\documentclass{amsart}
\usepackage{graphicx}
\usepackage{xcolor}
\usepackage{fullpage}

\usepackage{mathtools}
\mathtoolsset{showonlyrefs} 

\graphicspath{{Fig/}}

\newtheorem{thm}{Theorem}

\newtheorem{fact}[thm]{Fact}

\theoremstyle{definition}
\newtheorem{dfn}{Definition}

\DeclareMathOperator{\dvg}{div}
\DeclareMathOperator{\grad}{grad}
\DeclareMathOperator{\jgrad}{\mathcal{J}grad}

\newcommand{\rp}{\mathrm{Re}}

\newcommand{\re}{{\mathbb R}}
\newcommand{\ce}{{\mathbb C}}
\newcommand{\ze}{{\mathbb Z}}

\theoremstyle{plain}
\newtheorem{theorem}[thm]{Theorem}
\newtheorem{lemma}[thm]{Lemma}
\newtheorem{proposition}[thm]{Proposition}
\newtheorem{corollary}[thm]{Corollary}
\theoremstyle{definition}
\newtheorem{definition}[dfn]{Definition}

\DeclareMathOperator{\ran}{ran}
\DeclareMathOperator{\coker}{coker}
\DeclareMathOperator{\indop}{ind}
\DeclareMathOperator{\Tr}{Tr}
\DeclareMathOperator{\tr}{tr}

\newcommand{\id}{\mathrm{id}}

\begin{document}

\title{Continuation and reduction of steady vortex sheets under metric deformations on a disk}

\author{Yuuki Shimizu} 
\address{Faculty of Science, Academic Assembly, University of Toyama, 3190 Gofuku, Toyama 930-8555, Japan} 
\email{yuuki.shimizu1992@gmail.com}

\subjclass[2020]{Primary 76B47; Secondary 35B32, 47J15, 58J55}

\date{}

\keywords{steady vortex sheets, conformal metric deformations, Fredholm linearization, normalized continuation, Lyapunov--Schmidt reduction, Fourier resonance}

\begin{abstract}
We study the local continuation of a steady vortex sheet under a prescribed
deformation $g_s= e^{2\sigma_s}g_{\mathrm e}$ of a conformal metric on the unit
disk.  Representing the moving sheet by a normal graph, we formulate the
streamline and Bernoulli conditions as a nonlinear residual whose two
components have different Sobolev orders.  If the two one-sided tangential
velocities of the reference sheet do not vanish simultaneously, the
derivative of the unnormalized residual is Fredholm of index two.  Fixing the
mean normal displacement and the total circulation gives an index-zero
problem; when the normalization differential restricted to the unnormalized
kernel is onto $\re^2$, we obtain a parameter-dependent
Lyapunov--Schmidt reduction.  This yields a locally unique normalized branch
in the nondegenerate case and explicit first- and second-order necessary
conditions in the degenerate case.  For a rotationally symmetric reference
metric and a concentric circular sheet, the linearized problem separates
into Fourier blocks and only finitely many modes can be resonant.  In the
constant-curvature disk family, every mode $k\geq2$ is nonresonant, whereas
the first mode produces a two-dimensional reduced problem.  The two
first-mode kernel profiles are normal traces of ambient Killing fields, but
they do not generate fixed-boundary symmetries or, by themselves, a
nonconcentric branch.
\end{abstract}

\maketitle

\section{Introduction}
\label{sec:introduction}

A vortex sheet is a codimension-one concentration of vorticity across which
the tangential component of the velocity has a jump.  In two-dimensional
incompressible flow, its motion can be described either as a singular weak
solution of the Euler equation or through the Birkhoff--Rott equation.  The
relation between these descriptions, as well as the analytical difficulties
caused by the Kelvin--Helmholtz instability, has been studied extensively;
see, for example,
\cite{MarchioroPulvirenti1994,MajdaBertozzi2002,LopesFilhoLopesSchochet2007}.
On a Riemannian manifold, Izosimov and Khesin developed a geometric
description of vortex sheets in terms of discontinuous differential forms,
Hamiltonian structures, and layer potentials \cite{IzosimovKhesin2018}.
These theories treat the ambient geometry as fixed.

The present problem begins with a different question.  Let
$g_0= e^{2\sigma_0}g_{\mathrm e}$ be a conformal metric on the unit disk and
let $(c_0,\gamma_0)$ be a steady vortex sheet for $g_0$.  If the metric is
prescribed to vary along a path

\begin{equation}
  g_s= e^{2\sigma_s}g_{\mathrm e},
  \label{eq:intro-metric-path}
\end{equation}

can the support curve and the sheet strength be varied so that the sheet
remains steady?  Existing constructions of stationary and relative-equilibrium
vortex sheets in planar domains use perturbative arguments around particular
configurations \cite{CaoQinZou2023Stationary,CaoQinZou2023Relative}.  Here the
reference sheet is not required to be circular, the prescribed perturbation
acts on the metric rather than on the vorticity profile or the domain, and the
outer boundary remains fixed.

We write a nearby support as the normal graph
$c_f=c_0+fn$ and perturb the strength by $h$.  The two constants occurring in the streamline and Bernoulli
conditions are denoted by $\mu$ and $\beta$, so the complete unknown is
$u=(f,h,\mu,\beta)$.  Pulling both stationary conditions
back to $c_0$ gives a nonlinear residual

\begin{equation}
 F\colon(-s_0,s_0)\times O\longrightarrow
 H^{m+1}(c_0)\times H^m(c_0),
 \qquad m\geq3.
 \label{eq:intro-residual-type}
\end{equation}

The mixed target is forced by the equations.  The streamline residual is the
trace of a single-layer potential and gains one derivative, whereas the
Bernoulli residual contains an averaged normal derivative and remains at the
order of the sheet strength.  Consequently, the derivative with respect to
$(f,h)$ has the mixed principal form

\begin{equation}
 \begin{pmatrix}
 M_{-w\langle v_0\rangle_{\tau}}&\mathcal S_0M_{w}\\
 M_{-\gamma_0w^{-1}}\mathcal T_0M_{\gamma_0w}
   &M_{\langle v_0\rangle_{\tau}}
 \end{pmatrix},
 \qquad w= e^{\sigma_0\circ c_0},
 \label{eq:intro-principal-matrix}
\end{equation}

where $\mathcal S_0$ and $\mathcal T_0$ have orders $-1$ and $1$,
respectively.  Its effective scalar factor is

\begin{equation}
 -w\left(\langle v_0\rangle_{\tau}^2+\frac{\gamma_0^2}{4}\right)
 =-\frac{w}{2}\left((v_{\tau,0}^+)^2+(v_{\tau,0}^-)^2\right).
 \label{eq:intro-effective-factor}
\end{equation}

Thus the principal matrix is invertible whenever the two one-sided
tangential velocities do not vanish simultaneously.  We derive the complete
linearized transmission problem before extracting
\eqref{eq:intro-principal-matrix}; the remaining terms are then shown to be
compact by the mapping and shape-differentiability properties of the layer
potentials \cite{CostabelLeLouer2012}.  A two-sided parametrix and an
operator-norm continuous Fredholm homotopy prove that

\begin{equation}
 D_uF(0,u_0)\colon H^{m+1}(c_0)\times H^m(c_0)\times\re^2
 \longrightarrow H^{m+1}(c_0)\times H^m(c_0)
 \label{eq:intro-unnormalized-operator}
\end{equation}

is Fredholm of index two.

We append mean normal displacement and total circulation as two
normalization equations; the resulting derivative has index zero.  If the
normalization differential restricted to $\ker D_uF(0,u_0)$ is onto
$\re^2$, its kernel and cokernel have the same finite dimension $d$.
A parameter-dependent Lyapunov--Schmidt reduction then identifies the local
normalized steady sheets with the zeros of a $C^2$ map

\begin{equation}
 B\colon(s,\zeta)\in\re\times\mathcal K\longmapsto
 B(s,\zeta)\in\mathcal C,
 \qquad \dim\mathcal K=\dim\mathcal C=d.
 \label{eq:intro-reduced-map}
\end{equation}

If $d=0$, the normalized sheet continues uniquely along every prescribed
metric path.  If $d>0$, the reduction gives first- and second-order necessary
conditions for a $C^2$ branch, but neither resonance nor the index calculation
alone determines whether a nontrivial branch exists.

For rotationally symmetric reference metrics, the abstract degeneracy can be
computed explicitly.  Let $c_0$ be the circle of radius $\rho$,
take $\gamma_0\equiv\gamma\ne0$, put $x=\rho^2$, and define

\begin{equation}
 A(r)=2\pi\int_0^r  e^{2\sigma(q)}q\, d q,
 \quad U=\frac{A(\rho)}{A(1)},
 \quad p_\sigma=1+\rho\sigma'(\rho),
 \quad Z=\frac{2\pi\rho^2 e^{2\sigma(\rho)}}{A(1)}.
 \label{eq:intro-radial-data}
\end{equation}

The zero Fourier mode is removed by the two normalizations.  For every
$k\geq1$, the cosine and sine components have the same $2\times2$ matrix,
whose determinant, after division by $\gamma^2 e^{\sigma(\rho)}$, is

\begin{equation}
 \mathring D_k^\sigma(\rho)
 =-\left(\frac{1+x^k}{2}-U\right)^2
  -\frac{1-x^{2k}}4
  +\frac{1-x^k}{2k}\,[Z+p_\sigma(1-2U)].
 \label{eq:intro-radial-determinant}
\end{equation}

This formula implies that only finitely many modes can resonate and gives an
explicit finite cutoff for checking them.

The constant-curvature disk family makes the modal conclusion particularly
sharp.  For

\begin{equation}
  e^{\sigma_K(r)}=\frac{2}{1+Kr^2},
 \qquad K>-1,
 \label{eq:intro-constant-curvature-metric}
\end{equation}

the determinant becomes

\begin{equation}
 \mathring D_k^{\sigma_K}(\rho)
 =\frac{1}{(1+Kx)^2}\left[
  (1-x)(x-x^k)
  -\frac{k-1}{2k}(1-x^k)
     \bigl(1+K(K+2)x^2\bigr)
 \right].
 \label{eq:intro-constant-curvature-determinant}
\end{equation}

It vanishes for $k=1$ and is strictly negative for every $k\geq2$.
Accordingly, $d=2$ throughout the admissible constant-curvature family.  The
two kernel profiles are the normal traces of ambient space-form Killing
fields.  Their flows do not preserve the boundary of the unit disk, however,
so this identification does not produce a symmetry-generated nonconcentric
branch.  The unresolved local question is whether the two-dimensional reduced
equation has nonconcentric zeros.

The results are local in the normal-graph chart and concern prescribed
conformal metric paths on the disk.  In the degenerate case we determine the
finite-dimensional equation and its first two necessary conditions, but do
not assert a branch count, a branch scale, or the existence of a nontrivial
branch.  These limitations occur precisely where the Fredholm analysis ends
and the particular reduced equation begins.

Section~\ref{sec:preliminaries} establishes the Euler, bifurcation, and
elliptic ingredients.  Section~\ref{sec:problem-setting} constructs the
steady-sheet residual and the two physical normalizations, and
Section~\ref{sec:main-results} states the Fredholm and continuation results.
Their proofs are given in Section~\ref{sec:proof}.  Section~\ref{sec:applications}
derives the Fourier blocks for rotationally symmetric metrics and then
specializes them to the constant-curvature disk family.  The conclusions and
the remaining finite-dimensional question are collected in
Section~\ref{sec:conclusion}.
\section{Geometric and analytic framework}
\label{sec:preliminaries}

\subsection{Euler--Arnold equation and vortex sheets}
\label{subsec:euler-vortex-sheets}

Let $(M,g)$ be a compact oriented Riemannian surface, possibly with boundary,
let $ d\mu$ be its area form, and let $\mathcal J$ be rotation by $+\pi/2$.
For a smooth incompressible velocity tangent to the boundary, the
Euler--Arnold equation and its instantaneous pressure-free variational
identity are
\begin{equation}
  \partial_t v+\nabla_v^g v=-\grad_g p,
  \qquad \dvg_g v=0,
  \label{eq:euler-arnold}
\end{equation}
and
\begin{equation}
  \int_M g(\partial_t v+\nabla_v^g v,X)\, d\mu=0
  \label{eq:weak-euler-arnold}
\end{equation}
for every smooth $g$-divergence-free vector field $X$ tangent to
$\partial M$.  This intrinsic formulation is the only one used on a general
surface.

The stream-function representation used in
Propositions~\ref{prop:streamline-characterization}
and~\ref{prop:bernoulli-characterization} is restricted to the
unit disk $\mathbb{D}\subset\re^2$ with its impermeable outer boundary.
We fix the standard orientation and write
$g= e^{2\sigma}g_{\mathrm e}$ with
$\sigma\in C^\infty(\overline{\mathbb D})$.  The stream function is
normalized by zero Dirichlet data on $\partial\mathbb{D}$; on the simply
connected disk this fixes the velocity without an additional harmonic
circulation component.  With
$\triangle_g=\dvg_g\grad_g$, the conventions are
\begin{equation}
  v=-\jgrad_g\psi,
  \qquad -\triangle_g\psi=\omega,
  \qquad \psi|_{\partial\mathbb{D}}=0.
  \label{eq:stream-function-convention}
\end{equation}
The Dirichlet condition implies that $v$ is tangent to the outer boundary.

The fixed Euclidean Dirichlet Green kernel is denoted by $G$ and is normalized by
\begin{equation}
  G(z,y)=\frac{1}{2\pi}
  \log\frac{|1-z\overline{y}|}{|z-y|},
  \qquad -\triangle_{\mathrm e,z}G(z,y)=\delta_{y},
  \qquad G(\mathord{\cdot},y)|_{\partial\mathbb{D}}=0.
  \label{eq:disk-green-kernel}
\end{equation}

\begin{definition}[Vortex-sheet data]
\label{def:vortex-sheet-data}
Let $c$ be a connected, separating $C^2$ embedded closed curve in
$\mathbb{D}^\circ$ and let $\gamma\in C^1(c)$ be its sheet strength per unit
$g$-arclength.  Define
\begin{equation}
  \Gamma_{c,\gamma}=\int_c\gamma\, d\ell_g,
  \qquad
  \bar\omega_{c,\gamma}
  =\frac{\Gamma_{c,\gamma}}{|\mathbb{D}|_g},
  \qquad
  |\mathbb{D}|_g=\int_{\mathbb{D}} d\mu.
  \label{eq:circulation-background}
\end{equation}
The weak Dirichlet stream function $\psi_{c,\gamma}\in H^1_0(\mathbb{D})$
is the solution of
\begin{equation}
  \int_{\mathbb{D}}
  g(\grad_g\psi_{c,\gamma},
    \grad_g\phi)\, d\mu
  =\int_c\gamma\phi\, d\ell_g
   -\bar\omega_{c,\gamma}
    \int_{\mathbb{D}}\phi\, d\mu,
  \qquad \phi\in H^1_0(\mathbb{D}),
  \label{eq:weak-sheet-poisson}
\end{equation}
and its velocity is
$v_{c,\gamma}=-\jgrad_g\psi_{c,\gamma}$.
The choice of $\bar\omega_{c,\gamma}$ makes the total vorticity zero;
it is not a compatibility condition for the Dirichlet problem.
\end{definition}

\begin{definition}[Traces, averages, and jumps]
\label{def:traces-jumps}
Write
$\mathbb{D}\setminus c=\Omega^-\mathbin{\dot\cup}\Omega^+$, where
$\Omega^-$ is the bounded component and $\Omega^+$ contains
$\partial\mathbb{D}$.  The $g$-unit normal $n$ points from $\Omega^-$ to
$\Omega^+$ and $\tau=\mathcal J n$.  For any quantity $q$ with one-sided
traces, set
\begin{equation}
  \langle q\rangle=\frac{q^++q^-}{2},
  \qquad [q]=q^+-q^-.
  \label{eq:average-jump-convention}
\end{equation}
In particular,
\begin{equation}
  \langle v_{c,\gamma}\rangle_\tau
  =\frac{g(v_{c,\gamma}^+,\tau)
          +g(v_{c,\gamma}^-,\tau)}{2}.
  \label{eq:average-tangential-velocity}
\end{equation}
In the conformal coordinate, the line and area measures are
\begin{equation}
   d\ell_g= e^\sigma d\ell_{\mathrm e},
  \qquad  d\mu= e^{2\sigma} d A_{\mathrm e}.
  \label{eq:conformal-frame-measures}
\end{equation}
\end{definition}

\begin{proposition}[Weak stream function and fixed Green representation]
\label{prop:weak-stream-function}
The problem in Definition~\ref{def:vortex-sheet-data} has a unique solution.
It satisfies, in Euclidean distributions,
\begin{equation}
  -\triangle_{\mathrm e}\psi_{c,\gamma}
  =\gamma  e^\sigma\delta_c
   -\bar\omega_{c,\gamma} e^{2\sigma}
  \quad\text{in }\mathbb{D},
  \qquad \psi_{c,\gamma}|_{\partial\mathbb{D}}=0,
  \label{eq:sheet-poisson-distribution}
\end{equation}
Here the Euclidean curve delta is fixed by
$\langle\delta_c,\phi\rangle=\int_c\phi\, d\ell_{\mathrm e}$.  The solution has the fixed-kernel
representation
\begin{equation}
  \psi_{c,\gamma}(z)
  =\int_cG(z,y)\gamma(y)\, d\ell_g(y)
   -\bar\omega_{c,\gamma}
    \int_{\mathbb{D}}G(z,y)\, d\mu(y).
  \label{eq:fixed-green-representation}
\end{equation}
The trace of $\psi_{c,\gamma}$ is common to the two sides, and the one-sided
normal derivatives obey
\begin{equation}
  [\partial_n\psi_{c,\gamma}]= -\gamma,
  \qquad
  [g(v_{c,\gamma},\tau)]=\gamma.
  \label{eq:sheet-jump-relations}
\end{equation}
At the geometric $C^2/C^1$ level the restrictions are in
$H^2(\Omega^\pm)$ and have the weak one-sided traces required by
Definition~\ref{def:weak-steady-sheet}.  For a
smooth curve and strength in $H^r(c)$, $r\geq0$, the common trace is in
$H^{r+1}(c)$ and the normal traces are in $H^r(c)$.
\end{proposition}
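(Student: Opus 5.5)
The argument starts from the conformal invariance of the Dirichlet energy in two dimensions. For $g=e^{2\sigma}g_{\mathrm e}$ we have $g(\grad_g\psi,\grad_g\phi)\,d\mu=\nabla\psi\cdot\nabla\phi\,dA_{\mathrm e}$. Combined with \eqref{eq:conformal-frame-measures}, this turns the weak problem \eqref{eq:weak-sheet-poisson} into a Euclidean one:
\begin{equation}
 \int_{\mathbb D}\nabla\psi\cdot\nabla\phi\,dA_{\mathrm e}
 =\int_c\gamma e^{\sigma}\phi\,d\ell_{\mathrm e}
 -\bar\omega_{c,\gamma}\int_{\mathbb D}e^{2\sigma}\phi\,dA_{\mathrm e}.
\end{equation}
The right-hand side is a bounded functional on $H^1_0(\mathbb D)$. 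The curve term is bounded by the trace theorem on the $C^2$ curve $c$, and the area term because $e^{2\sigma}$ is bounded. The left-hand side is coercive by Poincaré's inequality. Lax--Milgram (or Riesz) therefore gives existence and uniqueness. Testing with $\phi\in C_c^\infty(\mathbb D)$ is exactly the distributional identity \eqref{eq:sheet-poisson-distribution}, with the Euclidean curve delta as normalized in the statement.

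For the representation \eqref{eq:fixed-green-representation}, I will check that the right-hand side is a weak solution and then invoke uniqueness. Write $\psi_1(z)=\int_c G(z,y)\gamma e^\sigma\,d\ell_{\mathrm e}(y)$, which is a Euclidean single-layer potential with the Dirichlet Green kernel, and $\psi_2(z)=\int_{\mathbb D}G(z,y)e^{2\sigma}\,dA_{\mathrm e}(y)$, a Newtonian-type potential. By \eqref{eq:disk-green-kernel}, each vanishes on $\partial\mathbb D$ and has $-\triangle_{\mathrm e}$ equal to the respective density. Both lie in $H^1_0$: the single layer with an $L^2$ density lies in $H^1$ (indeed is $H^{3/2}$ locally), and $\psi_2\in H^2$. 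Fubini together with $\int G(\cdot,y)(-\triangle\phi)=\phi(y)$ for test functions then shows that $\psi_1-\bar\omega\psi_2$ satisfies the weak equation, so it coincides with $\psi_{c,\gamma}$.

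Next I treat the jump relations. The single layer is continuous across $c$, and $\psi_2$ is $C^1$, so the trace is common to both sides. The classical jump relation for the normal derivative of a single-layer potential with density $\mu=\gamma e^\sigma$ (Euclidean normal $n_{\mathrm e}$ pointing into $\Omega^+$) is $[\partial_{n_{\mathrm e}}\psi_1]=-\mu$. It is simplest to derive it weakly: integrate by parts on $\Omega^\pm$ against a test function supported near $c$ and compare with the weak equation, which gives $[\partial_{n_{\mathrm e}}\psi]\,d\ell_{\mathrm e}=-\gamma e^\sigma\,d\ell_{\mathrm e}$. Since $n=e^{-\sigma}n_{\mathrm e}$, we have $\partial_n=e^{-\sigma}\partial_{n_{\mathrm e}}$, and so $[\partial_n\psi]=-\gamma$. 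For the velocity, $g(v,\tau)=-g(\mathcal J\grad_g\psi,\mathcal J n)=-\partial_n\psi$, which yields $[g(v,\tau)]=\gamma$. The orientation and sign bookkeeping here is routine but must be checked against $\tau=\mathcal Jn$.

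The regularity claims are the main obstacle. In the $C^2/C^1$ case, $\psi$ solves $-\triangle_{\mathrm e}\psi=-\bar\omega e^{2\sigma}\in L^\infty$ on each $\Omega^\pm$. It satisfies transmission conditions: continuous trace and a jump of normal derivative $-\gamma e^\sigma\in C^1(c)\subset H^{1/2}(c)$. It also satisfies Dirichlet data on $\partial\mathbb D$. To get $H^2(\Omega^\pm)$, I would subtract a lifting $\chi\in H^2(\Omega^-)$ extended by zero, chosen so that $\partial_{n}\chi$ carries the jump and $\chi$ has zero trace. After this subtraction the remainder has no normal-derivative jump, so it solves a Poisson problem on the whole disk with $L^2$ right-hand side and is in $H^2(\mathbb D)$. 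Near $c$ this only uses $H^2$ elliptic regularity in $C^{1,1}$ (here $C^2$) domains. The one-sided weak traces then follow from the trace theorem. For a smooth curve and $\gamma\in H^r(c)$, I would instead use the mapping properties of the single-layer operator on a smooth closed curve. It is a pseudodifferential operator of order $-1$, so its trace lies in $H^{r+1}(c)$. The normal traces equal $\mp\tfrac12\mu$ plus an adjoint double-layer term of order $-1$ on a smooth curve (smoothing, in fact, in two dimensions), so they lie in $H^r(c)$. The image term in $G$ and the $\psi_2$ contribution are smooth near $c$. The point requiring the most care is the uniformity of these statements for noninteger $r$ and $r$ near $0$. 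There I would rely on the standard results cited for layer potentials \cite{CostabelLeLouer2012}.
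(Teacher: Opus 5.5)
Your proposal is correct and follows essentially the paper's route. Both proofs use conformal invariance of the Dirichlet form with Lax--Milgram, verify the fixed-kernel representation by testing $G$ against the sheet and area measures and invoking uniqueness, obtain the jump $-\gamma e^{\sigma}$ from local flux balance converted by $\partial_n=e^{-\sigma}\partial_{n_{\mathrm e}}$, and in the smooth case use the order $-1$ single-layer mapping with the $\mp\frac12$ density terms.

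The one step you handle differently is the piecewise $H^2$ regularity. The paper quotes the standard transmission estimates in McLean. Your lifting $\chi$ (zero trace on $c$, normal derivative $\gamma e^{\sigma}$ there, extended by zero to $\Omega^+$) instead reduces the question to global $H^2$ regularity of the Dirichlet Laplacian on the disk. That argument is self-contained and needs only the $H^2(\Omega^-)$ trace right inverse on a $C^{1,1}$ curve. Your parenthetical $H^{3/2}$ claim for the single layer holds only on each side, not across $c$, but the argument never uses it.
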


\begin{proof}
In dimension two the Dirichlet energy is conformally invariant, so the
left-hand side of \eqref{eq:weak-sheet-poisson} equals
$\int_{\mathbb{D}}\nabla_{\mathrm e}\psi_{c,\gamma}\cdot
\nabla_{\mathrm e}\phi\, d A_{\mathrm e}$.  The interior trace map
$H^1(\mathbb{D})\to H^{1/2}(c)\to L^2(c)$ is bounded.  Hence the right-hand
side is a bounded functional on $H^1_0(\mathbb{D})$, while Poincar\'e's
inequality makes the Dirichlet form coercive.  Lax--Milgram gives existence
and uniqueness.

Converting the line and area measures by
\eqref{eq:conformal-frame-measures} gives
\eqref{eq:sheet-poisson-distribution}.  Testing
\eqref{eq:disk-green-kernel} against the finite sheet measure and the smooth
volume density proves that the right-hand side of
\eqref{eq:fixed-green-representation} solves the same weak problem.  The
outer boundary value is zero because $G(\mathord{\cdot},y)$ vanishes there, and
uniqueness proves the representation.

An $H^1$ function has one common interior trace on $c$.  Integrating
\eqref{eq:sheet-poisson-distribution} across a shrinking tubular strip, with
the normal directed from $\Omega^-$ to $\Omega^+$, gives
the Euclidean normal-derivative jump $-\gamma  e^\sigma$.
Since the metric-unit normal derivative is $ e^{-\sigma}$ times the Euclidean
normal derivative and
\[
  g(v_{c,\gamma},\tau)
  =-\partial_n\psi_{c,\gamma},
\]
the other two identities in \eqref{eq:sheet-jump-relations} follow.

The piecewise $H^2$ regularity for a $C^2$ interface and the corresponding
one-sided traces are the standard elliptic transmission estimates; see
\cite[Chapters~4 and~6]{McLean2000}.  To identify the higher-order boundary
maps in the smooth setting, pull the singular part
$-(2\pi)^{-1}\log|z-y|$ of $G$ back to a $C^2$ parametrization of $c$.
Its trace is the one-dimensional logarithmic potential, whose Fourier
multiplier has order $-1$; the remainder kernel and the volume potential
have the stated higher regularity.  The two normal traces are their average
plus and minus one half of the density.  This proves the geometric trace
clause and, on a smooth curve, the asserted $H^r$ mapping statements.
\end{proof}

\begin{definition}[Weak steady vortex sheet]
\label{def:weak-steady-sheet}
The data $(c,\gamma)$ of Definition~\ref{def:vortex-sheet-data} form a weak
steady vortex sheet if the one-sided fields supplied by
Proposition~\ref{prop:weak-stream-function} satisfy
\begin{equation}
  g(v_{c,\gamma}^+,n)=g(v_{c,\gamma}^-,n)=0
  \quad\text{on }c
  \label{eq:sheet-kinematic-condition}
\end{equation}
and
\begin{equation}
  \int_{\Omega^+}g(\nabla^g_{v_{c,\gamma}^+}v_{c,\gamma}^+,X)\, d\mu
  +\int_{\Omega^-}g(\nabla^g_{v_{c,\gamma}^-}v_{c,\gamma}^-,X)\, d\mu=0
  \label{eq:sheet-stationary-weak-euler}
\end{equation}
for every $X\in H^1(\mathbb{D};T\mathbb{D})$ that is weakly
$g$-divergence-free and tangent to $\partial\mathbb{D}$.  The nonlinear
integrals are well defined by the one-sided $H^1$ velocity regularity and
the embedding $H^1\hookrightarrow L^4$ in two dimensions.  Equivalently,
it suffices to test against smooth divergence-free tangent fields, which are
dense in this $H^1$ test space.  The physical Sobolev class in
Definition~\ref{def:bvsmd-perturbation-data} has stronger one-sided regularity, as proved in
Proposition~\ref{prop:nonlinear-residual-regularity}.
\end{definition}

\begin{proposition}[Streamline characterization]
\label{prop:streamline-characterization}
For the connected sheet in Definition~\ref{def:vortex-sheet-data}, the
kinematic condition \eqref{eq:sheet-kinematic-condition} holds if and only if
there is a unique $\mu\in\re$ such that
\begin{equation}
  \psi_{c,\gamma}|_c=\mu.
  \label{eq:streamline-constant}
\end{equation}
\end{proposition}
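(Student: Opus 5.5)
The plan is to reduce both one-sided normal velocities to the tangential derivative of the common trace of $\psi_{c,\gamma}$ on $c$, and then use connectedness of $c$. By Proposition~\ref{prop:weak-stream-function}, the restrictions $\psi^\pm$ of $\psi_{c,\gamma}$ to $\Omega^\pm$ lie in $H^2(\Omega^\pm)$ and share one trace on $c$, which lies in $H^{3/2}(c)\subset C^0(c)$ and so has a weak tangential derivative in $H^{1/2}(c)$. From $v=-\jgrad_g\psi$ and $\tau=\mathcal J n$, with $\mathcal J$ a $g$-isometry satisfying $\mathcal J^2=-\id$, I compute on each side
\[
g(v^\pm,n)=-g(\mathcal J\grad_g\psi^\pm,n)=g(\grad_g\psi^\pm,\mathcal J n)=\partial_\tau\psi^\pm .
\]
This is the companion of the identity $g(v,\tau)=-\partial_n\psi$ used in the proof of Proposition~\ref{prop:weak-stream-function}.

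The key step is that the tangential derivative of the trace of an $H^2(\Omega^\pm)$ function equals the trace of the tangential component of its gradient. I would prove this for smooth functions up to $c$, where it is the chain rule along a $C^2$ parametrization of $c$. I would then extend it by density of $C^\infty(\overline{\Omega^\pm})$ in $H^2(\Omega^\pm)$, using continuity of the trace $H^1(\Omega^\pm)\to H^{1/2}(c)$ for $\grad_g\psi^\pm$, and of the trace $H^2\to H^{3/2}$ followed by $\partial_\tau$ into $H^{1/2}$ (or simply into distributions on $c$). Because the traces of $\psi^+$ and $\psi^-$ agree, $\partial_\tau\psi^+=\partial_\tau\psi^-=\partial_\tau(\psi|_c)$. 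Hence \eqref{eq:sheet-kinematic-condition} is equivalent to $\partial_\tau(\psi|_c)=0$ in $H^{1/2}(c)$.

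To finish, I take an arclength parametrization of the closed curve $c$, which is $C^2$ and connected and therefore diffeomorphic to a circle. A continuous function on $c$ with vanishing distributional derivative is constant, so $\psi|_c=\mu$ for some $\mu$. The constant is unique because it is the value of a continuous function on a nonempty set. Conversely, if $\psi|_c$ is constant, its tangential derivative vanishes, so both normal velocities vanish by the computation above.

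I expect the main obstacle to be the regularity bookkeeping at the $C^2$ interface, that is, justifying the trace/tangential-derivative commutation with only $H^2(\Omega^\pm)$ regularity and a $C^2$ curve. The density argument handles this, since a $C^2$ boundary suffices for smooth approximation and for the trace theorems. The sign conventions for $\mathcal J$ affect only the sign of the identity, not the equivalence.
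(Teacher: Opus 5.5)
Your proposal is correct and follows the paper's route: the identity $g(v^\pm,n)=\partial_\tau\psi^\pm$ (from $\tau=\mathcal J n$ and the skew-adjointness of $\mathcal J$), the common trace supplied by Proposition~\ref{prop:weak-stream-function}, and constancy of a function with zero tangential derivative on the connected closed curve. The only difference is that you justify, by density of smooth functions in $H^2(\Omega^\pm)$, the commutation of trace and tangential derivative, which the paper uses without comment.
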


\begin{proof}
The orientation in Definition~\ref{def:traces-jumps} gives, on either side,
\[
  g(v_{c,\gamma}^\pm,n)
  =g(-\jgrad_g\psi_{c,\gamma}^\pm,n)
  =\partial_\tau\psi_{c,\gamma}^\pm.
\]
Proposition~\ref{prop:weak-stream-function} supplies a common trace.  Thus
both normal velocities vanish exactly when the tangential derivative of that
trace vanishes.  A weakly differentiable function with zero tangential
derivative on the connected closed curve is a single constant, and its value
is unique.
\end{proof}

\begin{lemma}[Normal-trace realization]
\label{lem:normal-trace-realization}
Let $r\geq1$, assume that $c$ is smooth, and let
$q\in H^{r-1/2}(c)$ satisfy
\begin{equation}
  \int_cq\, d\ell_g=0.
  \label{eq:normal-trace-zero-mean}
\end{equation}
There is a bounded linear choice of $X\in H^r(\mathbb{D};T\mathbb{D})$,
tangent to $\partial\mathbb{D}$, such that
\begin{equation}
  \dvg_gX=0,
  \qquad g(X,n)|_c=q,
  \qquad
  \|X\|_{H^r(\mathbb{D})}\leq C\|q\|_{H^{r-1/2}(c)}.
  \label{eq:normal-trace-right-inverse}
\end{equation}
For a $C^2$ curve the assertion holds at $r=1$, and smooth $q$ can be
realized by a smooth field.  Conversely, the normal trace on $c$ of every
divergence-free field has zero mean.
\end{lemma}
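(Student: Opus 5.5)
Divergence-free fields on the disk are generated by stream functions, $X=-\jgrad_g\phi$, and then $\dvg_gX=0$ holds automatically. Tangency to $\partial\mathbb D$ is equivalent to $\phi$ being constant there; we take $\phi|_{\partial\mathbb D}=0$. The orientation computation in the proof of Proposition~\ref{prop:streamline-characterization} gives
\[
  g(X,n)=\partial_\tau\phi \quad\text{on } c,
\]
with $\partial_\tau$ the $g$-unit tangential derivative. The problem therefore reduces to finding $\phi\in H^{r+1}(\mathbb D)$ with $\phi|_{\partial\mathbb D}=0$ and $\partial_\tau\phi|_c=q$.

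\emph{Step 1: integrate along $c$.} Parametrize $c$ by $g$-arclength $\ell\in[0,L)$ and set
\[
  Q(\ell)=\int_0^\ell q\,d\ell_g .
\]
The zero-mean hypothesis \eqref{eq:normal-trace-zero-mean} makes $Q$ periodic, so $Q$ is a well-defined function on $c$. Subtracting its mean is a bounded linear operation, and $\|Q\|_{H^{r+1/2}(c)}\le C\|q\|_{H^{r-1/2}(c)}$, because on a smooth closed curve antidifferentiation of mean-zero data gains one derivative (Fourier multiplier $1/(ik)$ in the arclength variable). Since $c$ is smooth, the weight $e^\sigma$ and the change of variables are smooth and only affect constants.

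\emph{Step 2: extend into the disk.} Choose $\phi$ by a bounded linear extension, for instance by solving the Dirichlet problems $\triangle_{\mathrm e}\phi=0$ in $\Omega^-$ and in $\Omega^+$ with data $Q$ on $c$ and $0$ on $\partial\mathbb D$. Elliptic regularity on the smooth domains $\Omega^\pm$ gives $\phi\in H^{r+1}(\Omega^\pm)$. The two pieces do not glue to an $H^{r+1}$ function on $\mathbb D$, since their normal derivatives jump across $c$. A cleaner choice is therefore an inverse-trace operator: $H^{r+1/2}(c)\to H^{r+1}(\mathbb D)$, multiplied by a cutoff equal to one near $c$ and vanishing near $\partial\mathbb D$. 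This is linear and bounded by the trace theorem. With this $\phi$, the field $X=-\jgrad_g\phi$ lies in $H^r$ with the required bound, is tangent to $\partial\mathbb D$ (indeed it vanishes near $\partial\mathbb D$), and has normal trace $\partial_\tau Q=q$. If $q$ is smooth, $Q$ is smooth, and a smooth cutoff extension gives a smooth $X$.

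\emph{Step 3: the $C^2$ case at $r=1$.} Now $q\in H^{1/2}(c)$, so $Q\in H^{3/2}(c)$, and we need an $H^2$ extension across a curve that is only $C^2$. The trace theorem $H^2\to H^{3/2}$ and its right inverse hold for $C^{1,1}$ boundaries, in particular for $C^2$ curves; I would cite McLean as in Proposition~\ref{prop:weak-stream-function}. Alternatively, one can flatten the collar by the $C^2$ tubular coordinates $(\ell,t)\mapsto c(\ell)+t\,n(\ell)$, extend $Q$ there as a function of $(\ell,t)$ that is constant in $t$, smoothed by a Poisson-type convolution, and transport back. I expect this step to be the main obstacle: a $C^2$ diffeomorphism preserves $H^2$ but no higher Sobolev class, so the regularity bookkeeping has to be checked exactly at this borderline.

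\emph{Converse.} Let $X$ be divergence-free, and apply the divergence theorem on $\Omega^-$, whose boundary is $c$ with outward normal $n$. This gives $\int_c g(X,n)\,d\ell_g=0$. For $H^1$ fields the statement is understood via the weak normal trace, using $\dvg_gX=0$ in $L^2$.
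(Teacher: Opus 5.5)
Your proposal is correct and is essentially the paper's proof: integrate $q$ around $c$ using the zero mean, extend the mean-zero antiderivative into $\mathbb D$ by a bounded trace right inverse supported in a tube around $c$ away from $\partial\mathbb D$, and take the rotated gradient. The paper writes $X=\jgrad_g\chi$ with $\partial_\tau\chi=-q$, which is your field with $\chi=-\phi$, and it obtains the converse from the divergence theorem on $\Omega^-$ as you do. In Step~3, rely on the $C^{1,1}$ trace theorem you cite and drop the tubular-coordinate alternative: for a $C^2$ curve the normal $n$ is only $C^1$, so $(\ell,t)\mapsto c(\ell)+t\,n(\ell)$ is only a $C^1$ map and need not preserve $H^2$, whereas a local graph flattening $(x_1,x_2)\mapsto(x_1,x_2-\psi(x_1))$ with $\psi\in C^2$ would.
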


\begin{proof}
The divergence theorem on $\Omega^-$ proves the necessary zero-mean
condition.  Conversely, integrate $-q$ in metric arclength around $c$.
Condition \eqref{eq:normal-trace-zero-mean} gives a periodic
$\chi_c\in H^{r+1/2}(c)$ satisfying
$\partial_\tau\chi_c=-q$ and
$\|\chi_c\|_{H^{r+1/2}(c)}\leq C\|q\|_{H^{r-1/2}(c)}$ after its mean is
fixed to zero.  The tubular trace-extension theorem gives a bounded
extension $\chi\in H^{r+1}(\mathbb{D})$ supported in a tubular neighborhood
of $c$ disjoint from $\partial\mathbb{D}$.  Set
$X=\jgrad_g\chi$.  Rotation of a gradient is
divergence-free, $X$ vanishes near the outer boundary, and
\[
  g(X,n)=-\partial_\tau\chi=q
  \quad\text{on }c.
\]
The extension estimate gives
\eqref{eq:normal-trace-right-inverse}.  The same construction in smooth
tubular coordinates preserves smoothness.
\end{proof}

\begin{proposition}[Bernoulli characterization]
\label{prop:bernoulli-characterization}
Assume the kinematic condition \eqref{eq:sheet-kinematic-condition}.  Then
the weak stationary Euler identity
\eqref{eq:sheet-stationary-weak-euler} holds if and only if there is a unique
$\beta\in\re$ such that
\begin{equation}
  \gamma\langle v_{c,\gamma}\rangle_\tau=\beta
  \quad\text{on }c.
  \label{eq:bernoulli-constant}
\end{equation}
\end{proposition}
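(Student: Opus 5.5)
The plan is to convert the weak identity \eqref{eq:sheet-stationary-weak-euler} into a boundary identity on $c$ tested against normal traces, and then to use Lemma~\ref{lem:normal-trace-realization} to read off the pointwise condition.

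\textbf{Local Euler equation in each component.} On each component $\Omega^\pm$ the field $v^\pm=v_{c,\gamma}^\pm$ is divergence-free. Its vorticity is the constant $-\bar\omega_{c,\gamma}$ in the sense of \eqref{eq:sheet-poisson-distribution}; this constant background vorticity is what we will exploit. In two dimensions we have the identity
\[
\nabla^g_vv=\grad_g\tfrac12|v|_g^2+\omega\,\mathcal J v \quad(\text{up to the sign convention}),
\]
and $\mathcal Jv=\pm\grad_g\psi$ up to sign. Since $\omega$ is constant off $c$, this gives
\[
\nabla^g_{v^\pm}v^\pm=-\grad_g P^\pm
\]
with $P^\pm=-\tfrac12|v^\pm|_g^2+\kappa\,\bar\omega\,\psi$ for a fixed sign $\kappa$. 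Here $P^+$ and $P^-$ are one-sided $H^1$ functions (actually better), and the $\psi$-term has a common trace on $c$.

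\textbf{Integration by parts.} Take a test field $X$ that is divergence-free and tangent to $\partial\mathbb D$. Integrating by parts on $\Omega^\pm$, the outer boundary contributes nothing because $g(X,\nu)=0$ there. The interior contributions cancel by $\dvg_gX=0$. What remains is
\[
\int_c [P]\,g(X,n)\,d\ell_g=0 .
\]
The $\psi$-part of $[P]$ vanishes because $\psi$ has a common trace. Hence
\[
[P]=-\tfrac12[|v|_g^2].
\]
The kinematic condition \eqref{eq:sheet-kinematic-condition} gives $|v^\pm|_g^2=(v_\tau^\pm)^2$, and therefore
\[
\tfrac12[|v|^2]=\langle v\rangle_\tau[v_\tau]=\gamma\langle v\rangle_\tau
\]
by \eqref{eq:sheet-jump-relations}. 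So the weak identity is equivalent to
\[
\int_c\gamma\langle v\rangle_\tau\,q\,d\ell_g=0
\]
for all $q$ that arise as normal traces of admissible fields.

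\textbf{Conclusion via the realization lemma.} By Lemma~\ref{lem:normal-trace-realization}, these normal traces are exactly the zero-mean functions: smooth zero-mean $q$ are realized by smooth fields, which suffices by the density remark in Definition~\ref{def:weak-steady-sheet}, and conversely every normal trace has zero mean. A function orthogonal to all zero-mean smooth $q$ in $L^2(c,d\ell_g)$ is constant. Hence $\gamma\langle v\rangle_\tau=\beta$, and $\beta$ is unique because it equals the mean value. The converse direction is immediate: if the product is constant, the boundary integral reduces to $\beta\int_c q\,d\ell_g=0$.

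\textbf{Main obstacle.} The delicate step is justifying the integration by parts and the Lamb-form identity at the stated regularity. Here $v^\pm\in H^1(\Omega^\pm)$ only, so $|v|^2\in W^{1,1}$ and its trace must be handled carefully. I would first prove the identity for smooth $X$, using the one-sided $H^2$ regularity of $\psi$ from Proposition~\ref{prop:weak-stream-function}. That regularity gives $|v|^2\in W^{1,p}$ with traces in $L^1(c)$, which makes the boundary pairing with smooth $q$ meaningful. I would then pass to the full test space by density.
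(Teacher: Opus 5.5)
Your proposal is correct and follows essentially the same route as the paper. Both arguments use the Lamb-form identity $\nabla^g_vv=\grad_g\bigl(\tfrac12|v|_g^2-\bar\omega_{c,\gamma}\psi_{c,\gamma}\bigr)$ on each side, integrate by parts so that only the jump term on $c$ survives (the $\psi$-part cancelling by the common trace), use $\tfrac12[|v|_g^2]=\gamma\langle v\rangle_\tau$ from the kinematic condition, and invoke Lemma~\ref{lem:normal-trace-realization} with density; the paper settles the regularity issue as you suggest, noting that the volume functional is continuous on the $H^1$ test space (H\"older and $H^1\hookrightarrow L^4$), so the smooth-field integration by parts extends.
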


\begin{proof}
Away from $c$, equation \eqref{eq:sheet-poisson-distribution} says that the
metric vorticity is $-\bar\omega_{c,\gamma}$.  The two-dimensional
acceleration identity and \eqref{eq:stream-function-convention} give
\begin{align*}
  \nabla_v^gv
  &=\grad_g\frac{|v|_g^2}{2}
    -\bar\omega_{c,\gamma}\mathcal Jv
   =\grad_g
    \left(\frac{|v|_g^2}{2}
          -\bar\omega_{c,\gamma}\psi_{c,\gamma}\right).
\end{align*}
On each side of $c$, H\"older's inequality and $H^1\hookrightarrow L^4$
give
\[
 \left|\int g(\nabla_v^gv,X)\, d\mu\right|
 \leq C\|v\|_{H^1}^2\|X\|_{H^1}.
\]
Thus the stationary functional is continuous on the test space of
Definition~\ref{def:weak-steady-sheet}, and the following integration by
parts, first for smooth fields, extends by density.
Integration by parts on $\Omega^-$ and $\Omega^+$, using outer-boundary
tangency of $X$, therefore turns the left-hand side of
\eqref{eq:sheet-stationary-weak-euler} into
\[
  -\int_c
  \left[\frac{|v_{c,\gamma}|_g^2}{2}
  -\bar\omega_{c,\gamma}\psi_{c,\gamma}\right]
  g(X,n)\, d\ell_g.
\]
The stream function has a common trace.  The kinematic condition makes both
velocities tangential, and hence
\begin{align*}
  \left[\frac{|v_{c,\gamma}|_g^2}{2}\right]
  &=\langle v_{c,\gamma}\rangle_\tau
    [g(v_{c,\gamma},\tau)]
   =\gamma\langle v_{c,\gamma}\rangle_\tau
\end{align*}
by \eqref{eq:sheet-jump-relations}.  Every admissible test field has
$\int_cg(X,n)\, d\ell_g=0$.  Thus
\eqref{eq:bernoulli-constant} implies the weak Euler identity.

Conversely, Lemma~\ref{lem:normal-trace-realization} realizes every smooth
zero-mean function as $g(X,n)|_c$.  The weak identity therefore makes
$\gamma\langle v_{c,\gamma}\rangle_\tau$ annihilate all zero-mean test
functions.  It is a constant distribution on the connected curve and, by
the trace regularity in Proposition~\ref{prop:weak-stream-function}, an
almost-everywhere constant function.  Its value $\beta$ is unique.
\end{proof}

The remainder of the paper uses the Euclidean frame fixed in
Definition~\ref{def:reference-state}; metric-unit normals and tangents will
always be indicated explicitly.

\subsection{Bifurcation-theoretic facts}
\label{subsec:bifurcation-facts}

The following statements are formulated on abstract Banach spaces.  The
physical spaces and maps are introduced in
Definition~\ref{def:physical-sobolev-spaces} and
Section~\ref{sec:problem-setting}.

\begin{definition}[Fredholm operator and index]
\label{def:fredholm-index}
For Banach spaces $X,Y$, a bounded operator $A\colon X\to Y$ is Fredholm if
$\ker A$ is finite-dimensional, $\ran A$ is closed, and
$\coker A=Y/\ran A$ is finite-dimensional.  Its
index is
\begin{equation}
  \indop A
  =\dim\ker A-\dim\coker A.
  \label{eq:fredholm-index}
\end{equation}
\end{definition}

\begin{fact}[Finite-dimensional index bookkeeping]
\label{fact:finite-dimensional-index}
Let $A\colon X\to Y$ be Fredholm and let $\mathsf q\geq1$.  Adjoining
$\mathsf q$ scalar domain variables by any finite-rank column gives a
Fredholm operator $X\times\re^{\mathsf q}\to Y$ of index
$\indop A+\mathsf q$.  Appending $\mathsf q$ bounded scalar
equations gives a Fredholm operator $X\to Y\times\re^{\mathsf q}$ of
index $\indop A-\mathsf q$.
\end{fact}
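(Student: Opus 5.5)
The plan is to realize both operations as compositions of $A$ with finite-rank data, and to control the index through the standard stability facts. I will use two of them: a finite-rank (hence compact) perturbation of a Fredholm operator is Fredholm with the same index, and the index is additive under composition of Fredholm operators. Trivial extensions are handled first. The extension $A\oplus 0\colon X\times\re^{\mathsf q}\to Y$ has kernel $\ker A\times\re^{\mathsf q}$ and the same range as $A$. Its range is therefore closed, its cokernel is $\coker A$, and its index is $\indop A+\mathsf q$.

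For adjoining scalar variables, write the new operator as
\[
(x,t)\mapsto Ax+\sum_{j=1}^{\mathsf q} t_j y_j,
\]
where $y_j\in Y$ are the columns of the finite-rank part. This operator equals $A\oplus 0$ plus the finite-rank operator $(x,t)\mapsto\sum_j t_jy_j$. By compact-perturbation stability it is Fredholm of index $\indop A+\mathsf q$. Alternatively, one can check closedness of the range directly: the range is $\ran A+\operatorname{span}\{y_j\}$, and a closed subspace plus a finite-dimensional one is closed. The kernel and cokernel dimensions can then be counted via the exact sequence relating them to those of $A\oplus0$.

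For appending equations, consider
\[
x\mapsto (Ax,\ell_1(x),\dots,\ell_{\mathsf q}(x)),
\]
where the $\ell_j$ are bounded functionals. Again, first treat the trivial case $\ell=0$. Then the operator $x\mapsto(Ax,0)$ has kernel $\ker A$ and range $\ran A\times\{0\}$, which is closed. Its cokernel is $\coker A\times\re^{\mathsf q}$, so the index is $\indop A-\mathsf q$. The general operator differs from this one by the finite-rank map $x\mapsto(0,\ell(x))$, so stability of the index under compact perturbations gives the claim.

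The only point needing care is the stability theorem itself in the Banach setting. I will simply cite it, e.g.\ via the existence of a parametrix modulo compacts, rather than reprove it. I expect no real obstacle beyond checking that the trivial extensions have closed range, which was verified directly above.
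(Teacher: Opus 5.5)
Your proposal is correct and follows the paper's route: compute kernel and cokernel of the trivial extensions $A\oplus0$ and $x\mapsto(Ax,0)$ directly, then treat the actual column or appended equations as finite-rank perturbations, under which Fredholmness and index are invariant. The additivity of the index under composition that you list among your tools is never used and can be dropped.
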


\begin{proof}
The zero-column extension is $A\oplus0\colon X\times\re^{\mathsf q}\to Y$
and has kernel $\ker A\times\re^{\mathsf q}$ and the same cokernel as
$A$.  Any other column is a finite-rank perturbation.  Similarly,
$x\mapsto(Ax,0)$ has the same kernel as $A$ and cokernel
$\coker A\times\re^{\mathsf q}$, and arbitrary appended
bounded scalar equations are finite-rank perturbations.  Fredholmness and
index are invariant under finite-rank perturbations.
\end{proof}

\begin{fact}[Parameter-dependent Banach implicit function theorem]
\label{fact:parameter-implicit-function}
Let $I\subset\re$ be open, let $U\subset X$ be open, and let
$\Phi\colon I\times U\to Y$ be $C^2$.  If
$\Phi(s_0,x_0)=0$ and
$D_x\Phi(s_0,x_0)\colon X\to Y$ is an isomorphism, then there are neighborhoods
of $s_0$ and $x_0$ and a unique $C^2$ map $x=x(s)$ in their product such
that $x(s_0)=x_0$ and $\Phi(s,x(s))=0$.  These are the parameter and
regularity conclusions of the Banach implicit function theorem
\cite[Theorem~I.1.1 and Addendum~I.1.7]{Kielhofer2004}.
\end{fact}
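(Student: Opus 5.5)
The statement is a cited fact: the parameter-dependent Banach implicit function theorem. My plan is to reduce it to the standard Banach implicit function theorem with $C^2$ regularity, citing \cite[Theorem~I.1.1]{Kielhofer2004}. The reduction works by treating the parameter as a coordinate in the product Banach space $\re\times X$.

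\emph{Step 1: fixed-point formulation.} Set $L=D_x\Phi(s_0,x_0)$, which is an isomorphism by hypothesis, so $L^{-1}\colon Y\to X$ is bounded. Solving $\Phi(s,x)=0$ is then equivalent to finding a fixed point of
\[
  T_s(x)=x-L^{-1}\Phi(s,x).
\]
We have $D_xT_s(x)=\id-L^{-1}D_x\Phi(s,x)$. Since $\Phi$ is $C^2$, and in particular $D_x\Phi$ is continuous, this derivative has operator norm at most $1/2$ on a product neighborhood $I_0\times\overline{B_r(x_0)}$.

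\emph{Step 2: contraction and self-map.} By the mean value inequality, $T_s$ is a $1/2$-contraction on $\overline{B_r(x_0)}$. Shrinking $I_0$ so that $\|L^{-1}\Phi(s,x_0)\|<r/2$, which uses $\Phi(s_0,x_0)=0$ and continuity in $s$, ensures that $T_s$ maps the closed ball into itself. The contraction mapping principle then gives, for each $s\in I_0$, a unique $x(s)$ in the ball with $\Phi(s,x(s))=0$. This yields both existence and uniqueness in the product neighborhood. Moreover, $x(s)$ is continuous in $s$, because the fixed point of a family of uniform contractions depends continuously on the parameter.

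\emph{Step 3: regularity (the main obstacle).} Here I would show that $x(s)$ is differentiable with
\[
  x'(s)=-D_x\Phi(s,x(s))^{-1}D_s\Phi(s,x(s)).
\]
After shrinking the neighborhood, $D_x\Phi(s,x(s))$ remains invertible by the Neumann series, since invertibility is an open condition. Differentiability follows from a first-order Taylor expansion of $\Phi$ at $(s,x(s))$, combined with the Lipschitz bound on $x(\cdot)$ inherited from the contraction estimate. The right-hand side of the formula is then a composition of continuous maps: the map $A\mapsto A^{-1}$ is smooth on the open set of isomorphisms, $D\Phi$ is $C^1$, and $x(\cdot)$ is continuous. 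Hence $x\in C^1$. Bootstrapping once more, the right-hand side is $C^1$ because $D\Phi$ is $C^1$ and $x\in C^1$, so $x\in C^2$. This bootstrap is exactly the content of Addendum~I.1.7 in \cite{Kielhofer2004}. Since the fact is cited there, in the paper itself I would simply invoke Theorem~I.1.1 and Addendum~I.1.7.
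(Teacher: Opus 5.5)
Your proposal is correct and takes the same approach as the paper, which gives no independent proof and simply invokes Kielh\"ofer's Theorem~I.1.1 with Addendum~I.1.7; your contraction-mapping construction and the $C^1\to C^2$ bootstrap through $x'(s)=-D_x\Phi(s,x(s))^{-1}D_s\Phi(s,x(s))$ are the standard argument behind that citation. The only small elision is that the Lipschitz bound on $x(\cdot)$ needs $D_s\Phi$ to be bounded on the shrunken neighborhood, not just the contraction estimate; your argument also works verbatim for the Banach-space parameter $(s,k)$ to which the paper later applies this fact.
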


\begin{proposition}[Normalization of a Fredholm operator]
\label{prop:fredholm-normalization}
Let $A\colon X\to Y$ be Fredholm, let $\mathsf q\geq1$ be an integer, and let
$\nu\colon X\to\re^{\mathsf q}$ be bounded.  If
$\nu|_{\ker A}$ is onto, then
\begin{equation}
  \widetilde A=(A,\nu)\colon X\longrightarrow
  Y\times\re^{\mathsf q}
  \label{eq:abstract-normalized-operator}
\end{equation}
is Fredholm,
\begin{equation}
  \ker\widetilde A=\ker A\cap\ker\nu,
  \qquad
  \coker\widetilde A\cong\coker A,
  \qquad
  \indop\widetilde A=\indop A-\mathsf q.
  \label{eq:abstract-normalization-data}
\end{equation}
\end{proposition}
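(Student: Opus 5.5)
The plan is to compute the kernel and the range of $\widetilde A$ directly, and then obtain the index either from Fact~\ref{fact:finite-dimensional-index} or by counting dimensions.

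\emph{Kernel.} An element $x$ lies in $\ker\widetilde A$ exactly when $Ax=0$ and $\nu x=0$. Hence $\ker\widetilde A=\ker A\cap\ker\nu$, and this space is finite-dimensional because $\ker A$ is. Since $\nu|_{\ker A}\colon\ker A\to\re^{\mathsf q}$ is onto, rank--nullity gives
\[
\dim\ker\widetilde A=\dim\ker A-\mathsf q .
\]

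\emph{Range.} I will show that
\[
\ran\widetilde A=\ran A\times\re^{\mathsf q}.
\]
The inclusion $\subseteq$ is clear. For the reverse inclusion, take $(y,a)$ with $y=Ax_1$. Surjectivity of $\nu$ on $\ker A$ supplies $k\in\ker A$ with $\nu k=a-\nu x_1$. Then $x=x_1+k$ satisfies
\[
\widetilde A x=(Ax_1,\;\nu x_1+\nu k)=(y,a).
\]
Because $\ran A$ is closed, the product $\ran A\times\re^{\mathsf q}$ is closed in $Y\times\re^{\mathsf q}$. Moreover, the quotient map induces an isomorphism
\[
(Y\times\re^{\mathsf q})/(\ran A\times\re^{\mathsf q})\cong Y/\ran A .
\]
So $\coker\widetilde A\cong\coker A$, and this space is finite-dimensional.

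\emph{Index.} It follows that $\widetilde A$ is Fredholm, and
\[
\indop\widetilde A=(\dim\ker A-\mathsf q)-\dim\coker A=\indop A-\mathsf q .
\]
This agrees with the appended-equation case of Fact~\ref{fact:finite-dimensional-index}.

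The argument has no genuine obstacle. The only point needing care is the reverse inclusion for the range, and that is exactly where the surjectivity hypothesis on $\nu|_{\ker A}$ is used.
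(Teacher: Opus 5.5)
Your proof is correct and turns on the same step as the paper's: choosing $k\in\ker A$ with $\nu k=a-\nu x_1$. The paper uses it to show that the induced map $\coker\widetilde A\to\coker A$ is injective, and you use it to identify $\ran\widetilde A=\ran A\times\re^{\mathsf q}$; the one difference is that you get Fredholmness and the index directly (closed range from that identity, $\dim\ker\widetilde A=\dim\ker A-\mathsf q$ by rank--nullity), which makes your version slightly more self-contained than the paper's appeal to the finite-rank perturbation argument of Fact~\ref{fact:finite-dimensional-index}.
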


\begin{proof}
Only the cokernel identification requires more than
Fact~\ref{fact:finite-dimensional-index}.  Map the class of $(y,a)$ modulo
$\ran\widetilde A$ to the class of $y$ modulo
$\ran A$.  Surjectivity is immediate.  If $y=Ax$, choose
$k\in\ker A$ with $\nu k=a-\nu x$.  Then
$(y,a)=\widetilde A(x+k)$, so the map is injective.  The kernel identity is
immediate, and the index formula follows.
\end{proof}

\begin{proposition}[Parameter-dependent Lyapunov--Schmidt reduction]
\label{prop:abstract-lyapunov-schmidt}
Let $X$ and $Y$ be Banach spaces, let $I\subset\re$ and $U\subset X$
be open with $0\in I$ and $x_0\in U$, and let
$\Phi\colon I\times U\to Y$ be $C^2$.  Suppose
$\Phi(0,x_0)=0$, and suppose
$A=D_x\Phi(0,x_0)\colon X\to Y$ is Fredholm of index zero.  Choose closed
complements
\begin{equation}
  X=\ker A\oplus X_1,
  \qquad
  Y=\ran A\oplus Y_1.
  \label{eq:abstract-ls-splittings}
\end{equation}
There are neighborhoods of $(0,0)$ in
$\re\times\ker A$ and a unique $C^2$ map
$w$ into $X_1$, with $w(0,0)=0$, such that the range component of
$\Phi(s,x_0+k+w(s,k))$ vanishes.  In these neighborhoods,
\begin{equation}
  \Phi(s,x)=0
  \quad\Longleftrightarrow\quad
  x=x_0+k+w(s,k)
  \ \text{and}\ 
  Q\Phi(s,x_0+k+w(s,k))=0,
  \label{eq:abstract-ls-equivalence}
\end{equation}
where $Q\colon Y\to Y_1$ is the projection along $\ran A$.
The last equation is finite-dimensional and is $C^2$ jointly in the
parameter and kernel coordinate.
\end{proposition}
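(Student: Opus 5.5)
The plan is the standard Lyapunov--Schmidt argument. First I will fix the algebraic setup, then solve the range equation by the implicit function theorem, and finally check the equivalence.

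\emph{Setup.} Since $A$ is Fredholm, $\ker A$ is finite-dimensional and $\ran A$ is closed of finite codimension. Hence the closed complements in \eqref{eq:abstract-ls-splittings} exist, and the index-zero hypothesis gives $\dim Y_1=\dim\ker A$. Let $P=\id-Q\colon Y\to\ran A$ be the projection along $Y_1$. Both $P$ and $Q$ are bounded because the splitting is topological. The restriction $A|_{X_1}\colon X_1\to\ran A$ is a bounded bijection between Banach spaces. By the open mapping theorem it is therefore an isomorphism.

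\emph{Range equation.} Define
\[
  \Psi(s,k,w)=P\Phi(s,x_0+k+w)
\]
for $(s,k,w)$ near $(0,0,0)$ in $I\times\ker A\times X_1$. This map is $C^2$, vanishes at the origin, and satisfies $D_w\Psi(0,0,0)=PA|_{X_1}=A|_{X_1}$, which is an isomorphism onto $\ran A$. I apply Fact~\ref{fact:parameter-implicit-function} with the parameter $(s,k)$. The fact is stated for a scalar parameter, but the Banach implicit function theorem cited from \cite{Kielhofer2004} permits a parameter in the Banach space $\re\times\ker A$. Alternatively, one can absorb $k$ by treating $(k,w)$ as the unknown and adding the equation $k=k'$. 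This yields neighborhoods $N\ni(0,0)$ and $W\ni0$ and a unique $C^2$ map $w\colon N\to W$ with $w(0,0)=0$ and $P\Phi(s,x_0+k+w(s,k))=0$. Moreover, every zero of $\Psi$ in $N\times W$ has this form.

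\emph{Equivalence.} Any $x$ near $x_0$ decomposes uniquely as $x=x_0+k+w'$ with $k\in\ker A$ and $w'\in X_1$, both small by continuity of the projections. Take $U'$ to be the set of such $x$ with $(s,k)\in N$ and $w'\in W$. For $x\in U'$, we have $\Phi(s,x)=0$ if and only if both $P\Phi=0$ and $Q\Phi=0$. By uniqueness, $P\Phi=0$ forces $w'=w(s,k)$, so the condition reduces to $Q\Phi(s,x_0+k+w(s,k))=0$, which is \eqref{eq:abstract-ls-equivalence}. The bifurcation map $(s,k)\mapsto Q\Phi(s,x_0+k+w(s,k))$ is a composition of $C^2$ maps and takes values in the finite-dimensional space $Y_1$. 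It is therefore $C^2$ jointly in $s$ and $k$.

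The only delicate point is bookkeeping the neighborhoods: the neighborhood in $x$ must be chosen as the product image above so that the uniqueness clause of the implicit function theorem applies. Everything else is routine.
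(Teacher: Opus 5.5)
Your proposal is correct and follows essentially the same route as the paper's proof. Both set $P=\id-Q$, invert $A|_{X_1}$ onto $\ran A$ via the open mapping theorem, apply the implicit function theorem with $(s,k)$ as the parameter, and split the full equation with $P$ and $Q$; your neighborhood bookkeeping spells out what the paper leaves implicit. You are also right that the parameter lies in $\re\times\ker A$ rather than $\re$, and the Banach-parameter version you cite covers this; your alternative of adjoining $k=k'$ does not actually remove the vector-valued parameter, but it is not needed.
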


\begin{proof}
Let $P=1-Q$.  The derivative with respect to $w\in X_1$ of
$P\Phi(s,x_0+k+w)$ at the origin is
$A|_{X_1}\colon X_1\to\ran A$.  It is injective by the first
splitting in \eqref{eq:abstract-ls-splittings}, surjective by definition of
the range, and has a bounded inverse by the open mapping theorem.  Apply
Fact~\ref{fact:parameter-implicit-function} with $(s,k)$ as the parameter.
The resulting unique $w$ solves the range equation.  Splitting the full
equation with $P$ and $Q$ proves \eqref{eq:abstract-ls-equivalence}.  Both
$\ker A$ and $Y_1$ are finite-dimensional because $A$ is Fredholm, and the
index-zero hypothesis gives them equal dimension.
\end{proof}

\begin{corollary}[Nondegenerate abstract reduction]
\label{cor:nondegenerate-abstract-reduction}
Under the hypotheses of Proposition~\ref{prop:abstract-lyapunov-schmidt}, if
$\ker A=0$ and $\coker A=0$, then $A$ is an isomorphism and
$\Phi(s,x)=0$ has a unique local $C^2$ branch through $(0,x_0)$.
\end{corollary}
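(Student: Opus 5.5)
The corollary follows directly from the parameter-dependent implicit function theorem once $A$ is known to be an isomorphism, so I would argue in two short steps.

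First, show that $A=D_x\Phi(0,x_0)$ is an isomorphism. It is bounded by hypothesis. It is injective because $\ker A=0$. The condition $\coker A=Y/\ran A=0$ means $\ran A=Y$, so $A$ is surjective. The open mapping theorem then makes $A^{-1}$ bounded, and $A\colon X\to Y$ is a Banach-space isomorphism. The index-zero hypothesis of Proposition~\ref{prop:abstract-lyapunov-schmidt} is consistent with this but is not needed beyond the stated kernel and cokernel conditions.

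Second, apply Fact~\ref{fact:parameter-implicit-function} with $I$, $U$, $\Phi$, base point $(0,x_0)$ and derivative $D_x\Phi(0,x_0)=A$. This gives neighborhoods $I_0\ni0$ and $U_0\ni x_0$ and a unique $C^2$ map $x(s)$ with $x(0)=x_0$ and $\Phi(s,x(s))=0$. Moreover, every zero of $\Phi$ in $I_0\times U_0$ lies on this graph; this is the uniqueness clause of the Banach implicit function theorem. That is exactly the asserted local unique $C^2$ branch.

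As a consistency check, I would also read the result off Proposition~\ref{prop:abstract-lyapunov-schmidt}. With $\ker A=0$ we take $X_1=X$, and with $\ran A=Y$ we take $Y_1=0$. Then $Q=0$, so the bifurcation equation in \eqref{eq:abstract-ls-equivalence} is vacuous, and the map $w(s,0)$ supplies the branch $x(s)=x_0+w(s,0)$.

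I see no real obstacle. The only point needing care is that cokernel zero is taken in the algebraic sense $Y/\ran A=0$, so that surjectivity, and hence bounded invertibility via the open mapping theorem, follows.
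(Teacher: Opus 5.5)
Your proof is correct and is essentially the paper's argument: $A$ is a bounded bijection, the open mapping theorem makes its inverse bounded, and Fact~\ref{fact:parameter-implicit-function} then yields the unique local $C^2$ branch. Your Lyapunov--Schmidt consistency check with $X_1=X$, $Y_1=0$, $Q=0$ is a valid optional addition that the paper does not include.
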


\begin{proof}
The hypotheses make $A$ a bounded bijection; its inverse is bounded by the
open mapping theorem.  Fact~\ref{fact:parameter-implicit-function} applies.
\end{proof}

\begin{corollary}[Derivative of an implicit branch]
\label{cor:implicit-branch-derivative}
Under the hypotheses of Fact~\ref{fact:parameter-implicit-function}, the
branch derivative is
\begin{equation}
  x'(s_0)
  =-D_x\Phi(s_0,x_0)^{-1}D_s\Phi(s_0,x_0).
  \label{eq:abstract-implicit-branch-derivative}
\end{equation}
\end{corollary}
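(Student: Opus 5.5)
The formula follows by differentiating the identity that defines the branch in Fact~\ref{fact:parameter-implicit-function}. That fact gives a $C^2$ map $s\mapsto x(s)$ with $x(s_0)=x_0$ and
\[
  \Phi(s,x(s))=0
\]
for all $s$ in a neighborhood of $s_0$.

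First, I would differentiate this identity at $s=s_0$. Because $\Phi$ is $C^2$, and in particular Fr\'echet differentiable jointly in $(s,x)$, and $x(\cdot)$ is $C^2$, the chain rule for Fr\'echet derivatives on $I\times U\subset\re\times X$ gives
\[
  D_s\Phi(s_0,x_0)+D_x\Phi(s_0,x_0)\,x'(s_0)=0 \quad\text{in } Y.
\]
Here $D_s\Phi(s_0,x_0)$ is identified with the vector $\partial_s\Phi(s_0,x_0)\in Y$.

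Second, I would invoke the hypothesis that $D_x\Phi(s_0,x_0)$ is an isomorphism $X\to Y$. Its inverse is then a bounded operator on $Y$, and applying it to the identity above yields \eqref{eq:abstract-implicit-branch-derivative}.

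There is no genuine obstacle. The only points to confirm are that the chain rule applies, which needs only $C^1$ regularity and is guaranteed by the $C^2$ hypothesis, and that the inverse is bounded, which holds by the definition of isomorphism or by the open mapping theorem.
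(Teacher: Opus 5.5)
Your proposal is correct and is the same argument as the paper's: differentiate $\Phi(s,x(s))=0$ at $s=s_0$ by the chain rule, then apply the inverse of the isomorphism $D_x\Phi(s_0,x_0)$. One small correction: that inverse is a bounded operator from $Y$ to $X$, not an operator on $Y$ as you wrote, but this does not affect the argument.
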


\begin{proof}
Differentiate $\Phi(s,x(s))=0$ at $s=s_0$ and apply the inverse of
$D_x\Phi(s_0,x_0)$.
\end{proof}

When the abstract kernel is nonzero,
Proposition~\ref{prop:abstract-lyapunov-schmidt} identifies the local
solution set with the zero set of its finite-dimensional reduced equation.
No number, scale, or regularity of branches follows without additional
finite-dimensional transversality hypotheses.

\subsection{Elliptic and boundary-integral inputs}
\label{subsec:elliptic-boundary-inputs}

We now fix the physical standing data before introducing metric-dependent
elliptic objects.

\begin{definition}[Prescribed metric deformation]
\label{def:prescribed-metric-deformation}
Let $(-s_0,s_0)$ be an open interval and let
\begin{equation}
  s\longmapsto\sigma_s
  \quad\text{belong to}\quad
  C^3((-s_0,s_0);C^\infty(\overline{\mathbb D})).
  \label{eq:metric-path-topology}
\end{equation}
Here $C^\infty$ has its standard Fr\'echet topology, so all derivatives in
$s$ through order three are continuous in every spatial $C^k$ seminorm.
Set $\sigma_0=\left.\sigma_s\right|_{s=0}$ and
\begin{equation}
  g_s= e^{2\sigma_s}g_{\mathrm e},
  \qquad  d\mu_{g_s}= e^{2\sigma_s} d A_{\mathrm e},
  \qquad  d\ell_{g_s}= e^{\sigma_s} d\ell_{\mathrm e}.
  \label{eq:prescribed-conformal-metrics}
\end{equation}
The parameter $s$ is prescribed external data.
\end{definition}

\begin{definition}[Reference state]
\label{def:reference-state}
Fix a smooth connected weak steady vortex sheet $(c_0,\gamma_0)$ for $g_0$.
Parametrize $c_0$ by Euclidean arclength
$t\in\re/|c_0|\ze$.  Its Euclidean unit normal $n$ points
from $\Omega^-$ to $\Omega^+$, and $\tau=\mathcal J n=c_0'$.
Define the signed Euclidean curvature $\kappa$ by
\begin{equation}
  n'=\kappa\tau,
  \qquad \tau'=-\kappa n.
  \label{eq:reference-frenet}
\end{equation}
Let $\mu_0$ and $\beta_0$ be the unique constants
in Propositions~\ref{prop:streamline-characterization} and
\ref{prop:bernoulli-characterization}, and set
\begin{equation}
  \psi_0=\psi_{c_0,\gamma_0},
  \qquad
  v_0=-\jgrad_{g_0}\psi_0,
  \qquad
  u_0=(0,0,\mu_0,\beta_0).
  \label{eq:reference-unknown}
\end{equation}
Write
\begin{equation}
  v_{\tau,0}^{\pm}
  =g_0\bigl(v_0^\pm, e^{-\sigma_0\circ c_0}\tau\bigr),
  \qquad
  \langle v_0\rangle_{\tau}
  =\frac{v_{\tau,0}^{+}+v_{\tau,0}^{-}}{2}
  \label{eq:reference-tangential-velocities}
\end{equation}
for the one-sided scalar tangential velocities and their average.
\end{definition}

\begin{definition}[Physical Sobolev spaces]
\label{def:physical-sobolev-spaces}
Fix an integer $m\geq3$ and write
$H^r=H^r(\re/|c_0|\ze;\re)$.  The physical unknown and residual
spaces are
\begin{equation}
  \mathcal X=H^{m+1}\times H^m\times\re^2,
  \qquad
  \mathcal Y=H^{m+1}\times H^m.
  \label{eq:physical-spaces}
\end{equation}
The period is the Euclidean length $|c_0|$ and the Fourier modes are
$e_k(t)= e^{2\pi i kt/|c_0|}$.
\end{definition}

\begin{fact}[Periodic Sobolev calculus]
\label{fact:periodic-sobolev-calculus}
For $r>1/2$, $H^r(\re/|c_0|\ze)$ is a Banach algebra, multiplication
is continuous on $H^r$, and inversion is smooth on the open subset bounded
away from zero.  Smooth Nemytskii maps act smoothly on $H^r$.  In
particular, for $m\geq3$, composition of a smooth ambient function with
$c_0+fn$, the derivative, norm, reciprocal norm, unit tangent, and unit
normal have the Sobolev regularities dictated by
$f\in H^{m+1}$.  These maps are smooth on every open set on which the
denominators stay uniformly nonzero.  The trace maps
  $H^{r+1/2}(\Omega^\pm)\to H^r(c_0)$ are bounded at the orders used here.
\end{fact}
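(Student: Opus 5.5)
The plan is to treat the four clauses of Fact~\ref{fact:periodic-sobolev-calculus} separately. Each is a standard one-dimensional Sobolev statement on the circle $\re/|c_0|\ze$, so the work is to reduce each to a Fourier-side estimate or a Moser-type inequality.

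\emph{Algebra property.} For $r>1/2$, write $u=\sum\hat u_k e_k$ and use the elementary inequality
\[
(1+|k|)^r\le 2^r\bigl((1+|k-j|)^r+(1+|j|)^r\bigr).
\]
This gives
\[
\|uv\|_{H^r}\le C\bigl(\|u\|_{H^r}\|\hat v\|_{\ell^1}+\|\hat u\|_{\ell^1}\|v\|_{H^r}\bigr).
\]
Cauchy--Schwarz gives $\|\hat u\|_{\ell^1}\le C\|u\|_{H^r}$, since $\sum(1+|k|)^{-2r}<\infty$ exactly when $r>1/2$. This proves the Banach algebra property, and continuity of multiplication is the bilinear bound.

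\emph{Nemytskii maps and inversion.} For a smooth $\Phi$ and $u\in H^r$, I will prove $\Phi\circ u\in H^r$ by the Moser estimate
\[
\|\Phi\circ u\|_{H^r}\le C\bigl(\|u\|_{L^\infty}\bigr)\bigl(1+\|u\|_{H^r}\bigr).
\]
For integer $r$ this follows from the chain rule together with Gagliardo--Nirenberg interpolation. For fractional $r$ I will interpolate, or use the Sobolev--Slobodeckij difference characterization combined with the mean value theorem; this fractional case is the technically delicate step. Smoothness of $u\mapsto\Phi\circ u$ then follows from Taylor's formula with integral remainder, since the $k$-th derivative is $v\mapsto\Phi^{(k)}(u)v^k$ and the algebra property controls the remainder. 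Inversion is the special case $\Phi(x)=1/x$, taken on a smooth cutoff modification of $1/x$ that agrees with it on $\{|x|\ge\delta\}$. The set of $u$ with $|u|\ge\delta$ is open in $H^r$ because $H^r\hookrightarrow C^0$.

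\emph{Geometric maps.} I will apply the above with $r=m\ge3$ to $c_f=c_0+fn$ for $f\in H^{m+1}$. First, $c_f\in H^{m+1}$, because $n$ is smooth and $H^{m+1}$ is an algebra. Next, composition $F\circ c_f$ of a smooth ambient function $F$ lies in $H^{m+1}$ by the vector-valued Nemytskii statement, since $F$ is smooth near the compact image. Then $c_f'=(1-\kappa f)\tau+f'n$ lies in $H^m$, and $|c_f'|=\sqrt{\,\cdot\,}$ is a smooth Nemytskii map where $c_f'\neq0$. The reciprocal norm follows from the inversion clause. Finally, $\tau_f=c_f'/|c_f'|$ and $n_f=-\mathcal J\tau_f$ are products in the algebra $H^m$. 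All of these maps are smooth in $f$ on the open set where $|c_f'|$ stays uniformly away from zero. Because these are compositions of smooth maps, this is the clause that most directly uses the earlier steps.

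\emph{Trace clause.} I will flatten a collar of $c_0$ in each of $\Omega^\pm$ using smooth tubular coordinates $(t,\nu)$ with a partition of unity. The trace theorem $H^{r+1/2}(\re/|c_0|\ze\times(0,\epsilon))\to H^r$ then follows by the standard Fourier argument in $t$, or by citing \cite{McLean2000}. I expect no obstacle beyond the fractional Moser estimate in the Nemytskii step.
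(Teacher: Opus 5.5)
Your proposal is correct and follows the paper's argument: Fourier convolution plus Cauchy--Schwarz for the algebra property; Moser estimates (chain rule at integer orders, difference quotients at fractional orders) for Nemytskii maps and inversion; the chain rule with the multiplication estimate for the geometric compositions; and boundary flattening with the one-dimensional Fourier trace estimate. There are two harmless slips: the open set on which inversion is smooth is $\{u:\min|u|>\delta\}$ (the set where $|u|\geq\delta$ is closed), and with the paper's convention $n'=\kappa\tau$ one has $c_f'=(1+\kappa f)\tau+f'n$.
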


\begin{proof}
The Fourier convolution estimate and Cauchy--Schwarz prove the algebra
property for $r>1/2$.  The Moser estimates, obtained from the chain rule at
integer orders and fractional difference quotients at the remaining orders,
give smooth inversion and Nemytskii regularity on bounded ranges separated
from any singularities.
The composition assertion follows by differentiating the ambient function
and applying the same multiplication estimate; approximation by smooth
functions gives the Sobolev statement.  The trace assertion follows after
flattening the smooth boundary and applying the one-dimensional Fourier
trace estimate.
\end{proof}

\begin{theorem}[Parameter-dependent Dirichlet regularity]
\label{thm:parameter-dirichlet-regularity}
Let $r\geq0$.  For
$q_s\in H^r(\mathbb{D})$ and
$b_s\in H^{r+3/2}(\partial\mathbb{D})$, the problem
\begin{equation}
  -\triangle_{g_s}u_s=q_s\quad\text{in }\mathbb{D},
  \qquad u_s=b_s\quad\text{on }\partial\mathbb{D}
  \label{eq:parameter-dirichlet-problem}
\end{equation}
has a unique solution $u_s\in H^{r+2}(\mathbb{D})$ and, on compact
$s$-intervals,
\begin{equation}
  \|u_s\|_{H^{r+2}(\mathbb{D})}
  \leq C_r
  \bigl(\|q_s\|_{H^r(\mathbb{D})}
       +\|b_s\|_{H^{r+3/2}(\partial\mathbb{D})}\bigr).
  \label{eq:parameter-dirichlet-estimate}
\end{equation}
If $s\mapsto(q_s,b_s)$ is $C^j$, $0\leq j\leq3$, in the displayed
spaces, then $s\mapsto u_s$ is $C^j$ in $H^{r+2}$.
\end{theorem}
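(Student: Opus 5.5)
The plan is to reduce the problem to the Euclidean Dirichlet problem on the disk, where classical regularity is available, and then transfer the $s$-dependence through an explicit formula.

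In dimension two the Laplace--Beltrami operator of a conformal metric is
\[
  \triangle_{g_s}=e^{-2\sigma_s}\triangle_{\mathrm e}.
\]
Hence \eqref{eq:parameter-dirichlet-problem} is equivalent to
\[
  -\triangle_{\mathrm e}u_s=e^{2\sigma_s}q_s\ \text{in }\mathbb D,
  \qquad u_s=b_s\ \text{on }\partial\mathbb D.
\]
Multiplication by the smooth function $e^{2\sigma_s}$ is bounded on $H^r(\mathbb D)$. Its operator norm is bounded uniformly for $s$ in compact intervals, because $\sigma_s$ is continuous into every $C^k(\overline{\mathbb D})$ by \eqref{eq:metric-path-topology}.

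First, existence, uniqueness and the estimate. I would lift the boundary data with a bounded right inverse $E\colon H^{r+3/2}(\partial\mathbb D)\to H^{r+2}(\mathbb D)$ of the trace, which is standard on the smooth disk. Writing $u_s=Eb_s+w_s$ reduces the problem to
\[
  -\triangle_{\mathrm e}w_s=e^{2\sigma_s}q_s+\triangle_{\mathrm e}Eb_s\in H^r(\mathbb D),
  \qquad w_s\in H^1_0(\mathbb D).
\]
Lax--Milgram gives a unique $w_s\in H^1_0$, since the Dirichlet form is coercive as in the proof of Proposition~\ref{prop:weak-stream-function}. Standard elliptic regularity for the Euclidean Dirichlet problem on a smooth bounded domain (e.g.\ McLean, Chapter~4) then gives the shift estimate
\[
  \|w_s\|_{H^{r+2}}\le C\|{-\triangle_{\mathrm e}w_s}\|_{H^r}.
\]
Combining this with the bound on $E$ and the uniform multiplier bound yields \eqref{eq:parameter-dirichlet-estimate} with a constant independent of $s$ on compact intervals. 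Uniqueness follows from uniqueness in $H^1_0$. For noninteger $r$, the shift estimate follows by interpolation between integer orders.

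Second, the $C^j$ dependence. The solution operator
\[
  \mathcal R\colon H^r(\mathbb D)\times H^{r+3/2}(\partial\mathbb D)\to H^{r+2}(\mathbb D),
  \qquad (q,b)\mapsto u,
\]
of the Euclidean problem is a fixed bounded linear map, independent of $s$, and $u_s=\mathcal R(e^{2\sigma_s}q_s,b_s)$. It therefore suffices to show that $s\mapsto e^{2\sigma_s}q_s$ is $C^j$ into $H^r(\mathbb D)$. Here $s\mapsto e^{2\sigma_s}$ is $C^3$ into $C^k(\overline{\mathbb D})$ for every $k$, because composition with the smooth function $\exp$ preserves $C^3$ dependence in each $C^k$ seminorm. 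Pointwise multiplication
\[
  C^k(\overline{\mathbb D})\times H^r(\mathbb D)\to H^r(\mathbb D),\qquad k\ge\lceil r\rceil,
\]
is continuous and bilinear. A continuous bilinear map composed with a $C^3$ and a $C^j$ path is $C^{\min(j,3)}=C^j$ by the Leibniz rule, and bounded linearity of $\mathcal R$ transfers this to $u_s$.

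The only step needing care is the uniform $H^{r+2}$ shift estimate for noninteger $r$ together with the trace lifting. On the smooth disk both are classical, so I would simply cite them rather than reprove them. No step is a genuine obstacle, since the conformal reduction removes all metric dependence from the operator itself.
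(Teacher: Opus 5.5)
Your proposal is correct and follows the paper's proof essentially step for step. The conformal identity $\triangle_{g_s}=e^{-2\sigma_s}\triangle_{\mathrm e}$ reduces the problem to a fixed Euclidean Dirichlet problem; a bounded trace lift together with the zero-Dirichlet isomorphism $H^{r+2}\cap H^1_0\to H^r$ gives existence, uniqueness, and the uniform estimate; and the $C^j$ dependence follows by composing the fixed bounded solution operator with the $C^3$ multiplier path $s\mapsto e^{2\sigma_s}$, the only cosmetic difference being that you produce the $H^1_0$ solution by Lax--Milgram before invoking the shift estimate, whereas the paper cites the isomorphism directly.
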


\begin{proof}
In two dimensions
$-\triangle_{g_s}u_s=- e^{-2\sigma_s}\triangle_{\mathrm e}u_s$, so
\eqref{eq:parameter-dirichlet-problem} is the fixed Euclidean equation
\[
  -\triangle_{\mathrm e}u_s= e^{2\sigma_s}q_s,
  \qquad u_s=b_s.
\]
Choose a bounded right inverse of the boundary trace and subtract the lift
of $b_s$.  The zero-Dirichlet Euclidean Laplacian is an isomorphism
$H^{r+2}(\mathbb{D})\cap H^1_0(\mathbb{D})\to H^r(\mathbb{D})$; its usual
Fourier or boundary-flattening estimate gives
\eqref{eq:parameter-dirichlet-estimate}.  Multiplication by
$ e^{2\sigma_s}$ is bounded on $H^r$ by
Fact~\ref{fact:periodic-sobolev-calculus} and its fixed-domain analogue.
The solution is the composition of a fixed bounded inverse with the smooth
maps $(\sigma,q)\mapsto  e^{2\sigma}q$ and the boundary lift.  Differentiating
this explicit composition through order $j$ proves the parameter-dependence
assertion.
\end{proof}

\begin{theorem}[Transmission regularity]
\label{thm:transmission-regularity}
Let $c_0$ be the smooth interface in
Definition~\ref{def:reference-state}, let $r\geq0$, and prescribe
\[
  p^\pm\in H^r(\Omega^\pm),\qquad
  a\in H^{r+3/2}(c_0),\qquad
  b\in H^{r+1/2}(c_0),\qquad
  d\in H^{r+3/2}(\partial\mathbb{D}).
\]
There is a unique pair $u^\pm\in H^{r+2}(\Omega^\pm)$ satisfying
\begin{equation}
\begin{aligned}
  -\triangle_{\mathrm e}u^\pm&=p^\pm &&\text{in }\Omega^\pm,\\
  [u]&=a, & [\partial_{n}u]&=b &&\text{on }c_0,\\
  u^+&=d &&\text{on }\partial\mathbb{D},
\end{aligned}
  \label{eq:transmission-problem}
\end{equation}
where the same Euclidean normal $n$ from $\Omega^-$ to $\Omega^+$ is used
in both normal traces.  Moreover,
\begin{equation}
  \|u^+\|_{H^{r+2}}+\|u^-\|_{H^{r+2}}
  \leq C_r\bigl(
  \|p^+\|_{H^r}+\|p^-\|_{H^r}
  +\|a\|_{H^{r+3/2}}+\|b\|_{H^{r+1/2}}
  +\|d\|_{H^{r+3/2}}\bigr).
  \label{eq:transmission-estimate}
\end{equation}
There is no scalar integral compatibility condition; the outer Dirichlet
condition removes the constant kernel.
\end{theorem}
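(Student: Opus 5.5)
Our plan is to reduce the transmission problem to two decoupled problems: a lifting of the interface data, followed by a Dirichlet problem on the whole disk, to which the estimate of Theorem~\ref{thm:parameter-dirichlet-regularity} applies (with $\sigma\equiv0$).

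\emph{Step 1: lifting the jumps.} Fix a smooth cutoff $\eta$ supported in a tubular neighborhood $N$ of $c_0$ that is disjoint from $\partial\mathbb{D}$, with $\eta\equiv1$ near $c_0$. In the tubular coordinates $(t,\nu)\mapsto c_0(t)+\nu n(t)$, a standard right inverse of the pair of traces $(w|_{c_0},\partial_\nu w|_{c_0})$ gives $w\in H^{r+2}(N)$. We take $w|_{c_0}=a$ and $\partial_n w|_{c_0}=b$, with
\[
\|w\|_{H^{r+2}}\le C(\|a\|_{H^{r+3/2}}+\|b\|_{H^{r+1/2}}).
\]
This is the one-sided version of the Fourier trace-extension used in Fact~\ref{fact:periodic-sobolev-calculus} and Lemma~\ref{lem:normal-trace-realization}. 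Define $W=\eta w$ on $\Omega^+$ and $W=0$ on $\Omega^-$. Then $W^\pm\in H^{r+2}(\Omega^\pm)$, $[W]=a$, $[\partial_nW]=b$, and $W$ vanishes near $\partial\mathbb{D}$.

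\emph{Step 2: the jump-free remainder.} Set $u^\pm=W^\pm+z^\pm$. It remains to solve the problem with $[z]=0$, $[\partial_nz]=0$, $z^+=d$ on $\partial\mathbb{D}$, and $-\triangle_{\mathrm e}z^\pm=p^\pm+\triangle_{\mathrm e}W^\pm=:\tilde p^\pm\in H^r(\Omega^\pm)$. Because both jumps vanish, the glued function $z$ lies in $H^1(\mathbb{D})$ and solves $-\triangle_{\mathrm e}z=\tilde p$ in distributions on $\mathbb{D}$. Here $\tilde p$ is the piecewise function, and the interface terms in Green's formula cancel.

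The main obstacle is now visible: for $r>1/2$ the piecewise $\tilde p$ is not in $H^r(\mathbb{D})$, so whole-disk regularity only gives $z\in H^{\min(r,1/2-\epsilon)+2}$. We would handle this in one of two ways:
\begin{enumerate}
\item Extend each $p^\pm$ by Stein/Seeley extension to $\mathbb{D}$ and absorb the discrepancy into the lifting. Concretely, build $W$ order by order in $\nu$ (a Taylor/Borel construction in the normal variable), so that the normal derivatives of $\tilde p^\pm$ match across $c_0$ up to order $\lfloor r\rfloor$.
\item More robustly, invoke the standard piecewise regularity for transmission problems on smooth interfaces, as cited from \cite[Chapters~4 and~6]{McLean2000} in Proposition~\ref{prop:weak-stream-function}. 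That is, localize near $c_0$, flatten, and run the Nirenberg tangential difference-quotient argument to control tangential derivatives. Normal derivatives are then recovered from the equation $\partial_\nu^2 z=-\tilde p-(\text{tangential and lower-order terms})$ on each side separately, by induction on the order.
\end{enumerate}
We would choose the second route: it gives $z^\pm\in H^{r+2}(\Omega^\pm)$ with the bound by $\|\tilde p^\pm\|_{H^r}+\|d\|_{H^{r+3/2}}$. Near $\partial\mathbb{D}$, Theorem~\ref{thm:parameter-dirichlet-regularity} gives the corresponding boundary regularity. Non-integer $r$ follows by interpolation.

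\emph{Step 3: uniqueness and the estimate.} For uniqueness, the difference of two solutions has zero jumps, so it glues to an $H^1_0(\mathbb{D})$ harmonic function, which is zero by the Lax--Milgram coercivity used in Proposition~\ref{prop:weak-stream-function}. This also explains the remark in the statement: the outer Dirichlet condition removes the constant kernel, so no integral compatibility condition is needed. For the estimate \eqref{eq:transmission-estimate}, combine the lifting bound for $W$ with the regularity bound for $z$.
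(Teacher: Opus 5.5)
Your proof is correct. The existence half is essentially the paper's: lift the jump data, then solve one problem on the whole disk whose right-hand side is only piecewise regular. Your variant lifts only on the outer side and keeps $d$ as Dirichlet data, and this even gives existence at once. Since $\tilde p\in L^2(\mathbb{D})$ and $d\in H^{3/2}(\partial\mathbb{D})$, Theorem~\ref{thm:parameter-dirichlet-regularity} with $r=0$ produces $z\in H^2(\mathbb{D})$, whose jumps vanish automatically. You should say this explicitly, because Step~2 currently reads as if $z$ were already given.

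You genuinely depart from the paper in the regularity step.
\begin{itemize}
\item The paper treats the full transmission system. It flattens $c_0$, freezes coefficients, and checks the complementing condition through the jump-symbol matrix with determinant $-2|\nu|$. This yields the piecewise $H^{r+2}$ estimate with a lower-order $L^2$ term, which it then removes by uniqueness and a compactness--contradiction argument.
\item You instead use the fact that the same smooth Laplacian acts on both sides. After lifting, no interface condition remains: $z$ is a genuine solution across $c_0$, and the only interface effect is the piecewise right-hand side.
\item Tangential difference quotients then control tangential derivatives, since these do not see the interface and the $t$-direction is periodic in tubular coordinates. You recover $\partial_\nu^2 z^\pm$ from the equation separately on each side; in the coordinates $c_0(t)+\nu n(t)$ the coefficient of $\partial_\nu^2$ is exactly $1$.
\item The lower-order term is controlled directly by the global $H^2$ bound, so no contradiction argument is needed.
\end{itemize}
Your route is more elementary and self-contained. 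The paper's route is the general elliptic-transmission framework, which does not rely on the two sides carrying the same operator; the same jump-symbol computation reappears in the proof of Lemma~\ref{lem:moving-boundary-operators}.

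Two small points. First, the piecewise right-hand side already fails to lie in $H^r(\mathbb{D})$ at $r=1/2$, not only for $r>1/2$; this is harmless, since your route covers every $r$. Second, the lift you need is the standard right inverse of the Dirichlet--Neumann trace pair, not the single-trace extension of Fact~\ref{fact:periodic-sobolev-calculus}.
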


\begin{proof}
The joint trace map consisting of the Dirichlet and normal traces has a
bounded right inverse on each smooth side of $c_0$.  Because $c_0$ and
$\partial\mathbb{D}$ are disjoint, these right inverses may be localized to
choose $\ell^\pm\in H^{r+2}(\Omega^\pm)$ such that
\[
 [\ell]=a,
 \qquad [\partial_{n}\ell]=b,
 \qquad \ell^+=d\quad\hbox{on }\partial\mathbb{D},
\]
with the corresponding norm bounded by the three boundary-data norms in
\eqref{eq:transmission-estimate}.  Define the piecewise function
$q|_{\Omega^\pm}=p^\pm+\triangle_{\mathrm e}\ell^\pm$.  The global
zero-Dirichlet problem
\[
 \int_{\mathbb{D}}\nabla w\cdot\nabla\phi\, d A_{\mathrm e}
 =\int_{\mathbb{D}}q\phi\, d A_{\mathrm e},
 \qquad \phi\in H^1_0(\mathbb{D}),
\]
has a unique solution $w\in H^1_0(\mathbb{D})$.  Its common trace and
weak flux balance give
$[w]=[\partial_{n}w]=0$ on $c_0$.  Hence
$u^\pm=w|_{\Omega^\pm}+\ell^\pm$ solves
\eqref{eq:transmission-problem}.

For completeness of the regularity step, flatten $c_0$ and freeze the
principal coefficients.  At every nonzero tangential frequency $\nu$, the
decaying homogeneous solutions on the two half-planes have interface matrix
\[
  \begin{pmatrix}1&-1\\-|\nu|&-|\nu|\end{pmatrix},
\]
whose determinant is $-2|\nu|$.  The complementing condition, localization,
and the ordinary Dirichlet estimate at the outer boundary therefore give
piecewise $H^{r+2}$ regularity and \eqref{eq:transmission-estimate}, initially
with a lower-order $L^2$ term.  For homogeneous data the two pieces glue to
a function in $H^1_0(\mathbb{D})$; testing its weak harmonic equation by the
function itself proves uniqueness.  The standard contradiction argument
then removes the lower-order term.  The global variational construction
also shows directly that no scalar integral compatibility condition occurs.
\end{proof}

\begin{definition}[Green and torsion functions]
\label{def:green-torsion}
For the fixed kernel in \eqref{eq:disk-green-kernel} and the metric path in
Definition~\ref{def:prescribed-metric-deformation}, define
\begin{equation}
  \Psi_s(z)=\int_{\mathbb{D}}G(z,y)\, d\mu_{g_s}(y)
  =\int_{\mathbb{D}}G(z,y) e^{2\sigma_s(y)}\, d A_{\mathrm e}(y).
  \label{eq:torsion-function}
\end{equation}
\end{definition}

\begin{fact}[Green representation for the torsion function]
\label{fact:torsion-green-representation}
The function in Definition~\ref{def:green-torsion} is the unique solution of
\begin{equation}
  -\triangle_{\mathrm e}\Psi_s= e^{2\sigma_s}
  \quad\text{in }\mathbb{D},
  \qquad \Psi_s|_{\partial\mathbb{D}}=0.
  \label{eq:torsion-equation}
\end{equation}
It is smooth in the spatial variable and $C^3$ in $s$ with values in every
finite Sobolev space.
\end{fact}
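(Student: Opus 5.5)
The plan is to verify directly that the Green integral solves the Dirichlet problem, and then to obtain regularity and parameter dependence from Theorem~\ref{thm:parameter-dirichlet-regularity} and Fact~\ref{fact:periodic-sobolev-calculus}.

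\emph{The Green integral solves \eqref{eq:torsion-equation}.} Since $\sigma_s\in C^\infty(\overline{\mathbb D})$, the density $e^{2\sigma_s}$ is bounded. Pair \eqref{eq:disk-green-kernel} with a test function $\phi\in C_c^\infty(\mathbb D)$ and use Fubini, which is justified because $G(\cdot,y)$ is locally integrable uniformly in $y$ thanks to its logarithmic singularity. This gives
\[
\langle-\triangle_{\mathrm e}\Psi_s,\phi\rangle=\int_{\mathbb D}e^{2\sigma_s(y)}\phi(y)\,dA_{\mathrm e}(y).
\]
The boundary condition holds because $G(z,y)=0$ for $|z|=1$. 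Continuity of $\Psi_s$ up to $\partial\mathbb D$ follows from dominated convergence, as the log singularity is uniformly integrable. The same argument shows $\Psi_s\in H^1_0(\mathbb D)$; alternatively, it coincides with the Lax--Milgram solution exactly as in the proof of Proposition~\ref{prop:weak-stream-function}.

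\emph{Uniqueness.} The difference of two solutions is harmonic with zero Dirichlet data, so it vanishes by the Dirichlet energy identity, or equivalently by the uniqueness part of Theorem~\ref{thm:parameter-dirichlet-regularity}.

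\emph{Spatial regularity.} Apply Theorem~\ref{thm:parameter-dirichlet-regularity} with $b_s=0$ and $q_s=e^{2\sigma_s}\cdot e^{-2\sigma_s}$. More simply, use its Euclidean form with right-hand side $e^{2\sigma_s}\in H^r(\mathbb D)$, which holds for every $r$. This gives $\Psi_s\in H^{r+2}(\mathbb D)$ for all $r$, and Sobolev embedding then yields $\Psi_s\in C^\infty(\overline{\mathbb D})$.

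\emph{Parameter dependence.} By \eqref{eq:metric-path-topology}, the map $s\mapsto\sigma_s$ is $C^3$ into every $C^k(\overline{\mathbb D})$, hence into every $H^r(\mathbb D)$. The Nemytskii map $\sigma\mapsto e^{2\sigma}$ is smooth on $H^r(\mathbb D)$ for $r>1$ by the fixed-domain analogue of Fact~\ref{fact:periodic-sobolev-calculus}, so $s\mapsto e^{2\sigma_s}$ is $C^3$ into $H^r$. The solution operator of the zero-Dirichlet Euclidean Laplacian is a fixed bounded linear map $H^r\to H^{r+2}$, so composing with it preserves $C^3$ dependence. The parameter clause of Theorem~\ref{thm:parameter-dirichlet-regularity}, with $j=3$, states exactly this.

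The main point requiring care is the Fubini/distributional identity near the diagonal, and it is routine.
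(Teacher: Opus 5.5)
Your proposal is correct and follows essentially the same route as the paper. The paper likewise applies $-\triangle_{\mathrm e,z}$ to the Green integral in distributions and reads off the boundary value from $G(\cdot,y)|_{\partial\mathbb D}=0$; it then takes uniqueness, spatial regularity, and $C^3$ dependence on $s$ from Theorem~\ref{thm:parameter-dirichlet-regularity} with $q_s=1$ and zero boundary data (your $q_s=e^{2\sigma_s}e^{-2\sigma_s}$ is just that choice), while your Fubini, boundary-limit, and uniqueness-class justifications fill in details the paper leaves implicit; for the boundary limit, uniform integrability gives convergence via Vitali's theorem rather than dominated convergence, since no single $z$-independent dominating function exists near the boundary point.
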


\begin{proof}
Apply $-\triangle_{\mathrm e,z}$ to
\eqref{eq:torsion-function} in distributions and use
\eqref{eq:disk-green-kernel}.  The boundary value follows from the same
kernel identity.  Uniqueness and the regularity and parameter dependence are
Theorem~\ref{thm:parameter-dirichlet-regularity} with $q_s=1$
and zero boundary data.
\end{proof}

\begin{definition}[Fixed exact layer potentials and boundary operators]
\label{def:fixed-layer-operators}
For a density $\rho$ per Euclidean arclength on the reference curve, define
\begin{align}
  \mathcal V[\rho](z)
  &=\int_{c_0}G(z,y)\rho(y)\, d\ell_{\mathrm e}(y),
  &
  \mathcal W[\rho](z)
  &=\int_{c_0}\partial_{n_y}G(z,y)\rho(y)\, d\ell_{\mathrm e}(y),
  \label{eq:fixed-layer-potentials}\\
  \mathcal S\rho&=\Tr\mathcal V[\rho],
  &
  \mathcal D\rho&=\langle\Tr\mathcal W[\rho]\rangle,
  \nonumber\\
  \mathcal D'\rho&=
  \langle\partial_{n}\mathcal V[\rho]\rangle,
  &
  \mathcal T\rho&=\partial_{n}\mathcal W[\rho].
  \label{eq:fixed-boundary-operators}
\end{align}
The normal $n_y$ in $\mathcal W$ is the Euclidean source normal,
whereas $n$ in $\mathcal D'$ and $\mathcal T$ is the Euclidean target normal.  All
operators are pulled back by the Euclidean-arclength parametrization of
$c_0$.
\end{definition}

\begin{fact}[Layer-potential jump relations]
\label{fact:layer-jump-relations}
With $+$ denoting the outer side, $-$ the inner side, and
$[q]=q^+-q^-$, the fixed exact potentials satisfy
\begin{equation}
\begin{aligned}
  \partial_{n}\mathcal V[\rho]^\pm
  &=\mathcal D'\rho\mp\frac12\rho,
  & [\partial_{n}\mathcal V[\rho]]&=-\rho,\\
  \mathcal W[\rho]^\pm
  &=\mathcal D\rho\pm\frac12\rho,
  & [\mathcal W[\rho]]&=\rho,\\
  [\partial_{n}\mathcal W[\rho]]&=0.
\end{aligned}
  \label{eq:fixed-layer-jumps}
\end{equation}
\end{fact}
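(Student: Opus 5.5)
The plan is to reduce every jump relation in \eqref{eq:fixed-layer-jumps} to the singular part of the kernel. Write
\[
  G(z,y)=-\frac{1}{2\pi}\log|z-y|+H(z,y),
  \qquad H(z,y)=\frac{1}{2\pi}\log|1-z\overline y|.
\]
Since $|y|<1$ on $c_0$, $H$ is smooth and harmonic in $z$ on a neighbourhood of $\overline{\mathbb D}\times c_0$. The potentials built from $H$, and their normal derivatives, are therefore continuous across $c_0$ and contribute nothing to any jump. They only add smooth terms to the averaged operators $\mathcal S$, $\mathcal D$, $\mathcal D'$ and $\mathcal T$. It therefore suffices to prove the relations for the free-space logarithmic kernel on the smooth closed curve $c_0$.

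\textbf{Single-layer potential.} The potential $\mathcal V[\rho]$ is continuous across $c_0$ because the logarithmic kernel is weakly singular. Distributionally $-\triangle_{\mathrm e}\mathcal V[\rho]=\rho\,\delta_c$, by \eqref{eq:disk-green-kernel}. Integrating over a shrinking tubular strip, exactly as in the proof of Proposition~\ref{prop:weak-stream-function}, gives
\[
  [\partial_n\mathcal V[\rho]]=-\rho .
\]
To split this jump into the two one-sided traces, I would use the classical computation: fix a target $y_0\in c_0$, subtract $\rho(y_0)$ from the density, and note that $\partial_{n(y_0)}$ of the log kernel restricted to the curve is bounded, of size $O(\kappa)$, by the curvature expansion. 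The limit from either side is then the principal value $\mathcal D'\rho$ plus the contribution of the constant density $\rho(y_0)$. For the constant density one obtains the solid-angle term $\mp\tfrac12\rho(y_0)$ from the flat half-plane model. The sign is fixed by the jump already computed, so
\[
  \partial_n\mathcal V[\rho]^\pm=\mathcal D'\rho\mp\tfrac12\rho .
\]

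\textbf{Double-layer potential.} The same density-subtraction argument applies to $\mathcal W$. The key input is Gauss's identity: for the free log kernel, the double-layer potential of the constant $1$ equals $0$ at points of $\Omega^+$ inside the disk and $-1$ at points of $\Omega^-$ (up to the sign convention for $n_y$), and its boundary principal value is $-\tfrac12$. This yields $\mathcal W[\rho]^\pm=\mathcal D\rho\pm\tfrac12\rho$, hence $[\mathcal W[\rho]]=\rho$. I would check the signs against the normal pointing from $\Omega^-$ to $\Omega^+$.

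\textbf{Normal derivative of the double layer.} The remaining relation, $[\partial_n\mathcal W[\rho]]=0$, follows from a transmission argument. Let $u$ be the piecewise solution of \eqref{eq:transmission-problem} with $p^\pm=0$, $a=\rho$, $b=0$ and boundary data equal to the $H$-part, supplied by Theorem~\ref{thm:transmission-regularity}. Green's representation formula identifies $\mathcal W[\rho]$ with $u$ plus a single layer whose density is $[\partial_n\cdot]$. Equivalently, one can apply Green's second identity on each side: for $z\notin c_0$,
\[
  \partial_{n}\mathcal W[\rho](z)=\partial_\tau\mathcal V[\partial_\tau\rho](z)\ \text{plus smooth terms (the Maue identity)},
\]
and the right-hand side is continuous in its normal trace because tangential derivatives of $\mathcal V$ are continuous. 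I expect this step to be the main obstacle, since $\mathcal T$ is hypersingular. The cleanest route is the Maue identity in two dimensions, proved by integrating by parts along the closed curve. Because the density is differentiated tangentially and $\mathcal V$ has a common trace, the jump vanishes. The $H$ contribution is smooth and does not affect this. For smooth $\rho$ all limits exist classically; for Sobolev densities the relations extend by density using the mapping properties quoted in the proof of Proposition~\ref{prop:weak-stream-function}.
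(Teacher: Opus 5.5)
Your proposal is correct. It shares the paper's basic reduction: split $G$ into $-(2\pi)^{-1}\log|z-y|$ plus the smooth harmonic part $(2\pi)^{-1}\log|1-z\overline y|$, which produces no jumps. You then treat the singular part with global identities, whereas the paper uses a local flattening. The paper integrates across a short normal segment in a flattened chart to get the half-density terms for both layers, and lets the averages \emph{define} $\mathcal D$ and $\mathcal D'$. You use instead:
\begin{itemize}
\item the flux of $-\triangle_{\mathrm e}\mathcal V[\rho]=\rho\,\delta_c$ for the single-layer jump;
\item Gauss's identity for $\mathcal W[1]$ (your values $0$, $-1$, $-\frac12$ and the resulting signs are right);
\item the Maue identity $\nabla_z\mathcal W[\rho]=-\mathcal J\nabla_z\mathcal V[\partial_t\rho]$ modulo smooth terms off $c_0$. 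This gives exactly your $\partial_n\mathcal W[\rho]=\partial_\tau\mathcal V[\partial_\tau\rho]$, hence $[\partial_n\mathcal W[\rho]]=0$.
\end{itemize}
What this buys is an actual justification of the hypersingular relation, which the paper covers only by ``the same calculation with the source normal.''

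Two simplifications are available. First, $\mathcal D\rho$ and $\mathcal D'\rho$ are defined as averages of one-sided traces. So your density-subtraction analysis is needed only for the existence of the one-sided limits. Once the jumps are known, $q^\pm=\langle q\rangle\pm\frac12[q]$ gives the $\pm\frac12$ formulas, and you never need to identify $\mathcal D'$ with a principal value.

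Second, your transmission variant is muddled as written; it should use the full Dirichlet kernel with zero outer data rather than ``the $H$-part.'' Green's second identity on $\Omega^\pm$ gives $u=\mathcal W[[u]]-\mathcal V[[\partial_n u]]$ for every piecewise $H^2$ harmonic $u$ vanishing on $\partial\mathbb D$. Hence the solution of \eqref{eq:transmission-problem} with $a=\rho$, $b=0$, $d=0$ is exactly $\mathcal W[\rho]$, with no additional single layer. Used this way with $(a,b)=(\rho,0)$ and $(a,b)=(0,-\rho)$, Theorem~\ref{thm:transmission-regularity} delivers all three jump relations at once, together with the one-sided regularity.
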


\begin{proof}
The smooth part of $G$ has no jump.  Flattening $c_0$ and integrating the
normal derivative of $-(2\pi)^{-1}\log|z-y|$ across a short normal segment
gives the two half-density terms with the signs in
\eqref{eq:fixed-layer-jumps}.  The double-layer formulas follow by the same
calculation with the source normal.  Averaging defines the principal-value
terms in Definition~\ref{def:fixed-layer-operators} and fixes all signs.
\end{proof}

\begin{fact}[Fixed boundary-operator mapping properties]
\label{fact:fixed-boundary-mappings}
For every real $r$, the fixed exact operators extend boundedly as
\begin{equation}
  \mathcal S\colon H^r\to H^{r+1},
  \qquad
  \mathcal D,\mathcal D'\colon H^r\to H^{r+1},
  \qquad
  \mathcal T\colon H^{r+1}\to H^r.
  \label{eq:fixed-boundary-mappings}
\end{equation}
The first and last maps have orders $-1$ and $1$; the middle maps are
lower-order for the principal analysis.  The one-sided potentials have the
corresponding trace regularity.  These are the one-dimensional specialization
of the pseudo-homogeneous mapping theorem
\cite[Theorem~2.6]{CostabelLeLouer2012}.
\end{fact}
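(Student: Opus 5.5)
The plan is to reduce every operator in Definition~\ref{def:fixed-layer-operators} to a one-dimensional periodic operator with a logarithmic or smooth kernel, and then read off its order from Fourier multipliers.

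Split the Dirichlet kernel as
\[
 G(z,y)=-\frac{1}{2\pi}\log|z-y|+\frac{1}{2\pi}\log|1-z\overline y|.
\]
Since $c_0$ lies compactly inside $\mathbb D^\circ$, the second term is real-analytic on a neighbourhood of $c_0\times c_0$. The same holds for its derivatives in the source and target normals. Pulled back by the arclength parametrization, it gives an integral operator with a $C^\infty$ periodic kernel, which maps $H^r\to H^{r'}$ for all real $r,r'$. Such terms are harmless in all four operators.

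For the singular part, write $c_0(t)-c_0(t')$ as a Taylor expansion. Then
\[
 \log|c_0(t)-c_0(t')|=\log\bigl|2\sin(\pi(t-t')/|c_0|)\bigr|+R(t,t'),
\]
where $R$ is smooth because $c_0$ is embedded and $|c_0'|=1$ makes the quotient $|c_0(t)-c_0(t')|/|2\sin(\cdot)|$ smooth and positive. The periodic logarithmic kernel is a Fourier multiplier with symbol proportional to $|c_0|/(2\pi|k|)$ for $k\neq0$, so $\mathcal S\colon H^r\to H^{r+1}$ has order exactly $-1$ modulo smoothing.

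For $\mathcal D$ and $\mathcal D'$, the principal-value kernels $\partial_{n_y}\log|z-y|$ and $\partial_{n_z}\log|z-y|$ restricted to the curve equal
\[
 \frac{(c_0(t)-c_0(t'))\cdot n}{|c_0(t)-c_0(t')|^2}.
\]
By the Frenet relations \eqref{eq:reference-frenet}, this extends smoothly across the diagonal, with value $\mp\kappa/2$ there. Hence these two operators are smoothing, and in particular bounded $H^r\to H^{r+1}$. The averaging in their definition removes exactly the half-density terms of Fact~\ref{fact:layer-jump-relations}.

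For $\mathcal T$, I would not treat the hypersingular kernel directly. Instead I would use Maue's identity in two dimensions,
\[
 \mathcal T\rho=\partial_\tau\,\mathcal S\,\partial_\tau\rho+(\text{smoothing-type terms}),
\]
obtained by integrating by parts along the curve using $\triangle_{\mathrm e}G=0$ off the diagonal. The correction comes from the regular part of $G$ together with curvature contributions, which are handled as above. Thus $\mathcal T$ is $\partial_\tau$ composed with the order $-1$ operator $\mathcal S$ composed with $\partial_\tau$, plus lower-order terms. It therefore maps $H^{r+1}\to H^r$ with principal symbol proportional to $|k|$.

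Boundedness at all real $r$ then follows from Fourier multiplier estimates together with smooth-kernel bounds. One-sided trace regularity follows by representing the potentials in tubular coordinates and using the same kernel decomposition, or alternatively from Theorem~\ref{thm:transmission-regularity} applied with $a=0$ and $b=-\rho$ for $\mathcal V$, and $a=\rho$ and $b=0$ for $\mathcal W$.

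The main obstacle is the rigorous Maue integration by parts for $\mathcal T$ with correct signs. If that becomes cumbersome, I would instead cite the pseudo-homogeneous kernel theorem of Costabel--Le Lou\"er for the general orders, and only verify the leading symbols by the flattened half-plane computation.
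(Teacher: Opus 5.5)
Your proposal is correct and follows essentially the same route as the paper: split off the smooth part of $G$, identify the pulled-back singular part of $\mathcal S$ with the periodic kernel $-(2\pi)^{-1}\log|2\sin(\pi(t-t')/|c_0|)|$, whose multiplier is exactly $|c_0|/(4\pi|k|)$ (the model operator $\mathcal S_0$), observe that the double-layer quotients have removable diagonal singularities, and obtain $\mathcal T=\partial_t\mathcal S\partial_t+R$ with $R$ a smooth kernel from Maue's integration by parts. In two dimensions that identity is exact for the logarithmic part, so no curvature terms arise and the remainder comes only from the smooth part of $G$; your additional derivation of one-sided trace regularity from Theorem~\ref{thm:transmission-regularity} is a valid supplement for $r\geq0$, a point the paper's proof leaves implicit.
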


\begin{proof}
Pull the kernel back by Euclidean arclength.  The singular part of
$\mathcal S$ is
\[
 -(2\pi)^{-1}\log\left|2\sin\frac{\pi(t-t')}{|c_0|}\right|,
\]
and its nonzero Fourier multiplier is $|c_0|/(4\pi|k|)$.  The difference
from this kernel is smooth.  The
double-layer and adjoint double-layer quotients have removable diagonal
singularities and are lower-order.  Finally, tangential integration by parts
gives the Maue formula
\[
  \mathcal T=\partial_t\mathcal S\partial_t+R,
\]
where $R$ has a smooth kernel.  The Fourier multiplier estimate proves every
map in \eqref{eq:fixed-boundary-mappings}.
\end{proof}

\begin{definition}[Model principal operators]
\label{def:model-principal-operators}
On the complexified periodic scale, define the model principal operators by
\begin{equation}
\begin{aligned}
  \mathcal S_0e_k&=\frac{|c_0|}{4\pi|k|}e_k,
  &\mathcal T_0e_k&=-\frac{\pi|k|}{|c_0|}e_k,
  &&k\neq0,\\
  \mathcal S_0e_0&=0,
  &\mathcal T_0e_0&=0.
\end{aligned}
  \label{eq:model-operator-multipliers}
\end{equation}
They restrict to real operators and have orders $-1$ and $1$.
\end{definition}

\begin{fact}[Model operator identity]
\label{fact:model-operator-identity}
For every $k\neq0$,
\begin{equation}
  \mathcal S_0\mathcal T_0e_k
  =\mathcal T_0\mathcal S_0e_k=-\frac14e_k,
  \label{eq:model-product-nonzero-modes}
\end{equation}
whereas both compositions vanish on $e_0$.
\end{fact}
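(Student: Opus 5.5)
The identity is a direct computation with the Fourier multipliers in Definition~\ref{def:model-principal-operators}. Both $\mathcal S_0$ and $\mathcal T_0$ are diagonal in the basis $e_k$, so each composition acts on $e_k$ by the product of the two scalar multipliers. I will apply the two operators in turn to a single mode, first for $k\neq0$ and then for $k=0$.

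For $k\neq0$, first apply $\mathcal T_0$ to get $\mathcal T_0e_k=-\frac{\pi|k|}{|c_0|}e_k$. Applying $\mathcal S_0$ then gives
\[
\mathcal S_0\mathcal T_0e_k=-\frac{\pi|k|}{|c_0|}\cdot\frac{|c_0|}{4\pi|k|}e_k=-\frac14e_k .
\]
Reversing the order, $\mathcal S_0e_k=\frac{|c_0|}{4\pi|k|}e_k$, and applying $\mathcal T_0$ multiplies by $-\frac{\pi|k|}{|c_0|}$. This produces the same product $-\frac14$, because scalar multipliers commute.

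For $k=0$, both $\mathcal S_0e_0$ and $\mathcal T_0e_0$ vanish, so either composition applied to $e_0$ is zero.

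There is no real obstacle here. The one thing to check is that the normalizations $|c_0|/(4\pi|k|)$ and $-\pi|k|/|c_0|$ are reciprocal up to the factor $-\frac14$. The $|k|$ and $|c_0|$ factors cancel exactly, which is why the constant is independent of $k$ and of the curve length. If one wants the identity on all of $H^r$ rather than mode by mode, it extends by linearity and continuity, since both operators are bounded Fourier multipliers.
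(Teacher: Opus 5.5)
Your proposal is correct and follows the paper's own proof: it multiplies the two diagonal Fourier multipliers to get $-\tfrac14$ for $k\neq0$, and it reads off the vanishing on $e_0$ from the separately declared zero-mode values. The closing remark about extending by linearity and continuity is harmless but is not needed for the mode-by-mode statement.
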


\begin{proof}
Multiply the two Fourier multipliers in
\eqref{eq:model-operator-multipliers}; the zero-mode assertion follows from
their separately declared zero-mode values.
\end{proof}

\begin{lemma}[Smooth dependence of moving boundary operators]
\label{lem:moving-boundary-operators}
There is an open neighborhood of $0$ in $H^{m+1}$ such that every
$c_f=c_0+fn$ with $f$ in that neighborhood is embedded, remains in a fixed compact subset of
$\mathbb{D}^\circ$, and has the reference orientation.  Let $\Omega_f^-$ be
the bounded component of $\mathbb{D}\setminus c_f$ and let $\Omega_f^+$
contain $\partial\mathbb{D}$.  For a parameter density
$\rho(t)\, d t$, set
\begin{align}
  \mathcal V_f[\rho](z)
  &=\int_0^{|c_0|}G(z,c_f(t'))\rho(t')\, d t',
  \label{eq:generic-moving-potential}\\
  \mathcal S_f\rho(t)
  &=\mathcal V_f[\rho](c_f(t)),
  \nonumber\\
  \mathcal D_f'\rho(t)
  &=\left\langle
    \partial_{n_f(t)}\mathcal V_f[\rho]
    \right\rangle(c_f(t)),
  \label{eq:generic-moving-boundary-operators}
\end{align}
where $n_f$ is the target Euclidean unit normal.  Then
\begin{equation}
\begin{aligned}
  f&\longmapsto\mathcal S_f
    &&\text{is }C^2\text{ with values in }
      \mathcal L(H^m,H^{m+1}),\\
  f&\longmapsto\mathcal D_f'
    &&\text{is }C^2\text{ with values in }
      \mathcal L(H^m,H^m).
\end{aligned}
  \label{eq:moving-operator-types}
\end{equation}
For each $f$ in that neighborhood, $\mathcal V_f$ is bounded from $H^m$ to
$H^1_0(\mathbb{D})$, and its restrictions to $\Omega_f^\pm$ belong to
$H^{m+3/2}(\Omega_f^\pm)$.  After shrinking the neighborhood, for $j=0,1,2$ there is a
uniform $C$ such that
\begin{align}
 &\|D_f^j\mathcal S_f[p_1,\ldots,p_j]\rho\|_{H^{m+1}}
 +\|D_f^j\mathcal D_f'[p_1,\ldots,p_j]\rho\|_{H^m}
 \nonumber\\
 &\hspace{35mm}\leq
 C\|\rho\|_{H^m}\prod_{i=1}^j\|p_i\|_{H^{m+1}}.
  \label{eq:moving-operator-uniform-estimate}
\end{align}
In addition,
\begin{equation}
 \|\mathcal V_f[\rho]\|_{H^1_0(\mathbb{D})}
 \leq C\|\rho\|_{H^m}.
 \label{eq:moving-global-potential-bound}
\end{equation}
If $f$ and $\rho$ are smooth, the potential is smooth to either side and its one-sided
normal derivatives satisfy
\begin{equation}
  [\partial_{n_f}\mathcal V_f[\rho]]=-\frac{\rho}{|c_f'|}.
  \label{eq:moving-density-jump}
\end{equation}
\end{lemma}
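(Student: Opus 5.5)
The plan is to reduce everything to the fixed reference curve by a change of variables that separates the logarithmic singularity from a kernel depending smoothly on $f$, and then to quote the pseudo-homogeneous shape-differentiability theory of \cite{CostabelLeLouer2012} at the one-dimensional level, exactly as Fact~\ref{fact:fixed-boundary-mappings} does for $f=0$.

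\emph{Step 1: geometry.} Because $H^{m+1}\hookrightarrow C^{m}$ with $m\geq3$, the map $f\mapsto c_f=c_0+fn$ is smooth into $C^2$. Choose the neighborhood so that $\|f\|_{C^1}$ is smaller than the tubular radius of $c_0$ and than its distance to $\partial\mathbb D$. Then $c_f$ is a $C^1$-small normal graph, hence embedded, and it lies in a fixed compact subset of $\mathbb D^\circ$. Since $|c_f'|$ stays close to $1$, the orientation and the labelling of $\Omega_f^\pm$ are unchanged. By Fact~\ref{fact:periodic-sobolev-calculus}, the quantities $|c_f'|$, $|c_f'|^{-1}$ and $n_f$ depend smoothly on $f$ with values in $H^m$.

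\emph{Step 2: kernel decomposition.} Write
\[
G(c_f(t),c_f(t'))=-\tfrac1{2\pi}\log\Bigl|2\sin\tfrac{\pi(t-t')}{|c_0|}\Bigr|
-\tfrac1{2\pi}\log Q_f(t,t')+H(c_f(t),c_f(t')),
\]
where
\[
Q_f(t,t')=\frac{|c_f(t)-c_f(t')|}{|2\sin(\pi(t-t')/|c_0|)|}
\]
and $H$ is the smooth regular part of $G$ on the compact set. The quotient $Q_f$ equals a divided difference $\bigl|\int_0^1 c_f'(t'+\theta(t-t'))\,d\theta\bigr|$ times a smooth positive factor. Embeddedness together with $|c_f'|\approx1$ keeps $Q_f$ bounded away from zero. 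The operator with the first kernel is the fixed order $-1$ multiplier, independent of $f$, and maps $H^m\to H^{m+1}$. The remaining kernel
\[
K_f(t,t')=-\tfrac1{2\pi}\log Q_f+H\circ(c_f\times c_f)
\]
is built from divided differences of $c_f$ composed with smooth Nemytskii maps. I will show that $f\mapsto K_f$ is $C^2$ into the kernel class $\{K:\ \partial_t^jK\in L^\infty_{t'}L^2_t,\ j\leq m+1\}$, which maps $H^m\to H^{m+1}$ boundedly. Each $f$-derivative of order $j\leq2$ is a polynomial in divided differences of $p_i$ and $f$; this gives the multilinear bounds \eqref{eq:moving-operator-uniform-estimate} for $\mathcal S_f$.

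\emph{The main obstacle is regularity counting.} Divided differences of an $H^{m+1}$ function lose nothing in $t'$ but only reach $H^{m}$-type regularity jointly. The $t$-derivatives of order $m+1$ of $\log Q_f$ therefore produce terms containing $\partial_t^{m+1}c_f$, which is only in $L^2$. These terms have to be placed in $L^2_t$ while the other factors are bounded, so the count must be done via Moser/Leibniz estimates. I would first try this directly. If it becomes messy, I would instead cite \cite[Theorem~2.6 and the shape-derivative results]{CostabelLeLouer2012}, pulled back to $c_0$ by the diffeomorphism $c_0(t)\mapsto c_f(t)$.

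\emph{Step 3: the double-layer-type operator and the potentials.} For $\mathcal D'_f$, the target-normal derivative kernel is
\[
\frac{n_f(t)\cdot(c_f(t)-c_f(t'))}{|c_f(t)-c_f(t')|^2}
\]
plus a smooth part. Its numerator is a second divided difference times $(t-t')^2$, so the kernel is a smooth function of second divided differences of $c_f$, multiplied by $n_f$. This makes $\mathcal D'_f$ of order at most $0$ with the same $C^2$ dependence, which gives the second half of \eqref{eq:moving-operator-types}. The principal value is interpreted as the average of the two one-sided limits.

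For the potential, $\mathcal V_f[\rho]$ is the Green potential of the measure $\rho\,dt$ carried by $c_f$. It solves the weak Dirichlet problem with a bounded trace functional, exactly as in the proof of Proposition~\ref{prop:weak-stream-function}, and this yields \eqref{eq:moving-global-potential-bound} uniformly in $f$. The one-sided $H^{m+3/2}(\Omega_f^\pm)$ regularity follows from Theorem~\ref{thm:transmission-regularity}, transported by a smooth-enough diffeomorphism to the reference interface, or from the trace regularity of $\mathcal S_f\rho$ combined with Dirichlet regularity on each side. Finally, the jump \eqref{eq:moving-density-jump} is Fact~\ref{fact:layer-jump-relations} applied on the smooth curve $c_f$: the Euclidean arclength density is $\rho/|c_f'|$.
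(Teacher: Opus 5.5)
\textbf{Gap in Step~2: the kernel class is not reached at $f\in H^{m+1}$.} You correctly flag the regularity count as the obstacle, but your proposed resolution fails. Put $\Delta(t,t')=\int_0^1c_f'(t'+\theta(t-t'))\,d\theta$ and $g=c_f^{(m+1)}\in L^2$. Then $\partial_t^m\Delta=(t-t')^{-m-1}\int_{t'}^{t}(u-t')^mg(u)\,du$, and hence
\[
 \partial_t^{m+1}\Delta(t,t')
 =\frac{1}{t-t'}\Bigl(g(t)-\frac{m+1}{(t-t')^{m+1}}
   \int_{t'}^{t}(u-t')^{m}g(u)\,du\Bigr).
\]
So $\partial_t^{m+1}c_f$ is multiplied by the singular factor $(t-t')^{-1}$, and is compensated only by a weighted average, not by bounded factors.

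Suppose $f^{(m+1)}$ has a jump (allowed in $H^{m+1}$) at distance $\delta$ from $t'$. Then this expression has $L^2_t$ norm of order $\delta^{-1/2}$, so its $\sup_{t'}$ is infinite, and even its $L^2$ norm in $(t,t')$ diverges. It enters $\partial_t^{m+1}\log Q_f$ through $|\Delta|^{-2}\Delta\cdot\partial_t^{m+1}\Delta$. The rough part of $g$ is $f^{(m+1)}n$, and its coefficient $\Delta\cdot n$ is essentially the divided difference of $f$, which does not vanish for general $f$ near $0$.

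The same defect appears elsewhere:
\begin{itemize}
\item in the second shape derivative, already at $f=0$, with coefficient essentially the divided difference of $p_1$;
\item for $\mathcal D_f'$, in the $m$th $t$-derivative of your second divided difference.
\end{itemize}
Your class therefore needs one more derivative on $f$ and on the $p_i$ than is available, and Moser/Leibniz counting cannot recover it.

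The operators are nonetheless bounded as stated. On your route, proving this would require exploiting the $t'$-integration against $\rho$. Concretely, you would write $\partial_t=(\partial_t+\partial_{t'})-\partial_{t'}$; the first operator maps divided differences of $c_f$ to those of $c_f'$. You would then move up to $m$ derivatives onto $\rho$ by integration by parts, and control the top-order remainder by Hilbert-transform/Calder\'on commutator estimates.

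The fallback citation does not close the gap. The Costabel--Le Lou\"er results are stated for smooth boundaries and smooth deformations, and the paper explicitly adds a finite-Sobolev step for exactly this reason.

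\textbf{How the paper avoids kernel counting, and the related gap in Step~3.} The paper pulls the transmission problem back to fixed domains. It sets $\Phi_f^\pm=\id+\mathsf E_\pm(fn)$, where $\mathsf E_\pm$ is a trace right inverse into $H^{m+3/2}(\Omega_0^\pm)$. Then $f$ enters only through $a_f^\pm=\det(D\Phi_f^\pm)(D\Phi_f^\pm)^{-1}(D\Phi_f^\pm)^{-\mathsf T}$, which is $C^3$ in $f$ with values in the algebra $H^{m+1/2}(\Omega_0^\pm)$. Hence $\mathcal P_f$ is a $C^2$ small perturbation of the isomorphism $\mathcal P_0$ between fixed spaces. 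The Neumann series and the formulas for derivatives of the inverse give $\upsilon_f[\rho]=\mathcal P_f^{-1}(0,0,-\rho)$ with $C^2$ dependence and uniform multilinear bounds. Energy uniqueness identifies it with $\mathcal V_f[\rho]$. Reading off $\mathcal S_f$ and $\mathcal D_f'$ as traces then gives \eqref{eq:moving-operator-types} and \eqref{eq:moving-operator-uniform-estimate}.

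The same mechanism is missing from your Step~3. A diffeomorphism carrying $c_0$ onto $c_f$ is in general no better than $H^{m+3/2}$. The transported problem therefore has $H^{m+1/2}$ variable coefficients and is not covered by Theorem~\ref{thm:transmission-regularity}. Your alternative, Dirichlet regularity on $\Omega_f^\pm$, is likewise not available, since that boundary is only $H^{m+1}$; it would also presuppose $\mathcal S_f\rho\in H^{m+1}$.

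Your Step~1 is fine, as are the $H^1_0$ bound via Lax--Milgram and the jump formula from the layer-jump relations with Euclidean density $\rho/|c_f'|$.
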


\begin{proof}
The tubular-neighborhood theorem gives a radius on which the normal
projection to $c_0$ is one-to-one.  Since
$H^{m+1}(\re/|c_0|\ze)\hookrightarrow C^2$ for $m\geq3$, a sufficiently
small $H^{m+1}$ ball keeps $c_f$ inside that tube, away from
$\partial\mathbb{D}$, with $|c_f'|$ bounded below and the original
orientation.  The Jordan curve theorem fixes the two components and their
side labels.

Write
$G(x,y)=-(2\pi)^{-1}\log|x-y|+G_{\mathrm{sm}}(x,y)$, where
$G_{\mathrm{sm}}$ is smooth on the fixed compact set containing all curves.
The pulled-back logarithm has the
decomposition
\begin{align*}
  \log|c_f(t)-c_f(t')|
  &=\log\left|2\sin\frac{\pi(t-t')}{|c_0|}\right|\\
  &\quad+
  \log\left|
  \frac{c_f(t)-c_f(t')}
       {2\sin(\pi(t-t')/|c_0|)}
  \right|.
\end{align*}
The quotient extends across the diagonal because
\[
  \frac{c_f(t)-c_f(t')}{t-t'}
  =\int_0^1c_f'(t'+\lambda(t-t'))\, d\lambda.
\]
It is uniformly nonzero near the diagonal by the lower bound for
$|c_f'|$, and away from the diagonal by embeddedness and compactness.
A finite-Sobolev step turns the smooth kernel calculus
into the Fr\'echet assertion in \eqref{eq:moving-operator-types}.  This step
is needed because the cited shape results are stated first for smooth
deformations.  Put
\[
 \delta_p(t,t')=p(t)n(t)-p(t')n(t').
\]
For the singular part of the single-layer kernel, its $j$th directional
kernel, $j\leq3$, is a finite sum of terms of the form
\begin{equation}
 D^j\log|c_f(t)-c_f(t')|
 [\delta_{p_1}(t,t'),\ldots,\delta_{p_j}(t,t')],
 \label{eq:moving-kernel-directional-form}
\end{equation}
with some directions omitted in lower-order summands.  Since
\begin{equation}
 \delta_p(t,t')
 =(t-t')\int_0^1
   \partial_t(pn)\bigl(t'+\lambda(t-t')\bigr)\, d\lambda,
 \label{eq:moving-direction-difference}
\end{equation}
each differentiation of the logarithm is accompanied by a cancelling
factor $t-t'$.  After applying the target normal, use
\[
 n_f(t)
 =\frac{(1+\kappa f)n-(\partial_t f)\tau}
        {\bigl((1+\kappa f)^2+(\partial_t f)^2\bigr)^{1/2}}.
\]
For smooth $f$ and directions, this formula and its first three shape
derivatives have the pseudo-homogeneous classes $-1$ for $\mathcal S_f$ and
$0$ for $\mathcal D_f'$.  The operators in
\cite[Theorem~4.5 and Corollary~4.6]{CostabelLeLouer2012} use the moving
surface measure.  Since
\[
 \rho(t)\, d t=\frac{\rho(t)}{|c_f'(t)|}\, d\ell_{\mathrm e,c_f},
\]
our operators are obtained from those results by precomposition with
$M_{|c_f'|^{-1}}$; this multiplier and its first two derivatives have the
required Sobolev bounds.  The mapping theorem is
\cite[Theorem~2.6]{CostabelLeLouer2012}.  We now verify that the constants
needed here depend only on the finite Sobolev norms.

We extend those smooth operators to the finite Sobolev chart by a fixed-domain
transmission argument.  Put $r_*=m+3/2$ and $\Omega_0^\pm:=\Omega^\pm$, and write
\[
 \tr_0\vartheta
 \coloneqq(\Tr_{c_0(\re/|c_0|\ze)}\vartheta)\circ c_0
\]
for the trace pulled back to $\re/|c_0|\ze$.  Choose
$\chi_{\mathrm{cut}}\in C_c^\infty([0,\infty))$ equal to one near zero.  On a
flat boundary the formula
\[
 \widehat{\mathsf E a}(\nu,r)
 =\chi_{\mathrm{cut}}(r)e^{-r(1+|\nu|^2)^{1/2}}\widehat a(\nu)
\]
and Plancherel's theorem give a bounded right inverse
$H^{m+1}(\re/|c_0|\ze)\to H^{r_*}$ for the trace.  Boundary
charts and a partition of unity therefore give vector-valued right inverses
\[
  \mathsf E_\pm\colon H^{m+1}(\re/|c_0|\ze;\re^2)
 \longrightarrow H^{r_*}(\Omega_0^\pm;\re^2),
\]
chosen so that $\tr_0 \mathsf E_\pm a=a$ and with zero outer
trace on the plus side.  Define
\(\Phi_f^\pm=\id+ \mathsf E_\pm(fn)\).  Because
$H^{r_*}$ embeds into $C^1$ in two dimensions, a smaller neighborhood makes these
uniformly bi-Lipschitz Sobolev diffeomorphisms from $\Omega_0^\pm$ onto
$\Omega_f^\pm$.

Set, only in this proof,
\[
 a_f^\pm
 \coloneqq\det(D\Phi_f^\pm)(D\Phi_f^\pm)^{-1}(D\Phi_f^\pm)^{-\mathsf T}
 \in H^{m+1/2}(\Omega_0^\pm;\re^{2\times2}).
\]
In two space dimensions, $H^{m+1/2}(\Omega_0^\pm)$ is a Banach algebra.
Matrix inversion on the open set of uniformly positive determinants and the
tame product estimate show that $f\mapsto a_f^\pm$ is $C^3$ from $H^{m+1}$
to $H^{m+1/2}$, with each derivative through order three bounded by the
product of the direction norms.

Let
\begin{align*}
 \mathsf H_{\mathrm{dom}}
 \coloneqq\{&(\vartheta^+,\vartheta^-)\in H^{r_*}(\Omega_0^+)\times H^{r_*}(\Omega_0^-):
       \tr_0 \vartheta^+=\tr_0 \vartheta^-,\\
     &\hspace{55mm}
       \Tr_{\partial\mathbb{D}}\vartheta^+=0\},
\end{align*}
and
\[
 \mathsf H_{\mathrm{tar}}
 \coloneqq H^{m-1/2}(\Omega_0^+)\times H^{m-1/2}(\Omega_0^-)
   \times H^m(\re/|c_0|\ze).
\]
On these fixed spaces define
\begin{equation}
 \begin{aligned}
 \mathcal P_f(\vartheta^+,\vartheta^-)
 =\bigl(&-\dvg(a_f^+\nabla\vartheta^+),
          -\dvg(a_f^-\nabla\vartheta^-),\\
        &n\cdot\tr_0(a_f^+\nabla\vartheta^+)
          -n\cdot\tr_0(a_f^-\nabla\vartheta^-)\bigr).
 \end{aligned}
 \label{eq:moving-fixed-transmission-operator}
\end{equation}
The product and trace estimates give
\begin{equation}
 \|\mathcal P_f-\mathcal P_{\widetilde f}\|_{\mathcal L(\mathsf H_{\mathrm{dom}},
                                             \mathsf H_{\mathrm{tar}})}
 \leq C\|f-\widetilde f\|_{H^{m+1}},
 \label{eq:moving-transmission-operator-difference}
\end{equation}
For $f$ and $\widetilde f$ in this neighborhood, these estimates also prove that
$f\mapsto\mathcal P_f$ is $C^2$ in that operator norm.  We verify the required
inverse at $f=0$ independently of
Theorem~\ref{thm:transmission-regularity}.  At a flat interface, tangential Fourier
transformation gives for the two homogeneous modes the jump-symbol matrix
\[
 \begin{pmatrix}1&-1\\-|\nu|&-|\nu|\end{pmatrix},
 \qquad \det=-2|\nu|.
\]
Given flat-model data
$p^\pm\in H^{m-1/2}$ and $\mathsf j\in H^m$, let $q^\pm$ solve the two
zero-interface-trace half-space Dirichlet problems.  Their tangential Fourier
representations, obtained by variation of constants in the normal variable,
give
\[
 \|q^\pm\|_{H^{m+3/2}}
 +\|\left.\partial_{n}q^\pm\right|_{x_2=0}\|_{H^m}
 \leq C\|p^\pm\|_{H^{m-1/2}}.
\]
The remaining harmonic correction has Fourier amplitudes determined by
\[
 \begin{pmatrix}1&-1\\- |\nu|&- |\nu|\end{pmatrix}
 \begin{pmatrix}A^+\\A^-\end{pmatrix}
 =\begin{pmatrix}0\\
 \mathsf j-[\partial_{n}q]\end{pmatrix}.
\]
Thus $A^+=A^-=-(\mathsf j-[\partial_{n}q])/(2|\nu|)$ for $\nu\ne0$, and the
Poisson-extension estimate yields
\begin{align*}
 \|\vartheta^+\|_{H^{m+3/2}}+\|\vartheta^-\|_{H^{m+3/2}}
 \leq C\bigl(&\|p^+\|_{H^{m-1/2}}+\|p^-\|_{H^{m-1/2}}\\
              &+\|\mathsf j\|_{H^m}\bigr)
\end{align*}
at nonzero tangential frequencies.  Localizing along the smooth fixed
interface and freezing its
coefficients preserves this estimate at high frequency; the commutators lose
one order and are absorbed there.  The remaining finite frequency band is
controlled by the $H^1$ energy estimate with the fixed outer Dirichlet
condition.  More precisely, on the common-trace, outer-zero $H^1$ space, the
weak equation for arbitrary $(p^+,p^-,\mathsf j)\in \mathsf H_{\mathrm{tar}}$ is,
for every common-trace, outer-zero test pair $(\widehat\vartheta^+,\widehat\vartheta^-)$,
\[
 \sum_\pm\int_{\Omega_0^\pm}\nabla\vartheta^\pm\cdot\nabla\widehat\vartheta^\pm\, d A_{\mathrm e}
 =\sum_\pm\int_{\Omega_0^\pm}p^\pm\widehat\vartheta^\pm\, d A_{\mathrm e}
  -\langle \mathsf j,\tr_0\widehat\vartheta\rangle.
\]
The right side is bounded by the trace theorem, while the left side is
coercive by the outer-zero Poincar\'e inequality.  Lax--Milgram therefore gives
the common-trace $H^1$ solution.  The localized estimate upgrades it to
$\mathsf H_{\mathrm{dom}}$, and the same energy identity gives uniqueness.
Thus
\(\mathcal P_0\colon \mathsf H_{\mathrm{dom}}\to \mathsf H_{\mathrm{tar}}\)
is an isomorphism.

Shrink the neighborhood once more so that
\(\|\mathcal P_0^{-1}(\mathcal P_f-\mathcal P_0)\|<1/2\).
The Neumann series makes every $\mathcal P_f$ invertible with a uniform
inverse.  For $\rho\in H^m$ define
\[
 (\upsilon_f^+[\rho],\upsilon_f^-[\rho])
 \coloneqq\mathcal P_f^{-1}(0,0,-\rho).
\]
The identities
\begin{align*}
 D(\mathcal P^{-1})[p]
 &=-\mathcal P^{-1}(D\mathcal P[p])\mathcal P^{-1},\\
 D^2(\mathcal P^{-1})[p,q]
 &=\mathcal P^{-1}D\mathcal P[q]\mathcal P^{-1}D\mathcal P[p]\mathcal P^{-1}
   +\mathcal P^{-1}D\mathcal P[p]\mathcal P^{-1}D\mathcal P[q]\mathcal P^{-1}\\
 &\quad-\mathcal P^{-1}D^2\mathcal P[p,q]\mathcal P^{-1}
\end{align*}
show that $f\mapsto\upsilon_f$ is $C^2$ with values in
$\mathcal L(H^m,\mathsf H_{\mathrm{dom}})$ and give, for $j=0,1,2$,
\begin{equation}
 \|D_f^j\upsilon_f[p_1,\ldots,p_j]\rho\|_{\mathsf H_{\mathrm{dom}}}
 \leq C\|\rho\|_{H^m}\prod_{i=1}^j\|p_i\|_{H^{m+1}}.
 \label{eq:moving-kernel-finite-sobolev-bound}
\end{equation}

Before comparing the two constructions, we establish the global energy
class of the Green potential.  The moving-curve trace estimate is uniform on
the chosen neighborhood, and hence
\[
 \left|\int_0^{|c_0|}\rho(t)\varpi(c_f(t))\, d t\right|
 \leq C\|\rho\|_{H^{-1/2}}\|\varpi\|_{H^1_0(\mathbb{D})}
 \leq C\|\rho\|_{H^m}\|\varpi\|_{H^1_0(\mathbb{D})}.
\]
Lax--Milgram gives a unique $U_f[\rho]\in H^1_0(\mathbb{D})$ with this
right-hand side and the corresponding uniform norm bound.  Testing the
Dirichlet Green identity first against smooth densities and then using
density in $H^m$ identifies $U_f[\rho]$ with the integral
\eqref{eq:generic-moving-potential}.  Thus
$\mathcal V_f[\rho]\in H^1_0(\mathbb{D})$ and
\eqref{eq:moving-global-potential-bound} already holds.

Define
\[
 \widetilde\upsilon_f|_{\Omega_f^\pm}
 \coloneqq\upsilon_f^\pm[\rho]\circ(\Phi_f^\pm)^{-1}.
\]
For every $\varpi\in H^1_0(\mathbb{D})$, the change-of-variables formula and
\eqref{eq:moving-fixed-transmission-operator} give
\[
 \int_{\mathbb{D}}\nabla_{\mathrm e}\widetilde\upsilon_f\cdot\nabla_{\mathrm e}\varpi\, d A_{\mathrm e}
 =\int_0^{|c_0|}\rho(t)\varpi(c_f(t))\, d t,
 \qquad \widetilde\upsilon_f|_{\partial\mathbb{D}}=0.
\]
The Green potential $\mathcal V_f[\rho]$ is the $H^1_0$ solution just
constructed, so energy uniqueness identifies it with
$\widetilde\upsilon_f$.  Consequently
\[
 \mathcal S_f\rho=\tr_0\upsilon_f[\rho],
\]
and
\[
 \mathcal D_f'\rho
 =\frac12\sum_{\pm}
   n_f\cdot\tr_0\!\left(
   (D\Phi_f^\pm)^{-\mathsf T}\nabla\upsilon_f^\pm[\rho]\right).
\]
The trace map in the first identity has target $H^{m+1}$; the normal-gradient
trace in the second has target $H^m$.  Their coefficients depend $C^2$ on
$f$.  Together with \eqref{eq:moving-kernel-finite-sobolev-bound}, these
identities prove \eqref{eq:moving-operator-types} and
\eqref{eq:moving-operator-uniform-estimate}.

The same trace-extension construction and the inverse-function identity give
uniform $H^{r_*}$ bounds for $(\Phi_f^\pm)^{-1}$ on the smaller neighborhood.
For every $\vartheta_0\in H^{r_*}(\Omega_0^\pm)$, the integer chain rule through
order $m+1$, the $H^{m+1/2}$ algebra estimate, and the bi-Lipschitz change of
variables in the remaining one-half Slobodeckij seminorm give
\[
 \|\vartheta_0\circ(\Phi_f^\pm)^{-1}\|_{H^{r_*}(\Omega_f^\pm)}
 \leq C\|\vartheta_0\|_{H^{r_*}(\Omega_0^\pm)}.
\]
This proves the asserted one-sided $H^{m+3/2}$ bound.  The common trace glues
the two pieces consistently with the $H^1_0$ solution constructed above.
Finally, the Piola identity gives
\[
 n\cdot\tr_0(a_f^\pm\nabla\upsilon_f^\pm)
 =|c_f'|\,(\partial_{n_f}\widetilde\upsilon_f^\pm)\circ c_f,
\]
so the third component of \eqref{eq:moving-fixed-transmission-operator}
proves \eqref{eq:moving-density-jump}.

If $f_N$ and $\rho_N$ are Fourier truncations, then
$a_{f_N}^\pm\to a_f^\pm$ in $H^{m+1/2}$ and
$\mathcal P_{f_N}\to\mathcal P_f$ in operator norm.  The $C^3$ coefficient
dependence also gives
\[
 D_f^j\mathcal P_{f_N}\longrightarrow D_f^j\mathcal P_f
\]
in the corresponding multilinear operator norm for $j=1,2$.  The resolvent
identity
\[
 \mathcal P_{f_N}^{-1}-\mathcal P_f^{-1}
 =\mathcal P_{f_N}^{-1}(\mathcal P_f-\mathcal P_{f_N})\mathcal P_f^{-1}
\]
and the two inverse-derivative formulas show operator-norm convergence of
the maps and their first two shape derivatives.  This supplies the exact
smooth-approximation and limit step and identifies the finite-Sobolev maps
with the smooth Green-kernel operators described by
\eqref{eq:moving-kernel-directional-form}--%
\eqref{eq:moving-direction-difference}.  No differentiability of
$f\mapsto\mathcal V_f$ into the global space $H^1_0(\mathbb{D})$ is claimed:
its first fixed-point shape variation is a double-layer potential and can
have a jump.  Differentiation of the residual uses only the pulled-back
boundary maps $\mathcal S_f$ and $\mathcal D_f'$.
\end{proof}

\begin{lemma}[Compact remainders and commutators]
\label{lem:compact-remainders}
On the fixed smooth reference curve, finite-mode operators and operators
with smooth kernels are compact between the physical source and target
spaces.  Moreover,
\begin{equation}
\begin{aligned}
  \mathcal S-\mathcal S_0&\colon H^r\to H^{r+2},\\
  \mathcal T-\mathcal T_0&\colon H^{r+1}\to H^{r+1},\\
  [\mathcal S_0,M_b]&\colon H^r\to H^{r+2},\\
  [\mathcal T_0,M_b]&\colon H^{r+1}\to H^{r+1}
\end{aligned}
  \label{eq:compact-operator-gains}
\end{equation}
for every smooth coefficient $b$, where $M_b$ denotes multiplication by
$b$.  Each map becomes compact when followed by the one-derivative-lower
embedding appropriate to $\mathcal Y$.  The exact operators
$\mathcal D,\mathcal D'$ and the zero-mode discrepancies are lower-order or
finite-rank contributions.
\end{lemma}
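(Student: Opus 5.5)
The proof will be a Fourier-multiplier and kernel-splitting argument on the fixed smooth curve $c_0$, using the explicit decompositions from the proof of Fact~\ref{fact:fixed-boundary-mappings}. First, finite-mode operators have finite rank, so they are compact. Next, an operator with a $C^\infty$ kernel on $(\re/|c_0|\ze)^2$ maps $H^{-N}$ into $H^{N}$ for every $N$: differentiate under the integral and estimate by duality. Composing with the Rellich embedding $H^{N}\hookrightarrow H^{N-1}$, which is compact on the circle, gives compactness between any of the physical spaces.

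For $\mathcal S-\mathcal S_0$, write the pulled-back kernel as
\[
 -(2\pi)^{-1}\log\left|2\sin\frac{\pi(t-t')}{|c_0|}\right|
\]
plus a smooth remainder. The logarithmic part has multiplier $|c_0|/(4\pi|k|)$ for $k\neq0$, which equals that of $\mathcal S_0$. On $e_0$ it contributes a constant rank-one term. Hence $\mathcal S-\mathcal S_0$ is a smooth-kernel operator plus a rank-one operator, and it maps $H^r\to H^{r+2}$ (indeed into every $H^N$).

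For $\mathcal T$, I will use the Maue formula $\mathcal T=\partial_t\mathcal S\partial_t+R$ with $R$ smoothing. Substituting $\mathcal S=\mathcal S_0+K$ gives
\[
 \mathcal T=\partial_t\mathcal S_0\partial_t+\partial_tK\partial_t+R .
\]
On $e_k$, the operator $\partial_t\mathcal S_0\partial_t$ acts by
\[
 \Bigl(\frac{2\pi i k}{|c_0|}\Bigr)^2\frac{|c_0|}{4\pi|k|}=-\frac{\pi|k|}{|c_0|},
\]
which is exactly $\mathcal T_0$, with both vanishing on $e_0$. The term $\partial_tK\partial_t$ still has a smooth kernel after integrating by parts in $t'$, so $\mathcal T-\mathcal T_0$ is smoothing, in particular $H^{r+1}\to H^{r+1}$. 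One point must be checked: the smooth part of $G$, and the curvature terms hidden in $R$, really have smooth kernels on the diagonal. This follows from the removable-singularity argument already used for $\mathcal D$ and $\mathcal D'$.

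The commutator estimates are the main obstacle, and I would handle them with a symbol-calculus argument on the circle. Expand $b=\sum_j\hat b_je_j$, so that
\[
 [\mathcal S_0,M_b]e_k=\sum_j\hat b_j\bigl(s(k+j)-s(k)\bigr)e_{k+j},\qquad s(k)=\frac{|c_0|}{4\pi|k|}.
\]
For $|k|\geq2|j|$ with $k,k+j\neq0$, the mean value theorem gives $|s(k+j)-s(k)|\lesssim|j|\langle k\rangle^{-2}$. In the remaining range $|k|<2|j|$ there is a crude bound $\lesssim1$, but the rapid decay of $\hat b_j$ absorbs any power of $\langle k\rangle\lesssim\langle j\rangle$. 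Schur's test (or Peetre's inequality $\langle k+j\rangle^{a}\lesssim\langle k\rangle^{a}\langle j\rangle^{|a|}$) then yields $H^r\to H^{r+2}$. The same computation with $t(k)=-\pi|k|/|c_0|$ uses that $|t(k+j)-t(k)|\leq\pi|j|/|c_0|$, which is an order-zero bound. This gives $[\mathcal T_0,M_b]\colon H^{r+1}\to H^{r+1}$.

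Finally, each displayed map gains one order relative to the mapping required between the physical source and target spaces $\mathcal X$, $\mathcal Y$. Following it by the compact embedding that is one derivative lower therefore gives compactness. The statement about $\mathcal D$ and $\mathcal D'$ follows from Fact~\ref{fact:fixed-boundary-mappings}, since they map $H^r\to H^{r+1}$. The zero-mode discrepancies are rank one.
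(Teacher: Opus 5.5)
Your proposal is correct and follows the paper's proof: smooth exact-minus-model kernels from the splitting in Fact~\ref{fact:fixed-boundary-mappings}, the Maue formula for $\mathcal T-\mathcal T_0$, the Fourier matrix entries $\hat b_j\bigl(a(k+j)-a(k)\bigr)$ controlled by a mean-value bound and rapid decay of $\hat b$, and Rellich for compactness; you add the Schur/Peetre bookkeeping that the paper leaves implicit. One small correction: the kernel $\log|2\sin(\pi s/|c_0|)|$ has zero mean, so it agrees with $\mathcal S_0$ on $e_0$ too, and the rank-one zero-mode term you allow for is actually zero, which is harmless.
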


\begin{proof}
The kernel decomposition in the proof of
Fact~\ref{fact:fixed-boundary-mappings} shows that exact-minus-model kernels
are smooth; the Maue formula gives the stated gain for
$\mathcal T-\mathcal T_0$.  For a Fourier multiplier $a(k)$ of order
$-1$ or $1$, the matrix of $[a(D),M_b]$ contains
$(a(k)-a(\ell))\widehat b(k-\ell)$.  The mean-value estimate for the
multiplier and rapid decay of $\widehat b$ lower the order by one, giving
the last two maps in \eqref{eq:compact-operator-gains}.  Rellich's theorem
then gives compactness.  Finite modes are finite rank, and a smooth kernel
maps bounded sets into every higher Sobolev space.
\end{proof}
\section{Problem setting}
\label{sec:problem-setting}

\subsection{Deformations}
\label{subsec:deformations}

The disk, the fixed Green kernel, the prescribed metric path, the reference
sheet, and the physical spaces are those of
Definitions~\ref{def:prescribed-metric-deformation}--
\ref{def:physical-sobolev-spaces}.  In particular,
$t\in\re/|c_0|\ze$ is Euclidean arclength,
$n$ points from $\Omega^-$ to $\Omega^+$, and
$\tau=\mathcal J n=c_0'$; the Frenet convention is fixed in
\eqref{eq:reference-frenet}.

\begin{definition}[BVSMD perturbation data]
\label{def:bvsmd-perturbation-data}
Lemma~\ref{lem:moving-boundary-operators} supplies an open neighborhood
of $0$ in $H^{m+1}$.  For every $f$ in that neighborhood, define
\begin{equation}
  c_f(t)=c_0(t)+f(t)n(t).
  \label{eq:normal-graph}
\end{equation}
The curve is embedded and separating in $\mathbb D^\circ$, has the same
orientation and side convention as $c_0$, and stays a uniform positive
distance from $\partial\mathbb D$.  Its Euclidean Jacobian and unit frame
are
\begin{equation}
\begin{aligned}
  c_f'&=(1+\kappa f)\tau+f'n,\\
  |c_f'|&=\bigl((1+\kappa f)^2+(f')^2\bigr)^{1/2},\\
  \tau_f&=|c_f'|^{-1}\bigl((1+\kappa f)\tau+f'n\bigr),\\
  n_f&=|c_f'|^{-1}\bigl((1+\kappa f)n-f'\tau\bigr).
\end{aligned}
  \label{eq:normal-graph-frame}
\end{equation}
The metric line element is
\begin{equation}
  d\ell_{g_s}=e^{\sigma_s\circ c_f}|c_f'|\,d t.
  \label{eq:moving-conformal-frame}
\end{equation}
For such $f$ and $h\in H^m$, define the scalar sheet strength on the moving
support by
\begin{equation}
  \gamma_{f,h}(c_f(t))=\gamma_0(t)+h(t).
  \label{eq:moving-sheet-strength}
\end{equation}
The physical unknown is
\begin{equation}
  u=(f,h,\mu,\beta)\in\mathcal X.
  \label{eq:physical-unknown}
\end{equation}
Choose an open neighborhood $O\subset\mathcal X$ of
$u_0=(0,0,\mu_0,\beta_0)$ whose $f$-projection is contained in that neighborhood and on
which the lower bounds from Lemma~\ref{lem:moving-boundary-operators} and
\eqref{eq:normal-graph-frame} are uniform.  Thus $O$ encodes embeddedness,
separation, interiority,
nonvanishing tangent, the positive conformal factor, and the existence of
all pulled-back traces and moving operators.  The constants $\mu$ and
$\beta$ are unknown because the stationary conditions determine their
constancy rather than prescribed values.
\end{definition}

The tubular-neighborhood map $(t,r)\mapsto c_0(t)+rn(t)$ is a
diffeomorphism for $|r|$ smaller than a fixed radius.  The embedding
$H^{m+1}\hookrightarrow C^2$ is continuous because $m\geq3$.  The
neighborhood is chosen so that $\|f\|_{C^1}$ is small, $|f|$ stays
inside the tubular radius, $c_f$ stays away from the outer boundary, and
$1+\kappa f>0$.  The normal projection is then one-to-one, $|c_f'|$ is
bounded below, and $c_f'\cdot\tau=1+\kappa f>0$ preserves the orientation.
The Jordan curve theorem gives the same inner and outer side designation.
Differentiating \eqref{eq:normal-graph} and using
\eqref{eq:reference-frenet} gives \eqref{eq:normal-graph-frame}; conformal
rescaling gives \eqref{eq:moving-conformal-frame}.

The normal graph fixes the local tangential reparametrization freedom.  The
strength $\gamma_{f,h}$ is a scalar per unit metric arclength; it is not the
parameter density used in the boundary integral.

\subsection{Residual map}
\label{subsec:residual-map}

\begin{definition}[Sheet density and background vorticity]
\label{def:sheet-density-background}
For $(s,u)\in(-s_0,s_0)\times O$, set
\begin{equation}
\begin{aligned}
  w_{s,f}(t)&=e^{\sigma_s(c_f(t))},\\
  \Theta(s,f,h)(t)
  &=(\gamma_0(t)+h(t))w_{s,f}(t)|c_f'(t)|,\\
  |\mathbb D|_{g_s}
  &=\int_{\mathbb D}e^{2\sigma_s}\,d A_{\mathrm e},\\
  \bar\omega(s,f,h)
  &=\frac{\displaystyle\int_0^{|c_0|}\Theta(s,f,h)(t)\,d t}
          {|\mathbb D|_{g_s}}.
\end{aligned}
  \label{eq:moving-density-background}
\end{equation}
Thus
$\Theta\,d t=\gamma_{f,h}\,d\ell_{g_s}$ is the pulled-back sheet measure.
The regular bulk vorticity is
$-\bar\omega(s,f,h)$.  Every occurrence of
$\bar\omega$ retains its dependence on $s,f,h$.
\end{definition}

\begin{definition}[Moving boundary operators]
\label{def:moving-boundary-operators}
We use the potential $\mathcal V_f$ and the pulled-back boundary operators
$\mathcal S_f$ and $\mathcal D_f'$ defined in
\eqref{eq:generic-moving-potential}--\eqref{eq:generic-moving-boundary-operators}.
The derivative in $\mathcal D_f'$ uses the target Euclidean normal.  No
source Jacobian is inserted because $\rho(t)\,d t$ is already the parameter
measure; equivalently, its density relative to Euclidean arclength on $c_f$
is $\rho/|c_f'|$.
\end{definition}

\begin{proposition}[Boundary-integral representation]
\label{prop:boundary-integral-representation}
The canonical weak stream function $\psi_{c_f,\gamma_{f,h}}$ of
$(c_f,\gamma_{f,h})$ for $g_s$ is
\begin{equation}
  \psi_{c_f,\gamma_{f,h}}
  =\mathcal V_f[\Theta(s,f,h)]
   -\bar\omega(s,f,h)\Psi_s.
  \label{eq:moving-stream-representation}
\end{equation}
It has zero outer Dirichlet value and satisfies
\begin{equation}
  -\triangle_{\mathrm e}\psi_{c_f,\gamma_{f,h}}
  =(\gamma_0+h)w_{s,f}\,\delta_{c_f}
   -\bar\omega(s,f,h)e^{2\sigma_s}
  \label{eq:moving-stream-distribution}
\end{equation}
when $\delta_{c_f}$ is measured with Euclidean arclength.  Its pulled-back trace and averaged Euclidean normal derivative are
\begin{equation}
\begin{aligned}
  \psi_{c_f,\gamma_{f,h}}\circ c_f
  &=\mathcal S_f\Theta
    -\bar\omega\,\Psi_s\circ c_f,\\
  \left\langle\partial_{n_f}\psi_{c_f,\gamma_{f,h}}\right\rangle\circ c_f
  &=\mathcal D_f'\Theta
    -\bar\omega\,\partial_{n_f}\Psi_s\circ c_f.
\end{aligned}
  \label{eq:moving-stream-boundary-values}
\end{equation}
Its canonical velocity for $g_s$ is
$v_{c_f,\gamma_{f,h}}
\coloneqq-\jgrad_{g_s}\psi_{c_f,\gamma_{f,h}}$.
\end{proposition}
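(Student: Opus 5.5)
The plan is to show that the right-hand side of \eqref{eq:moving-stream-representation} solves the weak problem of Definition~\ref{def:vortex-sheet-data} for the data $(c_f,\gamma_{f,h})$ and the metric $g_s$. Uniqueness in Proposition~\ref{prop:weak-stream-function} then identifies it with $\psi_{c_f,\gamma_{f,h}}$.

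\textbf{Step 1: the measure identity.} I first check that the fixed-kernel representation \eqref{eq:fixed-green-representation}, written for $g_s$ and $c_f$, coincides with the proposed expression. By \eqref{eq:moving-conformal-frame} and \eqref{eq:moving-sheet-strength},
\[
\gamma_{f,h}\,d\ell_{g_s}=(\gamma_0+h)\,e^{\sigma_s\circ c_f}|c_f'|\,dt=\Theta(s,f,h)\,dt .
\]
Hence $\int_{c_f}G(z,y)\gamma_{f,h}(y)\,d\ell_{g_s}(y)=\mathcal V_f[\Theta](z)$ by \eqref{eq:generic-moving-potential}. Integrating the same identity gives $\Gamma_{c_f,\gamma_{f,h}}=\int_0^{|c_0|}\Theta\,dt$, so $\bar\omega_{c_f,\gamma_{f,h}}=\bar\omega(s,f,h)$ by \eqref{eq:moving-density-background}. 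The volume term is $\bar\omega\Psi_s$ by Definition~\ref{def:green-torsion}.

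\textbf{Step 2: regularity and the weak equation.} Because $\Theta\in H^m$ (Fact~\ref{fact:periodic-sobolev-calculus}), Lemma~\ref{lem:moving-boundary-operators} places $\mathcal V_f[\Theta]$ in $H^1_0(\mathbb D)$. In its proof it is characterized by
\[
\int\nabla\mathcal V_f[\Theta]\cdot\nabla\phi\,dA_{\mathrm e}=\int_0^{|c_0|}\Theta\,\phi\circ c_f\,dt
\qquad\text{for all }\phi\in H^1_0(\mathbb D).
\]
Fact~\ref{fact:torsion-green-representation} gives the analogous identity for $\Psi_s$ with source $e^{2\sigma_s}$. Conformal invariance of the Dirichlet form lets me rewrite both in the form \eqref{eq:weak-sheet-poisson}, and uniqueness yields \eqref{eq:moving-stream-representation}. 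The same two identities, read distributionally with $\Theta\,dt=(\gamma_0+h)w_{s,f}\,d\ell_{\mathrm e}$ on $c_f$, give \eqref{eq:moving-stream-distribution}. The zero outer value follows from $G(\cdot,y)|_{\partial\mathbb D}=0$.

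\textbf{Step 3: boundary values.} Composing with $c_f$ gives $\mathcal S_f\Theta-\bar\omega\,\Psi_s\circ c_f$ directly from the definition of $\mathcal S_f$. For the averaged normal derivative I split the average linearly over the two terms. The first term gives $\mathcal D_f'\Theta$ by definition. $\Psi_s$ is smooth across $c_f$, so its average is simply its normal derivative.

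\textbf{Main obstacle.} The delicate point is justifying that the finite-Sobolev potential $\mathcal V_f$ agrees with the Green integral and carries well-defined one-sided traces when $f$ is only in $H^{m+1}$. I will handle this by invoking the identification $U_f[\rho]=\mathcal V_f[\rho]$ and the one-sided $H^{m+3/2}$ bounds already established in Lemma~\ref{lem:moving-boundary-operators}, rather than reproving them. The velocity statement is then the definition $v=-\jgrad_{g_s}\psi$ from \eqref{eq:stream-function-convention}.
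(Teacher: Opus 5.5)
Your proposal is correct and follows essentially the same route as the paper. You identify $\Theta\,dt$ with $\gamma_{f,h}\,d\ell_{g_s}$, check that $\mathcal V_f[\Theta]-\bar\omega\Psi_s$ solves the weak and distributional problem with zero outer trace, conclude by uniqueness in Proposition~\ref{prop:weak-stream-function}, and read off the boundary values from the definitions of $\mathcal S_f$ and $\mathcal D_f'$ together with the smoothness of $\Psi_s$; note that your Step~1 combined with the fixed-kernel formula \eqref{eq:fixed-green-representation} already gives the representation, so Step~2 is a redundant but harmless second check.
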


\begin{proof}
For a test function $\phi$,
\[
  \left\langle-\triangle_{\mathrm e}\mathcal V_f[\Theta],\phi\right\rangle
  =\int_0^{|c_0|}\Theta(t)\phi(c_f(t))\,d t
  =\int_{c_f}(\gamma_0+h)w_{s,f}\phi\,d\ell_{\mathrm e}.
\]
Fact~\ref{fact:torsion-green-representation} supplies the volume term in
\eqref{eq:moving-stream-distribution}.  Both terms in
\eqref{eq:moving-stream-representation} vanish on
$\partial\mathbb D$ by the Dirichlet property of $G$.  Thus the displayed
function solves exactly the variational problem in
Definition~\ref{def:vortex-sheet-data}, with
$d\ell_{g_s}=w_{s,f}d\ell_{\mathrm e}$.  Uniqueness in
Proposition~\ref{prop:weak-stream-function} proves the representation.
Taking the common trace and the average of the two target-normal traces gives
\eqref{eq:moving-stream-boundary-values}.  The jump itself is
$[\partial_{n_f}\psi_{c_f,\gamma_{f,h}}]=-(\gamma_0+h)w_{s,f}$ by
\eqref{eq:moving-density-jump}, since
$\Theta/|c_f'|=(\gamma_0+h)w_{s,f}$.
\end{proof}

The $g_s$-unit tangent along $c_f$ is
$e^{-\sigma_s\circ c_f}\tau_f=w_{s,f}^{-1}\tau_f$.  Conformal conversion
therefore gives
\begin{equation}
  g_s\left(v_{c_f,\gamma_{f,h}}^{\pm},
            e^{-\sigma_s\circ c_f}\tau_f\right)
  =-w_{s,f}^{-1}\partial_{n_f}
       \psi_{c_f,\gamma_{f,h}}^{\pm}.
  \label{eq:metric-tangential-normal-conversion}
\end{equation}
This identity accounts for the sign and the single conformal factor in the
second residual.

\begin{definition}[Steady vortex-sheet residual]
\label{def:steady-sheet-residual}
For $u=(f,h,\mu,\beta)\in O$, define
\begin{align}
  E_I(s,u)
  &=\mathcal S_f\Theta(s,f,h)
    -\bar\omega(s,f,h)\Psi_s\circ c_f-\mu,
  \label{eq:streamline-residual}\\
  E_J(s,u)
  &=-(\gamma_0+h)w_{s,f}^{-1}
    \bigl(\mathcal D_f'\Theta(s,f,h)
      -\bar\omega(s,f,h)\partial_{n_f}\Psi_s\circ c_f\bigr)-\beta,
  \label{eq:bernoulli-residual}\\
  F(s,u)&=(E_I(s,u),E_J(s,u)).
  \label{eq:physical-residual}
\end{align}
The residual has type
\begin{equation}
  F\colon(-s_0,s_0)\times O\longrightarrow\mathcal Y.
  \label{eq:physical-residual-type}
\end{equation}
its well-definedness and differentiability are proved in
Proposition~\ref{prop:nonlinear-residual-regularity}.  Only the moving
nonlinear operators of Definition~\ref{def:moving-boundary-operators} occur
in \eqref{eq:streamline-residual}--\eqref{eq:bernoulli-residual}; the fixed
exact and model principal operators have no role in the nonlinear
definition.
\end{definition}

\begin{definition}[Normalized residual]
\label{def:normalized-residual}
Define the mean-displacement and circulation normalization by
\begin{equation}
  \mathcal N(s,u)=\left(
    \frac1{|c_0|}\int_0^{|c_0|}f(t)\,d t,
    \int_0^{|c_0|}\Theta(s,f,h)(t)\,d t
  \right)\in\re^2,
  \label{eq:physical-normalization}
\end{equation}
and define
\begin{equation}
  \mathcal F(s,u)=
  \bigl(F(s,u),\mathcal N(s,u)-\mathcal N(0,u_0)\bigr)
  \in\mathcal Y\times\re^2.
  \label{eq:normalized-physical-residual}
\end{equation}
The second component of $\mathcal N$ depends on $s$ through $w_{s,f}$, a
dependence that enters the parameter and mixed derivatives below.
\end{definition}

\begin{proposition}[Residual characterization of steady sheets]
\label{prop:residual-characterization}
For $(s,u)\in(-s_0,s_0)\times O$, the equation $F(s,u)=0$ is equivalent to
$(c_f,\gamma_{f,h})$ being a weak steady vortex sheet for $g_s$ with
streamline constant $\mu$ and Bernoulli constant $\beta$.  The equation
$\mathcal F(s,u)=0$ is equivalent to those conditions together with
\begin{equation}
  \mathcal N(s,u)=\mathcal N(0,u_0),
  \label{eq:normalization-values-fixed}
\end{equation}
that is, fixed mean normal displacement and fixed total circulation.
\end{proposition}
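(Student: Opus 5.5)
The plan is to reduce everything to the two characterizations already proved, Proposition~\ref{prop:streamline-characterization} and Proposition~\ref{prop:bernoulli-characterization}, applied to the data $(c_f,\gamma_{f,h})$ with the metric $g_s$. The dictionary between the residual and these characterizations is Proposition~\ref{prop:boundary-integral-representation}.

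First, Definition~\ref{def:bvsmd-perturbation-data} guarantees that for $u\in O$ the curve $c_f$ is a connected, separating, embedded $C^2$ curve in $\mathbb D^\circ$ with the reference side convention. Moreover $\gamma_{f,h}\in H^m\subset C^1$. Hence $(c_f,\gamma_{f,h})$ is admissible vortex-sheet data for $g_s$ in the sense of Definition~\ref{def:vortex-sheet-data}, and its stream function is given by \eqref{eq:moving-stream-representation}.

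By the first line of \eqref{eq:moving-stream-boundary-values}, $E_I(s,u)=0$ says exactly that the pulled-back common trace $\psi_{c_f,\gamma_{f,h}}\circ c_f$ equals $\mu$. Since $c_f$ is a bijective parametrization, this means $\psi|_{c_f}\equiv\mu$. Proposition~\ref{prop:streamline-characterization} then makes $E_I=0$ equivalent to the kinematic condition \eqref{eq:sheet-kinematic-condition} with streamline constant $\mu$, and the uniqueness of the constant identifies $\mu$.

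Next, for $E_J$, I will combine the second line of \eqref{eq:moving-stream-boundary-values} with the conformal conversion \eqref{eq:metric-tangential-normal-conversion}. Averaging the two one-sided identities gives
\[
  \langle v_{c_f,\gamma_{f,h}}\rangle_\tau\circ c_f
  =-w_{s,f}^{-1}\bigl(\mathcal D_f'\Theta-\bar\omega\,\partial_{n_f}\Psi_s\circ c_f\bigr),
\]
where the $\tau$-component is taken with respect to the $g_s$-unit tangent. Here I must check that the $g_s$-unit normal $w_{s,f}^{-1}n_f$ and tangent $w_{s,f}^{-1}\tau_f$ point in the directions required by Definition~\ref{def:traces-jumps}. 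Multiplying by $\gamma_0+h=\gamma_{f,h}\circ c_f$ shows that $E_J=0$ is precisely \eqref{eq:bernoulli-constant} with constant $\beta$.

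Proposition~\ref{prop:bernoulli-characterization} assumes the kinematic condition. So the argument for $F=0$ proceeds in order: $E_I=0$ gives the kinematic condition, and then $E_J=0$ is equivalent to the weak stationary Euler identity. Conversely, a weak steady sheet with constants $\mu,\beta$ satisfies both characterizations, and reading the identities backwards gives $F(s,u)=0$.

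The normalized statement is then immediate, because $\mathcal F=0$ means $F=0$ together with $\mathcal N(s,u)=\mathcal N(0,u_0)$. The first component of $\mathcal N$ is the mean normal displacement. The second equals $\int_{c_f}\gamma_{f,h}\,d\ell_{g_s}=\Gamma_{c_f,\gamma_{f,h}}$ by $\Theta\,dt=\gamma_{f,h}\,d\ell_{g_s}$, so fixing it fixes the total circulation.

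The main obstacle I expect is regularity and sign bookkeeping rather than any new idea. The weak one-sided traces used in Definition~\ref{def:weak-steady-sheet} must coincide with the pulled-back boundary operators $\mathcal S_f,\mathcal D_f'$. I would settle this using Lemma~\ref{lem:moving-boundary-operators}: it gives one-sided $H^{m+3/2}$ regularity of $\mathcal V_f[\Theta]$, and $\Psi_s$ is smooth, so the traces are classical. I would also verify that the sign in \eqref{eq:metric-tangential-normal-conversion} matches $g(v,\tau)=-\partial_n\psi$ from Proposition~\ref{prop:weak-stream-function} after conformal rescaling.
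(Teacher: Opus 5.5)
Your proposal is correct and follows essentially the same route as the paper. Proposition~\ref{prop:boundary-integral-representation} turns $E_I=0$ into constancy of the common trace, which is the kinematic condition by Proposition~\ref{prop:streamline-characterization}; under that condition, \eqref{eq:metric-tangential-normal-conversion} converts $E_J=0$ into the Bernoulli condition of Proposition~\ref{prop:bernoulli-characterization}; and the normalized statement follows from $\Theta\,dt=\gamma_{f,h}\,d\ell_{g_s}$. Your extra checks (admissibility of $(c_f,\gamma_{f,h})$, orientation of the $g_s$-frame, and agreement of the weak one-sided traces with the pulled-back operators) are bookkeeping that the paper leaves implicit.
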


\begin{proof}
By Proposition~\ref{prop:boundary-integral-representation}, $E_I=0$ is
exactly constancy of the common stream-function trace.  Proposition
\ref{prop:streamline-characterization} makes it equivalent to the
kinematic condition.  Under that condition,
\eqref{eq:metric-tangential-normal-conversion} identifies $E_J=0$ with
\[
 \gamma_{f,h}\left\langle
 g_s\left(v_{c_f,\gamma_{f,h}},
          e^{-\sigma_s\circ c_f}\tau_f\right)
 \right\rangle=\beta.
\]
Proposition~\ref{prop:bernoulli-characterization} makes this Bernoulli condition
equivalent to the weak stationary Euler identity.  This proves the first
claim.  The second follows from the definition
\eqref{eq:normalized-physical-residual}; the second normalization entry is
$\int_{c_f}\gamma_{f,h}\,d\ell_{g_s}$ by
\eqref{eq:moving-density-background}.
\end{proof}

\subsection{Regularity}
\label{subsec:nonlinear-regularity}

The asymmetric spaces in \eqref{eq:physical-spaces} reflect the
one-derivative gain of the single-layer trace and the order-zero target used
for the averaged normal derivative.

\begin{lemma}[Regularity of sheet data]
\label{lem:sheet-data-regularity}
On a sufficiently small choice of $O$, these maps have the indicated
values and are jointly $C^2$ in all displayed variables:
\begin{equation}
\begin{aligned}
  c_f&\in H^{m+1},
  &c_f',|c_f'|,\tau_f,n_f&\in H^m,\\
  w_{s,f}&\in H^{m+1},
  &\Theta(s,f,h)&\in H^m,\\
  \bar\omega(s,f,h)&\in\re,
  &\Psi_s\circ c_f&\in H^{m+1},\\
  \partial_{n_f}\Psi_s\circ c_f&\in H^m.
\end{aligned}
  \label{eq:sheet-data-regularities}
\end{equation}
The reciprocal $|c_f'|^{-1}$ and $w_{s,f}^{-1}$ have the same regularity as
$|c_f'|$ and $w_{s,f}$, respectively.  All bounds are uniform after $O$ and
the parameter interval are replaced by smaller neighborhoods.
\end{lemma}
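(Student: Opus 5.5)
The plan is to treat each entry of \eqref{eq:sheet-data-regularities} as a composition of three kinds of maps. The first kind are bounded linear maps: $f\mapsto fn$ and $f\mapsto f'$. The second kind are the smooth Nemytskii, product and inversion operations of Fact~\ref{fact:periodic-sobolev-calculus}. The third kind are ambient compositions with the $C^3$-in-$s$ family $\sigma_s$ and with $\Psi_s$. Joint $C^2$ regularity then follows from the chain rule.

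\emph{Curve, Jacobian and frame.} Shrink $O$ so that its $f$-projection lies in the neighborhood of Lemma~\ref{lem:moving-boundary-operators}.
\begin{itemize}
\item $c_f=c_0+fn$ is affine in $f$. Since $c_0$ and $n$ are smooth, it is smooth into $H^{m+1}$.
\item $c_f'=(1+\kappa f)\tau+f'n$ is affine and continuous into $H^m$.
\item $|c_f'|^2=(1+\kappa f)^2+(f')^2$ is a polynomial in $(f,f')$, hence smooth into $H^m$ because $H^m$ is an algebra ($m\ge3>1/2$).
\item On the set where $|c_f'|^2\ge\delta>0$, the square root and the reciprocal are smooth Nemytskii maps on $H^m$. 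This uses $H^m\hookrightarrow C^0$, which turns the uniform pointwise lower bound into an open condition in $H^m$.
\end{itemize}
Thus $|c_f'|$, $|c_f'|^{-1}$, and then $\tau_f$ and $n_f$ by \eqref{eq:normal-graph-frame}, are smooth in $f$ with values in $H^m$.

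\emph{Ambient compositions.} This step is the main obstacle. Here one must control $(s,f)\mapsto \phi_s\circ c_f$ for $\phi_s$ equal to $e^{\sigma_s}$, $\Psi_s$, or $\nabla\Psi_s$, with values in $H^{m+1}$ (respectively $H^m$ for the gradient). The map $(s,f)\mapsto\phi_s\circ(c_0+fn)$ is jointly $C^2$ by the following argument.
\begin{itemize}
\item Its $f$-derivatives are $D^j\phi_s\circ c_f[pn,\dots]$, which land in $H^{m+1}$ by the composition clause of Fact~\ref{fact:periodic-sobolev-calculus} and the algebra property.
\item Its $s$-derivatives are $\partial_s^j\phi_s\circ c_f$. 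For $e^{\sigma_s}$ these exist through order three in every $C^k$ seminorm by \eqref{eq:metric-path-topology}. For $\Psi_s$ they exist by Fact~\ref{fact:torsion-green-representation} in every $H^r(\mathbb D)$, hence in $C^k$ of a compact set containing all $c_f$, by Sobolev embedding.
\item Mixed derivatives are of the same form. Continuity of all these derivatives in $(s,f)$ follows from uniform continuity of $D^j\partial_s^i\phi_s$ on that compact set, together with Moser-type estimates in $f$.
\end{itemize}
I would verify the $C^2$ claim by writing the second-order Taylor remainder in integral form and bounding it with the one-higher derivative, which is available since both the metric path and the torsion function are $C^3$ in $s$. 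With this in hand:
\begin{itemize}
\item $w_{s,f}\in H^{m+1}$, and $w_{s,f}^{-1}=e^{-\sigma_s\circ c_f}$ is handled the same way.
\item $\Psi_s\circ c_f\in H^{m+1}$.
\item $\partial_{n_f}\Psi_s\circ c_f=n_f\cdot(\nabla\Psi_s\circ c_f)$ is an $H^m\cdot H^{m+1}$ product, hence lies in $H^m$.
\end{itemize}

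\emph{Remaining entries.} $\Theta=(\gamma_0+h)w_{s,f}|c_f'|$ is a product of an affine $H^m$ term with $H^{m+1}$ and $H^m$ factors, so it is $C^2$ into $H^m$. For $\bar\omega$, the numerator $\int\Theta\,dt$ is a bounded linear functional of $\Theta$. The denominator $|\mathbb D|_{g_s}=\int e^{2\sigma_s}\,dA_{\mathrm e}$ is $C^3$ in $s$ and positive, so the quotient is $C^2$.

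\emph{Uniform bounds.} These follow by shrinking $O$ and the $s$-interval to closed balls and a compact interval on which the lower bounds on $|c_f'|$ and $w_{s,f}$ hold uniformly. The derivatives up to order two are then continuous, and the Moser estimates are uniform on bounded sets.
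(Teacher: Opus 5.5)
Your proposal is correct and follows the same route as the paper. Both arguments use the affine, Nemytskii and product structure of the periodic Sobolev calculus for the curve and frame, and ambient composition with the $C^3$ Fr\'echet metric path and the torsion function for $w_{s,f}$ and $\Psi_s\circ c_f$. Both then treat the normal derivative as an $H^m\cdot H^{m+1}$ product and obtain $\bar\omega$ by integrating and dividing by the positive area.

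You give more detail than the paper on the joint $C^2$ composition step. One small inconsistency: in the ambient-composition paragraph you say $\nabla\Psi_s\circ c_f$ lies in $H^m$. It actually lies in $H^{m+1}$, and your later product argument uses exactly that.
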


\begin{proof}
Equations \eqref{eq:normal-graph-frame} and
Fact~\ref{fact:periodic-sobolev-calculus} give the first row and the smooth
dependence on $f$; the lower bounds in
equations \eqref{eq:normal-graph-frame} give the reciprocal maps.
The Fr\'echet $C^3$ metric path and smooth composition give
$w_{s,f}\in H^{m+1}$.  Since $H^m$ is an algebra,
$(\gamma_0+h)w_{s,f}|c_f'|\in H^m$, proving the density assertion.  Integration
and division by the strictly positive metric area preserve $C^2$ dependence
and give the scalar assertion.

Fact~\ref{fact:torsion-green-representation} gives $C^3$ dependence of
$\Psi_s$ in every finite spatial Sobolev space.  Smooth trace composition
therefore puts $\Psi_s\circ c_f$ in $H^{m+1}$.  Finally,
\[
  \partial_{n_f}\Psi_s\circ c_f
  =n_f\cdot(\nabla_{\mathrm e}\Psi_s)\circ c_f,
\]
is a product of an $H^m$ field and an $H^{m+1}$ field, hence lies in $H^m$.
The same calculations, differentiated twice, give the asserted joint
dependence and uniform bounds.  The third parameter derivative in
\eqref{eq:metric-path-topology} supplies more than the two derivatives used
here and remains available for the second-order reduction.
\end{proof}

\begin{proposition}[Regularity of the nonlinear residual]
\label{prop:nonlinear-residual-regularity}
For $m\geq3$, after shrinking $O$ and the parameter interval if necessary,
\begin{equation}
\begin{aligned}
  E_I(s,u)&\in H^{m+1},
  &E_J(s,u)&\in H^m,\\
  F&\in C^2((-s_0,s_0)\times O;\mathcal Y),\\
  \mathcal N&\in C^2((-s_0,s_0)\times O;\re^2),\\
  \mathcal F&\in C^2((-s_0,s_0)\times O;
    \mathcal Y\times\re^2).
\end{aligned}
  \label{eq:nonlinear-map-regularity}
\end{equation}
Every first, second, and mixed derivative is the derivative of the full map
in \eqref{eq:physical-residual} or
\eqref{eq:normalized-physical-residual}; in particular, derivatives of the
second component of $\mathcal N$ include all derivatives of
$\Theta(s,f,h)$.  The corresponding stream functions have one-sided
$H^{m+3/2}$ regularity, so Definition~\ref{def:weak-steady-sheet} is
realized in this physical class.
\end{proposition}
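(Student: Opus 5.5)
The plan is to decompose $F$ and $\mathcal N$ into compositions and products of maps whose regularity is already known. Then the Banach-algebra property of $H^m$ and the chain rule give the $C^2$ statements.

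\emph{Streamline component.} By \eqref{eq:streamline-residual}, $E_I$ has three pieces, which I treat as follows.
\begin{itemize}
\item The term $\mathcal S_f\Theta(s,f,h)$ is the evaluation of the operator family $f\mapsto\mathcal S_f\in\mathcal L(H^m,H^{m+1})$ at the density $\Theta(s,f,h)\in H^m$. By Lemma~\ref{lem:moving-boundary-operators} the operator family is $C^2$, and by Lemma~\ref{lem:sheet-data-regularity} $\Theta$ is jointly $C^2$ in $(s,f,h)$.
\item The evaluation map $\mathcal L(H^m,H^{m+1})\times H^m\to H^{m+1}$ is bounded bilinear. Composing it with these two $C^2$ maps gives a $C^2$ map into $H^{m+1}$. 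The derivatives are the expected Leibniz expansions, and \eqref{eq:moving-operator-uniform-estimate} bounds them uniformly on the shrunken neighborhood.
\item The term $\bar\omega\,\Psi_s\circ c_f$ is a scalar $C^2$ function times an $H^{m+1}$-valued $C^2$ map, again by Lemma~\ref{lem:sheet-data-regularity}.
\item The term $\mu$ is linear.
\end{itemize}
Hence $E_I\in C^2$ with values in $H^{m+1}$.

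\emph{Bernoulli component.} For $E_J$ I argue the same way with $\mathcal D_f'\in\mathcal L(H^m,H^m)$ and $\partial_{n_f}\Psi_s\circ c_f\in H^m$. The bracket in \eqref{eq:bernoulli-residual} is then $C^2$ into $H^m$. I multiply it by $(\gamma_0+h)w_{s,f}^{-1}$, which lies in $H^m$ because $w_{s,f}^{-1}\in H^{m+1}\subset H^m$. Since $H^m$ is an algebra for $m\ge3>1/2$ (Fact~\ref{fact:periodic-sobolev-calculus}), the product map is bounded bilinear and hence smooth. For $\mathcal N$, the first entry is a bounded linear functional of $f$. The second entry is integration of $\Theta$, which is the continuous linear functional $H^m\to\re$ composed with a $C^2$ map. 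So $\mathcal N$, and therefore $\mathcal F$, is $C^2$. The chain rule shows that every derivative is the derivative of the full composite, including the $s$-dependence of $\Theta$ through $w_{s,f}$. Shrinking $O$ and the $s$-interval keeps all lower bounds uniform.

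\emph{Stream-function regularity.} The stream-function representation \eqref{eq:moving-stream-representation} writes $\psi$ as $\mathcal V_f[\Theta]$ minus $\bar\omega\Psi_s$. Lemma~\ref{lem:moving-boundary-operators} puts the restrictions of $\mathcal V_f[\Theta]$ to $\Omega_f^\pm$ in $H^{m+3/2}(\Omega_f^\pm)$, and $\Psi_s$ is smooth. So the one-sided stream functions lie in $H^{m+3/2}$. The one-sided velocities then lie in $H^{m+1/2}\subset H^1$, which places Definition~\ref{def:weak-steady-sheet} in this class.

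\emph{Main obstacle.} The delicate point is that $C^2$ dependence of an operator-valued map together with $C^2$ dependence of its argument yields $C^2$ dependence of the composite in the right target space. This is where the mixed orders matter: $\mathcal S_f$ gains a derivative and $\mathcal D_f'$ does not. I would handle it by appealing directly to the bounded-bilinear evaluation argument and the uniform multilinear bounds \eqref{eq:moving-operator-uniform-estimate}, so that no differentiability of $\mathcal V_f$ into $H^1_0$ is needed.
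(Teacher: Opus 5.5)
Your proposal is correct and follows essentially the same route as the paper. You obtain the $C^2$ dependence of $F$ by composing the $C^2$ operator families $f\mapsto\mathcal S_f,\mathcal D_f'$ of Lemma~\ref{lem:moving-boundary-operators} with the $C^2$ density $\Theta$ and torsion traces of Lemma~\ref{lem:sheet-data-regularity} and then multiplying in the $H^m$ algebra, you treat $\mathcal N$ as bounded integrals of $C^2$ maps, and you read off the one-sided $H^{m+3/2}$ regularity from Lemma~\ref{lem:moving-boundary-operators} and the smoothness of $\Psi_s$; your bounded-bilinear evaluation argument just spells out the step the paper summarizes as ``composing the operator-valued and density-valued maps.''
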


\begin{proof}
Lemma~\ref{lem:moving-boundary-operators} and
Lemma~\ref{lem:sheet-data-regularity} give
\[
  \mathcal S_f\Theta\in H^{m+1},
  \qquad
  \mathcal D_f'\Theta\in H^m,
\]
with joint $C^2$ dependence after composing the operator-valued and
density-valued maps.  The same sheet-data lemma gives
$\Psi_s\circ c_f\in H^{m+1}$ and
$\partial_{n_f}\Psi_s\circ c_f\in H^m$.  Multiplication by
$\gamma_0+h$ and $w_{s,f}^{-1}$ preserves $H^m$.  Equations
\eqref{eq:streamline-residual} and \eqref{eq:bernoulli-residual} therefore
give the first row of \eqref{eq:nonlinear-map-regularity} and the joint
$C^2$ assertion for $F$.

Both normalization entries are bounded integrals of the $C^2$ maps $f$ and
$\Theta(s,f,h)$, so $\mathcal N$ is jointly $C^2$, with its genuine
$s$-dependence intact.  The assertion for $\mathcal F$ follows by taking the
product map.  Finally, Lemma~\ref{lem:moving-boundary-operators} gives
$H^{m+3/2}$ regularity on each side from the $H^m$ density.  Thus the
one-sided velocity and nonlinear Euler integrals have more regularity than
required in Definition~\ref{def:weak-steady-sheet}.
\end{proof}

\begin{lemma}[Base-state identities]
\label{lem:base-state-identities}
The reference point satisfies
\begin{equation}
  F(0,u_0)=0,
  \qquad
  \mathcal F(0,u_0)=0.
  \label{eq:base-state-zero}
\end{equation}
For a direction
$\dot u=(\dot f,\dot h,\dot\mu,\dot\beta)\in\mathcal X$, the complete
unknown derivative of the normalization at fixed $s=0$ is
\begin{equation}
  D_u\mathcal N(0,u_0)[\dot u]
  =\left(
    \frac1{|c_0|}\int_0^{|c_0|}\dot f\,d t,
    \int_0^{|c_0|}e^{\sigma_0\circ c_0}
    \left[
      \dot h+\gamma_0
      (\kappa+\partial_{n}\sigma_0\circ c_0)\dot f
    \right]d t
  \right).
  \label{eq:physical-normalization-derivative}
\end{equation}
If $\dot\sigma=\left.\partial_s\sigma_s\right|_{s=0}$ while $u=u_0$ is fixed, then
\begin{equation}
  D_s\mathcal N(0,u_0)
  =\left(
    0,
    \int_0^{|c_0|}
    \gamma_0e^{\sigma_0\circ c_0}
    (\dot\sigma\circ c_0)\,d t
  \right).
  \label{eq:parameter-normalization-derivative}
\end{equation}
Thus the normalization derivative entering
\eqref{eq:normalization-compatibility} is the restriction of
\eqref{eq:physical-normalization-derivative}, and the
parameter derivative of $\mathcal F$ includes
\eqref{eq:parameter-normalization-derivative}.
\end{lemma}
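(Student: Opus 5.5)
The plan is to verify the two zero identities directly from the base-state definitions, and then to differentiate the explicit formula \eqref{eq:moving-density-background} for $\Theta$ term by term at $(s,f,h)=(0,0,0)$. Here Proposition~\ref{prop:nonlinear-residual-regularity} already guarantees that $\mathcal N$ is $C^2$, so its Fr\'echet derivatives may be computed as directional (G\^ateaux) derivatives.

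\textbf{Zero identities.} At $u_0=(0,0,\mu_0,\beta_0)$ we have $c_f=c_0$ and $\gamma_{f,h}=\gamma_0$. By Definition~\ref{def:reference-state}, $(c_0,\gamma_0)$ is a weak steady sheet for $g_0$ with streamline constant $\mu_0$ and Bernoulli constant $\beta_0$. Proposition~\ref{prop:residual-characterization} therefore gives $F(0,u_0)=0$. The second component of $\mathcal F(0,u_0)$ is $\mathcal N(0,u_0)-\mathcal N(0,u_0)=0$ by definition \eqref{eq:normalized-physical-residual}. Hence $\mathcal F(0,u_0)=0$.

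\textbf{Unknown derivative.} The first entry of $\mathcal N$ is linear in $f$ and independent of $(h,\mu,\beta)$. Its derivative in the direction $\dot u$ is therefore $|c_0|^{-1}\int\dot f\,dt$. The second entry, $\int\Theta\,dt$, does not involve $\mu$ or $\beta$. We differentiate the product $(\gamma_0+h)\,w_{0,f}\,|c_f'|$ at $f=0$, $h=0$, where $w_{0,0}=e^{\sigma_0\circ c_0}$ and $|c_0'|=1$ because $t$ is Euclidean arclength. Three pieces are needed.
\begin{itemize}
\item The $h$-derivative is $e^{\sigma_0\circ c_0}\dot h$.
\item For the weight, $w_{0,f}=e^{\sigma_0(c_0+fn)}$, and the chain rule gives $D_fw[\dot f]=e^{\sigma_0\circ c_0}\,(\nabla\sigma_0\circ c_0\cdot n)\,\dot f=e^{\sigma_0\circ c_0}(\partial_n\sigma_0\circ c_0)\dot f$.
\item For the Jacobian, \eqref{eq:normal-graph-frame} gives $|c_f'|^2=(1+\kappa f)^2+(f')^2$. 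Differentiating at $f=0$ yields $2\,D_f|c_f'|[\dot f]=2\kappa\dot f+2f'\dot f'|_{f=0}=2\kappa\dot f$, so $D_f|c_f'|[\dot f]=\kappa\dot f$.
\end{itemize}
The product rule then gives the integrand $e^{\sigma_0\circ c_0}[\dot h+\gamma_0(\kappa+\partial_n\sigma_0\circ c_0)\dot f]$, which is \eqref{eq:physical-normalization-derivative}.

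\textbf{Parameter derivative.} With $u=u_0$ fixed, only $w_{s,0}=e^{\sigma_s\circ c_0}$ depends on $s$. The first entry of $\mathcal N$ is constant in $s$. The $C^3$ Fr\'echet path of Definition~\ref{def:prescribed-metric-deformation} lets us differentiate under the integral, giving $\partial_s w_{s,0}|_{s=0}=e^{\sigma_0\circ c_0}\,\dot\sigma\circ c_0$. Multiplying by $\gamma_0$ (since $h=0$) and integrating yields \eqref{eq:parameter-normalization-derivative}.

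\textbf{Remaining claims.} The closing statements are bookkeeping. The $\mathcal N$-block of $D_u\mathcal F(0,u_0)$ is exactly \eqref{eq:physical-normalization-derivative}, restricted to whatever subspace is used later. The subtracted constant $\mathcal N(0,u_0)$ contributes nothing to any derivative, so $D_s\mathcal F$ contains \eqref{eq:parameter-normalization-derivative} as its second block. The only point needing care is to keep the Jacobian variation $\kappa\dot f$ and the weight variation $\partial_n\sigma_0\,\dot f$ as separate contributions, and to note that the $f'\dot f'$ term vanishes at the base point. There is no real obstacle.
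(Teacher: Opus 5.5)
Your proposal is correct and follows the paper's proof essentially step for step. Both obtain $F(0,u_0)=0$ from the residual characterization of the steady reference sheet and read the normalization zero off the definition of $\mathcal F$. Both then apply the product rule to $\Theta=(\gamma_0+h)w_{s,f}|c_f'|$, using $D_f|c_f'|[\dot f]=\kappa\dot f$, $D_fw_{0,f}[\dot f]=e^{\sigma_0\circ c_0}(\partial_n\sigma_0\circ c_0)\dot f$, and the fact that only $w_{s,0}$ depends on $s$ at fixed $u_0$. Your explicit differentiation of $|c_f'|^2$, with the $f'\dot f'$ term vanishing at $f=0$, supplies a detail the paper leaves implicit.
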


\begin{proof}
The reference sheet is steady with constants $\mu_0,\beta_0$ by
Definition~\ref{def:reference-state}.  Proposition
\ref{prop:residual-characterization} gives $F(0,u_0)=0$, and the definition
of $\mathcal F$ gives its base zero.

At $f=0$, equation \eqref{eq:normal-graph-frame} gives
$\left.D_f|c_f'|\right|_{f=0}[\dot f]=\kappa\dot f$, while the chain rule gives
\[
  D_fw_{0,f}|_0[\dot f]
  =e^{\sigma_0\circ c_0}(\partial_{n}\sigma_0\circ c_0)\dot f.
\]
Differentiating all three factors in
$\Theta=(\gamma_0+h)w_{s,f}|c_f'|$ proves
\eqref{eq:physical-normalization-derivative}; the scalar directions
$\dot\mu,\dot\beta$ do not enter $\mathcal N$.  At fixed $u_0$, only the
factor $w_{s,0}$ depends on $s$, and its derivative is
$e^{\sigma_0\circ c_0}(\dot\sigma\circ c_0)$.  This proves
\eqref{eq:parameter-normalization-derivative}.
\end{proof}
\section{Main results}
\label{sec:main-results}

We state the results on the physical spaces of
Definition~\ref{def:physical-sobolev-spaces}.  Thus, for a fixed integer
$m\geq3$,
\[
 \mathcal X
 =H^{m+1}(\re/|c_0|\ze)\times
   H^m(\re/|c_0|\ze)
   \times\re^2,
 \qquad
 \mathcal Y
 =H^{m+1}(\re/|c_0|\ze)\times
   H^m(\re/|c_0|\ze).
\]
The unknown is $u=(f,h,\mu,\beta)$ and
$u_0=(0,0,\mu_0,\beta_0)$.  The residuals from
Definitions~\ref{def:steady-sheet-residual}
and~\ref{def:normalized-residual} have types
\[
 F\colon(-s_0,s_0)\times O\longrightarrow\mathcal Y,
 \qquad
 \mathcal F\colon(-s_0,s_0)\times O
       \longrightarrow\mathcal Y\times\re^2.
\]
In particular,
\[
 \mathcal F(s,u)
 =\bigl(F(s,u),\mathcal N(s,u)-\mathcal N(0,u_0)\bigr),
\]
so the parameter dependence of the physical normalization is part of the
map.

The hypotheses for this section are as follows.  The prescribed
map $s\mapsto\sigma_s$ is $C^3$ with values in
$C^\infty(\overline{\mathbb D})$ and
$g_s=e^{2\sigma_s}g_{\mathrm e}$.  The curve $c_0$ is a smooth embedded
separating curve parametrized by Euclidean arclength, the strength
$\gamma_0$ is smooth, and $(c_0,\gamma_0)$ is the
reference weak steady sheet.  The normal-graph neighborhood $O\subset
\mathcal X$ is chosen so that all curves remain embedded and all objects in
Section~\ref{sec:problem-setting} are defined.  Finally, the one-sided scalar
tangential velocities obey the pointwise stagnation-free condition
\begin{equation}
 |v_{\tau,0}^{+}(t)|+|v_{\tau,0}^{-}(t)|>0
 \quad\text{for every }t\in\re/|c_0|\ze.
 \label{eq:standing-stagnation-free}
\end{equation}
By Proposition~\ref{prop:nonlinear-residual-regularity}, $F$, $\mathcal N$,
and $\mathcal F$ are jointly $C^2$ on a possibly smaller product neighborhood of
$(0,u_0)$.

\begin{theorem}[Fredholm property of the unnormalized linearization]
\label{thm:unnormalized-fredholm}
Let $m\geq3$.  Let $s\mapsto\sigma_s$ be $C^3$ with values in
$C^\infty(\overline{\mathbb D})$, set $g_s=e^{2\sigma_s}g_{\mathrm e}$,
and let $(c_0,\gamma_0)$ be a smooth embedded separating reference weak
steady sheet, with $c_0$ parametrized by Euclidean arclength and
$\gamma_0\in C^\infty(c_0)$.  Let $O$ be the admissible
normal-graph neighborhood from
Definition~\ref{def:bvsmd-perturbation-data}, and assume
\eqref{eq:standing-stagnation-free}.  The full derivative is
\[
 L\coloneqq D_uF(0,u_0)\colon\mathcal X\longrightarrow\mathcal Y.
\]
It is Fredholm and
\[
 \indop L=2.
\]
Its kernel is finite dimensional and consists of smooth data.  Under the
periodic $L^2(d t)$ dual pairing, its cokernel is represented by a
finite-dimensional space of smooth formal-adjoint null vectors.  These
spaces and their dimensions are independent of the admissible Sobolev
exponent used to realize the same physical linearization.
\end{theorem}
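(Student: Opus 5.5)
The plan is to split $L=D_uF(0,u_0)$ into its $(f,h)$ block $L_{fh}\colon H^{m+1}\times H^m\to\mathcal Y$ and the constant columns $\dot\mu\mapsto(-\dot\mu,0)$ and $\dot\beta\mapsto(0,-\dot\beta)$. By Fact~\ref{fact:finite-dimensional-index} it suffices to show that $L_{fh}$ is Fredholm of index zero; adjoining the two scalar variables then gives index two. To compute $L_{fh}$, I differentiate \eqref{eq:streamline-residual}--\eqref{eq:bernoulli-residual} at $f=0$ using Lemma~\ref{lem:moving-boundary-operators}. In the $h$-direction, $\Theta$ changes by $w\dot h$ with $w=e^{\sigma_0\circ c_0}$. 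This gives $\mathcal S(w\dot h)$ in $E_I$, and in $E_J$ it gives $-\gamma_0w^{-1}\mathcal D'(w\dot h)$ plus the multiplier $-w^{-1}\langle\partial_n\psi_0\rangle\dot h=\langle v_0\rangle_\tau\dot h$, using \eqref{eq:metric-tangential-normal-conversion}. The $\bar\omega$ terms contribute finite-rank pieces.

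In the $f$-direction, the shape derivative of the trace of the fixed potential is the normal derivative along the moving point. Since $\psi_0$ is continuous with one-sided normal derivatives, the derivative of $E_I$ is $\langle\partial_n\psi_0\rangle\dot f$ plus the first shape variation of $\mathcal S_f$, which is a double-layer-type term. This yields the principal entry $-w\langle v_0\rangle_\tau\dot f$ plus a lower-order remainder. For $E_J$, differentiating $\mathcal D'_f\Theta$ in $f$ gives the hypersingular part. Moving the curve normally by $\dot f$ is equivalent to adding a dipole layer of strength $[\psi]$-type density $\gamma_0w\dot f$ (the jump $-\gamma_0 w$ of the normal derivative times $\dot f$). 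Its averaged normal derivative is $\mathcal T(\gamma_0w\dot f)$ modulo lower order, which produces $M_{-\gamma_0w^{-1}}\mathcal T M_{\gamma_0w}$. I would verify this identification by the fixed-domain transmission representation from the proof of Lemma~\ref{lem:moving-boundary-operators}. There, $D_f\upsilon_f=-\mathcal P_0^{-1}D\mathcal P[\dot f]\mathcal P_0^{-1}$, and the normal-derivative trace of the corrector is written as a layer potential, with the curvature and $\partial_t$-commutator terms lower order. This derivation of the full linearized transmission problem and the isolation of \eqref{eq:intro-principal-matrix} is the step I expect to be the main obstacle. In particular, the sign conventions and the claim that every discarded term gains a derivative in the right slot must be checked term by term against \eqref{eq:fixed-layer-jumps}.

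With the principal part $P$ in hand, Lemma~\ref{lem:compact-remainders} lets me replace $\mathcal S,\mathcal T$ by $\mathcal S_0,\mathcal T_0$. The differences $L_{fh}-P$ are then compact from $H^{m+1}\times H^m$ to $\mathcal Y$, entry by entry with the appropriate orders. For $P$, I build a parametrix modulo compacts using Fact~\ref{fact:model-operator-identity} and commutator gains. The product of the off-diagonal entries is $M_{-w}\mathcal S_0\mathcal T_0 M_{\gamma_0^2 w}\equiv \tfrac14 M_{w\gamma_0^2}$ modulo compacts. The Schur-complement type determinant is therefore the multiplier $-w(\langle v_0\rangle_\tau^2+\gamma_0^2/4)$, which is nonvanishing by \eqref{eq:standing-stagnation-free} and \eqref{eq:intro-effective-factor}. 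The parametrix is the explicit adjugate
\[
Q=\begin{pmatrix}M_{\langle v_0\rangle_\tau}&-\mathcal S_0M_w\\ M_{\gamma_0w^{-1}}\mathcal T_0M_{\gamma_0w}&M_{-w\langle v_0\rangle_\tau}\end{pmatrix}
\]
composed with multiplication by the reciprocal of the effective factor. One checks $QP-I$ and $PQ-I$ are compact, so $P$, and hence $L_{fh}$, is Fredholm. For index zero, I use the operator-norm continuous homotopy obtained by replacing $\langle v_0\rangle_\tau$ with $\lambda\langle v_0\rangle_\tau$ and $\gamma_0$ by $(1-\lambda)+\lambda\gamma_0$, chosen so that the effective factor stays nonzero. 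If $\gamma_0$ changes sign, I instead homotope $\gamma_0^2$ within positive functions while keeping the sum of squares positive. The homotopy ends at a diagonal-plus-compact operator, or at the constant-coefficient model, which on Fourier blocks is visibly index zero.

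For the remaining claims, the parametrix works for every admissible exponent $m$ and also gives elliptic regularity. If $Lu\in$ smooth, then $u$ is smooth, so the kernel consists of smooth data and is independent of $m$. The cokernel is handled with the formal $L^2(dt)$ adjoint, which has the same transposed principal structure and is Fredholm by the same argument. Its kernel is smooth and annihilates $\ran L$, giving the representation of the cokernel. The index being $2$ for all $m$, together with the $m$-independence of the kernel, yields $m$-independence of the cokernel dimension.
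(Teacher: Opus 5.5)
Your outline matches the paper on most structural points. The entries you isolate ($-w\langle v_0\rangle_\tau$, $\langle v_0\rangle_\tau$, $\mathcal S(w\,\cdot)$, and $-\gamma_0w^{-1}\mathcal T(\gamma_0w\,\cdot)$ from the dipole density $\gamma_0w\dot f$) are those of Proposition~\ref{prop:complete-linearization}. Your adjugate and effective factor are those of Proposition~\ref{prop:two-sided-parametrix}, and the scalar columns are handled by the same bookkeeping. (The off-diagonal product is $\mathcal S_0M_{-\gamma_0}\mathcal T_0M_{\gamma_0w}$ rather than what you wrote, but your conclusion $\tfrac14M_{w\gamma_0^2}$ is right.)

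Two steps differ from the paper.
\begin{itemize}
\item \emph{Derivation of the derivative.} The paper does not differentiate $\mathcal P_f^{-1}$. It takes the Eulerian variation $\varphi$, identifies it by uniqueness for the transmission problem \eqref{eq:physical-linearized-transmission}, and uses the streamline identity $[\partial_n^2\psi_0]=\kappa\gamma_0w$ to convert the direct Green-kernel variation into fixed layer potentials. This is a cleaner path through the step you flag as the main obstacle.
\item \emph{Index zero.} The paper conjugates the $f$-slot by the fixed order reduction $J_m$. Because the multipliers of $\mathcal S_0$ and $\mathcal T_0$ depend only on $|k|$, the block becomes $M_{\mathfrak P_\circ}+C_\circ$ with the pointwise matrix \eqref{eq:pointwise-equal-order-matrix}, whose determinant is the effective factor. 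Index zero is then immediate, with no coefficient path. The same identity, with a one-derivative-gaining remainder \eqref{eq:equal-order-one-gain}, also drives the kernel and adjoint bootstraps of Lemma~\ref{lem:kernel-cokernel-regularity}.
\end{itemize}

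Your coefficient homotopy can be made to work, but not along the first path you give. Under $\gamma_0\mapsto(1-\lambda)+\lambda\gamma_0$, the factor $-w\bigl(\lambda^2\langle v_0\rangle_\tau^2+((1-\lambda)+\lambda\gamma_0)^2/4\bigr)$ vanishes wherever $\langle v_0\rangle_\tau=0$ and $\gamma_0<0$. This needs no sign change of $\gamma_0$. In the radial example, constant $\gamma<0$ with $U=\tfrac12$ gives $\langle v_0\rangle_\tau\equiv0$. At $\lambda=1/(1-\gamma)$ the principal operator is then $\begin{pmatrix}0&\mathcal S_0M_w\\0&0\end{pmatrix}$, which is not Fredholm.

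The fix is to use the $\gamma_0^2$ version in every case. This is justified by
\[
M_{-\gamma_0w^{-1}}\mathcal T_0M_{\gamma_0w}=M_{-\gamma_0^2}\mathcal T_0+(\text{order }0).
\]
With $b_\lambda=(1-\lambda)+\lambda\gamma_0^2$, the factor $-w(\lambda^2\langle v_0\rangle_\tau^2+b_\lambda/4)$ never vanishes, and the $\lambda=0$ off-diagonal model has index zero. Your route gives the familiar elliptic-homotopy picture; the paper's sidesteps any path or winding question and reuses the conjugation for regularity.

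For the regularity bootstrap, you need the parametrix remainder to gain one derivative at every Sobolev level, including negative levels for the adjoint null vectors, not merely to be compact. This does hold term by term. Your shortcut $\dim\coker L=\dim\ker L-2$ for $m$-independence of the cokernel dimension is fine.
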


The exact derivative used here is Proposition~\ref{prop:complete-linearization};
the Fredholm and index arguments are
Proposition~\ref{prop:function-block-fredholm} and the finite-dimensional
bookkeeping of Fact~\ref{fact:finite-dimensional-index}.  Sobolev-order
independence is proved in Lemma~\ref{lem:kernel-cokernel-regularity}.

\begin{theorem}[Fredholm property of the normalized linearization]
\label{thm:normalized-fredholm}
Assume all hypotheses of Theorem~\ref{thm:unnormalized-fredholm}.  Then
\[
 D_u\mathcal F(0,u_0)\colon\mathcal X
       \longrightarrow\mathcal Y\times\re^2
\]
is Fredholm of index zero.

Suppose, in addition, that the physical normalization is compatible with
$L$ in the sense that
\begin{equation}
 D_u\mathcal N(0,u_0)\big|_{\ker L}\colon
 \ker L\longrightarrow\re^2
 \quad\text{is onto}.
 \label{eq:normalization-compatibility}
\end{equation}
Then
\begin{align}
 \ker D_u\mathcal F(0,u_0)
 &=\ker L\cap\ker D_u\mathcal N(0,u_0),
 \label{eq:normalized-kernel}\\
 \ran D_u\mathcal F(0,u_0)
 &=\ran L\times\re^2.
 \label{eq:normalized-range}
\end{align}
Projection to the $\mathcal Y$ factor identifies
$\coker D_u\mathcal F(0,u_0)$ canonically with
$\coker L$.  In
particular, after \eqref{eq:normalization-compatibility} has been verified,
the degeneracy index
\begin{equation}
 d\coloneqq\dim\coker L
   =\dim\coker D_u\mathcal F(0,u_0)
   =\dim\ker D_u\mathcal F(0,u_0)
 \label{eq:degeneracy-index}
\end{equation}
is well defined.
\end{theorem}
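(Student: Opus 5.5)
The plan is to reduce everything to Theorem~\ref{thm:unnormalized-fredholm} and the abstract Proposition~\ref{prop:fredholm-normalization}, taking $A=L$, $\nu=D_u\mathcal N(0,u_0)$ and $\mathsf q=2$.

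First, I will record that $D_u\mathcal F(0,u_0)=(L,\nu)$. This follows directly from definition \eqref{eq:normalized-physical-residual}, because the constant $\mathcal N(0,u_0)$ has zero derivative. By Lemma~\ref{lem:base-state-identities}, $\nu$ is given explicitly by \eqref{eq:physical-normalization-derivative}. Its first entry is a mean of $\dot f$, bounded on $H^{m+1}$. Its second entry integrates $\dot h$ and $\dot f$ against smooth weights, so it is bounded on $H^m\times H^{m+1}$. Hence $\nu\colon\mathcal X\to\re^2$ is a bounded finite-rank operator.

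Second, for the index-zero assertion alone I will not assume \eqref{eq:normalization-compatibility}. Theorem~\ref{thm:unnormalized-fredholm} gives that $L$ is Fredholm of index $2$. The second half of Fact~\ref{fact:finite-dimensional-index}, appending $\mathsf q=2$ bounded scalar equations, then makes $(L,\nu)$ Fredholm of index $2-2=0$.

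Third, assume \eqref{eq:normalization-compatibility}. Proposition~\ref{prop:fredholm-normalization} applies verbatim and yields \eqref{eq:normalized-kernel}. It also shows that the map $[(y,a)]\mapsto[y]$ is an isomorphism $\coker D_u\mathcal F(0,u_0)\to\coker L$; this map is induced by projection to the $\mathcal Y$ factor, which is the asserted canonical identification. For the range identity \eqref{eq:normalized-range} I will re-use the injectivity step of that proof. The inclusion $\subseteq\ran L\times\re^2$ is trivial. Conversely, let $y=Lx$ and $a\in\re^2$. By \eqref{eq:normalization-compatibility} there is $k\in\ker L$ with $\nu k=a-\nu x$, and then $(y,a)=(L,\nu)(x+k)$.

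Finally, the index is zero, so $\dim\ker D_u\mathcal F(0,u_0)=\dim\coker D_u\mathcal F(0,u_0)=\dim\coker L$. This makes $d$ in \eqref{eq:degeneracy-index} well defined and finite.

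There is no real obstacle here: the analytic content is entirely in Theorem~\ref{thm:unnormalized-fredholm}. The only points needing care are that $\nu$ is bounded at the mixed Sobolev orders of $\mathcal X$, which uses the smoothness of $\gamma_0$, $\kappa$ and $\sigma_0$, and that the identification of cokernels is the one induced by projection rather than an abstract dimension count.
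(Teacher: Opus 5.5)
Your proposal is correct and follows essentially the paper's route. The index-zero claim comes from appending two bounded scalar rows to the index-two operator $L$; the paper phrases this as a finite-rank perturbation of $x\mapsto(Lx,0)$. The kernel, range and cokernel identities then come from the surjectivity argument of Proposition~\ref{prop:fredholm-normalization}, exactly as you wrote. The paper also cites Lemma~\ref{lem:kernel-cokernel-regularity}, for independence of the Sobolev exponent. Since $d=\dim\coker L$, that already follows from Theorem~\ref{thm:unnormalized-fredholm}, so your reduction covers it.
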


The algebra behind \eqref{eq:normalized-kernel}--\eqref{eq:normalized-range}
is Proposition~\ref{prop:fredholm-normalization}; its physical realization is
\eqref{eq:physical-normalization-derivative}--%
\eqref{eq:normalized-finite-dimensional-counts}.

\begin{theorem}[Lyapunov--Schmidt reduction for normalized steady sheets]
\label{thm:physical-lyapunov-schmidt}
Assume the hypotheses of Theorem~\ref{thm:unnormalized-fredholm} and the
compatibility condition \eqref{eq:normalization-compatibility}.  Put
$\mathcal K=\ker D_u\mathcal F(0,u_0)$ and choose a closed complement $\mathcal X_1$
so that
$\mathcal X=\mathcal K\oplus\mathcal X_1$.  Choose a finite-dimensional complement
$\mathcal C$ of $\ran D_u\mathcal F(0,u_0)$ in
$\mathcal Y\times\re^2$, and let $P$ and $Q$ be the bounded
projections onto $\ran D_u\mathcal F(0,u_0)$ and $\mathcal C$,
respectively.  Thus $P+Q$ is the identity and
$\ker Q=\ran D_u\mathcal F(0,u_0)$.

There are neighborhoods of $0$ in $\re\times\mathcal K$ and a unique
$C^2$ map
\[
 \xi\colon(s,\zeta)\longmapsto\xi(s,\zeta)\in\mathcal X_1,
 \qquad
 \xi(0,0)=0,
 \qquad
 D_\zeta\xi(0,0)=0,
\]
which solves the complementary equation
\begin{equation}
 P\mathcal F\bigl(s,u_0+\zeta+\xi(s,\zeta)\bigr)=0.
 \label{eq:main-range-equation}
\end{equation}
The reduced map
\begin{equation}
 B(s,\zeta)
 \coloneqq Q\mathcal F\bigl(s,u_0+\zeta+\xi(s,\zeta)\bigr)
 \in\mathcal C
 \label{eq:main-reduced-map}
\end{equation}
has domain in $\re\times\mathcal K$ and target
$\mathcal C\simeq\coker D_u\mathcal F(0,u_0)$, with
$\dim\mathcal K=\dim\mathcal C=d$.  In a product neighborhood of $(0,u_0)$, define
\[
 (s,\zeta)\longmapsto
 \bigl(s,u_0+\zeta+\xi(s,\zeta)\bigr).
\]
This assignment is a bijection from the zeros of $B$ to the zeros of
$\mathcal F$.
Consequently it is also a bijection to the normalized weak steady sheets in
the chosen normal chart.
\end{theorem}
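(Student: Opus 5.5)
The argument specialises Proposition~\ref{prop:abstract-lyapunov-schmidt} to $\Phi=\mathcal F$, with $X=\mathcal X$, $Y=\mathcal Y\times\re^2$, $I=(-s_0,s_0)$, and $x_0=u_0$. First we check the hypotheses.
\begin{itemize}
\item[(i)] $\mathcal F$ is $C^2$ on a product neighbourhood of $(0,u_0)$ by Proposition~\ref{prop:nonlinear-residual-regularity}.
\item[(ii)] $\mathcal F(0,u_0)=0$ by Lemma~\ref{lem:base-state-identities}.
\item[(iii)] $A=D_u\mathcal F(0,u_0)$ is Fredholm of index zero by Theorem~\ref{thm:normalized-fredholm}.
\end{itemize}
The splittings are the given ones: $\mathcal X=\mathcal K\oplus\mathcal X_1$ and $\mathcal Y\times\re^2=\ran A\oplus\mathcal C$. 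The complement $\mathcal X_1$ is closed by assumption. $\ran A$ is closed because $A$ is Fredholm, and $\mathcal C$ is finite-dimensional. Hence $P$ and $Q$ are bounded projections.

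The abstract proposition then gives a unique $C^2$ map $\xi=w$ with $\xi(0,0)=0$ that solves \eqref{eq:main-range-equation}. It also gives the equivalence \eqref{eq:abstract-ls-equivalence}, which is exactly the statement that $(s,\zeta)\mapsto(s,u_0+\zeta+\xi(s,\zeta))$ maps zeros of $B$ onto zeros of $\mathcal F$ in the neighbourhood.

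Three points are not covered by the abstract statement and need a separate argument.

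\textbf{The identity $D_\zeta\xi(0,0)=0$.} Differentiate $P\mathcal F(s,u_0+\zeta+\xi(s,\zeta))=0$ in $\zeta$ at the origin. This gives
\[
PA(\dot\zeta+D_\zeta\xi(0,0)\dot\zeta)=0 .
\]
Since $\dot\zeta\in\ker A$ and $P$ is the identity on $\ran A$, this reduces to $A\,D_\zeta\xi(0,0)\dot\zeta=0$. Because $D_\zeta\xi(0,0)\dot\zeta$ lies in $\mathcal X_1$ and $A|_{\mathcal X_1}$ is injective, we get $D_\zeta\xi(0,0)=0$.

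\textbf{Bijectivity of the assignment.} Injectivity is immediate: the $\mathcal K$- and $\mathcal X_1$-components of $u-u_0$ are unique under the direct sum, so $u$ determines $\zeta$. Surjectivity onto zeros of $\mathcal F$ comes from the "$\Rightarrow$" direction of \eqref{eq:abstract-ls-equivalence}. Concretely, write $u-u_0=\zeta+x_1$. Then $P\mathcal F=0$, and uniqueness of $\xi$ forces $x_1=\xi(s,\zeta)$. For this we take the product neighbourhood of $(0,u_0)$ small enough that its projection lies inside the uniqueness neighbourhood of the implicit function theorem.

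\textbf{Dimensions and the final identification.} We have $\dim\mathcal K=\dim\mathcal C=d$ from \eqref{eq:degeneracy-index}, since $\mathcal C\simeq\coker A$ canonically. The last claim, that zeros of $\mathcal F$ correspond to normalized weak steady sheets in the chart, is Proposition~\ref{prop:residual-characterization}.

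The only delicate step is this neighbourhood bookkeeping. The neighbourhood in $(s,\zeta)$ and the product neighbourhood of $(0,u_0)$ must be chosen compatibly, so that both directions of the correspondence stay within the region where $\xi$ is unique. We handle this by first fixing the implicit-function neighbourhoods and then shrinking the product neighbourhood to lie inside their image under the (continuous) splitting map.
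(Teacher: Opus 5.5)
Your proposal is correct and follows essentially the paper's argument. The paper applies the parameter-dependent implicit function theorem directly to $(s,\zeta,\xi)\mapsto P\mathcal F(s,u_0+\zeta+\xi)$, using the isomorphism $D_u\mathcal F(0,u_0)|_{\mathcal X_1}\colon\mathcal X_1\to\ran D_u\mathcal F(0,u_0)$; this is exactly the content of Proposition~\ref{prop:abstract-lyapunov-schmidt}, which you cite instead. It then obtains $D_\zeta\xi(0,0)=0$ by the same differentiation, and gets the bijection from $P+Q=\id$, implicit-function uniqueness, and Proposition~\ref{prop:residual-characterization}, with neighbourhood bookkeeping no more detailed than yours.
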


The physical construction, including the $s$-dependent normalization, is
carried out in Subsection~\ref{subsec:physical-ls-proof}; see
Definition~\ref{def:physical-reduced-equation}.

\begin{corollary}[Nondegenerate continuation from the reference state]
\label{cor:nondegenerate-continuation}
Assume all hypotheses of Theorem~\ref{thm:unnormalized-fredholm}, suppose
\eqref{eq:normalization-compatibility} holds, and let $d=0$.  Then there are
$0<\varepsilon<s_0$, a sufficiently small neighborhood of $u_0$ contained
in $O$, and a unique $C^2$ map $u\colon(-\varepsilon,\varepsilon)\longrightarrow O$
whose values lie in that neighborhood and such that
\[
 u(0)=u_0,
 \qquad
 \mathcal F(s,u(s))=0.
\]
The uniqueness is among solutions with $|s|<\varepsilon$ and $u$ in that
neighborhood, and hence is relative to the chosen
normal chart and normalization.  The corresponding sheets have the fixed
mean normal displacement and fixed total circulation prescribed by
$\mathcal N(0,u_0)$.  Their initial derivative is
\begin{equation}
 u'(0)=-\bigl[D_u\mathcal F(0,u_0)\bigr]^{-1}
          D_s\mathcal F(0,u_0),
 \qquad
 D_s\mathcal F(0,u_0)
 =\bigl(D_sF(0,u_0),D_s\mathcal N(0,u_0)\bigr).
 \label{eq:implicit-branch-derivative}
\end{equation}
\end{corollary}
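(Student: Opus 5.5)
The plan is to reduce the corollary to the abstract nondegenerate reduction, Corollary~\ref{cor:nondegenerate-abstract-reduction}, applied to $\Phi=\mathcal F$ on $I\times U=(-s_0,s_0)\times O$. The branch derivative then comes from Corollary~\ref{cor:implicit-branch-derivative}.

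First, the hypotheses. By Proposition~\ref{prop:nonlinear-residual-regularity}, after shrinking the interval and $O$, the map $\mathcal F$ is jointly $C^2$ into $\mathcal Y\times\re^2$. By Lemma~\ref{lem:base-state-identities}, $\mathcal F(0,u_0)=0$. Theorem~\ref{thm:normalized-fredholm} shows that $A=D_u\mathcal F(0,u_0)$ is Fredholm of index zero. Under \eqref{eq:normalization-compatibility}, it also identifies $\coker A$ with $\coker L$ and gives $\dim\ker A=\dim\coker A=d$.

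With $d=0$, both $\ker A$ and $\coker A$ vanish. One could stop at index zero plus trivial kernel, but the cleaner route uses $\coker A\cong\coker L=0$ together with \eqref{eq:normalized-range}. Then $\ran A=\ran L\times\re^2=\mathcal Y\times\re^2$, so $A$ is a bounded bijection and its inverse is bounded by the open mapping theorem.

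Next, Fact~\ref{fact:parameter-implicit-function} gives $\varepsilon>0$, a neighborhood $U_0\subset O$ of $u_0$, and a unique $C^2$ map $u$ with $u(0)=u_0$ and $\mathcal F(s,u(s))=0$. The uniqueness is exactly the statement that every zero of $\mathcal F$ in $(-\varepsilon,\varepsilon)\times U_0$ lies on the graph. I shrink $\varepsilon$ below $s_0$ as needed. Since the normal graph and $\mathcal N$ are the fixed chart and normalization, this uniqueness is chart-relative. Proposition~\ref{prop:residual-characterization} then translates $\mathcal F(s,u(s))=0$ into steadiness for $g_s$ together with $\mathcal N(s,u(s))=\mathcal N(0,u_0)$, that is, fixed mean normal displacement and fixed circulation.

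Finally, the derivative formula \eqref{eq:implicit-branch-derivative} is Corollary~\ref{cor:implicit-branch-derivative} applied to $\Phi=\mathcal F$. The splitting $D_s\mathcal F=(D_sF,D_s\mathcal N)$ is immediate from \eqref{eq:normalized-physical-residual}, because the constant $\mathcal N(0,u_0)$ has zero derivative. Lemma~\ref{lem:base-state-identities} supplies the explicit second component \eqref{eq:parameter-normalization-derivative}.

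I expect no real obstacle. The only point needing care is to use the cokernel identification from Theorem~\ref{thm:normalized-fredholm}, which depends on \eqref{eq:normalization-compatibility}, to get surjectivity of $A$. Without that identification, $d=0$ would only have been defined as $\dim\coker L$. It is also worth noting that the implicit function theorem needs only $C^2$ regularity of $\mathcal F$, which Proposition~\ref{prop:nonlinear-residual-regularity} provides.
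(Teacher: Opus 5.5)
Your proposal is correct and follows the paper's own proof. With $d=0$, Theorem~\ref{thm:normalized-fredholm} makes $D_u\mathcal F(0,u_0)$ an isomorphism; the parameter-dependent implicit function theorem applied directly to $\mathcal F$ gives the locally unique $C^2$ branch; differentiating $\mathcal F(s,u(s))=0$ yields \eqref{eq:implicit-branch-derivative}; and Proposition~\ref{prop:residual-characterization} supplies the physical interpretation, while you additionally spell out the bijectivity and open-mapping step that the paper leaves implicit.
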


The proof is the $\mathcal K=\{0\}$ case of
Theorem~\ref{thm:physical-lyapunov-schmidt} and is given at the end of
Subsection~\ref{subsec:physical-ls-proof}.

\begin{theorem}[First- and second-order continuation obstructions]
\label{thm:reduced-obstructions}
Assume the hypotheses of Theorem~\ref{thm:physical-lyapunov-schmidt}, and
let $B$ be \eqref{eq:main-reduced-map}.  For $\eta\in\mathcal K$, the quadratic
reduced expression is
\begin{equation}
 \frac12B_{\zeta\zeta}(0,0)[\eta,\eta]
   +B_{s\zeta}(0,0)[\eta]
   +\frac12B_{ss}(0,0)
 \in\mathcal C.
 \label{eq:quadratic-reduced-expression}
\end{equation}
If $s\mapsto u(s)$ is a $C^2$ curve of normalized solutions with
$u(0)=u_0$, and $\eta$ is the derivative at zero of its $\mathcal K$-coordinate,
then necessarily
\begin{equation}
 B_s(0,0)=0,
 \qquad
 \frac12B_{\zeta\zeta}(0,0)[\eta,\eta]
   +B_{s\zeta}(0,0)[\eta]
   +\frac12B_{ss}(0,0)=0.
 \label{eq:first-second-obstructions}
\end{equation}
Thus failure of either condition excludes such a $C^2$ curve.

These are necessary conditions only.  No converse, branch existence, branch
number, branch scale, or branch regularity is asserted.
\end{theorem}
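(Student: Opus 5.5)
The plan is to substitute the curve into the reduced equation and Taylor-expand to second order. Let $s\mapsto u(s)$ be a $C^2$ curve of normalized solutions with $u(0)=u_0$ and $\mathcal F(s,u(s))=0$. For small $|s|$ the curve stays in the product neighborhood of Theorem~\ref{thm:physical-lyapunov-schmidt}. The bijection in that theorem then gives unique coordinates
\[
 u(s)=u_0+\zeta(s)+\xi(s,\zeta(s)),
 \qquad \zeta(s)\in\mathcal K,
 \qquad B(s,\zeta(s))=0 .
\]
We obtain $\zeta(s)$ by applying the bounded projection $\Pi_{\mathcal K}$ along $\mathcal X_1$ to $u(s)-u_0$, because $\xi$ takes values in $\mathcal X_1$. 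Hence $\zeta$ is $C^2$ with $\zeta(0)=0$, and we set $\eta=\zeta'(0)$. This identification of the $\mathcal K$-coordinate, together with the fact that $\zeta$ inherits $C^2$ regularity from $u$, is the only step that is not purely formal. It relies on $\xi$ taking values in $\mathcal X_1$ and on the uniqueness clause of the reduction.

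Next we record the first-order information on $B$ at the origin. From the definition of $B$,
\[
 B_\zeta(0,0)=Q\,D_u\mathcal F(0,u_0)\bigl(\id+D_\zeta\xi(0,0)\bigr)=0,
\]
since $Q$ annihilates $\ran D_u\mathcal F(0,u_0)$. (Equivalently, $D_\zeta\xi(0,0)=0$ and $\mathcal K$ lies in the kernel.) Since $B$ is $C^2$ jointly in $(s,\zeta)$, the composite $b(s)=B(s,\zeta(s))$ is $C^2$ and vanishes identically, so $b'(0)=b''(0)=0$.

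The first derivative is
\[
 b'(0)=B_s(0,0)+B_\zeta(0,0)\eta=B_s(0,0),
\]
which gives the first condition in \eqref{eq:first-second-obstructions}. For the second derivative, the chain rule gives
\[
 b''(0)=B_{ss}(0,0)+2B_{s\zeta}(0,0)[\eta]+B_{\zeta\zeta}(0,0)[\eta,\eta]+B_\zeta(0,0)\zeta''(0).
\]
The last term vanishes because $B_\zeta(0,0)=0$. Dividing by two yields exactly the expression \eqref{eq:quadratic-reduced-expression}, so it must vanish.

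Failure of either identity therefore contradicts the existence of such a curve. No converse is claimed, since we only differentiate along a given solution curve. I expect no real obstacle beyond making sure that $B_\zeta(0,0)=0$ is justified in the presence of the $s$-dependent normalization. That justification holds because the normalization is part of $\mathcal F$, and $Q$ kills the range of the full derivative $D_u\mathcal F(0,u_0)$.
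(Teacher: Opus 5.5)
Your proposal is correct and follows the paper's proof. You reconstruct $u(s)=u_0+\zeta(s)+\xi(s,\zeta(s))$ from the uniqueness clause of the reduction, then differentiate $B(s,\zeta(s))=0$ twice at $s=0$ using $B_\zeta(0,0)=0$; you also make explicit two points the paper leaves implicit, namely the $C^2$ regularity of $\zeta$ via the bounded projection along $\mathcal X_1$ and the vanishing of the $B_\zeta(0,0)\zeta''(0)$ term.
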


The parameter-dependent normalization is included in every derivative of
$\mathcal F$.  The identities are stated in
Lemma~\ref{lem:physical-reduced-derivatives}; its chain-rule proof and the
proof of the stated necessary conditions are in
Subsection~\ref{subsec:physical-ls-proof}.
\section{Proofs of the Fredholm and reduction theorems}
\label{sec:proof}

We proceed in five steps.  First, fixed-metric shape differentiation yields
the Eulerian transmission problem and the derivative of the two residuals.
Second, the boundary traces split this derivative into an elliptic
mixed-order matrix and compact terms.  Third, a two-sided parametrix and a
fixed-space Fredholm homotopy determine the index.  Fourth, the two physical
normalizations remove the excess index.  Finally, the normalized equation is
reduced to its finite-dimensional kernel and cokernel, with the metric
dependence retained in the reduced derivatives.

\subsection{Fixed-metric first variations}
\label{subsec:fixed-metric-first-variations}

In this subsection the metric parameter is fixed at \(s=0\).  We use the
reference shorthand
\[
 \sigma=\sigma_0,
 \qquad
 w=e^{\sigma\circ c_0},
 \qquad
 \Theta_0=\gamma_0w,
 \qquad
 \Psi=\Psi_0 .
\]
All normals, tangents, curvatures, and normal derivatives in the calculation
are Euclidean.  Thus \(n\) points from \(\Omega^-\) to \(\Omega^+\),
\(\tau=\mathcal J n\), and \([q]=q^+-q^-\).  The parameter \(t\) is
Euclidean arclength on
\(\re/|c_0|\ze\), and our Frenet convention is
\[
 c_0'=\tau,
 \qquad
 \partial_t n=\kappa\tau,
 \qquad
 \partial_t\tau=-\kappa n.
\]
Every ambient scalar and directional derivative in this subsection is
understood after pullback by $c_0$ unless a composition is displayed.  For a
smooth ambient scalar $q$, we also fix the Hessian-contraction convention
\begin{equation}
 \partial_n^2q=D^2q[n,n].
 \label{eq:base-hessian-convention}
\end{equation}

\begin{definition}[Eulerian variation]
\label{def:eulerian-variation}
For \((f,h)\in H^{m+1}\times H^m\), set
\[
 c_\varepsilon=c_0+\varepsilon fn,
 \qquad
 \gamma_\varepsilon(c_\varepsilon(t))
   =\gamma_0(t)+\varepsilon h(t),
\]
and let \(\psi_\varepsilon=\psi_{c_\varepsilon,\gamma_\varepsilon}\) for the fixed
metric $g_0$.
The one-sided Eulerian variation is the fixed-ambient-point derivative
\[
 \varphi^\pm(x)
 =\left.\frac{d}{d\varepsilon}\right|_{\varepsilon=0}
     \psi_\varepsilon^\pm(x),
 \qquad x\in\Omega^\pm.
\]
It is distinct from a moving-trace derivative: for either side,
\begin{equation}
\label{eq:moving-trace-chain-rule}
 \left.\frac{d}{d\varepsilon}\right|_{0}
 \psi_\varepsilon^\pm(c_\varepsilon(t))
 =\varphi^\pm(c_0(t))+f(t)\partial_{n}\psi_0^\pm(c_0(t)).
\end{equation}
Throughout the proof, with \(s\) fixed, abbreviate
\[
 (D\Theta)[f,h]
 \coloneqq D_{(f,h)}\Theta(0,0,0)[f,h],
 \qquad
 (D\bar\omega)[f,h]
 \coloneqq D_{(f,h)}\bar\omega(0,0,0)[f,h].
\]
\end{definition}

\begin{lemma}[Geometric first variations]
\label{lem:geometric-first-variations}
For the perturbation in Definition~\ref{def:eulerian-variation},
\begin{align}
 \left.\frac{d}{d\varepsilon}\right|_0c_\varepsilon'
   &=\kappa f\tau+f_tn,
 &
 \left.\frac{d}{d\varepsilon}\right|_0|c_\varepsilon'|
   &=\kappa f,                                             \label{eq:geometry-first-two}\\
 \left.\frac{d}{d\varepsilon}\right|_0\tau_\varepsilon
   &=f_tn,
 &
 \left.\frac{d}{d\varepsilon}\right|_0n_\varepsilon
   &=-f_t\tau,                                             \label{eq:geometry-frame}
\end{align}
where \(\tau_\varepsilon=c_\varepsilon'/|c_\varepsilon'|\) and
\(n_\varepsilon=-\mathcal J\tau_\varepsilon\) with the orientation fixed at
the start of Subsection~\ref{subsec:fixed-metric-first-variations}.  If
\(q_\varepsilon^\pm\) has Eulerian derivative
\(\dot q^\pm\), then
\begin{equation}
\label{eq:normal-derivative-variation}
 \left.\frac{d}{d\varepsilon}\right|_0
 \bigl(\partial_{n_\varepsilon}q_\varepsilon^\pm\bigr)
       (c_\varepsilon(t))
 =\partial_{n}\dot q^\pm
   +f\partial_n^2q_0^\pm-f_t\partial_{\tau} q_0^\pm .
\end{equation}
All quantities on the right are evaluated at \(c_0(t)\).  In particular,
the variations in \eqref{eq:geometry-first-two}--\eqref{eq:geometry-frame}
belong to \(H^m\), and \eqref{eq:normal-derivative-variation} has the trace
regularity furnished by the transmission theorem.
\end{lemma}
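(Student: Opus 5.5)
The plan is to differentiate the explicit formulas of \eqref{eq:normal-graph-frame} with $\varepsilon f$ in place of $f$, and then obtain \eqref{eq:normal-derivative-variation} from a chain rule for the moving trace of a normal derivative.

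First, with $c_\varepsilon'=(1+\varepsilon\kappa f)\tau+\varepsilon f_t n$, the $\varepsilon$-derivative at zero is $\kappa f\tau+f_tn$. For the norm,
\[
|c_\varepsilon'|=\bigl((1+\varepsilon\kappa f)^2+\varepsilon^2f_t^2\bigr)^{1/2},
\]
and differentiating at $\varepsilon=0$ gives $\kappa f$, because the $f_t^2$ term is quadratic in $\varepsilon$. For the tangent, the quotient rule gives
\[
\dot\tau=\dot{c'}-(\tau\cdot\dot{c'})\tau=(\kappa f\tau+f_tn)-\kappa f\tau=f_tn,
\]
using $|c_0'|=1$. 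Since $n_\varepsilon=-\mathcal J\tau_\varepsilon$ and $-\mathcal Jn=\tau\cdot(-1)$ (indeed $\mathcal J n=\tau$ implies $\mathcal J\tau=-n$, so $-\mathcal J n=-\tau$), this yields $\dot n=-f_t\tau$. This agrees with the $n_f$ formula in \eqref{eq:normal-graph-frame}. Regularity follows because $f\in H^{m+1}$ gives $f_t\in H^m$, and $\kappa,\tau,n$ are smooth, so all four variations lie in $H^m$ by Fact~\ref{fact:periodic-sobolev-calculus}.

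For \eqref{eq:normal-derivative-variation}, write the quantity as $n_\varepsilon(t)\cdot(\nabla q_\varepsilon^\pm)(c_\varepsilon(t))$ and apply the product rule. There are three contributions:
\begin{itemize}
\item the frame variation gives $\dot n\cdot\nabla q_0^\pm=-f_t\partial_\tau q_0^\pm$;
\item the Eulerian variation of the field gives $n\cdot\nabla\dot q^\pm=\partial_n\dot q^\pm$;
\item the motion of the evaluation point gives $n\cdot D^2q_0^\pm[fn]=f\partial_n^2q_0^\pm$, using convention \eqref{eq:base-hessian-convention}.
\end{itemize}
This is the same mechanism as \eqref{eq:moving-trace-chain-rule}, applied to $\nabla q$ instead of $q$.

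The only delicate step is justifying the point-motion term for one-sided fields. The moving point $c_\varepsilon(t)$ may leave $\Omega^\pm$, so $q_\varepsilon^\pm$ must be understood through smooth (or $H^{r+2}$) extensions of each side across $c_0$. The Eulerian derivative $\dot q^\pm$ is then defined on these extensions near $c_0$, and the derivative does not depend on the choice of extension to first order. I would argue this first for smooth data, where the joint $C^1$ dependence on $(\varepsilon,x)$ in a tubular neighbourhood gives the formula directly. I would then pass to the Sobolev class by density, using the one-sided bounds of Theorem~\ref{thm:transmission-regularity} and the trace mapping of Fact~\ref{fact:periodic-sobolev-calculus}. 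Those same bounds give the stated trace regularity, since $\partial_n\dot q^\pm$, $\partial_n^2q_0^\pm$ and $\partial_\tau q_0^\pm$ are traces of one-sided functions of the transmission class.
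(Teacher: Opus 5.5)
Your proposal is correct and follows essentially the same route as the paper. The paper likewise differentiates $c_\varepsilon'=(1+\varepsilon\kappa f)\tau+\varepsilon f_tn$, takes its norm and normalizes, and then applies the product and composition rules to $n_\varepsilon\cdot\nabla q_\varepsilon^\pm(c_\varepsilon)$, collecting the frame, Eulerian, and evaluation-point contributions. Your remark on one-sided extensions and your smooth-then-density justification add care that the paper leaves implicit; they do not change the argument.
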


\begin{proof}
Differentiating \(c_\varepsilon=c_0+\varepsilon fn\) in \(t\) and using
\(n_t=\kappa\tau\) gives
\[
 c_\varepsilon'
 =(1+\varepsilon\kappa f)\tau+\varepsilon f_tn.
\]
Taking its norm and then normalizing proves
\eqref{eq:geometry-first-two}--\eqref{eq:geometry-frame}.  Finally,
\[
 \partial_{n_\varepsilon}q_\varepsilon^\pm(c_\varepsilon)
 =n_\varepsilon\mathbin{\cdot}
   \nabla q_\varepsilon^\pm(c_\varepsilon).
\]
The product and composition rules give
\(-f_t\partial_{\tau} q_0^\pm+\partial_{n}\dot q^\pm
 +f\partial_n^2q_0^\pm\), including both the normal-vector and
evaluation-point contributions.  Periodic Sobolev multiplication for
\(m\geq3\) gives the asserted types.
\end{proof}

\begin{lemma}[Density and background-vorticity variations]
\label{lem:density-background-variations}
The fixed-metric derivatives are
\begin{equation}
\label{eq:density-first-variation}
 (D\Theta)[f,h]
 =wh+\gamma_0w(\partial_{n}\sigma+\kappa)f
 \in H^m
\end{equation}
and
\begin{equation}
\label{eq:background-first-variation}
 (D\bar\omega)[f,h]
 =\frac{1}{|\mathbb D|_{g_0}}
   \int_0^{|c_0|}
   \bigl[wh+\gamma_0w(\partial_{n}\sigma+\kappa)f\bigr]\,d t
 \in\re .
\end{equation}
Thus \((f,h)\mapsto(D\Theta)[f,h]\) is bounded
\(H^{m+1}\times H^m\to H^m\), while
\((f,h)\mapsto(D\bar\omega)[f,h]\) is a bounded scalar functional.
\end{lemma}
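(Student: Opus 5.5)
The plan is to differentiate the three factors of $\Theta(0,f,h)=(\gamma_0+h)\,w_{0,f}\,|c_f'|$ at $(f,h)=(0,0)$ by the product rule, exactly as in the proof of Lemma~\ref{lem:base-state-identities}, and then to integrate the result to obtain the derivative of $\bar\omega$. At $s=0$ the denominator $|\mathbb D|_{g_0}$ does not depend on $(f,h)$. It therefore contributes nothing to the fixed-metric variation, and the derivative of $\bar\omega$ is simply $|\mathbb D|_{g_0}^{-1}$ times the integral of $(D\Theta)[f,h]$.

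The first factor $\gamma_0+h$ is affine in $h$, so it contributes $h\,w|c_0'|=wh$, using $|c_0'|=1$ for Euclidean arclength. For the second factor, the chain rule applied to $w_{0,f}=e^{\sigma\circ(c_0+fn)}$ gives $D_fw_{0,f}|_0[f]=w\,(\partial_n\sigma\circ c_0)\,f$. For the third factor, \eqref{eq:geometry-first-two} of Lemma~\ref{lem:geometric-first-variations} gives $D_f|c_f'|\,|_0[f]=\kappa f$. Adding the three contributions, each multiplied by the reference values of the remaining factors, produces \eqref{eq:density-first-variation}.

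Formula \eqref{eq:background-first-variation} then follows by integrating over $t$ and dividing by $|\mathbb D|_{g_0}$. Interchanging the Fréchet derivative with integration is legitimate because $\Theta$ is $C^2$ into $H^m$ by Lemma~\ref{lem:sheet-data-regularity}, and integration against $dt$ is a bounded linear functional on $H^m$.

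For the boundedness claims, I use that $w$ and $\gamma_0$ are smooth, that $\kappa$ and $\partial_n\sigma\circ c_0$ are smooth, and that $H^{m+1}\hookrightarrow H^m$. Multiplication by a smooth function is bounded on $H^m$ by Fact~\ref{fact:periodic-sobolev-calculus}. This gives $\|(D\Theta)[f,h]\|_{H^m}\le C(\|h\|_{H^m}+\|f\|_{H^{m+1}})$. The scalar functional is bounded by Cauchy--Schwarz on the compact circle.

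None of these steps is a real obstacle. The only point needing care is to identify the derivative of $|c_f'|$ with $\kappa f$ rather than something involving $f_t$: the term $f_t$ enters $|c_f'|^2$ only quadratically, so it drops out at first order. Lemma~\ref{lem:geometric-first-variations} already records this.
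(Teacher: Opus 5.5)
Your proposal is correct and matches the paper's proof: you apply the product rule to $\Theta=(\gamma_0+h)e^{\sigma\circ c_f}|c_f'|$, take the variations $wf\partial_n\sigma$ and $\kappa f$ of the last two factors from the chain rule and Lemma~\ref{lem:geometric-first-variations}, and integrate over the fixed denominator $|\mathbb D|_{g_0}$. Your boundedness argument is also the paper's, namely smoothness of the reference coefficients together with periodic Sobolev multiplication.
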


\begin{proof}
At fixed \(s=0\),
\[
 \Theta(0,\varepsilon f,\varepsilon h)
 =(\gamma_0+\varepsilon h)
   e^{\sigma\circ c_\varepsilon}|c_\varepsilon'|.
\]
Lemma~\ref{lem:geometric-first-variations} and the chain rule yield
\[
 \left.\frac{d}{d\varepsilon}\right|_0
 e^{\sigma\circ c_\varepsilon}=wf\partial_{n}\sigma,
 \qquad
 \left.\frac{d}{d\varepsilon}\right|_0|c_\varepsilon'|=\kappa f,
\]
which proves \eqref{eq:density-first-variation}.  Since the metric is fixed,
the denominator in
\(\bar\omega=|\mathbb D|_{g_0}^{-1}\int\Theta\,d t\) is fixed;
differentiating
only the numerator gives \eqref{eq:background-first-variation}.  The claimed
mapping properties follow from the smoothness of the reference coefficients
and periodic Sobolev multiplication.
\end{proof}

\begin{lemma}[Jump data for the Eulerian variation]
\label{lem:eulerian-jump-data}
The Eulerian variation in Definition~\ref{def:eulerian-variation} satisfies
\begin{align}
 [\varphi]&=\Theta_0f=\gamma_0wf,                           \label{eq:variation-value-jump}\\
 [\partial_{n}\varphi]
 &=-wh-\gamma_0w(\partial_{n}\sigma)f
   -f[\partial_n^2\psi_0].                                \label{eq:variation-normal-jump}
\end{align}
The value jump belongs to \(H^{m+1}\), and the normal-derivative jump belongs
to \(H^m\).  No curvature Jacobian occurs in
\eqref{eq:variation-normal-jump}; that Jacobian belongs to the pulled-back
measure density \(\Theta\), not to the physical normal-derivative jump.
\end{lemma}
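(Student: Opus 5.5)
The plan is to derive both jump conditions by differentiating in $\varepsilon$ the exact jump relations satisfied by $\psi_\varepsilon$ on the moving curve $c_\varepsilon$, and then to convert the moving-trace derivatives into Eulerian ones with the chain rule \eqref{eq:moving-trace-chain-rule} and Lemma~\ref{lem:geometric-first-variations}. For each $\varepsilon$, Proposition~\ref{prop:boundary-integral-representation} with $s=0$ gives a common trace, $\psi_\varepsilon^+(c_\varepsilon(t))=\psi_\varepsilon^-(c_\varepsilon(t))$, and the Euclidean normal jump $[\partial_{n_\varepsilon}\psi_\varepsilon](c_\varepsilon(t))=-(\gamma_0+\varepsilon h)e^{\sigma\circ c_\varepsilon}$, which follows from \eqref{eq:moving-density-jump} since $\Theta/|c_\varepsilon'|=(\gamma_0+\varepsilon h)w_\varepsilon$. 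The one-sided functions are smooth up to $c_\varepsilon$ when $f,h$ are smooth. I would therefore first carry out the computation for smooth $(f,h)$, extending each side $\psi_\varepsilon^\pm$ smoothly across the interface so that the Eulerian derivative $\varphi^\pm$ is defined near $c_0$. The Sobolev statement then follows by density, using the $C^2$ dependence in Lemma~\ref{lem:moving-boundary-operators}.

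For the value jump, I differentiate the common-trace identity at $\varepsilon=0$. By \eqref{eq:moving-trace-chain-rule}, $\varphi^++f\partial_n\psi_0^+=\varphi^-+f\partial_n\psi_0^-$. This gives $[\varphi]=-f[\partial_n\psi_0]$, and \eqref{eq:sheet-jump-relations}, in its Euclidean form $[\partial_n\psi_0]=-\gamma_0w$, then yields \eqref{eq:variation-value-jump}.

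For the normal-derivative jump, I apply \eqref{eq:normal-derivative-variation} on each side and subtract. This gives
\[
 \left.\frac{d}{d\varepsilon}\right|_0[\partial_{n_\varepsilon}\psi_\varepsilon](c_\varepsilon)
 =[\partial_n\varphi]+f[\partial_n^2\psi_0]-f_t[\partial_\tau\psi_0].
\]
The last term vanishes because the trace of $\psi_0$ is common, so $[\partial_\tau\psi_0]=0$. The right-hand side of the jump identity differentiates to $-hw-\gamma_0wf\partial_n\sigma$, using the chain rule for $e^{\sigma\circ c_\varepsilon}$ as in Lemma~\ref{lem:density-background-variations}. Rearranging gives \eqref{eq:variation-normal-jump}. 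No $\kappa f$ appears: $|c_\varepsilon'|$ cancels in $\Theta/|c_\varepsilon'|$, which is the point of the final remark in the statement.

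The main obstacle is justifying the Eulerian derivative at the interface itself, because $\varphi^\pm$ is only defined on the open sets $\Omega^\pm$ while $c_\varepsilon$ moves across $c_0$. I would handle this with the diffeomorphisms $\Phi_f^\pm$ from the proof of Lemma~\ref{lem:moving-boundary-operators}. On fixed domains, the pulled-back functions $\upsilon^\pm$ are $C^1$ in $\varepsilon$ with values in $H^{m+3/2}$, and one sets $\varphi^\pm=\dot\upsilon^\pm-\dot\Phi^\pm\cdot\nabla\psi_0^\pm$. The transported moving traces are then exactly the chain-rule expressions above. The stated regularity comes from inspection of the formulas: $\gamma_0wf\in H^{m+1}$ by multiplication, and the right-hand side of \eqref{eq:variation-normal-jump} lies in $H^m$ because $h\in H^m$ and $[\partial_n^2\psi_0]$ is a smooth coefficient for the smooth reference sheet.
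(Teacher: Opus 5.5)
Your proposal is correct and follows the paper's proof. Both arguments differentiate the common-trace identity and the moving jump identity $[\partial_{n_\varepsilon}\psi_\varepsilon]\circ c_\varepsilon=-(\gamma_0+\varepsilon h)e^{\sigma\circ c_\varepsilon}$, converting via \eqref{eq:moving-trace-chain-rule} and \eqref{eq:normal-derivative-variation}. There are two minor differences. You remove $f_t[\partial_\tau\psi_0]$ using only the common trace, whereas the paper invokes the stronger streamline condition $\partial_\tau\psi_0^\pm=0$. You also justify the interface Eulerian derivative through the pullbacks $\Phi_f^\pm$, which the paper leaves implicit.
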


\begin{proof}
Continuity of each \(\psi_\varepsilon\) on its moving support, followed by
\eqref{eq:moving-trace-chain-rule}, gives
\[
 [\varphi]+f[\partial_{n}\psi_0]=0.
\]
The reference jump is
\([\partial_{n}\psi_0]=-\gamma_0w=-\Theta_0\), proving
\eqref{eq:variation-value-jump}.  On the other hand, the physical jump on
the perturbed curve is
\[
 [\partial_{n_\varepsilon}\psi_\varepsilon]\circ c_\varepsilon
 =-(\gamma_0+\varepsilon h)e^{\sigma\circ c_\varepsilon}.
\]
Its right-hand derivative is
\(-wh-\gamma_0wf\partial_{n}\sigma\).  Applying
\eqref{eq:normal-derivative-variation} to both sides gives on the left
\[
 [\partial_{n}\varphi]+f[\partial_n^2\psi_0]
 -f_t[\partial_{\tau}\psi_0].
\]
The reference streamline condition says
\(\psi_0^\pm\circ c_0=\mu_0\), hence
\(\partial_{\tau}\psi_0^\pm=0\).  Equating the derivatives proves
\eqref{eq:variation-normal-jump}.  The trace types follow from the stated
regularity of the reference solution and the Sobolev product theorem.
\end{proof}

\subsection{The linearized transmission problem and derivative}
\label{subsec:exact-transmission-linearization}

\begin{proposition}[Linearized transmission representation]
\label{prop:linearized-transmission}
For every \((f,h)\in H^{m+1}\times H^m\), the Eulerian variation is the
unique piecewise \(H^{m+3/2}\) solution of
\begin{equation}
\label{eq:physical-linearized-transmission}
\begin{aligned}
 -\triangle_{\mathrm e}\varphi^\pm
   &=-(D\bar\omega)[f,h]e^{2\sigma}
       &&\text{in }\Omega^\pm,\\
 \varphi^+&=0 &&\text{on }\partial\mathbb D,\\
 [\varphi]&=\gamma_0wf &&\text{on }c_0,\\
 [\partial_{n}\varphi]
   &=-wh-\bigl(\gamma_0w\partial_{n}\sigma
                    +[\partial_n^2\psi_0]\bigr)f
       &&\text{on }c_0.
\end{aligned}
\end{equation}
It is represented, using only the fixed exact layer potentials, by
\begin{equation}
\label{eq:exact-variation-representation}
\begin{split}
 \varphi={}&\mathcal W[\gamma_0wf]
 +\mathcal V\!\left[
      wh+\bigl(\gamma_0w\partial_{n}\sigma
                 +[\partial_n^2\psi_0]\bigr)f\right]\\
 &\quad -(D\bar\omega)[f,h]\Psi .
\end{split}
\end{equation}
Consequently its exact average traces are
\begin{equation}
\label{eq:exact-average-value-trace}
\begin{split}
 \langle\varphi\rangle={}&\mathcal D(\gamma_0wf)
 +\mathcal S\!\left[
      wh+\bigl(\gamma_0w\partial_{n}\sigma
                 +[\partial_n^2\psi_0]\bigr)f\right]\\
 &\quad -(D\bar\omega)[f,h]\,\Psi\circ c_0
\end{split}
\end{equation}
and
\begin{equation}
\label{eq:exact-average-normal-trace}
\begin{split}
 \langle\partial_{n}\varphi\rangle={}&\mathcal T(\gamma_0wf)
 +\mathcal D'\!\left[
      wh+\bigl(\gamma_0w\partial_{n}\sigma
                 +[\partial_n^2\psi_0]\bigr)f\right]\\
 &\quad -(D\bar\omega)[f,h]\,\partial_{n}\Psi\circ c_0.
\end{split}
\end{equation}
\end{proposition}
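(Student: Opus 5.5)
The plan has three parts: derive the transmission problem \eqref{eq:physical-linearized-transmission} by differentiating the equations for $\psi_\varepsilon$ at fixed ambient points, show that \eqref{eq:exact-variation-representation} solves the same problem, and invoke uniqueness from Theorem~\ref{thm:transmission-regularity}. The trace formulas then follow from Fact~\ref{fact:layer-jump-relations}.

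For the interior equation, I fix a compact set $K\subset\Omega^\pm$. For small $\varepsilon$ the curve $c_\varepsilon$ avoids $K$. By \eqref{eq:moving-stream-distribution} at $s=0$,
\[
 -\triangle_{\mathrm e}\psi_\varepsilon=-\bar\omega(0,\varepsilon f,\varepsilon h)e^{2\sigma}
 \quad\text{on }K.
\]
Since the representation \eqref{eq:moving-stream-representation} is smooth in $\varepsilon$ on $K$ (its kernel is smooth off the curve), I can differentiate under the Laplacian and obtain $-\triangle\varphi^\pm=-(D\bar\omega)[f,h]e^{2\sigma}$. The outer condition $\varphi^+=0$ holds because every $\psi_\varepsilon$ vanishes on $\partial\mathbb D$. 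The two jump conditions are exactly Lemma~\ref{lem:eulerian-jump-data}.

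Regularity needs a little care. I would first treat smooth $(f,h)$, where the one-sided functions $\psi_\varepsilon^\pm$ extend smoothly across $c_0$ and Lemma~\ref{lem:geometric-first-variations} applies pointwise. I then extend to $H^{m+1}\times H^m$ by density, using the continuity of both sides in the data, which comes from estimate \eqref{eq:transmission-estimate} with $r=m-1/2$. The jump data have exactly the regularity that estimate requires: $\gamma_0wf\in H^{m+1}=H^{r+3/2}$ and the normal jump lies in $H^m=H^{r+1/2}$. Theorem~\ref{thm:transmission-regularity} therefore gives existence, uniqueness and the piecewise $H^{m+3/2}$ bound.

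For the representation, I check the right-hand side $\tilde\varphi$ of \eqref{eq:exact-variation-representation} condition by condition.
\begin{itemize}
\item \emph{Interior equation.} Away from $c_0$, $\mathcal W$ and $\mathcal V$ are harmonic, and $-\triangle\Psi=e^{2\sigma}$ by Fact~\ref{fact:torsion-green-representation}. Hence $-\triangle\tilde\varphi=-(D\bar\omega)e^{2\sigma}$.
\item \emph{Outer boundary.} All three terms vanish on $\partial\mathbb D$, because $G(\cdot,y)$ and $\partial_{n_y}G(\cdot,y)$ vanish there (the latter since $G(z,\cdot)$ vanishes identically for $z\in\partial\mathbb D$).
\item \emph{Value jump.} By \eqref{eq:fixed-layer-jumps}, $[\mathcal W\rho]=\rho$ and $[\mathcal V\rho]=0$, so $[\tilde\varphi]=\gamma_0wf$.
\item \emph{Normal jump.} By \eqref{eq:fixed-layer-jumps}, $[\partial_n\mathcal W\rho]=0$ and $[\partial_n\mathcal V\rho]=-\rho$. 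This gives precisely the required jump $-wh-(\gamma_0w\partial_n\sigma+[\partial_n^2\psi_0])f$. The $\Psi$ term is $C^1$ across $c_0$ and contributes no jump.
\end{itemize}
Uniqueness then gives $\varphi=\tilde\varphi$.

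Averaging the one-sided traces using Fact~\ref{fact:layer-jump-relations} and Definition~\ref{def:fixed-layer-operators} yields \eqref{eq:exact-average-value-trace} and \eqref{eq:exact-average-normal-trace}. In the value trace the $\pm\frac12\rho$ terms of $\mathcal W$ cancel and $\mathcal S$ appears. In the normal trace the $\mp\frac12\rho$ terms of $\partial_n\mathcal V$ cancel, and $\partial_n\mathcal W=\mathcal T$ on both sides.

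The main obstacle is justifying that the fixed-point derivative $\varphi^\pm$ exists in piecewise $H^{m+3/2}$ up to $c_0$, together with the jump identities of Lemma~\ref{lem:eulerian-jump-data}, for nonsmooth $f$. The domains $\Omega^\pm_\varepsilon$ move, so $\psi_\varepsilon^\pm$ is not defined on a fixed neighbourhood of $c_0$. I would handle this with the pullbacks $\upsilon^\pm_{\varepsilon f}$ from the proof of Lemma~\ref{lem:moving-boundary-operators}, which are $C^2$ in $f$ on fixed domains. I then identify $\varphi^\pm$ with the derivative of $\upsilon^\pm\circ(\Phi^\pm_{\varepsilon f})^{-1}$, which introduces a transport term, first for smooth data. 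The general case follows by density and the uniform estimate \eqref{eq:transmission-estimate}.
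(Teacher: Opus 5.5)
Your proposal is correct, and its skeleton (transmission problem from Lemma~\ref{lem:eulerian-jump-data}, line-by-line check of the layer-potential ansatz via Fact~\ref{fact:layer-jump-relations}, uniqueness in Theorem~\ref{thm:transmission-regularity}, then averaging) is exactly the second half of the paper's proof. You differ in how the existence and up-to-$c_0$ regularity of $\varphi$ are obtained. The paper uses neither pullbacks nor smooth approximation. It differentiates the fixed Green representation at points $z\notin c_0$ (the same smoothness in $\varepsilon$ off the curve that you already invoke for the interior equation). This gives, for every $(f,h)\in H^{m+1}\times H^m$ at once,
\[
 \varphi=\mathcal W[\gamma_0wf]
 +\mathcal V\bigl[wh+\gamma_0w(\partial_n\sigma+\kappa)f\bigr]
 -(D\bar\omega)[f,h]\Psi,
\]
where the double layer comes from the displacement $fn$ of the source point and the single layer from $(D\Theta)[f,h]$. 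Piecewise $H^{m+3/2}$ regularity and the jump data then follow from the layer-potential mapping and jump relations, so the obstacle you single out never arises.

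The price is the identity $[\partial_n^2\psi_0]=\kappa\gamma_0w$, which converts the curvature Jacobian in $(D\Theta)[f,h]$ into the form of the statement. The paper obtains it by differentiating $\psi_0^\pm\circ c_0=\mu_0$ twice, which gives $D^2\psi_0^\pm[\tau,\tau]=\kappa\partial_n\psi_0^\pm$, and then using $[\triangle_{\mathrm e}\psi_0]=0$ and $[\partial_n\psi_0]=-\gamma_0w$. Your route avoids this identity because the normal jump is imported from Lemma~\ref{lem:eulerian-jump-data}. Your pullback argument also has a side benefit: it actually justifies the moving-trace chain rule \eqref{eq:moving-trace-chain-rule} on which that lemma rests. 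With the $C^2$ dependence on $f$ in Lemma~\ref{lem:moving-boundary-operators}, it already gives $\varphi^\pm\in H^{m+3/2}(\Omega^\pm)$ for general $(f,h)$, so the density step can be dropped. If you keep the density step, you must identify the limit of the smooth-data variations with the Eulerian variation of $(f,h)$. That needs continuity of $(f,h)\mapsto\varphi(z)$ for fixed $z\notin c_0$, which is most easily read off the Green integral, that is, from the paper's computation.
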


\begin{proof}
The fixed Green representation and the kernel differentiation established in
Lemma~\ref{lem:moving-boundary-operators} first prove that the fixed-point
derivative exists away from $c_0$.  Differentiating there gives
\begin{equation}
 \varphi
 =\mathcal W[\gamma_0wf]
  +\mathcal V\!\left[wh+\gamma_0w(\partial_{n}\sigma+\kappa)f\right]
  -(D\bar\omega)[f,h]\Psi.
 \label{eq:direct-green-variation}
\end{equation}
The streamline identity also yields the relation needed to put this formula
in transmission form.  Indeed, differentiating
$\psi_0^\pm\circ c_0=\mu_0$ twice in Euclidean arclength gives
\[
 D^2\psi_0^\pm[\tau,\tau]=\kappa\partial_{n}\psi_0^\pm.
\]
The volume vorticity is the same on the two sides, so
$[\triangle_{\mathrm e}\psi_0]=0$.  Since
$[\partial_{n}\psi_0]=-\gamma_0w$, it follows that
\begin{equation}
 [\partial_n^2\psi_0]
 =-[D^2\psi_0[\tau,\tau]]
 =-\kappa[\partial_{n}\psi_0]
 =\kappa\gamma_0w.
 \label{eq:reference-second-normal-jump}
\end{equation}
Thus \eqref{eq:direct-green-variation} is precisely
\eqref{eq:exact-variation-representation}.

Differentiating the piecewise equation for \(\psi_\varepsilon\) at fixed
metric gives the first line of \eqref{eq:physical-linearized-transmission};
the fixed outer boundary gives the second.  The remaining two lines are
Lemma~\ref{lem:eulerian-jump-data}.  Their data have exactly the trace types
in Theorem~\ref{thm:transmission-regularity}: the volume term is smooth on
each side, the value jump is in \(H^{m+1}\), and the normal jump is in
\(H^m\).  That theorem therefore gives a unique piecewise \(H^{m+3/2}\)
solution.  In particular, because an outer Dirichlet condition is present,
there is no scalar integral compatibility condition to check.

The jump relations also provide a direct verification: the double layer in
\eqref{eq:exact-variation-representation}
is harmonic off \(c_0\), has value jump \(\gamma_0wf\), and has continuous
normal derivative.  The single layer is harmonic off \(c_0\), has continuous
value, and its normal-derivative jump is the negative of its displayed
density.  Finally, \(-\triangle_{\mathrm e}\Psi=e^{2\sigma}\), so the last
term has the required volume source.  The Dirichlet Green kernel, its source
normal derivative, and \(\Psi\) all vanish when the target lies on
\(\partial\mathbb D\); hence every term has zero outer trace.  Thus the
right-hand side of \eqref{eq:exact-variation-representation} solves every
line of \eqref{eq:physical-linearized-transmission}, and uniqueness in
Theorem~\ref{thm:transmission-regularity} identifies it with \(\varphi\).
The averaged jump relations then give
\eqref{eq:exact-average-value-trace} and
\eqref{eq:exact-average-normal-trace}.
\end{proof}

The reference kinematic and velocity identities used in
Proposition~\ref{prop:complete-linearization} are
\begin{equation}
\label{eq:reference-average-normal-velocity}
 \langle\partial_{n}\psi_0\rangle
 =-w\langle v_0\rangle_{\tau} .
\end{equation}
Here \(\langle v_0\rangle_{\tau}\) is the averaged scalar tangential component
defined in \eqref{eq:reference-tangential-velocities}.

\begin{proposition}[Fixed-metric linearization]
\label{prop:complete-linearization}
At \((s,u)=(0,u_0)\), with every variable not displayed held fixed, the four
function-variable derivatives are
\begin{align}
 D_fE_I(0,u_0)[f]
 ={}&-w\langle v_0\rangle_{\tau} f+\mathcal D(\gamma_0wf)
 \notag\\
 &+\mathcal S\!\left[
   \bigl(\gamma_0w\partial_{n}\sigma
      +[\partial_n^2\psi_0]\bigr)f\right]
 -(D\bar\omega)[f,0]\,\Psi\circ c_0,
                                                               \label{eq:exact-df-f1}\\
 D_hE_I(0,u_0)[h]
 ={}&\mathcal S(wh)
 -(D\bar\omega)[0,h]\,\Psi\circ c_0,             \label{eq:exact-dh-f1}\\
 D_fE_J(0,u_0)[f]
 ={}&-\gamma_0w^{-1}\mathcal T(\gamma_0wf)
 -\gamma_0(\partial_{n}\sigma)\langle v_0\rangle_{\tau} f
 \notag\\
 &-\gamma_0w^{-1}\mathcal D'\!\left[
   \bigl(\gamma_0w\partial_{n}\sigma
      +[\partial_n^2\psi_0]\bigr)f\right]
 \notag\\
 &+\gamma_0w^{-1}(D\bar\omega)[f,0]
          \,\partial_{n}\Psi\circ c_0
 -\gamma_0w^{-1}f\langle\partial_n^2\psi_0\rangle,
                                                               \label{eq:exact-df-f2}\\
 D_hE_J(0,u_0)[h]
 ={}&\langle v_0\rangle_{\tau} h
 -\gamma_0w^{-1}\mathcal D'(wh)
 \notag\\
 &+\gamma_0w^{-1}(D\bar\omega)[0,h]
          \,\partial_{n}\Psi\circ c_0.             \label{eq:exact-dh-f2}
\end{align}
Their types are, in the displayed order,
\[
 H^{m+1}\to H^{m+1},\qquad
 H^m\to H^{m+1},\qquad
 H^{m+1}\to H^m,\qquad
 H^m\to H^m.
\]
The scalar derivatives are the four columns
\begin{equation}
\label{eq:exact-scalar-columns}
 D_\mu E_I[\dot\mu]=-\dot\mu\,1,
 \quad D_\beta E_I[\dot\beta]=0,
 \quad D_\mu E_J[\dot\mu]=0,
 \quad D_\beta E_J[\dot\beta]=-\dot\beta\,1.
\end{equation}
In particular, if
\(L=D_uF(0,u_0)\colon\mathcal X\to\mathcal Y\), then
\begin{equation}
\label{eq:full-unnormalized-derivative-formula}
 L(f,h,\dot\mu,\dot\beta)
 =\begin{pmatrix}
 D_fE_I[f]+D_hE_I[h]-\dot\mu\,1\\
 D_fE_J[f]+D_hE_J[h]-\dot\beta\,1
 \end{pmatrix},
\end{equation}
where the four function terms are exactly
\eqref{eq:exact-df-f1}--\eqref{eq:exact-dh-f2}.
\end{proposition}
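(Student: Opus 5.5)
The plan is to differentiate the two residuals \eqref{eq:streamline-residual}--\eqref{eq:bernoulli-residual} at $s=0$ through the moving-trace chain rule \eqref{eq:moving-trace-chain-rule}, and then to substitute the exact average traces of Proposition~\ref{prop:linearized-transmission}.

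For $E_I$, the pulled-back trace $\mathcal S_f\Theta-\bar\omega\Psi_s\circ c_f$ is the common trace $\psi_\varepsilon\circ c_\varepsilon$ by \eqref{eq:moving-stream-boundary-values}. Its $\varepsilon$-derivative is therefore $\varphi^\pm+f\partial_n\psi_0^\pm$ on either side. Averaging the two sides gives $\langle\varphi\rangle+f\langle\partial_n\psi_0\rangle$, and by \eqref{eq:reference-average-normal-velocity} the second term equals $-w\langle v_0\rangle_\tau f$. Inserting \eqref{eq:exact-average-value-trace} and splitting $(f,h)$ into $(f,0)$ and $(0,h)$ then yields \eqref{eq:exact-df-f1} and \eqref{eq:exact-dh-f1}. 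The double layer acts only on the $f$-density, and the single layer separates linearly into $wh$ and the $f$-part.

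For $E_J$, I write the residual as $-(\gamma_0+h)w_{0,f}^{-1}\langle\partial_{n_f}\psi\rangle\circ c_f-\beta$ and apply the product rule to its three factors.
\begin{itemize}
\item[(a)] Varying $h$ in the prefactor gives $-hw^{-1}\langle\partial_n\psi_0\rangle=\langle v_0\rangle_\tau h$.
\item[(b)] Varying $f$ in $w_{0,f}^{-1}$ gives $+\gamma_0w^{-1}(\partial_n\sigma)f\langle\partial_n\psi_0\rangle=-\gamma_0(\partial_n\sigma)\langle v_0\rangle_\tau f$.
\item[(c)] Varying the averaged normal derivative uses \eqref{eq:normal-derivative-variation}. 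The tangential term $f_t\partial_\tau\psi_0^\pm$ vanishes by the reference streamline condition, which leaves $\langle\partial_n\varphi\rangle+f\langle\partial_n^2\psi_0\rangle$. Multiplying by $-\gamma_0w^{-1}$ and substituting \eqref{eq:exact-average-normal-trace} gives the $\mathcal T$, $\mathcal D'$ and $\Psi$ terms together with the final Hessian term.
\end{itemize}
Collecting (a)--(c) produces \eqref{eq:exact-df-f2} and \eqref{eq:exact-dh-f2}. The scalar columns \eqref{eq:exact-scalar-columns} are immediate, because $\mu$ and $\beta$ enter linearly with coefficient $-1$.

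The main obstacle is justifying that the Gateaux derivatives computed along Eulerian variations coincide with the Fréchet derivatives $D_uF$ in the stated Sobolev types. The one-sided limits and traces must be interchanged with $\varepsilon$-differentiation even though $\mathcal V_f$ is not differentiable into $H^1_0$.

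I would handle this in two steps.
\begin{itemize}
\item First, Lemma~\ref{lem:moving-boundary-operators} and Proposition~\ref{prop:nonlinear-residual-regularity} give that $F$ is $C^2$ in the pulled-back operators. Hence the Fréchet derivative exists and equals the directional derivative.
\item Second, for smooth $f,h$ the one-sided functions are smooth up to $c_0$, so the chain rule is classical and the formulas hold. Both sides are bounded operators with the displayed types, by Fact~\ref{fact:fixed-boundary-mappings} and Lemma~\ref{lem:density-background-variations}, so density of smooth data extends the identities.
\end{itemize}
The types follow from the orders of $\mathcal S$, $\mathcal D$, $\mathcal D'$ and $\mathcal T$ together with periodic Sobolev multiplication.
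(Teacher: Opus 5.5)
Your proposal is correct and follows the paper's proof essentially step for step. For $E_I$ you use the moving-trace chain rule, substitute $\langle\partial_n\psi_0\rangle=-w\langle v_0\rangle_\tau$, and insert the exact average traces; for $E_J$ you apply the three-factor product rule, with the tangential term removed by the streamline condition. Your extra paragraph identifying the directional derivatives with the Fr\'echet derivative (via $C^2$ regularity of $F$, the classical computation for smooth directions, and density) makes explicit a justification the paper leaves implicit; it is not a different route.
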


\begin{proof}
Because \(E_I\) is the stream-function trace on the moving curve,
Definition~\ref{def:eulerian-variation} gives
\[
 D_{(f,h)}E_I[f,h]
 =\langle\varphi\rangle+f\langle\partial_{n}\psi_0\rangle.
\]
Substitution of \eqref{eq:exact-average-value-trace} and
\eqref{eq:reference-average-normal-velocity} gives
\eqref{eq:exact-df-f1} and \eqref{eq:exact-dh-f1}.

For the second residual write, only in this proof,
\[
 \mathfrak n_{\mathrm{av}}(f,h)
 =\Bigl\langle\partial_{n_f}\psi_{c_f,\gamma_{f,h}}\Bigr\rangle\circ c_f,
 \qquad
 E_J=-(\gamma_0+h)w_{0,f}^{-1}\mathfrak n_{\mathrm{av}}(f,h)-\beta.
\]
Lemma~\ref{lem:geometric-first-variations}, the streamline condition, and
\eqref{eq:exact-average-normal-trace} give
\[
 D_{(f,h)}\mathfrak n_{\mathrm{av}}(0,0)[f,h]
 =\langle\partial_{n}\varphi\rangle
   +f\langle\partial_n^2\psi_0\rangle;
\]
the term \(-f_t\langle\partial_{\tau}\psi_0\rangle\) vanishes by the reference
streamline condition.  Moreover,
\[
 D_fw_{0,f}^{-1}\big|_{f=0}[f]
 =-w^{-1}f\partial_{n}\sigma,
 \qquad
 \mathfrak n_{\mathrm{av}}(0,0)=-w\langle v_0\rangle_{\tau}.
\]
Applying the product rule and separating \(f\) and \(h\) now gives exactly
\eqref{eq:exact-df-f2} and \eqref{eq:exact-dh-f2}.  Finally,
\eqref{eq:exact-scalar-columns} follows directly from the terms
\(-\mu\) and \(-\beta\) in the two residuals.  Fixed-operator mapping
properties, smooth multiplication, and the fact that
\((D\bar\omega)[\cdot,\cdot]\) is scalar-valued prove all four stated
types.  This accounts explicitly for the geometric evaluation terms, the
conformal-factor term, the second-normal-derivative terms, and both rank-one
background terms.
\end{proof}

\subsection{The mixed-order Fredholm analysis}
\label{subsec:mixed-order-fredholm-analysis}

\begin{proposition}[Principal mixed-order matrix and compact remainder]
\label{prop:principal-mixed-matrix}
As an operator
\(H^{m+1}\times H^m\to H^{m+1}\times H^m\),
the principal mixed-order part is
\begin{equation}
\label{eq:principal-mixed-matrix}
 \begin{pmatrix}
 M_{-w\langle v_0\rangle_{\tau}}&\mathcal S_0M_{w}\\
 M_{-\gamma_0w^{-1}}\mathcal T_0M_{\gamma_0w}
   &M_{\langle v_0\rangle_{\tau}}
 \end{pmatrix}.
\end{equation}
The differences between the four derivative entries and the corresponding
entries of
\eqref{eq:principal-mixed-matrix} are
\begin{equation}
\begin{aligned}
 D_fE_I(0,u_0)[f]-M_{-w\langle v_0\rangle_{\tau}}f
 ={}&\mathcal D(\gamma_0wf)\\
 &+\mathcal S\!\left[
   \bigl(\gamma_0w\partial_{n}\sigma
        +[\partial_n^2\psi_0]\bigr)f\right]\\
 &-(D\bar\omega)[f,0]\,\Psi\circ c_0.
\end{aligned}
\label{eq:r11}
\end{equation}
\begin{equation}
 D_hE_I(0,u_0)[h]-\mathcal S_0(wh)
 =(\mathcal S-\mathcal S_0)(wh)
 -(D\bar\omega)[0,h]\,\Psi\circ c_0.
\label{eq:r12}
\end{equation}
\begin{equation}
\begin{aligned}
 D_fE_J(0,u_0)[f]
 +\gamma_0w^{-1}\mathcal T_0(\gamma_0wf)
 ={}&-\gamma_0w^{-1}
       (\mathcal T-\mathcal T_0)(\gamma_0wf)\\
 &-\gamma_0(\partial_{n}\sigma)\langle v_0\rangle_{\tau} f\\
 &-\gamma_0w^{-1}\mathcal D'\!\left[
   \bigl(\gamma_0w\partial_{n}\sigma
        +[\partial_n^2\psi_0]\bigr)f\right]\\
 &+\gamma_0w^{-1}(D\bar\omega)[f,0]
       \,\partial_{n}\Psi\circ c_0\\
 &-\gamma_0w^{-1}f\langle\partial_n^2\psi_0\rangle.
\end{aligned}
\label{eq:r21}
\end{equation}
\begin{equation}
\begin{aligned}
 D_hE_J(0,u_0)[h]-\langle v_0\rangle_{\tau} h
 ={}&-\gamma_0w^{-1}\mathcal D'(wh)\\
 &+\gamma_0w^{-1}(D\bar\omega)[0,h]
       \,\partial_{n}\Psi\circ c_0.
\end{aligned}
\label{eq:r22}
\end{equation}
Every left-hand difference in \eqref{eq:r11}--\eqref{eq:r22} is compact
between its corresponding mixed-order spaces.
\end{proposition}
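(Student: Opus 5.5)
The identities \eqref{eq:r11}--\eqref{eq:r22} are pure bookkeeping: subtract the entries of \eqref{eq:principal-mixed-matrix} from the exact formulas \eqref{eq:exact-df-f1}--\eqref{eq:exact-dh-f2} of Proposition~\ref{prop:complete-linearization}. In \eqref{eq:r12} write $\mathcal S(wh)=\mathcal S_0(wh)+(\mathcal S-\mathcal S_0)(wh)$. In \eqref{eq:r21} write $-\gamma_0w^{-1}\mathcal T(\gamma_0wf)=-\gamma_0w^{-1}\mathcal T_0(\gamma_0wf)-\gamma_0w^{-1}(\mathcal T-\mathcal T_0)(\gamma_0wf)$. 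The other two entries need no rewriting, since their principal parts are already the multiplication operators $M_{-w\langle v_0\rangle_\tau}$ and $M_{\langle v_0\rangle_\tau}$. So the real content is compactness, which I check term by term. Each term is treated as a bounded operator into a space strictly smaller than the target, followed by the Rellich embedding, or else as finite rank. The coefficients $w,\gamma_0,\partial_n\sigma$, $[\partial_n^2\psi_0]$ and $\langle\partial_n^2\psi_0\rangle$ are smooth, since the reference sheet is smooth. Multiplication by them is bounded on every $H^r$ by Fact~\ref{fact:periodic-sobolev-calculus}.

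For \eqref{eq:r11} (type $H^{m+1}\to H^{m+1}$) I treat the three terms in turn.
\begin{itemize}
\item By Fact~\ref{fact:fixed-boundary-mappings}, $\mathcal D$ maps $H^{m+1}\to H^{m+2}$.
\item $\mathcal S$ applied to a smooth multiple of $f$ maps $H^{m+1}\to H^{m+2}$.
\item The background term $(D\bar\omega)[f,0]\,\Psi\circ c_0$ is rank one by Lemma~\ref{lem:density-background-variations}.
\end{itemize}
For \eqref{eq:r12} ($H^m\to H^{m+1}$), Lemma~\ref{lem:compact-remainders} gives $(\mathcal S-\mathcal S_0)M_w\colon H^m\to H^{m+2}$, and the background term is again rank one.

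For \eqref{eq:r21} ($H^{m+1}\to H^m$) there are five terms.
\begin{itemize}
\item $(\mathcal T-\mathcal T_0)M_{\gamma_0w}$ maps $H^{m+1}\to H^{m+1}$ by Lemma~\ref{lem:compact-remainders}; the outer multiplication by $\gamma_0w^{-1}$ preserves this.
\item $-\gamma_0(\partial_n\sigma)\langle v_0\rangle_\tau f$ and $-\gamma_0w^{-1}f\langle\partial_n^2\psi_0\rangle$ are multiplications, bounded $H^{m+1}\to H^{m+1}$.
\item The $\mathcal D'$ term maps $H^{m+1}\to H^{m+2}$.
\item The rank-one background term is finite rank.
\end{itemize}
All of these land in $H^{m+1}$, which embeds compactly into $H^m$. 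For \eqref{eq:r22} ($H^m\to H^m$), $\mathcal D'M_w\colon H^m\to H^{m+1}$ is compact into $H^m$, and the last term is rank one.

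The only step needing care is the smoothness of the coefficients, in particular the one-sided traces of $\partial_n^2\psi_0$. This follows from Proposition~\ref{prop:weak-stream-function} and Theorem~\ref{thm:transmission-regularity}, since the reference curve and strength are smooth and hence $\psi_0^\pm$ is smooth up to $c_0$. I also need that Lemma~\ref{lem:compact-remainders} applies at the exponents used here. Its gains are stated for every $r$, so $r=m$ suffices. The zero-mode discrepancy of $\mathcal S_0$ and $\mathcal T_0$ is already absorbed in $\mathcal S-\mathcal S_0$ and $\mathcal T-\mathcal T_0$.
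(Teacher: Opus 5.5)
Your proposal is correct and follows essentially the same route as the paper. You subtract the principal entries from the exact linearization to obtain the exhaustive list of remainders, then prove compactness term by term: each non-finite-rank term gains at least one derivative and is followed by the Rellich embedding, and the background-vorticity terms have rank one. Your justification that the coefficients $\langle\partial_n^2\psi_0\rangle$ and $[\partial_n^2\psi_0]$ are smooth, via the transmission theorem at every order, is a detail the paper leaves implicit.
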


\begin{proof}
Subtracting the four entries of \eqref{eq:principal-mixed-matrix} from
\eqref{eq:exact-df-f1}--\eqref{eq:exact-dh-f2} gives
\eqref{eq:r11}--\eqref{eq:r22}, so the list is exhaustive.  We record the
compactness mechanism term by term, using
Fact~\ref{fact:fixed-boundary-mappings} and
Lemma~\ref{lem:compact-remainders}.

For the difference in \eqref{eq:r11}, from $H^{m+1}$ to $H^{m+1}$, the
lower-order double layer
maps \(\mathcal D(\gamma_0w\,\cdot)\) into \(H^{m+2}\), and the single
layer applied to the smooth coefficient times \(f\) also maps into
\(H^{m+2}\).  Each is followed by the compact embedding
\(H^{m+2}\hookrightarrow H^{m+1}\).  The last term factors through the
one-dimensional range spanned by \(\Psi\circ c_0\).

For the difference in \eqref{eq:r12}, from $H^m$ to $H^{m+1}$, the exact/model difference
\(\mathcal S-\mathcal S_0\) has a smooth kernel and maps \(H^m\) into
\(H^{m+2}\); the second term again has rank one.  Thus both are compact into
\(H^{m+1}\).

For the difference in \eqref{eq:r21}, from $H^{m+1}$ to $H^m$, the exact/model difference
\(\mathcal T-\mathcal T_0\) is one order below the hypersingular principal
part and maps into \(H^{m+1}\).  The two multiplication terms containing
\(f\) also map into \(H^{m+1}\).  The \(\mathcal D'\) term gains at least
one derivative, and the background term has rank one.  Every non-finite-rank
term therefore factors through \(H^{m+1}\hookrightarrow H^m\).

For the difference in \eqref{eq:r22}, from $H^m$ to $H^m$, the
$\mathcal D'$ term maps into
\(H^{m+1}\), and the background term has rank one.  Rellich compactness
finishes this block.  The zero modes omitted by the nonzero-mode model
identities are finite-dimensional and hence compact.  These mechanisms cover
all terms in \eqref{eq:r11}--\eqref{eq:r22}.
\end{proof}

We also record the two commutator gains, valid for every
smooth periodic coefficient \(b\) and every real \(r\):
\begin{equation}
\label{eq:model-commutator-gains}
 [\mathcal S_0,M_b]\colon H^r\longrightarrow H^{r+2},
 \qquad
 [\mathcal T_0,M_b]\colon H^{r+1}\longrightarrow H^{r+1}.
\end{equation}
As in Lemma~\ref{lem:compact-remainders}, they follow directly from the
multiplier differences
\(|k|^{-1}-|j|^{-1}\) and \(|k|-|j|\), with the zero frequencies separated
as finite-rank terms.  Thus each commutator is one order below its model
operator in the mixed scale.

\begin{lemma}[Effective scalar factor]
\label{lem:effective-scalar-factor}
The scalar factor associated with \eqref{eq:principal-mixed-matrix} is
\begin{equation}
\label{eq:effective-scalar-factor}
 -w\left(\langle v_0\rangle_{\tau}^2+\frac{\gamma_0^2}{4}\right)
 =-\frac{w}{2}\left((v_{\tau,0}^+)^2+(v_{\tau,0}^-)^2\right).
\end{equation}
It is nonzero at a point exactly when the two one-sided scalar tangential
velocities do not vanish simultaneously there.  Hence the stagnation-free
hypothesis implies
\[
 \left|w\left(\langle v_0\rangle_{\tau}^2
                  +\frac{\gamma_0^2}{4}\right)\right|\geq c>0
 \quad\text{on }c_0.
\]
\end{lemma}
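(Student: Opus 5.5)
The plan is to compute the symbol of the principal matrix \eqref{eq:principal-mixed-matrix} mode by mode, as a "determinant" in the mixed-order sense. I will then justify the identity \eqref{eq:effective-scalar-factor} algebraically from the jump relation in Proposition~\ref{prop:weak-stream-function}.

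First I determine which scalar is associated with the matrix. Since the off-diagonal entries have orders $-1$ and $1$, freezing the smooth coefficients at a point of $c_0$ (commutators are lower order by \eqref{eq:model-commutator-gains}) gives, on a mode $e_k$ with $k\ne0$, the $2\times2$ symbol
\[
\begin{pmatrix}-w\langle v_0\rangle_\tau & w\,\mathcal S_0(k)\\ -\gamma_0 w^{-1}\,\mathcal T_0(k)\,\gamma_0 w & \langle v_0\rangle_\tau\end{pmatrix}.
\]
Its determinant is $-w\langle v_0\rangle_\tau^2+\gamma_0^2 w\,\mathcal S_0(k)\mathcal T_0(k)$. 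Fact~\ref{fact:model-operator-identity} gives $\mathcal S_0(k)\mathcal T_0(k)=-1/4$, so the determinant equals $-w(\langle v_0\rangle_\tau^2+\gamma_0^2/4)$, independently of $k$. This shows that the mixed orders cancel and that this $k$-independent determinant is the effective scalar factor.

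Second, I rewrite it. By Proposition~\ref{prop:weak-stream-function}, the tangential jump is $[g(v,\tau)]=\gamma$. I pass to scalar tangential components via \eqref{eq:reference-tangential-velocities}: with $\tau$ Euclidean and the $g_0$-unit tangent equal to $w^{-1}\tau$, the jump identity reads $v_{\tau,0}^+-v_{\tau,0}^-=\gamma_0$. Then
\[
\langle v_0\rangle_\tau^2+\tfrac{\gamma_0^2}{4}=\Bigl(\tfrac{v^++v^-}{2}\Bigr)^2+\Bigl(\tfrac{v^+-v^-}{2}\Bigr)^2=\tfrac12\bigl((v^+)^2+(v^-)^2\bigr),
\]
which is the parallelogram identity. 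The conformal bookkeeping, namely that the jump is in metric units and not Euclidean ones, is the only point needing care. I would check it against \eqref{eq:metric-tangential-normal-conversion} and the Euclidean jump $[\partial_n\psi_0]=-\gamma_0 w$ used in Lemma~\ref{lem:eulerian-jump-data}. These give $v^\pm=-w^{-1}\partial_n\psi_0^\pm$, and hence $[v]=\gamma_0$.

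Finally, since $w=e^{\sigma_0\circ c_0}>0$, the factor vanishes at $t$ exactly when $v^+_{\tau,0}(t)=v^-_{\tau,0}(t)=0$. Under \eqref{eq:standing-stagnation-free}, the function $(v^+)^2+(v^-)^2$ is continuous and positive on the compact curve: the one-sided traces are smooth by the smooth-data clause of Proposition~\ref{prop:weak-stream-function}. It therefore has a positive minimum. Together with $\min w>0$, this yields the uniform lower bound $c>0$.

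The main obstacle is conceptual rather than computational. One must say precisely in what sense a mixed-order operator matrix has a scalar factor. I would present it as the determinant of the frozen-coefficient principal symbol on nonzero modes, where the orders $-1$ and $+1$ of $\mathcal S_0$ and $\mathcal T_0$ cancel exactly. The operator-level version, a Schur complement such as $M_{\langle v_0\rangle_\tau}$ times the first row minus $\mathcal S_0M_w$ times the second, is the version the later parametrix will use, and it produces this factor modulo the compact commutators.
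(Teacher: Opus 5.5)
Your proof is correct and matches the paper's: the metric-unit jump $v_{\tau,0}^+-v_{\tau,0}^-=\gamma_0$, squaring and adding, positivity of $w$, and a positive minimum of a continuous positive function on the compact curve. Your frozen-coefficient determinant computation, which explains why this is the factor ``associated'' with the matrix, goes beyond the paper's proof of this lemma; the paper makes that link only at the operator level, through the adjugate products in the proof of Proposition~\ref{prop:two-sided-parametrix}, which are the row operation you describe at the end.
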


\begin{proof}
The tangential velocity jump gives
\[
 v_{\tau,0}^+=\langle v_0\rangle_{\tau}+\frac{\gamma_0}{2},
 \qquad
 v_{\tau,0}^-=\langle v_0\rangle_{\tau}-\frac{\gamma_0}{2}.
\]
Squaring and adding proves the second equality in
\eqref{eq:effective-scalar-factor}.  Since \(w>0\), the remaining claims
follow, including the uniform lower bound on the compact parameter circle.
Notice that one-sided stagnation and zeros of \(\gamma_0\) are allowed; only
simultaneous stagnation is excluded.
\end{proof}

\begin{proposition}[Two-sided parametrix]
\label{prop:two-sided-parametrix}
Under the stagnation-free hypothesis, the principal operator in
\eqref{eq:principal-mixed-matrix} and hence \(D_{(f,h)}F(0,u_0)\) are Fredholm on
\(H^{m+1}\times H^m\).  More precisely, both possess bounded left and right
inverses modulo compact operators on these fixed domain and target spaces.
\end{proposition}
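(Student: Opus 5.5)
We construct an explicit candidate inverse of the principal matrix \eqref{eq:principal-mixed-matrix}, modulo compact operators, by the adjugate formula for a $2\times2$ matrix. The ordering of the factors matters only up to commutators, and those are compact. Write $a=-w\langle v_0\rangle_\tau$, $d=\langle v_0\rangle_\tau$, and $\mathcal B=\mathcal S_0M_w$, $\mathcal C=M_{-\gamma_0w^{-1}}\mathcal T_0M_{\gamma_0w}$. We propose the parametrix
\[
 \mathcal R=\begin{pmatrix} M_{\delta^{-1}d} & -M_{\delta^{-1}}\mathcal B\\ -M_{\delta^{-1}}\mathcal C & M_{\delta^{-1}a}\end{pmatrix},
 \qquad \delta=-w\Bigl(\langle v_0\rangle_\tau^2+\frac{\gamma_0^2}{4}\Bigr).
\]
Lemma~\ref{lem:effective-scalar-factor} shows that $\delta$ is smooth and bounded away from zero, so $M_{\delta^{-1}}$ is bounded on every $H^r$. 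The entries of $\mathcal R$ have the mixed orders needed to map $H^{m+1}\times H^m$ into itself: the $(1,2)$ entry maps $H^m\to H^{m+1}$ and the $(2,1)$ entry maps $H^{m+1}\to H^m$.

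The main step is to compute $\mathcal R\mathcal P$ and $\mathcal P\mathcal R$, where $\mathcal P$ denotes \eqref{eq:principal-mixed-matrix}.

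The $(1,1)$ entry of $\mathcal R\mathcal P$ is $M_{\delta^{-1}}(M_{da}-\mathcal B\mathcal C)$. Using the commutator gains \eqref{eq:model-commutator-gains}, we move all multipliers to the outside:
\[
 \mathcal B\mathcal C=\mathcal S_0M_wM_{-\gamma_0w^{-1}}\mathcal T_0M_{\gamma_0w}
 \equiv M_{-\gamma_0}\mathcal S_0\mathcal T_0M_{\gamma_0w}
 \equiv M_{\gamma_0^2w/4},
\]
modulo operators $H^{m+1}\to H^{m+2}$ and a finite-rank zero-mode term, by Fact~\ref{fact:model-operator-identity}. Hence $M_{da}-\mathcal B\mathcal C\equiv M_\delta$, and the entry is the identity plus a compact operator.

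The $(2,2)$ entry $M_{\delta^{-1}}(-\mathcal C\mathcal B+M_{ad})$ is handled the same way on $H^m$, using $\mathcal T_0\mathcal S_0\equiv-\tfrac14$.

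The off-diagonal entries are $M_{\delta^{-1}}(M_d\mathcal B-\mathcal BM_d)$ and $M_{\delta^{-1}}(-\mathcal CM_a+M_a\mathcal C)$. These are commutators, hence of order one lower than $\mathcal B$ and $\mathcal C$. In the mixed scale they map $H^m\to H^{m+2}$ and $H^{m+1}\to H^{m+1}$ respectively, so they are compact into $H^{m+1}$ and $H^m$ by Rellich's theorem.

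The computation for $\mathcal P\mathcal R$ is symmetric; there one must also commute $M_{\delta^{-1}}$ past $\mathcal B$ and $\mathcal C$, which again costs only commutators. Thus $\mathcal R$ is a two-sided inverse of $\mathcal P$ modulo compact operators on the fixed spaces.

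Fredholmness of $\mathcal P$ then follows from Atkinson's theorem. By Proposition~\ref{prop:principal-mixed-matrix}, $D_{(f,h)}F(0,u_0)-\mathcal P$ is compact. Hence the same $\mathcal R$ is a two-sided parametrix for $D_{(f,h)}F(0,u_0)$, which is therefore Fredholm.

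The main obstacle is the order bookkeeping: every remainder must genuinely gain one derivative in the correct slot of the mixed scale, rather than merely being of lower order in some loose sense. The $(1,1)$ remainder must map $H^{m+1}\to H^{m+2}$, and the $(2,2)$ remainder must map $H^m\to H^{m+1}$. The delicate point is that $\mathcal S_0\mathcal T_0$ involves an order $+1$ operator followed by an order $-1$ operator with a multiplier in between; the commutator $[\mathcal T_0,M_b]$ has order $0$ and $\mathcal S_0$ then gains one more derivative, so the total gain is exactly one, which suffices. We would check each of these cases explicitly, and treat the zero modes separately as finite-rank terms.
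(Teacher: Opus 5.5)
Your proposal is correct and follows the paper's proof. Like you, the paper uses the adjugate matrix $\mathcal Q$, the commutator gains \eqref{eq:model-commutator-gains}, and $\mathcal S_0\mathcal T_0=\mathcal T_0\mathcal S_0=-\tfrac14$ off the zero mode to show that $\mathfrak l\mathcal Q$ and $\mathcal Q\mathfrak l$ equal $M_{\mathsf e}$ modulo compact operators, and then concludes with Atkinson's theorem and Proposition~\ref{prop:principal-mixed-matrix}.

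The only cosmetic difference is in the choice of parametrices. The paper takes $\mathcal QM_{\mathsf e^{-1}}$ as the right parametrix and $M_{\mathsf e^{-1}}\mathcal Q$ as the left one. This avoids the extra commutators of $M_{\delta^{-1}}$ with $\mathcal B$ and $\mathcal C$ that your single $\mathcal R=M_{\delta^{-1}}\mathcal Q$ requires when you compute $\mathcal P\mathcal R$.
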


\begin{proof}
Only within this proof, denote the operator in
\eqref{eq:principal-mixed-matrix} by $\mathfrak l$ and put
\[
 \mathsf a=-w\langle v_0\rangle_{\tau},\qquad
 \mathsf d=\langle v_0\rangle_{\tau},\qquad
 \mathsf p=-\gamma_0w^{-1},\qquad
 \mathsf r=\gamma_0w,\qquad
 \mathsf e=-w\left(\langle v_0\rangle_{\tau}^2+\frac{\gamma_0^2}{4}\right),
\]
and set
\[
 \mathcal B_1=\mathcal S_0M_{w},\qquad
 \mathcal B_2=M_{\mathsf p}\mathcal T_0M_{\mathsf r}.
\]
Each entry has the required mixed order.  Define the bounded operator
\begin{equation}
\label{eq:adjugate-parametrix-matrix}
 \mathcal Q=
 \begin{pmatrix}
 M_{\mathsf d}&-\mathcal B_1\\
 -\mathcal B_2&M_{\mathsf a}
 \end{pmatrix}
 \colon H^{m+1}\times H^m\longrightarrow H^{m+1}\times H^m.
\end{equation}
Direct multiplication gives
\begin{align}
 \mathfrak l\mathcal Q
 &=\begin{pmatrix}
 M_{\mathsf a\mathsf d}-\mathcal B_1\mathcal B_2
   &\mathcal B_1M_{\mathsf a}
      -M_{\mathsf a}\mathcal B_1\\
 \mathcal B_2M_{\mathsf d}
      -M_{\mathsf d}\mathcal B_2
   &M_{\mathsf a\mathsf d}-\mathcal B_2\mathcal B_1
 \end{pmatrix},                                      \label{eq:lq-product}\\
 \mathcal Q\mathfrak l
 &=\begin{pmatrix}
 M_{\mathsf a\mathsf d}-\mathcal B_1\mathcal B_2
   &M_{\mathsf d}\mathcal B_1
      -\mathcal B_1M_{\mathsf d}\\
 M_{\mathsf a}\mathcal B_2
      -\mathcal B_2M_{\mathsf a}
   &M_{\mathsf a\mathsf d}-\mathcal B_2\mathcal B_1
 \end{pmatrix}.                                      \label{eq:ql-product}
\end{align}
The off-diagonal entries are compact by
\eqref{eq:model-commutator-gains}.  For example,
\[
 \mathcal B_1\mathcal B_2
 =M_{w\mathsf p}\mathcal S_0\mathcal T_0
      M_{\mathsf r}
   +[\mathcal S_0,M_{w\mathsf p}]
       \mathcal T_0M_{\mathsf r}.
\]
The second term gains one derivative in the mixed scale.  For every
$k\ne0$, Fact~\ref{fact:model-operator-identity} gives
\(\mathcal S_0\mathcal T_0e_k=-\tfrac14e_k\), while the zero-mode discrepancy
is rank one.  Hence
\[
 \mathcal B_1\mathcal B_2=-\frac14M_{w\mathsf p\mathsf r}+C_{11}.
\]
The same commutator calculation, now using
\(\mathcal T_0\mathcal S_0e_k=-\tfrac14e_k\) for $k\ne0$, gives
\[
 \mathcal B_2\mathcal B_1=-\frac14M_{w\mathsf p\mathsf r}+C_{22}.
\]
Here \(C_{11}\) and \(C_{22}\) are compact, with their only zero-mode parts
finite rank.  Since
\[
 \mathsf a\mathsf d+\frac14w\mathsf p\mathsf r
 =-w\left(\langle v_0\rangle_{\tau}^2+\frac{\gamma_0^2}{4}\right)
 =\mathsf e,
\]
equations \eqref{eq:lq-product}--\eqref{eq:ql-product} yield
\begin{equation}
\label{eq:two-sided-adjugate-products}
 \mathfrak l\mathcal Q=M_{\mathsf e}+C_1,
 \qquad
 \mathcal Q\mathfrak l=M_{\mathsf e}+C_2,
\end{equation}
with compact matrix operators \(C_1,C_2\).  Multiplication by
\(\mathsf e^{-1}\) is bounded by
Lemma~\ref{lem:effective-scalar-factor}.  Thus
the two products
$\mathfrak l(\mathcal QM_{\mathsf e^{-1}})$ and
$(M_{\mathsf e^{-1}}\mathcal Q)\mathfrak l$ differ from the corresponding
identity maps by compact operators.  The two-sided Atkinson argument gives
finite kernel, closed range, and finite cokernel, hence Fredholmness.  Finally,
Proposition~\ref{prop:principal-mixed-matrix} shows that the exact function
block is a compact perturbation of $\mathfrak l$, so it has the same
property.
\end{proof}

\begin{proposition}[Index of the function-variable block]
\label{prop:function-block-fredholm}
Under the stagnation-free hypothesis,
\[
 D_{(f,h)}F(0,u_0)\colon H^{m+1}\times H^m
 \longrightarrow H^{m+1}\times H^m,
\]
is Fredholm of index zero.
\end{proposition}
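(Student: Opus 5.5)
We already know from Proposition~\ref{prop:two-sided-parametrix} that $D_{(f,h)}F(0,u_0)$ is Fredholm. It is a compact perturbation of the principal matrix $\mathfrak l$ in \eqref{eq:principal-mixed-matrix}, by Proposition~\ref{prop:principal-mixed-matrix}. So it suffices to show $\indop\mathfrak l=0$. The plan is a homotopy inside the Fredholm operators on the fixed spaces $H^{m+1}\times H^m$, connecting $\mathfrak l$ to an operator whose index is evidently zero. We then invoke the local constancy of the index along operator-norm continuous Fredholm paths.

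For $\lambda\in[0,1]$, put
\[
 \mathfrak l_\lambda=
 \begin{pmatrix}
 M_{\mathsf a}&\lambda\mathcal S_0M_{w}\\
 \lambda M_{\mathsf p}\mathcal T_0M_{\mathsf r}&M_{\mathsf d}
 \end{pmatrix},
\]
with the notation of the proof of Proposition~\ref{prop:two-sided-parametrix}. The path is affine in $\lambda$, with bounded coefficients between the mixed-order spaces, hence operator-norm continuous. Repeating the adjugate computation with $\mathcal Q_\lambda$ (off-diagonal entries scaled by $\lambda$) gives $\mathfrak l_\lambda\mathcal Q_\lambda=M_{\mathsf e_\lambda}+C$ and $\mathcal Q_\lambda\mathfrak l_\lambda=M_{\mathsf e_\lambda}+C'$ with compact remainders, where
\[
 \mathsf e_\lambda=\mathsf a\mathsf d+\tfrac{\lambda^2}{4}w\mathsf p\mathsf r
 =-w\bigl(\langle v_0\rangle_\tau^2+\tfrac{\lambda^2}{4}\gamma_0^2\bigr).
\]
This is the main obstacle. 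At $\lambda<1$ the factor $\mathsf e_\lambda$ may vanish wherever $\langle v_0\rangle_\tau=0$, even though $\mathsf e_1$ does not, so the naive path can leave the Fredholm set.

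I would repair this by deforming the diagonal as well. By Lemma~\ref{lem:effective-scalar-factor}, the pair of smooth periodic functions $(\langle v_0\rangle_\tau,\gamma_0/2)$ never vanishes simultaneously. Consider the complex-valued function $\langle v_0\rangle_\tau+i\gamma_0/2$, which is nonvanishing on the circle, and rescale it to unit modulus. The homotopy replaces $(\langle v_0\rangle_\tau,\gamma_0/2)$ by a path of nonvanishing pairs $(\alpha_\lambda,\beta_\lambda)$ in the coefficients $\mathsf a=-w\alpha$, $\mathsf d=\alpha$, $\mathsf p=-2\beta w^{-1}$, $\mathsf r=2\beta w$. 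Along any such path, $\mathsf e_\lambda=-w(\alpha_\lambda^2+\beta_\lambda^2)$ stays bounded away from zero. The commutator and compactness arguments of Proposition~\ref{prop:two-sided-parametrix} then apply uniformly, so every $\mathfrak l_\lambda$ is Fredholm.

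Among such paths I would choose one ending at $\beta\equiv0$, $\alpha\equiv1$, where the operator $\operatorname{diag}(M_{-w},M_{1})$ is invertible and has index $0$. Such an endpoint is reachable because the pair ranges over $\re^2\setminus\{0\}$, which is connected. The difficulty is that a loop in $\re^2\setminus\{0\}$ has a winding number, so the pointwise homotopy has to be handled with care.

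If the winding obstruction bites, the alternative is to contract the plane-valued function pointwise by straight-line homotopy toward a constant. That is impossible in general, so instead I would note that in the $(\alpha,\beta)$ direction the matrix symbol at frequency $k\ne0$ is conjugate to $\begin{pmatrix}\alpha&\beta\\ \beta&-\alpha\end{pmatrix}$, up to positive weights and the signs of $\mathcal S_0,\mathcal T_0$. This symbol is real symmetric with eigenvalues $\pm\sqrt{\alpha^2+\beta^2}$, and its two eigen-line-bundles carry opposite winding. The resulting index contributions cancel, giving $0$ for any winding. Equivalently, one can compute directly: the symbol has a constant signature, so the Toeplitz-type index formula yields zero. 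I expect writing this cancellation cleanly to be the main work. Once it is done, homotopy invariance gives $\indop\mathfrak l=0$, and compact-perturbation invariance transfers the result to $D_{(f,h)}F(0,u_0)$.
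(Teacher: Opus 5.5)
Your reduction to $\indop\mathfrak l=0$ is correct, and so is your observation that the naive path $\mathfrak l_\lambda$ can leave the Fredholm set. The gap is that the proposal never actually computes the index. Your homotopy of $(\alpha,\beta)$ to $(1,0)$ is left open because of the winding question, and the fallback does not close it, for three reasons. First, the claimed conjugation to $\bigl(\begin{smallmatrix}\alpha&\beta\\ \beta&-\alpha\end{smallmatrix}\bigr)$ by positive weights is not available. Modulo compacts, the $(2,1)$ block $M_{\mathsf p}\mathcal T_0M_{\mathsf r}$ sees only $\mathsf p\mathsf r=-\gamma_0^2$. Any symmetrization therefore produces $|\gamma_0|/2$, with weights that degenerate at zeros of $\gamma_0$, which the hypothesis explicitly allows. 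Second, the two eigenlines of a reflection matrix both turn by half the angle in the same sense, so there is no ``opposite winding'' to cancel. Third, you never reduce your operator to Toeplitz or multiplication form, so no index formula has been made applicable.

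The missing idea is that no winding obstruction exists, because $\mathcal S_0$ and $\mathcal T_0$ are even in $k$. Let $\Lambda$ be the multiplier $(1+(2\pi k/|c_0|)^2)^{1/2}$ and put $\widetilde J(f,h)=(\Lambda f,h)$. Then $\Lambda\mathcal S_0-\tfrac12$ and $\mathcal T_0\Lambda^{-1}+\tfrac12$ are compact multipliers. Commutators of $\Lambda^{\pm1}$ with smooth multiplications lose an order. Hence, on $H^m\times H^m$,
$\widetilde J\,\mathfrak l\,\widetilde J^{-1}=M_{\mathfrak P_\circ}+C$ with $C$ compact and $\mathfrak P_\circ=\bigl(\begin{smallmatrix}-w\langle v_0\rangle_\tau&w/2\\ \gamma_0^2/2&\langle v_0\rangle_\tau\end{smallmatrix}\bigr)$, whose determinant is $\mathsf e\neq0$. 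Multiplication by a pointwise invertible smooth matrix is invertible, so the index is zero immediately. This is the paper's argument, and it uses no topology of the coefficient loop.

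If you prefer to keep your coefficient homotopy, you can rescue it with the Bernoulli condition $\gamma_0\langle v_0\rangle_\tau\equiv\beta_0$. If $\beta_0\neq0$, the pair $(\langle v_0\rangle_\tau,\gamma_0/2)$ stays in one open quadrant. If $\beta_0=0$, it stays on one open half-axis. In either case it is null-homotopic in $\re^2\setminus\{0\}$, and your endpoint $\operatorname{diag}(M_{-w},M_1)$ is reached through Fredholm operators.
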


\begin{proof}
We give the operator-norm homotopy required for the index calculation on one
fixed domain and target.  Let \(J_m\) denote, only in this proof, the fixed
Fourier isomorphism
\[
 J_m(f,h)=
 \left(
   \sum_{k\in\ze}
   \sqrt{1+\left(\frac{2\pi k}{|c_0|}\right)^2}\,
       \widehat f(k) e^{2\pi i kt/|c_0|},
   h\right),
\]
from \(H^{m+1}\times H^m\) onto \(H^m\times H^m\).  The first multiplier
commutes with \(\mathcal S_0\) and \(\mathcal T_0\).  For \(k\ne0\),
\[
 \sqrt{1+\left(\frac{2\pi k}{|c_0|}\right)^2}
 \frac{|c_0|}{4\pi|k|}\longrightarrow\frac12,
 \qquad
 -\frac{\pi|k|}{|c_0|}
 \left(1+\left(\frac{2\pi k}{|c_0|}\right)^2\right)^{-1/2}
 \longrightarrow-\frac12.
\]
The differences are compact Fourier multipliers; their zero-mode
differences are finite rank.  Every commutator of the first multiplier or
its inverse with a smooth multiplication coefficient is one order below the
corresponding diagonal term.
Together with Proposition~\ref{prop:principal-mixed-matrix}, this proves,
with $\mathfrak P_\circ(t)$ and $C_\circ$ denoting the resulting operators,
\begin{equation}
\label{eq:equal-order-conjugation}
 J_mD_{(f,h)}F(0,u_0)J_m^{-1}=M_{\mathfrak P_\circ}+C_\circ,
 \qquad C_\circ\ \hbox{compact},
 \quad\text{on }H^m\times H^m,
\end{equation}
where
\begin{equation}
\label{eq:pointwise-equal-order-matrix}
 \mathfrak P_\circ(t)=\begin{pmatrix}
 -w\langle v_0\rangle_{\tau}&w/2\\[1mm]
 \gamma_0^2/2&\langle v_0\rangle_{\tau}
 \end{pmatrix}.
\end{equation}
Its determinant is
$-w(\langle v_0\rangle_{\tau}^2+\gamma_0^2/4)$.  It follows from
Lemma~\ref{lem:effective-scalar-factor} that \(\mathfrak P_\circ(t)^{-1}\) is smooth and
bounded; hence \(M_{\mathfrak P_\circ}\) is invertible on \(H^m\times H^m\).

On this one fixed space, use the path
\[
 \lambda\longmapsto M_{\mathfrak P_\circ}+(1-\lambda)C_\circ,
 \qquad 0\leq\lambda\leq1.
\]
It is continuous in operator norm, and every member is a compact perturbation
of the invertible operator \(M_{\mathfrak P_\circ}\), hence Fredholm.  Equivalently, its
principal matrix is the same uniformly invertible $\mathfrak P_\circ$ for every
$\lambda$.  The endpoint $M_{\mathfrak P_\circ}$ has index zero, so constancy of the
Fredholm index along the path gives index zero at $\lambda=0$.  Conjugation by
the fixed isomorphism \(J_m\)
preserves the index, proving the assertion.  This proof includes the zero
and low modes in the compact part of the norm-continuous path and uses no
symbol-winding argument.
\end{proof}

We now pass from the function block to the full unnormalized derivative.
Consider the domain-extension operator
\[
 (x,r)\longmapsto D_{(f,h)}F(0,u_0)x
 \quad\text{from}\quad
 (H^{m+1}\times H^m)\times\re^2
 \quad\text{to}\quad\mathcal Y.
\]
Its kernel and range are, respectively,
\[
 \ker D_{(f,h)}F(0,u_0)\times\re^2,
 \qquad
 \ran D_{(f,h)}F(0,u_0).
\]
Its cokernel is that of \(D_{(f,h)}F(0,u_0)\).  Consequently this
domain-extension operator has index two.  By
\eqref{eq:exact-scalar-columns}, the full derivative \(L\) differs from this
domain-extension operator by a map with range in the span of
\((1,0)\) and \((0,1)\), so it is finite rank.  Therefore
\begin{equation}
\label{eq:full-l-index-two}
 L=D_uF(0,u_0)\colon\mathcal X\longrightarrow\mathcal Y
 \quad\text{is Fredholm with}\quad \operatorname{ind} L=2.
\end{equation}
This calculation adjoins domain variables before any normalization equation
is appended.

The derivative of the $s$-dependent normalization is the bounded map
\eqref{eq:physical-normalization-derivative}.
The scalar directions \(\dot\mu,\dot\beta\) do not occur.  The first
component is the derivative of mean normal displacement; the second is the
integral of \eqref{eq:density-first-variation}, not the derivative of the
background-vorticity quotient.  The compatibility assumption is
\eqref{eq:normalization-compatibility}.
It is an explicit hypothesis, to be verified separately in a special
reference configuration; no canonical pair of kernel directions is assumed.

Finally, append the two normalization equations.  Since
\[
 D_u\mathcal F(0,u_0)x
 =\bigl(Lx,D_u\mathcal N(0,u_0)x\bigr),
\]
compare it first with \(x\mapsto(Lx,0)\).  This comparison operator has index
\(\operatorname{ind} L-2=0\), and the difference between the two operators
has finite-dimensional range.  Thus, without using compatibility,
\begin{equation}
\label{eq:proof-normalized-index-zero}
 D_u\mathcal F(0,u_0)\colon\mathcal X
 \longrightarrow\mathcal Y\times\re^2
 \quad\text{is Fredholm of index zero}.
\end{equation}
If \eqref{eq:normalization-compatibility} holds, then
\begin{align}
 \ker D_u\mathcal F(0,u_0)
   &=\ker L\cap\ker D_u\mathcal N(0,u_0),       \label{eq:proof-normalized-kernel}\\
 \ran D_u\mathcal F(0,u_0)
   &=\ran L\times\re^2,      \label{eq:proof-normalized-range}\\
 \coker D_u\mathcal F(0,u_0)
   &\cong\coker L                  \label{eq:proof-normalized-cokernel}
\end{align}
under the canonical quotient map induced by projection onto the first
factor.  Indeed, for \(y=Lx_0\) and \(r\in\re^2\), surjectivity on
\(\ker L\) supplies \(k\in\ker L\) such that
\(D_u\mathcal N(0,u_0)k=r-D_u\mathcal N(0,u_0)x_0\); then
\(D_u\mathcal F(0,u_0)(x_0+k)=(y,r)\).  This proves
\eqref{eq:proof-normalized-range}, while the other two identities follow
directly.
In particular, if
\[
 d=\dim\coker L,
\]
then \(\dim\ker L=d+2\), and compatibility gives
\begin{equation}
\label{eq:normalized-finite-dimensional-counts}
 \dim\ker D_u\mathcal F(0,u_0)
 =\dim\coker D_u\mathcal F(0,u_0)=d.
\end{equation}
Index zero is not being identified with invertibility: invertibility follows
only when \(d=0\).

\begin{lemma}[Kernel and cokernel regularity]
\label{lem:kernel-cokernel-regularity}
For every admissible integer \(m\geq3\), the kernels of the function block,
\(L\), and, under \eqref{eq:normalization-compatibility},
\(D_u\mathcal F(0,u_0)\), consist of smooth functions.  Under the periodic
$L^2( d t)$ dual pairing, their cokernels are represented by smooth
formal-adjoint null vectors, with the scalar-column orthogonality conditions
for $L$.  Under these identifications, the kernel and cokernel spaces are the
same at every admissible Sobolev exponent.  In particular, each of their
finite dimensions is independent of that exponent.
\end{lemma}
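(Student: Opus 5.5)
The plan is an elliptic-regularity bootstrap driven by the parametrix of Proposition~\ref{prop:two-sided-parametrix}, run on both the operator and its formal adjoint. Write $\mathsf A=D_{(f,h)}F(0,u_0)$ for the function block. Every operator entering $\mathsf A$ through Proposition~\ref{prop:complete-linearization} has smooth coefficients and extends to the whole scale $H^{r+1}\times H^r\to H^{r+1}\times H^r$ for all real $r$; this includes $\mathcal S,\mathcal T,\mathcal D,\mathcal D'$, the multipliers, and the rank-one background terms. I would first recheck Proposition~\ref{prop:principal-mixed-matrix} and Proposition~\ref{prop:two-sided-parametrix} at a general real order $r$. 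The point is that the remainders $C_1,C_2$ in \eqref{eq:two-sided-adjugate-products} are not merely compact but gain one derivative in the mixed scale. Each is a sum of:
\begin{itemize}
\item commutators, by \eqref{eq:model-commutator-gains};
\item exact-minus-model differences, by Lemma~\ref{lem:compact-remainders};
\item lower-order layer operators;
\item finite-rank terms with smooth ranges.
\end{itemize}
Hence the left parametrix $\mathcal R=M_{\mathsf e^{-1}}\mathcal Q$ satisfies $\mathcal R\mathsf A=I+K$, where $K$ maps $H^{r+1}\times H^r$ into $H^{r+2}\times H^{r+1}$ for every $r$. Here $\mathsf e^{-1}$ is smooth by Lemma~\ref{lem:effective-scalar-factor}.

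\emph{Kernel.} Suppose $(f,h)\in\ker\mathsf A$ at level $m$. Then $(f,h)=-K(f,h)\in H^{m+2}\times H^{m+1}$, and iterating gives smoothness. For $L$, an element $(f,h,\dot\mu,\dot\beta)\in\ker L$ satisfies $\mathsf A(f,h)=(\dot\mu,\dot\beta)$ with constant, hence smooth, right side, so the same bootstrap applies to $\mathcal R\mathsf A(f,h)=\mathcal R(\dot\mu,\dot\beta)-K(f,h)$. The kernel of $D_u\mathcal F(0,u_0)$ is the subspace of $\ker L$ cut out by \eqref{eq:normalization-compatibility}, so it is smooth as well. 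Because smooth elements lie in every level, the kernel at level $m$ contains the kernel at level $m'$ and conversely for any $m,m'\geq3$. The kernels therefore coincide as sets.

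\emph{Cokernel.} By Fredholmness the range is closed. Identify $(H^{m+1}\times H^m)'$ with $H^{-m-1}\times H^{-m}$ via the $L^2(dt)$ pairing. The annihilator of $\ran\mathsf A$ is then $\ker\mathsf A^{*}$, where $\mathsf A^{*}$ is the formal adjoint acting $H^{-m-1}\times H^{-m}\to H^{-m-1}\times H^{-m}$. Here $\mathcal S^{*}$ is $\mathcal S$ with swapped kernel variables, $\mathcal D^{*}=\mathcal D'$ up to the fixed conventions, $\mathcal T$ is symmetric, and multiplications are self-adjoint. The adjoint has principal matrix equal to the transpose of \eqref{eq:principal-mixed-matrix} with multiplications moved across. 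Its adjugate has the same determinant $\mathsf e$ up to one-order-gaining commutators, so the argument of Proposition~\ref{prop:two-sided-parametrix} gives a left parametrix for $\mathsf A^*$ with smoothing remainder on the reversed mixed scale $H^{r}\times H^{r+1}$. Starting at $r=-m-1$ and bootstrapping shows every null vector is smooth. For $L$, the annihilator consists of the $\ker\mathsf A^*$ elements $(\phi,\chi)$ orthogonal to the two scalar columns, i.e.\ $\int\phi\,dt=\int\chi\,dt=0$. For $D_u\mathcal F$, \eqref{eq:proof-normalized-cokernel} reduces the question to $\coker L$.

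\emph{Independence of $m$.} The main obstacle is verifying that $\ran\mathsf A$ at level $m$ equals $(\ker\mathsf A^*)^{\perp}$ inside $H^{m+1}\times H^m$ for each $m$, with the same smooth null space. This requires closedness of range at every level, which is Proposition~\ref{prop:two-sided-parametrix} applied at level $m$, plus the formula $\ran=(\ker\mathsf A^*)^\perp$, which holds by the closed range theorem. Then $\dim\coker=\dim\ker\mathsf A^*$ is the same for all $m$. The only delicate bookkeeping I expect is checking that the adjoint's zero-mode and rank-one background terms produce smooth contributions ($\Psi\circ c_0$, $\partial_n\Psi\circ c_0$, constants), so the smoothing property of the remainder is not spoiled.
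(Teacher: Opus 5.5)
Your proposal is correct and follows essentially the paper's argument. Both proofs run a one-derivative-gaining parametrix bootstrap on the function block, on $L$ (where the constant scalar columns contribute smooth terms), and on the formal $L^2(dt)$ adjoint starting from negative order, and then use smoothness of all null vectors to get independence of the level. The only difference is cosmetic: you use the adjugate left parametrix $M_{\mathsf e^{-1}}\mathcal Q$ from Proposition~\ref{prop:two-sided-parametrix}, whereas the paper uses the equal-order conjugation $J_r D_{(f,h)}F(0,u_0)J_r^{-1}=M_{\mathfrak P_*}+C_r$.
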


\begin{proof}
We make both the kernel and adjoint arguments explicit.  Only in this
proof, put
\[
 \mathfrak P_*(t)=\begin{pmatrix}
 -w\langle v_0\rangle_{\tau}&w/2\\
 \gamma_0^2/2&\langle v_0\rangle_{\tau}
 \end{pmatrix}.
\]
The estimates used
in Proposition~\ref{prop:principal-mixed-matrix}, including the commutator
estimates in \eqref{eq:model-commutator-gains}, hold on every periodic
Sobolev level.  For each real $r$, define the fixed Fourier isomorphism
\[
 J_r(f,h)=
 \left(
   \sum_{k\in\ze}
   \sqrt{1+\left(\frac{2\pi k}{|c_0|}\right)^2}\,
     \widehat f(k) e^{2\pi i kt/|c_0|},
   h
 \right)\colon
 H^{r+1}\times H^r\longrightarrow H^r\times H^r.
\]
Consequently the equal-order realization in
\eqref{eq:equal-order-conjugation} can be sharpened to
\begin{equation}
\label{eq:equal-order-one-gain}
 J_rD_{(f,h)}F(0,u_0)J_r^{-1}=M_{\mathfrak P_*}+C_r,
 \qquad
 C_r\colon H^r\times H^r\longrightarrow H^{r+1}\times H^{r+1},
\end{equation}
for every real \(r\); the finitely many zero modes are smooth.  If
\(\vartheta\) lies in the function-block kernel, then
\[
 \vartheta=-M_{\mathfrak P_*^{-1}}C_r\vartheta.
\]
Thus \(\vartheta\in H^r\times H^r\) implies
\(\vartheta\in H^{r+1}\times H^{r+1}\), and iteration gives smoothness.  If
\((f,h,\dot\mu,\dot\beta)\in\ker L\), the same equation has on its
right-hand side only the one-order-gaining term $C_r\vartheta$ from
\eqref{eq:equal-order-one-gain} and the two smooth
constant functions from \eqref{eq:exact-scalar-columns}; the same bootstrap
applies.  The normalized kernel is a subspace of \(\ker L\).

For the cokernel, use the real periodic distributional \(L^2( d t)\) pairing
componentwise, and let a dagger denote the corresponding formal transpose.
The symmetry of the Dirichlet Green kernel gives
\(\mathcal S^\dagger=\mathcal S\),
\(\mathcal T^\dagger=\mathcal T\), and
\(\mathcal D^\dagger=\mathcal D'\).  Therefore the formal adjoint boundary
transmission system has principal part
\begin{equation}
\label{eq:formal-adjoint-mixed-system}
 \begin{pmatrix}
 M_{-w\langle v_0\rangle_{\tau}}
   &M_{\gamma_0w}\mathcal T_0M_{-\gamma_0w^{-1}}\\
 M_{w}\mathcal S_0&M_{\langle v_0\rangle_{\tau}}
 \end{pmatrix},
\end{equation}
and its lower-order part is the transpose of the four expressions
\eqref{eq:r11}--\eqref{eq:r22}.  After transport by the transpose of the fixed
equal-order isomorphism, the adjoint system is
\begin{equation}
\label{eq:equal-order-adjoint-system}
 M_{\mathfrak P_*^{\mathsf T}}\vartheta_{\mathrm{ad}}
   +C_r^\dagger\vartheta_{\mathrm{ad}}=0,
 \qquad
 C_r^\dagger\colon H^r\times H^r\longrightarrow
 H^{r+1}\times H^{r+1}.
\end{equation}
This is also the boundary system obtained by transposing the two-sided
outer-Dirichlet transmission formulation: single and hypersingular layers
remain self-transpose, while the two double-layer averages are exchanged.
The transposed jump data have the same trace orders, so
Theorem~\ref{thm:transmission-regularity} supplies the corresponding
piecewise adjoint field with the same gain and no scalar compatibility
condition.

Since \(\mathfrak P_*^{\mathsf T}\) is pointwise invertible,
\eqref{eq:equal-order-adjoint-system} gives
\(\vartheta_{\mathrm{ad}}
=-M_{\mathfrak P_*^{-\mathsf T}}C_r^\dagger\vartheta_{\mathrm{ad}}\).  Starting from any
distributional order and iterating shows that every adjoint null vector is
smooth.  A representative of \(\coker L\) is such an adjoint
null vector subject only to the two scalar-column orthogonality conditions
\[
 \int_0^{|c_0|}(\vartheta_{\mathrm{ad}})_1\, d t=0,
 \qquad
 \int_0^{|c_0|}(\vartheta_{\mathrm{ad}})_2\, d t=0,
\]
so it is smooth as well.  Under compatibility,
\eqref{eq:proof-normalized-range} identifies the normalized cokernel with the
same smooth representatives, with zero component in the dual of the appended
\(\re^2\).

Finally, the realizations on different Sobolev levels agree on smooth
functions and as distributional operators.  A kernel or adjoint-kernel
element at one admissible level is smooth by the bootstrap based on
\eqref{eq:equal-order-one-gain} and
hence belongs to every other admissible level; the reverse inclusions are
the same statement with the levels interchanged.  The kernel and cokernel
spaces, and therefore their dimensions, are independent of \(m\).
\end{proof}

The unnormalized Fredholm theorem now follows:
Proposition~\ref{prop:function-block-fredholm} gives index zero for the
function block.  The exact scalar columns
\eqref{eq:exact-scalar-columns} and the domain-extension calculation
\eqref{eq:full-l-index-two} then prove that
\(L=D_uF(0,u_0)\) is Fredholm of index two.  Lemma~
\ref{lem:kernel-cokernel-regularity} proves the asserted Sobolev-order
independence.  This proves Theorem~\ref{thm:unnormalized-fredholm}.

For the normalized Fredholm theorem,
formula \eqref{eq:physical-normalization-derivative} gives the normalization
derivative, and its restriction in
\eqref{eq:normalization-compatibility} is to the kernel of the full
unnormalized derivative \(L\).  The unconditional target-extension argument
gives \eqref{eq:proof-normalized-index-zero}; under the stated surjectivity
hypothesis,
\eqref{eq:proof-normalized-kernel}--\eqref{eq:proof-normalized-cokernel}
give the claimed kernel, range, and canonical cokernel identification.
Lemma~\ref{lem:kernel-cokernel-regularity} supplies Sobolev-order
independence.  This proves Theorem~\ref{thm:normalized-fredholm}.
\subsection{Physical Lyapunov--Schmidt reduction}
\label{subsec:physical-ls-proof}

We now work with the physical parameter-dependent normalized residual
\[
 \mathcal F(s,u)
 =\bigl(F(s,u),\mathcal N(s,u)-\mathcal N(0,u_0)\bigr)
 \colon(-s_0,s_0)\times O\longrightarrow
 \mathcal Y\times\re^2.
\]
In particular, none of the differentiations in this subsection freezes the
normalization
at $s=0$.  Let
\[
 \mathcal K=\ker D_u\mathcal F(0,u_0),
 \qquad
 d=\dim\mathcal K=\dim\coker D_u\mathcal F(0,u_0),
\]
where the last equality is Theorem~\ref{thm:normalized-fredholm}.  Choose
closed complements
\begin{equation}
 \mathcal X=\mathcal K\oplus\mathcal X_1,
 \qquad
 \mathcal Y\times\re^2
 =\ran D_u\mathcal F(0,u_0)\oplus\mathcal C,
 \label{eq:physical-fixed-complements}
\end{equation}
with $\dim\mathcal C=d$.  Because $D_u\mathcal F(0,u_0)$ is Fredholm, its
range is closed and both summands in
\eqref{eq:physical-fixed-complements} admit bounded projections.  Denote by
$P$ the projection onto $\ran D_u\mathcal F(0,u_0)$ and by $Q$
the projection onto $\mathcal C$ along that range.  Their sum is the identity,
and
\begin{equation}
 \ker Q=\ran D_u\mathcal F(0,u_0),
 \qquad
 QD_u\mathcal F(0,u_0)=0.
 \label{eq:physical-projections}
\end{equation}
The restriction
\begin{equation}
 D_u\mathcal F(0,u_0)\big|_{\mathcal X_1}\colon\mathcal X_1
       \longrightarrow\ran D_u\mathcal F(0,u_0)
 \label{eq:complement-isomorphism}
\end{equation}
is a bounded isomorphism.

Consider the $C^2$ map
\[
 (s,\zeta,\xi)\longmapsto P\mathcal F(s,u_0+\zeta+\xi)
\]
on the open subset of
$(-s_0,s_0)\times\mathcal K\times\mathcal X_1$ on which
$u_0+\zeta+\xi\in O$.
At $(0,0,0)$ its derivative in $\xi$ is
$D_u\mathcal F(0,u_0)|_{\mathcal X_1}$.  The parameter-dependent Banach
implicit function
theorem therefore gives product neighborhoods and a unique $C^2$ map
\begin{equation}
 \xi=\xi(s,\zeta)\in\mathcal X_1,
 \qquad
 \xi(0,0)=0,
 \label{eq:physical-reconstruction}
\end{equation}
such that
\begin{equation}
 P\mathcal F\bigl(s,u_0+\zeta+\xi(s,\zeta)\bigr)=0.
 \label{eq:physical-range-equation}
\end{equation}
Differentiation in a direction $\eta\in\mathcal K$ at the origin gives
\[
 D_u\mathcal F(0,u_0)
 \bigl(\eta+D_\zeta\xi(0,0)[\eta]\bigr)=0.
\]
Since $D_u\mathcal F(0,u_0)\eta=0$ and
$D_\zeta\xi(0,0)[\eta]\in\mathcal X_1$,
\eqref{eq:complement-isomorphism} implies
\begin{equation}
 D_\zeta\xi(0,0)=0.
 \label{eq:reconstruction-kernel-derivative}
\end{equation}

\begin{definition}[Physical reduced equation]
\label{def:physical-reduced-equation}
For the fixed complements and projections in
\eqref{eq:physical-fixed-complements}--\eqref{eq:physical-projections}, define
\begin{equation}
 B(s,\zeta)
 \coloneqq Q\mathcal F\bigl(s,u_0+\zeta+\xi(s,\zeta)\bigr).
 \label{eq:physical-reduced-equation}
\end{equation}
This is a $C^2$ map from the product neighborhood of $(0,0)$ in
$(-s_0,s_0)\times\mathcal K$ furnished by the implicit function theorem into
$\mathcal C$.
\end{definition}

\begin{proof}[Proof of Theorem~\ref{thm:physical-lyapunov-schmidt}]
Equations \eqref{eq:physical-range-equation} and
\eqref{eq:physical-reduced-equation}, together with
the fact that the two projections sum to the identity, show that
\[
 \mathcal F\bigl(s,u_0+\zeta+\xi(s,\zeta)\bigr)=0
 \quad\Longleftrightarrow\quad B(s,\zeta)=0.
\]
Conversely, every sufficiently small $u-u_0$ has a unique decomposition
$\zeta+\xi$ in $\mathcal K\oplus\mathcal X_1$.  If $\mathcal F(s,u)=0$, then its
range component is zero, so uniqueness in the implicit function theorem
forces $\xi=\xi(s,\zeta)$.  This proves the equivalence and uniqueness of
the reconstruction.  Proposition~\ref{prop:residual-characterization} then
identifies the zeros of $\mathcal F$ with the weak steady sheets satisfying
the two physical normalizations.  The domain and target in
\eqref{eq:physical-reduced-equation} have the asserted finite dimensions by
Theorem~\ref{thm:normalized-fredholm}.
\end{proof}

\begin{lemma}[Derivatives of the physical reduced equation]
\label{lem:physical-reduced-derivatives}
The range correction is
\begin{equation}
 \xi_s(0,0)
 =-\left(PD_u\mathcal F(0,u_0)\big|_{\mathcal X_1}\right)^{-1}
     P\,D_s\mathcal F(0,u_0)
 \in\mathcal X_1.
 \label{eq:parameter-complement-correction}
\end{equation}
For $\eta,\eta_1,\eta_2\in\mathcal K$, all derivatives in
\eqref{eq:reduced-first-parameter}--\eqref{eq:full-map-uu-ss} being evaluated at
$(0,u_0)$, one has
\begin{align}
 B_s(0,0)
 &=Q\mathcal F_s,
 \label{eq:reduced-first-parameter}\\
 B_{\zeta\zeta}(0,0)[\eta_1,\eta_2]
 &=Q\mathcal F_{uu}[\eta_1,\eta_2],
 \label{eq:reduced-zz}\\
 B_{s\zeta}(0,0)[\eta]
 &=Q\left(
   \mathcal F_{su}[\eta]
   +\mathcal F_{uu}[\xi_s(0,0),\eta]
  \right),
 \label{eq:reduced-sz}\\
 B_{ss}(0,0)
 &=Q\left(
   \mathcal F_{ss}
   +2\mathcal F_{su}[\xi_s(0,0)]
   +\mathcal F_{uu}[\xi_s(0,0),\xi_s(0,0)]
  \right).
 \label{eq:reduced-ss}
\end{align}
For $q,q_1,q_2\in\mathcal X$, the derivatives of the full physical map are
\begin{align}
 \mathcal F_s
 &=\bigl(F_s,D_s\mathcal N\bigr),
 &\mathcal F_{su}[q]
 &=\bigl(F_{su}[q],D_sD_u\mathcal N[q]\bigr),
 \label{eq:full-map-s-su}\\
 \mathcal F_{uu}[q_1,q_2]
 &=\bigl(F_{uu}[q_1,q_2],D_u^2\mathcal N[q_1,q_2]\bigr),
 &\mathcal F_{ss}
 &=\bigl(F_{ss},D_s^2\mathcal N\bigr),
 \label{eq:full-map-uu-ss}
\end{align}
with the normalization components given explicitly in
\eqref{eq:normalization-s-derivative}--\eqref{eq:normalization-ss-derivative}.
\end{lemma}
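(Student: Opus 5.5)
The plan is to differentiate the identity $P\mathcal F(s,u_0+\zeta+\xi(s,\zeta))=0$ and the definition \eqref{eq:physical-reduced-equation} of $B$ by the chain rule at $(s,\zeta)=(0,0)$. Three facts from the construction of $\xi$ will be used repeatedly: $\xi(0,0)=0$, $D_\zeta\xi(0,0)=0$ from \eqref{eq:reconstruction-kernel-derivative}, and $QD_u\mathcal F(0,u_0)=0$ from \eqref{eq:physical-projections}. All derivatives exist and are continuous because $\mathcal F$ is $C^2$ by Proposition~\ref{prop:nonlinear-residual-regularity} and $\xi$ is $C^2$. Throughout, write $x(s,\zeta)=u_0+\zeta+\xi(s,\zeta)$.

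\emph{Range correction and first derivative.} Differentiating the range equation in $s$ at the origin gives $P\mathcal F_s+PD_u\mathcal F(0,u_0)\xi_s(0,0)=0$. Since $\xi_s(0,0)\in\mathcal X_1$ and $P D_u\mathcal F(0,u_0)=D_u\mathcal F(0,u_0)$ restricted to $\mathcal X_1$ is the isomorphism \eqref{eq:complement-isomorphism}, this yields \eqref{eq:parameter-complement-correction}. For \eqref{eq:reduced-first-parameter}, we have $B_s=Q(\mathcal F_s+D_u\mathcal F\,x_s)$ at the origin. The second term is annihilated by $Q$, which leaves $Q\mathcal F_s$.

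\emph{Second derivatives.} We differentiate $B=Q\mathcal F(s,x(s,\zeta))$ twice. A second derivative of the composition has the form
\[
Q\bigl(\mathcal F_{ab}+\mathcal F_{au}[x_b]+\mathcal F_{ub}[x_a]+\mathcal F_{uu}[x_a,x_b]+D_u\mathcal F\,x_{ab}\bigr),
\]
where the mixed terms are present only for $s$-slots. In this expression the term containing $x_{ab}$ is killed by $Q$, so no second derivative of $\xi$ is ever needed. This point requires care, since $\xi$ is only $C^2$. At the origin we have $x_\zeta[\eta]=\eta$ because $D_\zeta\xi(0,0)=0$, and $x_s=\xi_s(0,0)$ because $\zeta$ does not depend on $s$. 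Substituting these gives:
\begin{itemize}
\item $B_{\zeta\zeta}=Q\mathcal F_{uu}[\eta_1,\eta_2]$, since $\mathcal F$ has no explicit $\zeta$-dependence;
\item $B_{s\zeta}[\eta]=Q(\mathcal F_{su}[\eta]+\mathcal F_{uu}[\xi_s,\eta])$;
\item $B_{ss}=Q(\mathcal F_{ss}+2\mathcal F_{su}[\xi_s]+\mathcal F_{uu}[\xi_s,\xi_s])$, using symmetry of the second derivative, which holds for $C^2$ maps.
\end{itemize}

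\emph{Component formulas.} The identities \eqref{eq:full-map-s-su}--\eqref{eq:full-map-uu-ss} follow because $\mathcal F=(F,\mathcal N-\mathcal N(0,u_0))$ is a product map and the subtracted term is constant. The derivatives are therefore taken componentwise, and the $s$-dependence of $\mathcal N$ through $w_{s,f}$ in $\Theta$ is retained in every entry.

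\emph{Main obstacle.} The remaining substantive work is the explicit normalization components \eqref{eq:normalization-s-derivative}--\eqref{eq:normalization-ss-derivative}. The first component of $\mathcal N$ is linear in $f$, so its second derivatives vanish. For the second component, $\int\Theta\,dt$, we would expand $\Theta=(\gamma_0+h)e^{\sigma_s\circ c_f}|c_f'|$ to second order jointly in $(s,f,h)$, as in Lemma~\ref{lem:base-state-identities}. This requires:
\begin{itemize}
\item the second variation of $|c_f'|$ from \eqref{eq:normal-graph-frame}, namely $D_f^2|c_f'|[f_1,f_2]=f_1'f_2'$ at $f=0$;
\item second derivatives of $e^{\sigma_s\circ c_f}$, which produce $\partial_n^2\sigma_0$, $\partial_n\dot\sigma$ and $\ddot\sigma$ terms;
\item the cross terms between the factors $h$, $f$ and $s$.
\end{itemize}
This bookkeeping, and keeping the conventions consistent with \eqref{eq:base-hessian-convention}, is the step most prone to error, although it is a routine product-rule computation.
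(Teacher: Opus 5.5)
Your proposal is correct and follows the paper's proof. The paper uses the same second-order chain rule on the range equation and on $B=Q\mathcal F(s,u_0+\zeta+\xi(s,\zeta))$, with $QD_u\mathcal F(0,u_0)=0$ and $D_\zeta\xi(0,0)=0$ removing every term in $\xi_{\zeta\zeta},\xi_{s\zeta},\xi_{ss}$, and it also obtains the normalization components by a second-order expansion of $\Theta=(\gamma_0+h)e^{\sigma_s\circ c_f}|c_f'|$ (differentiating its logarithm, with which your $D_f^2|c_f'|[f_1,f_2]=f_1'f_2'$ agrees); the only gap is that you leave that expansion as an outline rather than deriving the explicit formulas the statement cites.
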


\begin{proof}
We first record how the metric dependence of the normalization enters the
reduction.  Define
\begin{equation}
 \mathfrak m(s,f)(t)
 \coloneqq e^{\sigma_s(c_f(t))}|c_f'(t)|,
 \qquad
 \Theta(s,f,h)=(\gamma_0+h)\mathfrak m(s,f).
 \label{eq:normalization-jacobian-factor}
\end{equation}
Then
\begin{equation}
 \mathcal N(s,u)
 =\left(
   \frac1{|c_0|}\int_0^{|c_0|} f(t)\, d t,
   \int_0^{|c_0|}(\gamma_0(t)+h(t))\mathfrak m(s,f)(t)\, d t
  \right).
 \label{eq:physical-normalization-recalled}
\end{equation}
For the parameter derivatives
\eqref{eq:jacobian-first-derivatives}--\eqref{eq:normalization-ss-derivative}, write
\[
 \dot\sigma=\partial_s\sigma_s\big|_{s=0},
 \qquad
 \ddot\sigma=\partial_s^2\sigma_s\big|_{s=0},
 \qquad \mathfrak m_0=e^{\sigma_0\circ c_0}.
\]
All occurrences of these functions and their normal derivatives are
evaluated on $c_0$.  For any smooth ambient scalar $q$, subscripts of order
two and three denote contractions of the Euclidean derivative tensors:
\begin{equation}
 \partial_{nn}q=D^2q[n,n],\qquad
 \partial_{\tau n}q=D^2q[\tau,n],\qquad
 \partial_{nnn}q=D^3q[n,n,n].
 \label{eq:ls-tensor-contractions}
\end{equation}
They are not iterated derivatives of the moving frame.  With the convention
$\partial_t\tau=-\kappa n$ and $\partial_t n=\kappa\tau$ for the Euclidean
curvature, set
\begin{equation}
 \mathfrak a_{\mathrm{sh}}[f]\coloneqq(\partial_{n}\sigma_0+\kappa)f.
 \label{eq:normalization-first-shape-factor}
\end{equation}
The normal graph identity
\[
 |c_f'|^2=(1+\kappa f)^2+(\partial_t f)^2,
\]
gives these derivatives at $(s,f)=(0,0)$:
\begin{align}
 \mathfrak m_s&=\mathfrak m_0\dot\sigma,
 \qquad D_f\mathfrak m[f]=\mathfrak m_0\mathfrak a_{\mathrm{sh}}[f],
 \label{eq:jacobian-first-derivatives}\\
 \mathfrak m_{ss}&=\mathfrak m_0(\ddot\sigma+\dot\sigma^2),
 \notag\\
 \mathfrak m_{sf}[f]
 &=\mathfrak m_0\bigl(f\,\partial_{n}\dot\sigma+\dot\sigma\,\mathfrak a_{\mathrm{sh}}[f]\bigr),
 \label{eq:jacobian-parameter-second-derivatives}\\
 D_f^2\mathfrak m[f_1,f_2]
 &=\mathfrak m_0\Bigl(
   (\partial_t f_1)(\partial_t f_2)
   +(\partial_{nn}\sigma_0-\kappa^2)f_1f_2
   +\mathfrak a_{\mathrm{sh}}[f_1]\mathfrak a_{\mathrm{sh}}[f_2]
  \Bigr).
 \label{eq:jacobian-shape-second-derivative}
\end{align}
Indeed, these are obtained by differentiating
$\log \mathfrak m=\sigma_s(c_f)+\log|c_f'|$; in particular,
\[
 D_f^2\log|c_f'|[f_1,f_2]
 =(\partial_t f_1)(\partial_t f_2)-\kappa^2f_1f_2.
\]

Let $q=(f,h,\dot\mu,\dot\beta)$ and
$q_j=(f_j,h_j,\dot\mu_j,\dot\beta_j)$ be directions in $\mathcal X$.
The first derivative is \eqref{eq:physical-normalization-derivative}.
Direct differentiation of \eqref{eq:physical-normalization-recalled} yields
the remaining derivatives used in
Lemma~\ref{lem:physical-reduced-derivatives}:
\begin{align}
 D_s\mathcal N(0,u_0)
 &=\left(0,
   \int_0^{|c_0|}\gamma_0\mathfrak m_0\dot\sigma\, d t
  \right),
 \label{eq:normalization-s-derivative}\\
 D_sD_u\mathcal N(0,u_0)[q]
 &=\left(0,
   \int_0^{|c_0|} \mathfrak m_0\left[
    \gamma_0\bigl(\partial_{n}\dot\sigma
                    +\dot\sigma(\partial_{n}\sigma_0+\kappa)\bigr)f
    +h\dot\sigma
   \right] d t
  \right),
 \label{eq:normalization-su-derivative}\\
 D_u^2\mathcal N(0,u_0)[q_1,q_2]
 &=\Biggl(0,\int_0^{|c_0|} \mathfrak m_0\Bigl\{
   \gamma_0(\partial_t f_1)(\partial_t f_2)
   \notag\\
 &\qquad
   +\gamma_0\bigl(\partial_{nn}\sigma_0
                  +(\partial_{n}\sigma_0)^2
                  +2\kappa\partial_{n}\sigma_0\bigr)f_1f_2
   \notag\\
 &\qquad
   +(\partial_{n}\sigma_0+\kappa)(h_1f_2+h_2f_1)
  \Bigr\}\, d t\Biggr),
 \label{eq:normalization-uu-derivative}\\
 D_s^2\mathcal N(0,u_0)
 &=\left(0,
   \int_0^{|c_0|}\gamma_0\mathfrak m_0
       (\ddot\sigma+\dot\sigma^2)\, d t
  \right).
 \label{eq:normalization-ss-derivative}
\end{align}
The scalar directions $\dot\mu$ and $\dot\beta$ do not occur because the
normalization is independent of $\mu$ and $\beta$.  Equations
\eqref{eq:normalization-s-derivative}--\eqref{eq:normalization-ss-derivative}
show in particular why replacing $\mathcal N(s,u)$ by $\mathcal N(0,u)$
would change the physical reduced derivatives.

We next compute the pure-parameter derivatives of the residual while holding
the state at $u_0$.
Write
\begin{equation}
 \Theta_s(t)\coloneqq\Theta(s,0,0)(t)
 =\gamma_0(t) e^{\sigma_s(c_0(t))},
 \qquad
 \Theta_0=\gamma_0\mathfrak m_0.
 \label{eq:frozen-parameter-density}
\end{equation}
Then
\begin{equation}
 \dot\Theta=\Theta_0\dot\sigma,
 \qquad
 \ddot\Theta=\Theta_0(\ddot\sigma+\dot\sigma^2),
 \label{eq:frozen-density-derivatives}
\end{equation}
where the coefficients on the right are restricted to $c_0$.  Let
\[
 \mathcal A_s\coloneqq|\mathbb D|_{g_s}
 =\int_{\mathbb D} e^{2\sigma_s}\, d A_{\mathrm e},
 \qquad
 \Gamma_s\coloneqq\int_0^{|c_0|}\Theta_s(t)\, d t,
 \qquad
 \bar\omega_s\coloneqq\frac{\Gamma_s}{\mathcal A_s}.
\]
At $s=0$,
\begin{align}
 \dot{\mathcal A}
 &=2\int_{\mathbb D} e^{2\sigma_0}\dot\sigma\, d A_{\mathrm e},
 &
 \ddot{\mathcal A}
 &=2\int_{\mathbb D} e^{2\sigma_0}
       (\ddot\sigma+2\dot\sigma^2)\, d A_{\mathrm e},
 \label{eq:frozen-area-derivatives}\\
 \dot\Gamma
 &=\int_0^{|c_0|}\Theta_0\dot\sigma\, d t,
 &
 \ddot\Gamma
 &=\int_0^{|c_0|}\Theta_0(\ddot\sigma+\dot\sigma^2)\, d t,
 \label{eq:frozen-circulation-derivatives}\\
 \dot{\bar\omega}
 &=\frac{\dot\Gamma}{\mathcal A_0}
   -\frac{\Gamma_0\dot{\mathcal A}}{\mathcal A_0^2},
 &
 \ddot{\bar\omega}
 &=\frac{\ddot\Gamma}{\mathcal A_0}
   -\frac{\Gamma_0\ddot{\mathcal A}}{\mathcal A_0^2}
   -\frac{2\dot\Gamma\dot{\mathcal A}}{\mathcal A_0^2}
   +\frac{2\Gamma_0(\dot{\mathcal A})^2}{\mathcal A_0^3}.
 \label{eq:frozen-background-derivatives}
\end{align}

The torsion function and its first two parameter derivatives are
\begin{align}
 \Psi_s(z)
 &=\int_{\mathbb D}G(z,y) e^{2\sigma_s(y)}\, d A_{\mathrm e}(y),
 \label{eq:frozen-torsion}\\
 \dot\Psi(z)
 &=2\int_{\mathbb D}G(z,y) e^{2\sigma_0(y)}
        \dot\sigma(y)\, d A_{\mathrm e}(y),
 \label{eq:frozen-torsion-first}\\
 \ddot\Psi(z)
 &=2\int_{\mathbb D}G(z,y) e^{2\sigma_0(y)}
        \bigl(\ddot\sigma(y)+2\dot\sigma(y)^2\bigr)
        \, d A_{\mathrm e}(y).
 \label{eq:frozen-torsion-second}
\end{align}
Equivalently, with zero Dirichlet data on $\partial\mathbb D$,
\begin{align}
 -\triangle_{\mathrm e}\dot\Psi
 &=2 e^{2\sigma_0}\dot\sigma,
 &
 -\triangle_{\mathrm e}\ddot\Psi
 &=2 e^{2\sigma_0}(\ddot\sigma+2\dot\sigma^2).
 \label{eq:frozen-torsion-pde}
\end{align}

Let $\mathcal S$ and $\mathcal D'$ be the fixed trace and adjoint
double-layer operators on $c_0$, and write $\Psi=\Psi_0$.  In
\eqref{eq:frozen-f1-first}--\eqref{eq:frozen-f1-second}, all ambient
functions are restricted to $c_0$ and the pullback symbol $\circ c_0$ is
omitted.  The pure parameter
derivatives of the
streamline residual are
\begin{align}
 \dot E_I
 &=\mathcal S\dot\Theta
   -\dot{\bar\omega}\Psi-\bar\omega_0\dot\Psi,
 \label{eq:frozen-f1-first}\\
 \ddot E_I
 &=\mathcal S\ddot\Theta
   -\ddot{\bar\omega}\Psi
   -2\dot{\bar\omega}\dot\Psi
   -\bar\omega_0\ddot\Psi.
 \label{eq:frozen-f1-second}
\end{align}
For the Bernoulli residual set
\begin{align}
 a_s&\coloneqq\gamma_0 e^{-\sigma_s\circ c_0},
 &
 \mathfrak j_s&\coloneqq\mathcal D'\Theta_s
       -\bar\omega_s\,\partial_{n}\Psi_s\circ c_0,
 \label{eq:frozen-a-b}\\
 a_0&=\gamma_0\mathfrak m_0^{-1},
 &
 \dot a&=-a_0\dot\sigma,
 \qquad
 \ddot a=a_0(\dot\sigma^2-\ddot\sigma),
 \label{eq:frozen-a-derivatives}\\
 \dot{\mathfrak j}
 &=\mathcal D'\dot\Theta
   -\dot{\bar\omega}\,\partial_{n}\Psi
   -\bar\omega_0\,\partial_{n}\dot\Psi,
 \label{eq:frozen-b-first}\\
 \ddot{\mathfrak j}
 &=\mathcal D'\ddot\Theta
   -\ddot{\bar\omega}\,\partial_{n}\Psi
   -2\dot{\bar\omega}\,\partial_{n}\dot\Psi
   -\bar\omega_0\,\partial_{n}\ddot\Psi.
 \label{eq:frozen-b-second}
\end{align}
Since $E_J(s,u_0)=-a_s\mathfrak j_s-\beta_0$, it follows that
\begin{align}
 \dot E_J
 &=-(\dot a\,\mathfrak j_0+a_0\dot{\mathfrak j}),
 \label{eq:frozen-f2-first}\\
 \ddot E_J
 &=-(\ddot a\,\mathfrak j_0
     +2\dot a\,\dot{\mathfrak j}+a_0\ddot{\mathfrak j}).
 \label{eq:frozen-f2-second}
\end{align}
Thus
\begin{equation}
 F_s(0,u_0)=(\dot E_I,\dot E_J),
 \qquad
 F_{ss}(0,u_0)=(\ddot E_I,\ddot E_J)
 \label{eq:frozen-f-derivatives}
\end{equation}
contains every density, area, background-vorticity, torsion, conformal
prefactor, and normal-derivative contribution.

We now display the mixed and state derivatives.  For directions
$(s_i,q_i)=(s_i,f_i,h_i,\mu_i,\beta_i)
\in\re\times\mathcal X$, $i=1,2$, write $\delta_i$ and $\delta_{12}$
for the first and bilinear second variations at $(0,u_0)$ in the directions
$(s_i,q_i)$ and $((s_1,q_1),(s_2,q_2))$, respectively.  Put
\begin{align}
 \phi_i
 &=s_i\dot\sigma+f_i\partial_{n}\sigma_0,
 \notag\\
 \phi_{12}
 &=s_1s_2\ddot\sigma
   +s_1f_2\partial_{n}\dot\sigma
   +s_2f_1\partial_{n}\dot\sigma
   +f_1f_2\partial_{nn}\sigma_0.
 \label{eq:full-second-conformal-data}
\end{align}
The density factor in \eqref{eq:normalization-jacobian-factor} has variations
\begin{align}
 \delta_i\mathfrak m
 &=s_i\mathfrak m_s+D_f\mathfrak m[f_i],
 \notag\\
 \delta_{12}\mathfrak m
 &=s_1s_2\mathfrak m_{ss}
   +s_1\mathfrak m_{sf}[f_2]+s_2\mathfrak m_{sf}[f_1]
   +D_f^2\mathfrak m[f_1,f_2],
 \label{eq:full-second-jacobian-data}\\
 \delta_i\Theta
 &=h_i\mathfrak m_0+\gamma_0\delta_i\mathfrak m,
 \notag\\
 \delta_{12}\Theta
 &=h_1\delta_2\mathfrak m+h_2\delta_1\mathfrak m+\gamma_0\delta_{12}\mathfrak m,
 \label{eq:full-second-density-data}
\end{align}
where every entry on the right is the explicit expression in
\eqref{eq:jacobian-first-derivatives}--\eqref{eq:jacobian-shape-second-derivative}.
For the area, circulation, and background vorticity, set
\begin{align}
 \delta_i\mathcal A&=s_i\dot{\mathcal A},
 &
 \delta_{12}\mathcal A&=s_1s_2\ddot{\mathcal A},
 \notag\\
 \delta_i\Gamma&=\int_0^{|c_0|}\delta_i\Theta\, d t,
 &
 \delta_{12}\Gamma&=\int_0^{|c_0|}\delta_{12}\Theta\, d t.
 \label{eq:full-second-area-circulation}
\end{align}
Direct differentiation of $\bar\omega=\Gamma/\mathcal A$ gives
\begin{align}
 \delta_i\bar\omega
 &=\frac{\delta_i\Gamma}{\mathcal A_0}
   -\frac{\Gamma_0\delta_i\mathcal A}{\mathcal A_0^2},
 \notag\\
 \delta_{12}\bar\omega
 &=\frac{\delta_{12}\Gamma}{\mathcal A_0}
   -\frac{\delta_1\Gamma\,\delta_2\mathcal A
          +\delta_2\Gamma\,\delta_1\mathcal A
          +\Gamma_0\delta_{12}\mathcal A}{\mathcal A_0^2}
   +\frac{2\Gamma_0\delta_1\mathcal A\,\delta_2\mathcal A}
          {\mathcal A_0^3}.
 \label{eq:full-second-background-data}
\end{align}

The shape derivatives of the two moving boundary operators are fixed
exactly by their kernels.  If
$f_{\varepsilon}=\varepsilon_1f_1+\varepsilon_2f_2$, then for
$j=1,2$ and the corresponding indices $i_1,\ldots,i_j$,
\begin{align}
 \bigl(\delta_{i_1\cdots i_j}\mathcal S\bigr)\rho(t)
 &=
 \left.
 \partial_{\varepsilon_{i_1}}\cdots
 \partial_{\varepsilon_{i_j}}
 \int_0^{|c_0|}
 G(c_{f_\varepsilon}(t),c_{f_\varepsilon}(t'))\rho(t')\, d t'
 \right|_{\varepsilon=0},
 \notag\\
 \bigl(\delta_{i_1\cdots i_j}\mathcal D'\bigr)\rho(t)
 &=
 \left.
 \partial_{\varepsilon_{i_1}}\cdots
 \partial_{\varepsilon_{i_j}}
 \int_0^{|c_0|}
 n_{f_\varepsilon}(t)\cdot
 \nabla_zG(c_{f_\varepsilon}(t),c_{f_\varepsilon}(t'))
 \rho(t')\, d t'
 \right|_{\varepsilon=0}.
 \label{eq:exact-first-second-moving-operator-data}
\end{align}
Only the $f_i$ components enter these derivatives.  Lemma
\ref{lem:moving-boundary-operators}, in particular
\eqref{eq:moving-kernel-directional-form}--%
\eqref{eq:moving-kernel-finite-sobolev-bound}, proves that these kernel
derivatives are the bounded Fr\'echet derivatives at the displayed orders.

Set
\[
 T(s,f)=\Psi_s\circ c_f,\qquad
 R(s,f)=\partial_{n_f}\Psi_s\circ c_f.
\]
The evaluation-point and target-normal contributions are
\begin{align}
 \delta_iT
 &=s_i\dot\Psi+f_i\partial_{n}\Psi,
 \notag\\
 \delta_{12}T
 &=s_1s_2\ddot\Psi
   +(s_1f_2+s_2f_1)\partial_{n}\dot\Psi
   +f_1f_2\partial_{nn}\Psi,
 \label{eq:full-second-torsion-trace}\\
 \delta_iR
 &=s_i\partial_{n}\dot\Psi
   +f_i\partial_{nn}\Psi
   -(\partial_t f_i)\partial_{\tau}\Psi,
 \notag\\
 \delta_{12}R
 &=s_1s_2\partial_{n}\ddot\Psi
 \notag\\
 &\quad
   +s_1\bigl(f_2\partial_{nn}\dot\Psi
             -(\partial_t f_2)\partial_{\tau}\dot\Psi\bigr)
   +s_2\bigl(f_1\partial_{nn}\dot\Psi
             -(\partial_t f_1)\partial_{\tau}\dot\Psi\bigr)
 \notag\\
 &\quad
   +f_1f_2\partial_{nnn}\Psi
   -\bigl((\partial_t f_1)f_2+(\partial_t f_2)f_1\bigr)
       \partial_{\tau n}\Psi
 \notag\\
 &\quad
   +\kappa\bigl(f_1\partial_t f_2+f_2\partial_t f_1\bigr)
       \partial_{\tau}\Psi
   -(\partial_t f_1)(\partial_t f_2)\partial_{n}\Psi.
 \label{eq:full-second-torsion-normal}
\end{align}
All torsion terms in these formulas are restricted to $c_0$.  To verify the
last identity, differentiating \eqref{eq:normal-graph-frame} twice gives
\[
 \delta_i n_f=-(\partial_t f_i)\tau,\qquad
 \delta_{12}n_f
 =\kappa(f_1\partial_t f_2+f_2\partial_t f_1)\tau
  -(\partial_t f_1)(\partial_t f_2)n,
\]
and the ordinary second-order chain rule gives
\eqref{eq:full-second-torsion-normal}.

For the conformal prefactor in the Bernoulli residual, put
$W(s,f)=e^{-\sigma_s(c_f)}$ and
$a(s,f,h)=(\gamma_0+h)W(s,f)$.  Then
\begin{align}
 \delta_i W
 &=-\mathfrak m_0^{-1}\phi_i,
 &
 \delta_{12}W
 &=\mathfrak m_0^{-1}(\phi_1\phi_2-\phi_{12}),
 \notag\\
 \delta_i a&=h_i\mathfrak m_0^{-1}+\gamma_0\delta_i W,
 &
 \delta_{12}a
 &=h_1\delta_2W+h_2\delta_1W
   +\gamma_0\delta_{12}W.
 \label{eq:full-second-bernoulli-prefactor}
\end{align}
Finally define, only for this calculation,
\[
 C(s,f,h)\coloneqq\mathcal D'_f\Theta(s,f,h)
       -\bar\omega(s,f,h)R(s,f).
\]
Throughout this calculation, a subscript $0$ on $T$, $R$, $a$, or $C$
denotes evaluation at $(0,u_0)$.  Its variations are
\begin{align}
 \delta_i C
 &=(\delta_i\mathcal D')\Theta_0
   +\mathcal D'\delta_i\Theta
   -(\delta_i\bar\omega)R_0
   -\bar\omega_0\delta_iR,
 \notag\\
 \delta_{12}C
 &=(\delta_{12}\mathcal D')\Theta_0
   +(\delta_1\mathcal D')\delta_2\Theta
   +(\delta_2\mathcal D')\delta_1\Theta
   +\mathcal D'\delta_{12}\Theta
 \notag\\
 &\quad
   -(\delta_{12}\bar\omega)R_0
   -(\delta_1\bar\omega)\delta_2R
   -(\delta_2\bar\omega)\delta_1R
   -\bar\omega_0\delta_{12}R.
 \label{eq:full-second-bernoulli-bracket}
\end{align}
Consequently the residual derivatives are
\begin{align}
 \delta_iE_I
 &=(\delta_i\mathcal S)\Theta_0+\mathcal S\delta_i\Theta
   -(\delta_i\bar\omega)T_0
   -\bar\omega_0\delta_iT-\mu_i,
 \notag\\
 \delta_{12}E_I
 &=(\delta_{12}\mathcal S)\Theta_0
   +(\delta_1\mathcal S)\delta_2\Theta
   +(\delta_2\mathcal S)\delta_1\Theta
   +\mathcal S\delta_{12}\Theta
 \notag\\
 &\quad
   -(\delta_{12}\bar\omega)T_0
   -(\delta_1\bar\omega)\delta_2T
   -(\delta_2\bar\omega)\delta_1T
   -\bar\omega_0\delta_{12}T,
 \label{eq:full-first-second-streamline}\\
 \delta_iE_J
 &=-(\delta_i a)C_0-a_0\delta_i C-\beta_i,
 \notag\\
 \delta_{12}E_J
 &=-(\delta_{12}a)C_0
   -(\delta_1a)\delta_2C-(\delta_2a)\delta_1C
   -a_0\delta_{12}C.
 \label{eq:full-first-second-bernoulli}
\end{align}
Thus, writing $q=(f,h,\dot\mu,\dot\beta)$,
\begin{equation}
 F_{su}(0,u_0)[q]
 =D^2F(0,u_0)[(1,0),(0,q)],
 \qquad
 F_{uu}(0,u_0)[q_1,q_2]
 =D^2F(0,u_0)[(0,q_1),(0,q_2)],
 \label{eq:explicit-mixed-state-residual-data}
\end{equation}
where the right-hand sides are given by
\eqref{eq:full-second-conformal-data}--%
\eqref{eq:full-first-second-bernoulli}.  Choosing both directions equal to
$(1,0)$ recovers \eqref{eq:frozen-f-derivatives}.  Equations
\eqref{eq:full-second-conformal-data}--\eqref{eq:full-first-second-bernoulli}
account for the moving operators, evaluation maps, coefficients, background
vorticity, and scalar columns; the normalization terms were computed above.
It remains to differentiate the range and reduced equations.

Differentiate the range equation \eqref{eq:physical-range-equation} in $s$.
At the origin this gives
\[
 P\bigl(\mathcal F_s
   +D_u\mathcal F(0,u_0)\xi_s(0,0)\bigr)=0,
\]
and \eqref{eq:complement-isomorphism} gives
\eqref{eq:parameter-complement-correction}.  Differentiate
\eqref{eq:physical-reduced-equation}.  Since
$QD_u\mathcal F(0,u_0)=0$ and
$D_\zeta\xi(0,0)=0$, the terms containing
$D_u\mathcal F(0,u_0)\xi_{\zeta\zeta}$,
$D_u\mathcal F(0,u_0)\xi_{s\zeta}$, and
$D_u\mathcal F(0,u_0)\xi_{ss}$ vanish after applying $Q$.  The remaining
chain-rule terms are exactly
\eqref{eq:reduced-first-parameter}--\eqref{eq:reduced-ss}.  Finally,
\eqref{eq:full-map-s-su}--\eqref{eq:full-map-uu-ss} follow by differentiating
the definition of $\mathcal F$; their second components are
\eqref{eq:normalization-s-derivative}--\eqref{eq:normalization-ss-derivative}.
\end{proof}

\begin{proof}[Proof of Theorem~\ref{thm:reduced-obstructions}]
Let $u(s)$ be a $C^2$ curve of normalized solutions through $u_0$.
Unique reconstruction gives
\[
 u(s)=u_0+\zeta(s)+\xi(s,\zeta(s)),
 \qquad
 B(s,\zeta(s))=0,
\]
where $\zeta(s)\in\mathcal K$ is $C^2$, $\zeta(0)=0$, and
$\eta=\zeta'(0)$.  Because $B(0,0)=0$ and $B_\zeta(0,0)=0$, the first
derivative at zero is
\[
 0=B_s(0,0).
\]
The second derivative is
\[
 0=B_{ss}(0,0)+2B_{s\zeta}(0,0)[\eta]
   +B_{\zeta\zeta}(0,0)[\eta,\eta].
\]
Dividing this identity by two gives the second condition in
\eqref{eq:first-second-obstructions}.
Lemma~\ref{lem:physical-reduced-derivatives} identifies these quantities with
the derivatives of the parameter-dependent normalized map.
This proves only the stated necessary conditions.  The available joint
$C^2$ regularity does not furnish a differentiable rescaled map, and no
converse branch assertion is made.
\end{proof}

\begin{proof}[Proof of Corollary~\ref{cor:nondegenerate-continuation}]
If $d=0$, Theorem~\ref{thm:normalized-fredholm} makes
$D_u\mathcal F(0,u_0)$ an isomorphism.
The parameter-dependent Banach implicit function theorem applied directly to
$\mathcal F$ gives the unique normalized $C^2$ branch in a product
neighborhood.  Differentiation of $\mathcal F(s,u(s))=0$ gives
\eqref{eq:implicit-branch-derivative}; its normalization component is
\eqref{eq:normalization-s-derivative}, not zero in general.
Proposition~\ref{prop:residual-characterization} converts every zero to a
weak steady sheet, and the definition of $\mathcal N$ gives the two stated
physical constraints.
\end{proof}
\section{Applications}
\label{sec:applications}

\subsection{Rotationally symmetric reference metrics}
\label{subsec:rotational-application}

Radial metrics and circular sheets are the basic setting in which the
reference state is explicit and the normalized derivative diagonalizes in
Fourier modes.  They therefore turn the abstract degeneracy index into a
finite, computable resonance problem.  In this section, $\rho$ denotes the
circle radius and the sheet density remains denoted by $\Theta$.

\begin{definition}[Rotationally symmetric reference data]
\label{def:rotational-setting}
Let
\[
 g_0= e^{2\sigma(r)}g_{\mathrm e},
\]
on the unit disk, where the radial function $z\mapsto\sigma(|z|)$ belongs to
$C^\infty(\overline{\mathbb D})$.  Fix
$0<\rho<1$ and $\gamma\in\re\setminus\{0\}$.  The reference support
and strength are
\begin{equation}
 c_0(t)=\rho
 \left(\cos\frac{t}{\rho},\sin\frac{t}{\rho}\right),
 \qquad
 \gamma_0(t)=\gamma,
 \qquad
 t\in\re/(2\pi\rho)\ze.
 \label{eq:radial-arclength-parametrization}
\end{equation}
Thus, if $\theta$ is the angular coordinate, then
\begin{equation}
 t=\rho\theta,\qquad |c_0|=2\pi\rho,
 \qquad  e^{ i k\theta}= e^{2\pi i kt/|c_0|}.
 \label{eq:radial-angle-arclength}
\end{equation}
The unit normal is chosen radially outward.  A prescribed $C^3$ conformal
path $s\mapsto\sigma_s$ satisfies $\sigma_0(z)=\sigma(|z|)$; the metrics for
$s\ne0$ need not be rotationally symmetric.

Define
\begin{align}
 A(r)
 &\coloneqq2\pi\int_0^r  e^{2\sigma(q)}q\, d q,
 &
 U&\coloneqq\frac{A(\rho)}{A(1)},
 \label{eq:radial-area-u}\\
 x&\coloneqq\rho^2,
 \label{eq:radial-x}\\
 \gamma_\sigma&\coloneqq\gamma  e^{\sigma(\rho)},
 &
 p_\sigma&\coloneqq1+\rho\sigma'(\rho),
 \label{eq:radial-gamma-p}\\
 Z&\coloneqq\frac{2\pi\rho^2 e^{2\sigma(\rho)}}{A(1)}.
 \label{eq:radial-z}
\end{align}
Here $A(r)$ is the metric area of the disk of radius $r$.
\end{definition}

\begin{proposition}[Circular reference sheet]
\label{prop:circular-reference-sheet}
For the data in Definition~\ref{def:rotational-setting}, set
\begin{align}
 \bar\omega_0
 &=\frac{2\pi\rho\gamma  e^{\sigma(\rho)}}{A(1)},
 \label{eq:radial-background-vorticity}\\
 \psi_0'(r)
 &=\frac{\bar\omega_0A(r)}{2\pi r}
   -\frac{\rho\gamma  e^{\sigma(\rho)}}{r}\,
       \mathbf 1_{\{r>\rho\}},
 \label{eq:radial-stream-derivative}\\
 \psi_0(r)&=-\int_r^1\psi_0'(q)\, d q,
 &
 \mu_0&=\psi_0(\rho)
 =\rho\gamma  e^{\sigma(\rho)}
   \int_\rho^1\left(1-\frac{A(q)}{A(1)}\right)\frac{ d q}{q},
 \label{eq:radial-stream-mu}\\
 \langle v_0\rangle_{\tau}
 &=\gamma\left(\frac12-U\right),
 &
 \beta_0&=\gamma^2\left(\frac12-U\right).
 \label{eq:radial-velocity-beta}
\end{align}
The value of \eqref{eq:radial-stream-derivative} at the origin is its regular
limit, and its two one-sided values are used at $r=\rho$.  Then
$u_0=(0,0,\mu_0,\beta_0)$ is a zero of $F(0,\cdot)$, and
$(c_0,\gamma)$ is a smooth embedded weak steady vortex sheet.  It satisfies
the stagnation-free condition \eqref{eq:standing-stagnation-free}.
\end{proposition}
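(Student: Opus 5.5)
The plan is to verify directly that the radial function defined by \eqref{eq:radial-stream-derivative}--\eqref{eq:radial-stream-mu} is the canonical weak stream function $\psi_{c_0,\gamma}$ of Definition~\ref{def:vortex-sheet-data}. Once that identification is made, the two characterizations, Propositions~\ref{prop:streamline-characterization} and~\ref{prop:bernoulli-characterization}, together with Proposition~\ref{prop:residual-characterization}, will give the conclusions.

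\emph{Step 1: the background vorticity.} The circulation is
\[
\Gamma=\int_{c_0}\gamma\,d\ell_{g_0}=2\pi\rho\gamma e^{\sigma(\rho)}.
\]
The metric area is $|\mathbb D|_{g_0}=A(1)$. Together these give \eqref{eq:radial-background-vorticity}.

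\emph{Step 2: the stream function.} I will look for a radial solution $\psi(r)$ of \eqref{eq:sheet-poisson-distribution}. In polar form the equation reads
\[
-\frac1r\bigl(r\psi'\bigr)'=\gamma e^{\sigma(\rho)}\delta_{r=\rho}-\bar\omega_0e^{2\sigma}.
\]
Integrating $r\psi'$ from $0$, with regularity at the origin, gives
\[
r\psi'(r)=\bar\omega_0A(r)/(2\pi)-\rho\gamma e^{\sigma(\rho)}\mathbf 1_{r>\rho}.
\]
This is exactly \eqref{eq:radial-stream-derivative}.

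The function $\psi_0'$ is smooth away from $r=\rho$, and its regular limit at $0$ is $0$ because $A(r)=O(r^2)$. Imposing $\psi(1)=0$ produces the formula for $\psi_0$ in \eqref{eq:radial-stream-mu}. The resulting function is continuous and radial, so it lies in $H^1_0$. It satisfies the weak problem \eqref{eq:weak-sheet-poisson}: on each annulus it solves the equation classically, and the jump of $\partial_r\psi$ at $r=\rho$ is $-\gamma e^{\sigma(\rho)}$. Uniqueness in Proposition~\ref{prop:weak-stream-function} then identifies it with $\psi_{c_0,\gamma}$.

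Evaluating at $\rho$ gives the value of $\mu_0$. Here I will use $\bar\omega_0A(q)/(2\pi q)=\rho\gamma e^{\sigma(\rho)}A(q)/(A(1)q)$ for $q>\rho$.

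\emph{Step 3: the steadiness conditions.} The trace of $\psi_0$ on $c_0$ is the constant $\psi_0(\rho)$. Proposition~\ref{prop:streamline-characterization} therefore gives the kinematic condition.

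For the tangential velocities I will use the Euclidean conversion \eqref{eq:metric-tangential-normal-conversion}: $v^\pm_{\tau,0}=-e^{-\sigma(\rho)}\psi_0'(\rho^\pm)$. With the outward normal,
\[
v^-_{\tau,0}=-\gamma U,\qquad v^+_{\tau,0}=\gamma(1-U).
\]
Their average is $\gamma(\tfrac12-U)$ and their jump is $\gamma$, consistent with \eqref{eq:sheet-jump-relations}.

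Since $\gamma$ is constant, $\gamma\langle v_0\rangle_\tau=\gamma^2(\tfrac12-U)$ is constant. Proposition~\ref{prop:bernoulli-characterization} then gives the weak Euler identity with this value of $\beta_0$. Proposition~\ref{prop:residual-characterization} yields $F(0,u_0)=0$, and smoothness and embeddedness of the circle are evident.

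\emph{Step 4: the stagnation-free condition.} We have $0<U<1$ because $0<\rho<1$ and $e^{2\sigma}>0$. Hence both one-sided velocities are nonzero whenever $\gamma\ne0$, so \eqref{eq:standing-stagnation-free} holds (in fact in a stronger form than needed).

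The main obstacle is sign bookkeeping. The convention $v=-\jgrad\psi$, the orientation $\tau=\mathcal Jn$ with $n$ outward, and the metric-versus-Euclidean factors $e^{\pm\sigma}$ all have to be consistent. I would cross-check the final signs against the jump relation $[g(v,\tau)]=\gamma$ and against \eqref{eq:reference-average-normal-velocity}.
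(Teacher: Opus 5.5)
Your proposal is correct and follows essentially the same route as the paper: you take circulation over area for $\bar\omega_0$, integrate the radial Poisson equation for $\psi_0'$, and integrate from the outer boundary for $\mu_0$. You then convert the one-sided radial derivatives to tangential velocities and apply the streamline and Bernoulli characterizations, just as the paper does. The differences are minor: you check the weak formulation and invoke uniqueness explicitly, where $H^1_0$ membership follows because $\psi_0$ is Lipschitz, not merely continuous. You also get the stagnation-free condition from $0<U<1$, so both one-sided velocities are nonzero, rather than from the paper's observation that they differ by $\gamma\neq0$.
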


\begin{proof}
The circulation is $2\pi\rho\gamma  e^{\sigma(\rho)}$, so the
zero-total-vorticity convention in \eqref{eq:circulation-background} gives
\eqref{eq:radial-background-vorticity}.  Integrating the radial Poisson
equation over the disk of radius $r$ gives
\[
 2\pi r\psi_0'(r)
 =\bar\omega_0A(r)
  -2\pi\rho\gamma  e^{\sigma(\rho)}\mathbf 1_{\{r>\rho\}},
\]
which proves \eqref{eq:radial-stream-derivative}.  The circle is a level set
of $\psi_0$, and integration from $\rho$ to the outer Dirichlet boundary
gives \eqref{eq:radial-stream-mu}.  Averaging the two radial derivatives at
the sheet and converting the stream derivative to the scalar tangential
velocity gives the first identity in \eqref{eq:radial-velocity-beta}; its
product with the constant strength gives $\beta_0$.  The streamline and
Bernoulli characterizations therefore imply $F(0,u_0)=0$ and weak
steadiness.  Finally, the two one-sided scalar tangential velocities differ
by the nonzero sheet strength $\gamma$, so they cannot vanish
simultaneously.  Smoothness, embeddedness, and separation are immediate
from \eqref{eq:radial-arclength-parametrization}.
\end{proof}

\begin{proposition}[Exact Fourier mode matrices]
\label{prop:exact-radial-mode-matrices}
Use the real Fourier planes
\[
 \mathcal E_k
 =\operatorname{span}_{\re}\{\cos(k\theta),\sin(k\theta)\},
 \qquad k\geq1,
\]
with the conversion \eqref{eq:radial-angle-arclength}.  The cosine and sine
invariant subspaces of $\mathcal E_k\times \mathcal E_k$ carry the same real
matrix for the shape--strength part of $D_uF(0,u_0)$:
\begin{equation}
 \begin{pmatrix}I_f&I_h\\J_f&J_h\end{pmatrix},
 \label{eq:radial-mode-matrix}
\end{equation}
where
\begin{align}
 I_f
 &=\frac{\gamma_\sigma}{2}
   \left(1+\frac{p_\sigma}{k}\right)(1-x^k)
   -\gamma_\sigma+\gamma_\sigma U,
 \label{eq:radial-l1f}\\
 I_h
 &= e^{\sigma(\rho)}\frac{\rho}{2k}(1-x^k),
 \label{eq:radial-l1h}\\
 J_f
 &=\gamma  e^{-\sigma(\rho)}\left[
   \frac{\gamma_\sigma p_\sigma}{\rho}
      \left(U-\frac12\right)
   -\bar\omega_0 e^{2\sigma(\rho)}
   +\frac{\gamma_\sigma}{2\rho}
      \bigl((k+p_\sigma)x^k+k\bigr)
  \right],
 \label{eq:radial-l2f}\\
 J_h
 &=\gamma\left(\frac{1+x^k}{2}-U\right).
 \label{eq:radial-l2h}
\end{align}
The two normalization differentials vanish on every nonzero Fourier mode.

For the constant mode, use the input coordinates
$(f_0,h_0,\dot\mu,\dot\beta)$ and the output order
$(E_I,E_J,\mathcal N_1,\mathcal N_2)$.  The exact normalized block is
\begin{equation}
 \begin{pmatrix}
 \dfrac{p_\sigma\mu_0}{\rho}-\gamma_\sigma(1-U)
   &\dfrac{\mu_0}{\gamma}&-1&0\\
 -\dfrac{2\pi\gamma^2\rho  e^{2\sigma(\rho)}}{A(1)}
   &2\gamma(\frac12-U)&0&-1\\
 1&0&0&0\\
 2\pi\gamma_\sigma p_\sigma&2\pi\rho  e^{\sigma(\rho)}&0&0
 \end{pmatrix}.
 \label{eq:radial-zero-mode-matrix}
\end{equation}
In particular, its determinant is
\begin{equation}
 2\pi\rho  e^{\sigma(\rho)}>0.
 \label{eq:radial-zero-mode-determinant}
\end{equation}

Define the determinant directly from the four entries by
\[
 D_k^\sigma(\rho)\coloneqq I_fJ_h-I_hJ_f,
 \qquad
 \mathring D_k^\sigma(\rho)
 \coloneqq\frac{ D_k^\sigma(\rho)}{\gamma^2 e^{\sigma(\rho)}}.
\]
Then the exact determinant identity is
\begin{equation}
 \mathring D_k^\sigma(\rho)
 =-\left(\frac{1+x^k}{2}-U\right)^2
  -\frac{1-x^{2k}}4
  +\frac{1-x^k}{2k}\,[Z+p_\sigma(1-2U)].
 \label{eq:radial-determinant-identity}
\end{equation}
\end{proposition}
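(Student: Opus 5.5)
Our plan is to evaluate the four derivative entries of Proposition~\ref{prop:complete-linearization} explicitly on Fourier modes, using the fact that on a circle with a radial metric every operator in \eqref{eq:exact-df-f1}--\eqref{eq:exact-dh-f2} commutes with rotations. We first fix the reference data from Proposition~\ref{prop:circular-reference-sheet}: $w=e^{\sigma(\rho)}$, $\kappa=1/\rho$ with our Frenet convention (outward normal gives $n'=\kappa\tau$ with $\kappa=1/\rho$), $\partial_n\sigma=\sigma'(\rho)$ so that $\gamma_0w(\partial_n\sigma+\kappa)=\gamma_\sigma p_\sigma/\rho$, and $[\partial_n^2\psi_0]=\kappa\gamma_0w=\gamma_\sigma/\rho$ by \eqref{eq:reference-second-normal-jump}. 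The one-sided second normal derivatives follow from the radial Poisson equation $\psi_0''+\psi_0'/r=-\bar\omega_0e^{2\sigma}$ off $r=\rho$, giving $\langle\partial_n^2\psi_0\rangle=-\bar\omega_0e^{2\sigma(\rho)}-\langle\psi_0'\rangle/\rho$ with $\langle\psi_0'\rangle=-w\langle v_0\rangle_\tau=-\gamma_\sigma(\tfrac12-U)$.

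Next we compute the fixed disk boundary operators on $e^{ik\theta}$, $k\ge1$, from the expansion $G(z,y)=\frac1{2\pi}\sum_{k\ge1}\frac1k\bigl(\frac{r_<}{r_>}\bigr)^k-(r_<r_>)^k\bigr)\cos k(\theta-\theta')$ plus the $k=0$ term. Since densities are per Euclidean arclength ($dt=\rho\,d\theta$), this gives $\mathcal S e^{ik\theta}=\frac{\rho}{2k}(1-x^k)e^{ik\theta}$, $\mathcal D'e^{ik\theta}=-\tfrac12x^k e^{ik\theta}$ (average of the radial derivatives $\frac12(1\mp1)\cdot$ terms and the image term), $\mathcal D e^{ik\theta}=-\tfrac12x^k e^{ik\theta}$ by transposition, and $\mathcal T e^{ik\theta}=-\frac{k}{2\rho}(1+x^k)e^{ik\theta}$. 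The rank-one terms $(D\bar\omega)[\cdot]\Psi\circ c_0$ and $\partial_n\Psi\circ c_0$ have constant profiles and $(D\bar\omega)$ integrates the density, so they and both normalization differentials \eqref{eq:physical-normalization-derivative} vanish on $k\ge1$. Substituting into \eqref{eq:exact-df-f1}--\eqref{eq:exact-dh-f2} gives four scalars. For example, $I_h=w\,\frac{\rho}{2k}(1-x^k)$ directly, and $J_h=\langle v_0\rangle_\tau+\tfrac{\gamma}{2}x^k$ gives \eqref{eq:radial-l2h}. For $I_f$ and $J_f$ one adds the multiplication term, the $\mathcal D$ or $\mathcal T$ term, and the $\mathcal S$ or $\mathcal D'$ term applied to $(\gamma_\sigma p_\sigma/\rho+\gamma_\sigma/\rho)$ times the mode, together with the $\partial_n\sigma$ and $\langle\partial_n^2\psi_0\rangle$ terms in $J_f$. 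Since the coefficients are real and radial, cosine and sine blocks coincide.

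For the zero mode we evaluate the same expressions at $k=0$ using the logarithmic term of $G$. Specifically, $\mathcal S1=-\rho\log\rho$, and the $f$-column now includes the rank-one background contributions with $\Psi\circ c_0$ and $\partial_n\Psi=-A(\rho)/(2\pi\rho)$ in normalized form. The rows $\mathcal N_1,\mathcal N_2$ are read off from \eqref{eq:physical-normalization-derivative} times $2\pi\rho$. The determinant \eqref{eq:radial-zero-mode-determinant} then follows without the messy entries: expand along the columns of $\dot\mu,\dot\beta$, each of which has a single $-1$, leaving the $2\times2$ minor of rows three and four, namely $1\cdot2\pi\rho e^{\sigma(\rho)}-0$.

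Finally, the determinant identity \eqref{eq:radial-determinant-identity} is pure algebra. We form $I_fJ_h-I_hJ_f$, divide by $\gamma^2e^{\sigma(\rho)}$, and substitute $\bar\omega_0e^{2\sigma(\rho)}\rho/\gamma_\sigma=Z$. We then group the terms as $-(\frac{1+x^k}{2}-U)^2$, $-(1-x^{2k})/4$, and the $\frac{1-x^k}{2k}$ bracket, checking by collecting powers of $x^k$ and $U$. The main obstacle is the $J_f$ entry: it combines the hypersingular $\mathcal T$, the $\mathcal D'$ action on the curvature-shifted density, the conformal term $-\gamma_0\partial_n\sigma\langle v_0\rangle_\tau$, and $\langle\partial_n^2\psi_0\rangle$, where sign conventions (outward normal, image term sign) are easy to get wrong. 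We would cross-check it by the case $k=1$ with $\sigma\equiv0$ against a translated concentric circle, which must lie in the kernel up to the boundary effect encoded by $x^k$.
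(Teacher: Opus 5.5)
\textbf{Two inputs are wrong.} Your route for $k\geq1$ is the paper's: substitute the circle multipliers, which you have right, into \eqref{eq:exact-df-f1}--\eqref{eq:exact-dh-f2} and then do the algebra. But with the inputs you state, the substitution does not produce \eqref{eq:radial-l1f}, \eqref{eq:radial-l2f}, or \eqref{eq:radial-determinant-identity}.

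\emph{First error: the curvature term is counted twice.} The density acted on by $\mathcal S$ and $\mathcal D'$ in the $f$-entries is $(\gamma_0w\partial_n\sigma+[\partial_n^2\psi_0])f$. By \eqref{eq:reference-second-normal-jump} this equals $\gamma_0w(\partial_n\sigma+\kappa)f=(\gamma_\sigma p_\sigma/\rho)f$. You computed $\gamma_0w(\partial_n\sigma+\kappa)=\gamma_\sigma p_\sigma/\rho$ and $[\partial_n^2\psi_0]=\gamma_\sigma/\rho$ and then added them. The resulting density $\gamma_\sigma(p_\sigma+1)/\rho$ puts a spurious $\frac{\gamma_\sigma}{2k}(1-x^k)$ into $I_f$ and a spurious $\frac{\gamma^2x^k}{2\rho}$ into $J_f$.

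\emph{Second error: the bulk equation has the wrong sign.} By \eqref{eq:sheet-poisson-distribution}, off the sheet one has $\psi_0''+\psi_0'/r=+\bar\omega_0e^{2\sigma}$. This is exactly what yields \eqref{eq:radial-stream-derivative} and the value $\langle\psi_0'\rangle=\gamma_\sigma(U-\frac12)$ that you use, so your two inputs are mutually inconsistent. The correct average is $\langle\partial_n^2\psi_0\rangle=\bar\omega_0e^{2\sigma(\rho)}+\gamma_\sigma(\frac12-U)/\rho$. With your sign, the term $-\gamma_0w^{-1}f\langle\partial_n^2\psi_0\rangle$ contributes $+\gamma^2Z/\rho$ to $J_f$ instead of $-\gamma^2Z/\rho$. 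The bracket in \eqref{eq:radial-determinant-identity} would then come out as $-Z+p_\sigma(1-2U)$.

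\emph{Corrected entries.} With both fixes,
$J_f=\frac{\gamma^2}{\rho}\bigl[p_\sigma(U-\frac12)-Z+\frac{(k+p_\sigma)x^k+k}{2}\bigr]$, and your algebra then does give the identity.

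\emph{Your cross-check.} The check you propose would have exposed both errors. For $\sigma\equiv0$ and $k=1$, the correct first column $(I_f,J_f)$ vanishes exactly, with no boundary correction. The double count makes $I_f\neq0$, and the sign error alone already makes $J_f\neq0$. So your expectation of a kernel ``up to the boundary effect encoded by $x^k$'' is too loose to serve as a test.

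\textbf{Zero mode.} The statement asserts the full zero-mode block, but you derive only its determinant. Your expansion along the $\dot\mu,\dot\beta$ columns is correct. The first two rows are left as an unexecuted plan. Evaluating the general entries at $k=0$ would require $\mathcal D1$, $\mathcal D'1$, $\mathcal T1$, and $\langle\partial_n^2\psi_0\rangle$. It would also require the rank-one background terms in \emph{both} columns, not only the $f$-column, since $(D\bar\omega)[0,h_0]=2\pi\rho e^{\sigma(\rho)}h_0/A(1)\neq0$.

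The paper avoids all of this. Constant $(f_0,h_0)$ keeps the configuration concentric, with radius $\rho+f_0$ and strength $\gamma+h_0$. Hence $E_I=\mu_0(\rho+f_0,\gamma+h_0)-\mu$ and $E_J=\beta_0(\rho+f_0,\gamma+h_0)-\beta$. Differentiating \eqref{eq:radial-stream-mu} and $\beta_0=\gamma^2(\frac12-U)$, using $\frac{d}{d\rho}\int_\rho^1(1-A(q)/A(1))\,dq/q=-(1-U)/\rho$ and $U'(\rho)=2\pi\rho e^{2\sigma(\rho)}/A(1)$, gives both rows at once.

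One further slip: the $\mathcal N_1$ row is a mean, hence exactly $f_0$. Only $\mathcal N_2$ carries the factor $2\pi\rho$.
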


\begin{proof}
To compute these blocks, write $z=re^{i\theta}$ and
$y=r'e^{i\theta'}$.  The Euclidean Dirichlet Green kernel of the unit disk,
with $-\triangle_{\mathrm e}G=\delta$, has the Fourier expansion
\begin{equation}
 G(z,y)
 =-\frac1{2\pi}\log r_>
  +\sum_{k\geq1}\frac1{2\pi k}
   \left[\left(\frac{r_<}{r_>}\right)^k-(rr')^k\right]
   \cos k(\theta-\theta'),
 \label{eq:disk-green-fourier}
\end{equation}
where $r_<=\min(r,r')$ and $r_>=\max(r,r')$.  For the density $e_k(\theta')= e^{ i k\theta'}$ per Euclidean arclength
on $r'=\rho$, direct
integration gives
\[
 \mathcal V[e_k](r,\theta)
 =\frac{\rho}{2k}
 \begin{cases}
  ((r/\rho)^k-(r\rho)^k) e^{ i k\theta},&r<\rho,\\
  ((\rho/r)^k-(r\rho)^k) e^{ i k\theta},&r>\rho.
 \end{cases}
 \, .
\]
Consequently the fixed exact circle operators satisfy
\begin{align}
 \mathcal S e_k&=\frac{\rho}{2k}(1-x^k)e_k,
 &
 \mathcal D'e_k&=-\frac{x^k}{2}e_k,
 \label{eq:circle-s-dprime}\\
 \mathcal D e_k&=-\frac{x^k}{2}e_k,
 &
 \mathcal T e_k&=-\frac{k}{2\rho}(1+x^k)e_k.
 \label{eq:circle-d-t}
\end{align}
For a nonzero mode the variation of $\bar\omega$ is zero.  The transmission
jumps generated by $f=e_k$ are
\[
 [\varphi]=\gamma_\sigma e_k,
 \qquad
 [\partial_r\varphi]
 =-\frac{\gamma_\sigma p_\sigma}{\rho}e_k,
\]
whereas those generated by $h=e_k$ are
\[
 [\varphi]=0,
 \qquad
 [\partial_r\varphi]=- e^{\sigma(\rho)}e_k.
\]
The transmission representation and
\eqref{eq:circle-s-dprime}--\eqref{eq:circle-d-t} therefore give
\[
 \varphi_f
 =\mathcal W[\gamma_\sigma e_k]
  +\mathcal V[(\gamma_\sigma p_\sigma/\rho)e_k],
 \qquad
 \varphi_h=\mathcal V[ e^{\sigma(\rho)}e_k].
\]
At the radial base,
\begin{align*}
 \langle\psi_0'\rangle
 &=\gamma_\sigma\left(U-\frac12\right),
 &
 [\psi_0'']&=\frac{\gamma_\sigma}{\rho},\\
 \langle\psi_0''\rangle
 &=\bar\omega_0 e^{2\sigma(\rho)}
  -\frac{\gamma_\sigma U}{\rho}
  +\frac{\gamma_\sigma}{2\rho}.
\end{align*}
Put $w= e^{\sigma(\rho)}$ and let $\mathcal D'_k=-x^k/2$ and
$\mathcal T_k=-k(1+x^k)/(2\rho)$ denote the two multipliers above.  Substitution
in the four entries of Proposition~\ref{prop:complete-linearization} gives
the intermediate assembly
\begin{align}
 I_f
 &=-w\langle v_0\rangle_{\tau}-\frac{\gamma_\sigma x^k}{2}
   +\frac{\gamma_\sigma p_\sigma}{2k}(1-x^k),
 \label{eq:radial-l1f-assembly}\\
 I_h
 &=w\frac{\rho}{2k}(1-x^k),
 \label{eq:radial-l1h-assembly}\\
 J_f
 &=-\gamma w^{-1}\mathcal T_k\gamma_\sigma
   -\gamma\sigma'(\rho)\langle v_0\rangle_{\tau}
   -\gamma w^{-1}\mathcal D'_k
      \frac{\gamma_\sigma p_\sigma}{\rho}
   -\gamma w^{-1}\langle\psi_0''\rangle,
 \label{eq:radial-l2f-assembly}\\
 J_h
 &=\langle v_0\rangle_{\tau}-\gamma\mathcal D'_k.
 \label{eq:radial-l2h-assembly}
\end{align}
Using the displayed base derivatives and
$\langle v_0\rangle_{\tau}=\gamma(1/2-U)$ reduces these four lines to
\eqref{eq:radial-l1f}--\eqref{eq:radial-l2h}.

For the zero mode, differentiate the formula for $\mu_0$ in
\eqref{eq:radial-stream-mu} and
$\beta_0=\gamma^2(1/2-U)$ with respect to the circle radius and constant
strength.  Since
\[
 \frac{ d}{ d\rho}\int_\rho^1
 \left(1-\frac{A(q)}{A(1)}\right)\frac{ d q}{q}
 =-\frac{1-U}{\rho},
 \qquad
 U'(\rho)=\frac{2\pi\rho  e^{2\sigma(\rho)}}{A(1)},
\]
the first two rows of \eqref{eq:radial-zero-mode-matrix} follow.  The two
normalization derivatives are
\[
 D\mathcal N_1(f_0,h_0,\dot\mu,\dot\beta)=f_0,
\]
\[
 D\mathcal N_2(f_0,h_0,\dot\mu,\dot\beta)
 =2\pi\gamma_\sigma p_\sigma f_0
  +2\pi\rho  e^{\sigma(\rho)}h_0,
\]
which give its final two rows.  Expansion along the third row gives
\eqref{eq:radial-zero-mode-determinant}.

Finally, put
\[
 \mathsf b_k=\frac{1+x^k}{2}-U,\qquad
 \mathsf c_k=p_\sigma\left(U-\frac12\right)-Z
       +\frac{(k+p_\sigma)x^k+k}{2}.
\]
The four entries already proved obey
\[
 \frac{I_f}{\gamma_\sigma}
 =-\mathsf b_k+\frac{p_\sigma}{2k}(1-x^k),
 \qquad
 \frac{J_h}{\gamma}=\mathsf b_k,
 \qquad
 J_f=\frac{\gamma^2}{\rho}\mathsf c_k.
\]
Substitution in $I_fJ_h-I_hJ_f$, followed by
$2\mathsf b_k-x^k=1-2U$, gives
\eqref{eq:radial-determinant-identity}.
\end{proof}

\begin{proposition}[Modal degeneracy and normalization compatibility]
\label{prop:radial-modal-degeneracy}
The set $\{k\in\ze_{\geq1}:D_k^\sigma(\rho)=0\}$ is finite.  More precisely, if
\[
 C_\rho\coloneqq Z+p_\sigma(1-2U),
\]
then
\begin{equation}
 \mathring D_k^\sigma(\rho)
 \leq-\frac{1-x^2}{4}+\frac{|C_\rho|}{2k},
 \label{eq:radial-tail-bound}
\end{equation}
and every integer
\begin{equation}
 k>\frac{2|C_\rho|}{1-x^2}
 \label{eq:radial-tail-cutoff}
\end{equation}
is nonresonant.  Thus it is enough to inspect the modes with
$1\leq k<K_{\mathrm{tail}}$, where
\[
 K_{\mathrm{tail}}
 \coloneqq\left\lceil\frac{2|C_\rho|}{1-x^2}\right\rceil+1.
\]

Every resonant matrix has rank one.  For $k\geq1$ satisfying
$ D_k^\sigma(\rho)=0$,
right and left null vectors for one real copy satisfy
\begin{equation}
 \begin{pmatrix}I_f&I_h\\J_f&J_h\end{pmatrix}
 \begin{pmatrix}I_h\\-I_f\end{pmatrix}=0,
 \qquad
 \begin{pmatrix}I_f&I_h\\J_f&J_h\end{pmatrix}^{\mathsf T}
 \begin{pmatrix}J_h\\-I_h\end{pmatrix}=0.
 \label{eq:radial-null-vectors}
\end{equation}
The normalized kernel is the direct sum, over resonant $k$, of
\begin{align*}
 &(I_h\cos k\theta,-I_f\cos k\theta,0,0),\qquad
   (I_h\sin k\theta,-I_f\sin k\theta,0,0),
\end{align*}
and the normalized cokernel has the corresponding $L^2$ representatives
\begin{align*}
 &(J_h\cos k\theta,-I_h\cos k\theta,0,0),\qquad
   (J_h\sin k\theta,-I_h\sin k\theta,0,0).
\end{align*}
In particular,
\begin{equation}
 d=2\,\bigl|\{k\in\ze_{\geq1}:D_k^\sigma(\rho)=0\}\bigr|.
 \label{eq:radial-degeneracy-count}
\end{equation}
The mean-displacement and circulation normalization satisfies
\eqref{eq:normalization-compatibility} at every radial reference state.
\end{proposition}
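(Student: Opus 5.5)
The tail bound comes straight from the determinant identity \eqref{eq:radial-determinant-identity}. The first term $-\bigl(\tfrac{1+x^k}{2}-U\bigr)^2$ is nonpositive. Since $0<x<1$, for $k\geq1$ we have $x^{2k}\leq x^2$, so $-\tfrac{1-x^{2k}}{4}\leq-\tfrac{1-x^2}{4}$. Also $0<1-x^k\leq1$, so the last term is at most $|C_\rho|/(2k)$. Together these give \eqref{eq:radial-tail-bound}. The right-hand side is strictly negative exactly when $k>2|C_\rho|/(1-x^2)$. Hence $D_k^\sigma(\rho)=\gamma^2e^{\sigma(\rho)}\mathring D_k^\sigma(\rho)\neq0$ for those $k$, and the resonant set is finite, contained in $1\leq k<K_{\mathrm{tail}}$.

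For rank one, I observe that $I_h=e^{\sigma(\rho)}\tfrac{\rho}{2k}(1-x^k)>0$. So the first row of \eqref{eq:radial-mode-matrix} never vanishes, and a zero determinant forces rank exactly one. The vectors in \eqref{eq:radial-null-vectors} are then checked directly. The first row annihilates $(I_h,-I_f)$ identically, and the second gives $J_fI_h-J_hI_f=-D_k^\sigma=0$. The same computation works for the transpose with $(J_h,-I_h)$.

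For the kernel and cokernel I will use the rotational invariance of the radial base. By Proposition~\ref{prop:exact-radial-mode-matrices}, $D_u\mathcal F(0,u_0)$ is block diagonal over Fourier modes: the constant mode carries the $4\times4$ block \eqref{eq:radial-zero-mode-matrix}, and each $\cos k\theta$ and $\sin k\theta$ copy carries the $2\times2$ matrix, with vanishing normalization rows and vanishing scalar columns. The scalar columns are constant functions, so they live only in the zero mode. The zero block has determinant $2\pi\rho e^{\sigma(\rho)}\neq0$, so it contributes nothing to the kernel or cokernel. A kernel element of the full normalized operator therefore has zero constant mode, and each nonzero Fourier coefficient pair lies in the null space of the mode matrix. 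This space is trivial unless $k$ is resonant, in which case it is spanned by $(I_h,-I_f)$. Since only finitely many modes resonate, the kernel is the stated finite direct sum, with dimension $2$ per resonant $k$.

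For the cokernel I use the $L^2(dt)$ pairing on the same Fourier decomposition. The orthogonality of Fourier modes, together with the closed range from Theorem~\ref{thm:normalized-fredholm}, identifies annihilators of the range with modewise left null vectors. These are the vectors $(J_h,-I_h)$ at resonant $k$, with zero $\re^2$ component and zero constant mode, because the zero block is invertible and its transpose has no null vector. This gives \eqref{eq:radial-degeneracy-count}, consistent with index zero.

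For compatibility \eqref{eq:normalization-compatibility}, I restrict to the zero-mode subspace $(f_0,h_0,\dot\mu,\dot\beta)$ and consider the $2\times4$ top part of \eqref{eq:radial-zero-mode-matrix}, which is the restriction of $L$ there. Because of the $-1$ entries in the $\dot\mu$ and $\dot\beta$ columns, for any $(f_0,h_0)$ one can choose $(\dot\mu,\dot\beta)$ making $L=0$. This gives a two-dimensional subspace of $\ker L$, parametrized by $(f_0,h_0)$. On it, $D_u\mathcal N$ is the map $(f_0,h_0)\mapsto(f_0,\,2\pi\gamma_\sigma p_\sigma f_0+2\pi\rho e^{\sigma(\rho)}h_0)$, which is triangular with diagonal entries $1$ and $2\pi\rho e^{\sigma(\rho)}>0$. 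It is therefore onto $\re^2$.

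The main obstacle is justifying that the abstract cokernel and kernel equal the modewise ones, that is, that no infinite superposition of nonresonant modes contributes. The modewise matrices are invertible at every nonresonant $k$, but one also needs the inverses to be uniformly bounded in the mixed-order scale to conclude closed range directly. I would sidestep this by using Fredholmness from Theorem~\ref{thm:normalized-fredholm} and the smoothness of cokernel representatives from Lemma~\ref{lem:kernel-cokernel-regularity}: a smooth left null vector expands in Fourier modes, and each coefficient must be a left null vector of its block. This avoids needing uniform bounds.
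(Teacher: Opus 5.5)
Your proposal is correct and follows the paper's proof essentially step by step. Both arguments obtain the tail bound by dropping the nonpositive square, get rank one from $I_h>0$, use the invertible zero-mode block together with the vanishing normalization rows on nonzero modes, and verify compatibility on the concentric radius--strength directions. The one addition on your side is the explicit passage from modewise to global kernel and cokernel, via closed range and Fourier expansion of annihilators; this spells out what the paper asserts in a single line (``the displayed sums are complete'').
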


\begin{proof}
Since $0<x<1$, one has $1-x^{2k}\geq1-x^2$.  Dropping the first
nonpositive square in \eqref{eq:radial-determinant-identity} and bounding
$1-x^k\leq1$ proves \eqref{eq:radial-tail-bound}, hence finiteness and
\eqref{eq:radial-tail-cutoff}.  Moreover,
\[
 I_h= e^{\sigma(\rho)}\frac{\rho}{2k}(1-x^k)>0.
\]
Thus a singular $2\times2$ block is nonzero and has rank one.  Direct
multiplication verifies both vectors in \eqref{eq:radial-null-vectors};
the cosine and sine copies give the stated real multiplicity.  The
zero-mode matrix is invertible by
\eqref{eq:radial-zero-mode-determinant}, while the normalization rows vanish
on every nonzero mode.  Hence the zero mode contributes neither normalized
kernel nor cokernel, the displayed sums are complete, and
\eqref{eq:radial-degeneracy-count} follows.  Before normalization, the two
zero-mode tangent directions form the concentric radius--strength family;
the last two rows of \eqref{eq:radial-zero-mode-matrix} restrict to an
isomorphism on those directions.  This is exactly
\eqref{eq:normalization-compatibility}.
\end{proof}

\begin{corollary}[Rotationally symmetric continuation or reduction]
\label{cor:radial-continuation}
For the reference state of Definition~\ref{def:rotational-setting}, all
hypotheses of Theorem~\ref{thm:unnormalized-fredholm} hold, and the
compatibility condition \eqref{eq:normalization-compatibility} is satisfied.
Consequently the conclusions of Theorem~\ref{thm:normalized-fredholm} apply.
If
$ D_k^\sigma(\rho)\ne0$ for every $k\geq1$, then for every prescribed $C^3$
conformal metric path through the radial base there is a locally unique
$C^2$ normalized branch in the chosen normal chart.

If $ D_k^\sigma(\rho)=0$ for at least one $k\geq1$, the local normalized
problem is equivalent only, for $(s,\zeta)$ in the product neighborhood of
$(0,0)$ from Definition~\ref{def:physical-reduced-equation}, to the reduced equation
\[
 B(s,\zeta)=0,
 \qquad B(s,\zeta)\in\mathcal C,
 \qquad
 \dim\mathcal K=\dim\mathcal C=d
 =2\,\bigl|\{k\in\ze_{\geq1}:D_k^\sigma(\rho)=0\}\bigr|.
\]
Every $C^2$ branch must satisfy the first- and second-order conditions of
Theorem~\ref{thm:reduced-obstructions}.  Resonance alone gives no branch,
branch count, or branch scale.
\end{corollary}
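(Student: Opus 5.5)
The plan is to verify the hypotheses of the three abstract results in order and then read off the two alternatives. First, Proposition~\ref{prop:circular-reference-sheet} gives what Theorem~\ref{thm:unnormalized-fredholm} needs. The circle \eqref{eq:radial-arclength-parametrization} is smooth, embedded, separating and parametrized by Euclidean arclength. The constant strength $\gamma$ is smooth. The point $u_0=(0,0,\mu_0,\beta_0)$ is a zero of $F(0,\cdot)$. The stagnation-free condition \eqref{eq:standing-stagnation-free} holds because $v_{\tau,0}^+-v_{\tau,0}^-=\gamma\neq0$. The metric-path hypotheses are the standing ones and need no rotational symmetry for $s\ne0$. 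Theorem~\ref{thm:unnormalized-fredholm} then gives $\indop L=2$. The compatibility condition \eqref{eq:normalization-compatibility} is the last assertion of Proposition~\ref{prop:radial-modal-degeneracy}, so Theorem~\ref{thm:normalized-fredholm} applies, and with it Theorem~\ref{thm:physical-lyapunov-schmidt}.

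Second, I identify $d$. The linearization at the radial base commutes with rotations, so $D_u\mathcal F(0,u_0)$ preserves each Fourier block. Proposition~\ref{prop:exact-radial-mode-matrices} gives two pieces of information. The zero-mode normalized block \eqref{eq:radial-zero-mode-matrix} is invertible. For each $k\ge1$ the cosine and sine copies carry the same $2\times2$ matrix, and the normalization rows vanish there. Proposition~\ref{prop:radial-modal-degeneracy} packages this as $d=2|\{k\ge1:D_k^\sigma(\rho)=0\}|$, with the set finite by \eqref{eq:radial-tail-cutoff}. The one point I will check is that this blockwise count agrees with the Sobolev-space kernel and cokernel. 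Lemma~\ref{lem:kernel-cokernel-regularity} supplies this: kernel and cokernel elements are smooth, so they expand in Fourier series and split mode by mode. Only finitely many blocks are singular, so no infinite-dimensional contribution can hide in the tail.

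Third, the two cases. If no $D_k^\sigma(\rho)$ vanishes, then $d=0$ and Corollary~\ref{cor:nondegenerate-continuation} gives the locally unique $C^2$ normalized branch for any prescribed $C^3$ path through $\sigma_0$. If some $D_k^\sigma(\rho)=0$, Theorem~\ref{thm:physical-lyapunov-schmidt} gives the equivalence with $B(s,\zeta)=0$ on the stated product neighborhood, with $\dim\mathcal K=\dim\mathcal C=d$. Theorem~\ref{thm:reduced-obstructions} supplies the necessary first- and second-order conditions. The closing disclaimer records that nothing beyond these necessary conditions is claimed.

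The main obstacle is the identification in the second step: making sure the modal computation captures the full Fredholm kernel and cokernel on $\mathcal X$ and $\mathcal Y\times\re^2$, including the interaction of the scalar columns $\dot\mu,\dot\beta$ with the zero mode. I would handle it by observing that the scalar columns and the normalization rows live entirely in the zero-mode block, which has already been shown invertible.
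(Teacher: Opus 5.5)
Your proposal is correct and is essentially the paper's proof, which is the same citation chain: Proposition~\ref{prop:circular-reference-sheet} supplies the hypotheses of Theorem~\ref{thm:unnormalized-fredholm}, Propositions~\ref{prop:exact-radial-mode-matrices} and~\ref{prop:radial-modal-degeneracy} give compatibility and the value of $d$, and Corollary~\ref{cor:nondegenerate-continuation} together with Theorems~\ref{thm:physical-lyapunov-schmidt} and~\ref{thm:reduced-obstructions} give the two alternatives. Your extra check that the modal count exhausts the Sobolev kernel and cokernel is sound: you use smoothness from Lemma~\ref{lem:kernel-cokernel-regularity}, invariance of each Fourier block, and the fact that the scalar columns and normalization rows live only in the invertible zero-mode block. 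It elaborates the completeness claim the paper makes inside Proposition~\ref{prop:radial-modal-degeneracy} rather than taking a different route.
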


\begin{proof}
Proposition~\ref{prop:circular-reference-sheet} verifies the steady-state,
regularity, embeddedness, and stagnation-free hypotheses.
Propositions~\ref{prop:exact-radial-mode-matrices}
and~\ref{prop:radial-modal-degeneracy} verify the exact derivative,
normalization compatibility, kernel, and cokernel.  The first assertion is
Corollary~\ref{cor:nondegenerate-continuation}; the second is
Theorems~\ref{thm:physical-lyapunov-schmidt}
and~\ref{thm:reduced-obstructions}.
\end{proof}

\subsection{The constant-curvature disk family}
\label{subsec:constant-curvature-application}

Within the radial class, the constant-curvature metrics form a single
explicit family covering positive, zero, and negative curvature.  Every
geometric coefficient in the modal determinant can then be evaluated, so
the resonance set can be determined for all positive Fourier modes.

\begin{definition}[Constant-curvature setting]
\label{def:constant-curvature-setting}
Let $K\colon(-s_0,s_0)\to(-1,\infty)$ be $C^3$ and define
\begin{equation}
 e^{\sigma_s(r)}=\frac{2}{1+K(s)r^2},\qquad
 g_s=\frac{4}{(1+K(s)r^2)^2}g_{\mathrm e}.
 \label{eq:constant-curvature-family}
\end{equation}
For each $K>-1$, write
\begin{equation}
 \sigma_K(r)\coloneqq\log\frac2{1+Kr^2},\qquad
 g_K\coloneqq e^{2\sigma_K}g_{\mathrm e}.
 \label{eq:constant-curvature-static-notation}
\end{equation}
For the reference linearization write $K=K(0)$ and retain the radius
$0<\rho<1$ and strength $\gamma\ne0$ from
Definition~\ref{def:rotational-setting}.  Put
\begin{equation}
 x=\rho^2.
 \label{eq:constant-curvature-x}
\end{equation}
The restriction $K>-1$ is exactly the range in which the conformal factor is
positive on the closed unit disk.
Moreover,
\begin{equation}
 \partial_r\sigma_K=-\frac{2Kr}{1+Kr^2},
 \qquad
 \triangle_{\mathrm e}\sigma_K
   =-\frac{4K}{(1+Kr^2)^2},
 \qquad
 - e^{-2\sigma_K}\triangle_{\mathrm e}\sigma_K=K,
 \label{eq:constant-gaussian-curvature}
\end{equation}
which verifies that $g_K$ has Gaussian curvature $K$.
\end{definition}

\begin{proposition}[Constant-curvature specialization]
\label{prop:constant-curvature-specialization}
For the metric \eqref{eq:constant-curvature-family},
\begin{align}
 A(r)
 &=\frac{4\pi r^2}{1+Kr^2},
 &
 U&=\frac{x(1+K)}{1+Kx},
 \label{eq:constant-area-u}\\
  e^{\sigma_K(\rho)}&=\frac2{1+Kx},
 &
 \gamma_\sigma&=\frac{2\gamma}{1+Kx},
 \label{eq:constant-weight-gamma}\\
 p_\sigma&=\frac{1-Kx}{1+Kx},
 &
 Z&=\frac{2x(1+K)}{(1+Kx)^2},
 \label{eq:constant-p-z}\\
 \bar\omega_0&=\frac{\rho\gamma(1+K)}{1+Kx},
 &
 \langle v_0\rangle_{\tau}
 &=\gamma\left(\frac12-\frac{x(1+K)}{1+Kx}\right).
 \label{eq:constant-background-velocity}
\end{align}
The torsion function for the fixed metric $g_K$ is
\begin{equation}
 \Psi_K(r)=
 \begin{cases}
  \displaystyle\frac1K
   \log\frac{1+K}{1+Kr^2},&K\ne0,\\[1.2ex]
  1-r^2,&K=0,
 \end{cases}
 \qquad
 \partial_r\Psi_K(r)=-\frac{2r}{1+Kr^2}.
 \label{eq:constant-curvature-torsion}
\end{equation}

For every $k\geq1$, the exact entries in
\eqref{eq:radial-mode-matrix} become
\begin{align}
 I_f
 &=\frac{\gamma}{1+Kx}
   \left[\left(1+\frac{p_\sigma}{k}\right)(1-x^k)
         -2+2U\right],
 \label{eq:constant-l1f}\\
 I_h&=\frac{\rho}{k(1+Kx)}(1-x^k),
 \label{eq:constant-l1h}\\
 J_f
 &=\frac{\gamma^2}{\rho}\left[
   p_\sigma\left(U-\frac12\right)
   -\frac{2x(1+K)}{(1+Kx)^2}
   +\frac{(k+p_\sigma)x^k+k}{2}
  \right],
 \label{eq:constant-l2f}\\
 J_h&=\gamma\left(\frac{1+x^k}{2}-U\right).
 \label{eq:constant-l2h}
\end{align}
\end{proposition}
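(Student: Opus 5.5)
This is a direct specialization, so the plan is to substitute the explicit conformal factor $e^{\sigma_K(r)}=2/(1+Kr^2)$ into the radial data of Definition~\ref{def:rotational-setting}, the reference formulas of Proposition~\ref{prop:circular-reference-sheet}, and the entries \eqref{eq:radial-l1f}--\eqref{eq:radial-l2h} of Proposition~\ref{prop:exact-radial-mode-matrices}. No new analysis is needed. Each identity is either an elementary integral or an algebraic simplification, and I will do them in dependency order.

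\textbf{Step 1: radial data.} Compute
\[
 A(r)=2\pi\int_0^r\frac{4q}{(1+Kq^2)^2}\,dq=\frac{4\pi r^2}{1+Kr^2},
\]
using the antiderivative $-2/(K(1+Kq^2))$ for $K\ne0$, or directly for $K=0$. Then $A(1)=4\pi/(1+K)$, which gives $U=x(1+K)/(1+Kx)$.

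The values $e^{\sigma_K(\rho)}$ and $\gamma_\sigma$ are immediate. From $\rho\sigma_K'(\rho)=-2Kx/(1+Kx)$ one gets $p_\sigma=(1-Kx)/(1+Kx)$. Finally,
\[
 Z=\frac{2\pi x\cdot 4/(1+Kx)^2}{4\pi/(1+K)}=\frac{2x(1+K)}{(1+Kx)^2}.
\]
Substituting into \eqref{eq:radial-background-vorticity} and \eqref{eq:radial-velocity-beta} gives $\bar\omega_0$ and $\langle v_0\rangle_\tau$.

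\textbf{Step 2: torsion function.} By Fact~\ref{fact:torsion-green-representation}, $\Psi_K$ is the unique zero-Dirichlet solution of $-\triangle_{\mathrm e}\Psi=e^{2\sigma_K}$. I will verify the candidate rather than integrate. For $K\ne0$, the candidate $\Psi=K^{-1}\log\bigl((1+K)/(1+Kr^2)\bigr)$ equals $-\sigma_K/K$ plus a constant. Hence
\[
 -\triangle_{\mathrm e}\Psi=\frac{\triangle_{\mathrm e}\sigma_K}{K}=-\frac{4}{(1+Kr^2)^2}\cdot(-1)=e^{2\sigma_K}
\]
by \eqref{eq:constant-gaussian-curvature}. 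It vanishes at $r=1$ and is smooth at the origin, and uniqueness finishes. The case $K=0$ is the classical $1-r^2$, or equivalently the limit $K\to0$. The derivative formula follows by direct differentiation.

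\textbf{Step 3: mode entries.} Insert Step 1 into \eqref{eq:radial-l1f}--\eqref{eq:radial-l2h}.
\begin{itemize}
\item For $I_f$ and $I_h$, factor out $\gamma_\sigma/2=\gamma/(1+Kx)$ and $e^{\sigma(\rho)}\rho/(2k)$ respectively.
\item $J_h$ is unchanged.
\item For $J_f$, pull out $\gamma_\sigma/\rho$ to obtain the prefactor $\gamma e^{-\sigma(\rho)}\gamma_\sigma/\rho=\gamma^2/\rho$.
\end{itemize}
The only step needing care is the middle term of $J_f$. It equals $-\bar\omega_0e^{2\sigma(\rho)}\rho/\gamma_\sigma$, and I must check that this matches $-Z$, which is what the final bracket writes as $-2x(1+K)/(1+Kx)^2$. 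This holds in general: from \eqref{eq:radial-background-vorticity},
\[
 \frac{\bar\omega_0e^{2\sigma(\rho)}\rho}{\gamma_\sigma}=\frac{2\pi\rho^2e^{2\sigma(\rho)}}{A(1)}=Z,
\]
which is consistent with the $\mathsf c_k$ form used in the proof of Proposition~\ref{prop:exact-radial-mode-matrices}.

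I expect no real obstacle. The main risk is sign and factor bookkeeping in $J_f$, which I will cross-check by evaluating the case $K=0$, $k=1$ against the general determinant identity \eqref{eq:radial-determinant-identity}.
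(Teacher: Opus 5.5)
Your proposal is correct and is essentially the paper's proof: you substitute $e^{\sigma_K}=2/(1+Kr^2)$ into the radial data and into \eqref{eq:radial-l1f}--\eqref{eq:radial-l2h}, correctly handling the one nontrivial point $\bar\omega_0e^{2\sigma(\rho)}\rho/\gamma_\sigma=Z$ in $J_f$, and you check the torsion candidate directly (the paper uses the radial Laplacian; your use of $-\triangle_{\mathrm e}\sigma_K=Ke^{2\sigma_K}$ is equally short). Step 2 has a sign slip: since $\sigma_K=\log2-\log(1+Kr^2)$, one has $\Psi_K=\sigma_K/K+\mathrm{const}$, not $-\sigma_K/K+\mathrm{const}$, so $-\triangle_{\mathrm e}\Psi_K=-K^{-1}\triangle_{\mathrm e}\sigma_K=4/(1+Kr^2)^2=e^{2\sigma_K}$ follows directly from \eqref{eq:constant-gaussian-curvature}, and the factor $(-1)$ in your display, which only compensates for that slip, should be removed.
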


\begin{proof}
Direct integration of $4r/(1+Kr^2)^2$ gives
\eqref{eq:constant-area-u}; the remaining quantities in
\eqref{eq:constant-weight-gamma}--\eqref{eq:constant-background-velocity} follow
from the definitions in
\eqref{eq:radial-area-u}--\eqref{eq:radial-z} and
\eqref{eq:radial-background-vorticity}.  The function in
\eqref{eq:constant-curvature-torsion} vanishes at $r=1$ and satisfies
\[
 -\frac1r\frac{ d}{ d r}\left(r\frac{d\Psi_K}{dr}\right)
 =\frac4{(1+Kr^2)^2}= e^{2\sigma_K(r)};
\]
the $K=0$ expression is its continuous limit.  Inserting
\eqref{eq:constant-area-u}--\eqref{eq:constant-p-z} in the general radial
blocks yields \eqref{eq:constant-l1f}--\eqref{eq:constant-l2h}.
\end{proof}

\begin{proposition}[Persistent first-mode resonance]
\label{prop:constant-curvature-resonances}
For every $K>-1$, $0<\rho<1$, and $k\geq1$, the normalized determinant has
the exact form
\begin{equation}
 \mathring D_k^{\sigma_K}(\rho)
 =\frac1{(1+Kx)^2}\left[
  (1-x)(x-x^k)
  -\frac{k-1}{2k}(1-x^k)
      \bigl(1+K(K+2)x^2\bigr)
 \right].
 \label{eq:constant-curvature-determinant}
\end{equation}
It satisfies
\begin{equation}
 \mathring D_1^{\sigma_K}(\rho)=0,
 \qquad
 \mathring D_k^{\sigma_K}(\rho)<0\quad(k\geq2).
 \label{eq:constant-curvature-signs}
\end{equation}
At $k=1$ the first column of the matrix in
\eqref{eq:radial-mode-matrix} vanishes,
\[
 I_f=J_f=0,
\]
while $I_h=\rho(1-x)/(1+Kx)>0$.  Thus the only resonant mode is $k=1$,
the normalized degeneracy index is
\begin{equation}
 d=2,
 \label{eq:constant-curvature-d-two}
\end{equation}
and its kernel is the pure-shape space
\[
 \operatorname{span}_{\re}
 \{(\cos\theta,0,0,0),(\sin\theta,0,0,0)\}.
\]

These two profiles are normal traces of ambient space-form Killing fields,
but they are not infinitesimal symmetries of the fixed-boundary problem.
\end{proposition}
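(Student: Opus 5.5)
The plan is to specialize the general radial determinant identity \eqref{eq:radial-determinant-identity} using the closed forms of Proposition~\ref{prop:constant-curvature-specialization}. Then I will establish the $k=1$ vanishing and the strict sign for $k\geq2$, and finally read off the kernel and the Killing-field interpretation from Proposition~\ref{prop:radial-modal-degeneracy}.

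\emph{Step 1: the exact form.} Insert $U=x(1+K)/(1+Kx)$, $p_\sigma=(1-Kx)/(1+Kx)$ and $Z=2x(1+K)/(1+Kx)^2$ into \eqref{eq:radial-determinant-identity}. The bracket simplifies first: since $1-2U=(1-2x-Kx)/(1+Kx)$, expanding gives
\[
 Z+p_\sigma(1-2U)=\frac{1+K(K+2)x^2}{(1+Kx)^2}.
\]
It then remains to check that
\[
 -\Bigl(\tfrac{1+x^k}{2}-U\Bigr)^2-\tfrac{1-x^{2k}}4
 +\tfrac{1-x^k}{2k}\cdot\tfrac{1+K(K+2)x^2}{(1+Kx)^2}
\]
equals the right side of \eqref{eq:constant-curvature-determinant}. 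I will clear the factor $(1+Kx)^2$ and compare the two sides as polynomials in $K$ of degree at most two. The constant, linear and quadratic coefficients each reduce to a polynomial identity in $x$ and $x^k$. This is routine bookkeeping.

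\emph{Step 2: $k=1$ and the kernel.} At $k=1$ the second term in the bracket of \eqref{eq:constant-curvature-determinant} carries the factor $k-1$, and the first carries $x-x^k$, so $\mathring D_1^{\sigma_K}=0$. For the stronger claim $I_f=J_f=0$, substitute $k=1$ directly into \eqref{eq:constant-l1f} and \eqref{eq:constant-l2f}.
\begin{itemize}
\item In \eqref{eq:constant-l1f}, using $1+p_\sigma=2/(1+Kx)$ and $1-U=(1-x)/(1+Kx)$, the bracket becomes $2(1-x)/(1+Kx)-2(1-U)=0$.
\item In \eqref{eq:constant-l2f}, the bracket is linear in $p_\sigma$; clearing denominators and expanding shows it vanishes identically.
\end{itemize}
Since $I_h=\rho(1-x)/(1+Kx)>0$, the right null vector $(I_h,-I_f)$ from \eqref{eq:radial-null-vectors} is proportional to $(1,0)$. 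This gives the pure-shape kernel spanned by $\cos\theta$ and $\sin\theta$. Combined with Step 3, the count \eqref{eq:radial-degeneracy-count} yields $d=2$.

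\emph{Step 3: strict negativity for $k\geq2$ (main obstacle).} First observe that
\[
 Q\coloneqq 1+K(K+2)x^2=(1-x^2)+(1+K)^2x^2>1-x^2,
\]
with strict inequality because $K>-1$ and $x>0$. Since the prefactor $(k-1)(1-x^k)/(2k)$ is positive, it suffices to prove the $K$-free inequality
\[
 (1-x)(x-x^k)\leq\frac{k-1}{2k}(1-x^k)(1-x^2).
\]
Dividing by $(1-x)^2$ and writing $S_j=\sum_{i<j}x^i$, this becomes
\[
 2k\,x\,S_{k-1}\leq(k-1)(1+x)S_k \qquad (0<x<1).
\]
The two sides agree at $x=1$, so the inequality is sharp there, and this is the step I expect to need care. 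I would first try to compare coefficients of $x^j$ on both sides. Both polynomials are palindromic of degree $k$, and the difference should have the shape $\sum_j c_j(x^j+x^{k-j})$ with appropriate sign structure. If that does not work directly, I would fall back on induction in $k$, or on showing that $x\,S_{k-1}/\bigl((1+x)S_k\bigr)$ is increasing on $(0,1)$ with limit $(k-1)/(2k)$ at $x=1$. Because $Q>1-x^2$ is strict, even the non-strict form of this inequality gives $\mathring D_k^{\sigma_K}<0$ for every $k\geq2$.

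\emph{Step 4: Killing fields.} In the model $g_K$, the Killing fields are the rotation field and the two "translation" fields
\[
 X_a(z)=\tfrac12\bigl((1-K|z|^2)\,a+2K(a\cdot z)\,z\bigr),\qquad a\in\re^2,
\]
which are conformal Killing fields of $g_{\mathrm e}$ with the appropriate weight. On $|z|=\rho$, the Euclidean normal trace $X_a\cdot z/\rho$ equals $\tfrac12(1+Kx)\,(a\cdot z)/\rho$. This is a constant multiple of $\cos\theta$ or $\sin\theta$, and the constant is nonzero since $1+Kx>0$. On $|z|=1$, the same computation gives $\tfrac12(1+K)(a\cdot z)$, which is nonzero since $K>-1$. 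Hence the flows of $X_a$ do not preserve $\partial\mathbb D$, so they are not symmetries of the fixed-boundary problem. The rotation field preserves the boundary but has zero normal trace on the circle, so it contributes nothing to the kernel.
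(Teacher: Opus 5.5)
Your Steps 1, 2 and 4 follow the paper's proof: the bracket identity $Z+p_\sigma(1-2U)=(1+K(K+2)x^2)/(1+Kx)^2$, the direct $k=1$ cancellations in $I_f$ and $J_f$, and the Killing fields. Your $X_a$ is exactly $\tfrac12 Y_\alpha$ with $Y_\alpha=\alpha+K\overline\alpha z^2$. You should still record the one-line check $\rp Y_\alpha'+d\sigma_K(Y_\alpha)=0$ rather than simply assert the Killing property.

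Your reduction in Step 3 is also the paper's. It uses $1+K(K+2)x^2>1-x^2$ and division by $(1-x)^2$, giving the target $2kxS_{k-1}\le(k-1)(1+x)S_k$. You are right that the non-strict version suffices.

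The gap is that you never prove this inequality, and your first proposed strategy cannot work. The difference is
\[
 (k-1)(1+x)S_k-2kxS_{k-1}=(k-1)(1+x^k)-2\sum_{j=1}^{k-1}x^j .
\]
Its interior coefficients are all $-2$, since the left side has interior coefficients $2k$ against $2(k-1)$ on the right. So a coefficientwise comparison fails. The palindromic form $\sum_j c_j(x^j+x^{k-j})$ does not help by itself either, because every $c_j$ except the end one is negative.

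The missing idea is to distribute the end terms over the interior ones, writing $(k-1)(1+x^k)=\sum_{j=1}^{k-1}(1+x^k)$. Then the difference becomes
\[
 \sum_{j=1}^{k-1}\bigl(1+x^k-x^j-x^{k-j}\bigr)
 =\sum_{j=1}^{k-1}(1-x^j)(1-x^{k-j})>0
 \qquad(0<x<1,\ k\geq2).
\]
This is exactly the paper's argument. With it your proof is complete, and the induction and monotonicity fallbacks are unnecessary.
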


\begin{proof}
The substitutions in Proposition~\ref{prop:constant-curvature-specialization}
give
\[
 Z+p_\sigma(1-2U)
 =\frac{1+K(K+2)x^2}{(1+Kx)^2}.
\]
Inserting this and $U=x(1+K)/(1+Kx)$ into
\eqref{eq:radial-determinant-identity} first gives
\begin{align*}
 (1+Kx)^2\mathring D_k^{\sigma_K}(\rho)
 ={}&-\frac14\bigl[(1+Kx)(1+x^k)-2x(1+K)\bigr]^2\\
 &-\frac{(1+Kx)^2}{4}(1-x^{2k})
 +\frac{1-x^k}{2k}\bigl[1+K(K+2)x^2\bigr].
\end{align*}
Expanding the first two terms and factoring $1-x^k$ proves
\eqref{eq:constant-curvature-determinant}.  It vanishes when $k=1$.
Directly in the entries, the two cancellations are
\[
 \frac{(1+Kx)^2}{2\gamma}I_f
 =(1-x)-(1+Kx)+x(1+K)=0,
\]
and
\[
 ((2+K)x-1)(1-Kx)-4x(1+K)
 +(1+(2+K)x)(1+Kx)=0.
\]
The polynomial
$((2+K)x-1)(1-Kx)-4x(1+K)+(1+(2+K)x)(1+Kx)$ is the numerator
of $J_f$ after multiplication by its nonzero common denominator.  Hence the
first column vanishes.

It remains to exclude every $k\geq2$.  Since
\begin{equation}
 1+K(K+2)x^2
 =1-x^2+(K+1)^2x^2>1-x^2,
 \label{eq:constant-curvature-coefficient-bound}
\end{equation}
it is enough to prove
\[
 2kx\sum_{j=0}^{k-2}x^j
 <(k-1)(1+x)\sum_{j=0}^{k-1}x^j.
\]
The right-hand side minus the left-hand side is
\[
 (k-1)(1+x^k)-2\sum_{j=1}^{k-1}x^j.
\]
For every $1\leq j\leq k-1$,
\[
 x^j+x^{k-j}<1+x^k,
\]
because the difference is
$(1-x^j)(1-x^{k-j})>0$.  Summing these inequalities proves strict
positivity and hence the second sign in
\eqref{eq:constant-curvature-signs}.  Since $I_h>0$, the first-mode block
has rank one, and Proposition~\ref{prop:radial-modal-degeneracy} gives
\eqref{eq:constant-curvature-d-two} and the displayed pure-shape kernel.

For the geometric identification, in the complex coordinate $z$ consider
\[
 Y_\alpha(z)=\alpha+K\overline\alpha z^2,
 \qquad \alpha\in\ce.
\]
For a holomorphic planar vector field $Y$, the Killing equation for
$ e^{2\sigma_K}g_{\mathrm e}$ reduces to
$\rp Y'+ d\sigma_K(Y)=0$.  In the present case,
\[
 \rp Y_\alpha'(z)
 =2K\rp(\overline\alpha z),
 \qquad
  d\sigma_K(Y_\alpha)
 =-2K\rp(\overline\alpha z),
\]
so $Y_\alpha$ is a Killing field of
$4(1+K|z|^2)^{-2}g_{\mathrm e}$.  On $r=\rho$ its Euclidean normal
traces are
\[
 \langle Y_\alpha,n\rangle_{g_{\mathrm e}}
 =(1+K\rho^2)\rp(\alpha  e^{- i\theta}),
\]
which span the same cosine--sine space.  On the fixed outer boundary their
normal traces are
$(1+K)\rp(\alpha  e^{- i\theta})$, which are not identically
zero when $\alpha\ne0$ because $K>-1$.
Their flows therefore do not preserve the unit disk boundary.  The Killing
description identifies the kernel profile only and does not produce an
additional nonconcentric solution branch.
\end{proof}

\begin{corollary}[Constant-curvature reduced problem]
\label{cor:constant-curvature-continuation}
The nondegenerate parameter range in the constant-curvature family is empty:
every $K>-1$ and every $0<\rho<1$ has $d=2$.  Consequently
Corollary~\ref{cor:nondegenerate-continuation} does not classify the local
solution set at any such reference state.

The full normalized steady-sheet problem is locally equivalent, for
$(s,\zeta)$ in the product neighborhood of $(0,0)$ from
Definition~\ref{def:physical-reduced-equation} to
\[
 B(s,\zeta)=0,
 \qquad B(s,\zeta)\in\mathcal C,
 \qquad
 \dim\mathcal K=\dim\mathcal C=2.
\]
Any $C^2$ branch must satisfy the first- and second-order necessary
conditions in Theorem~\ref{thm:reduced-obstructions}.  Neither the first-mode
resonance nor its ambient Killing-field interpretation alone implies a
nonconcentric branch.
\end{corollary}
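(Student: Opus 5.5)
The corollary is essentially a bookkeeping consequence of the preceding results, so I would obtain it by assembling them rather than by new analysis. First, I would fix $K=K(0)>-1$ and $0<\rho<1$, $\gamma\ne0$, and observe that $\sigma_K$ is a smooth radial conformal factor on $\overline{\mathbb D}$; the path \eqref{eq:constant-curvature-family} is $C^3$ into $C^\infty(\overline{\mathbb D})$ because $K(\cdot)$ is $C^3$ and $K(s)>-1$ keeps $1+K(s)r^2$ bounded away from zero on the closed disk. This places the reference state in the setting of Definition~\ref{def:rotational-setting}. Corollary~\ref{cor:radial-continuation} then supplies all hypotheses of Theorem~\ref{thm:unnormalized-fredholm}: steadiness and stagnation-freeness from Proposition~\ref{prop:circular-reference-sheet}, and the compatibility condition \eqref{eq:normalization-compatibility} from Proposition~\ref{prop:radial-modal-degeneracy}.

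Second, I would invoke Proposition~\ref{prop:constant-curvature-resonances}. It gives $\mathring D_1^{\sigma_K}(\rho)=0$ and $\mathring D_k^{\sigma_K}(\rho)<0$ for $k\ge2$. Combined with the count \eqref{eq:radial-degeneracy-count}, this yields $d=2$ for every admissible $(K,\rho)$. In particular, the set of parameters with $d=0$ is empty, so the hypothesis of Corollary~\ref{cor:nondegenerate-continuation} never holds. The reference state is a zero of $\mathcal F(0,\cdot)$, but the implicit function theorem is unavailable, and the corollary gives no classification of the local solution set.

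Third, with compatibility verified, I would apply Theorem~\ref{thm:physical-lyapunov-schmidt} directly. It gives the $C^2$ maps $\xi$ and $B$ on a product neighborhood from Definition~\ref{def:physical-reduced-equation}, and it makes $(s,\zeta)\mapsto(s,u_0+\zeta+\xi(s,\zeta))$ a bijection from zeros of $B$ onto normalized zeros of $\mathcal F$. Here $\dim\mathcal K=\dim\mathcal C=d=2$, with $\mathcal K$ the pure-shape first-mode space. The necessary conditions for $C^2$ branches follow verbatim from Theorem~\ref{thm:reduced-obstructions}.

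The only point requiring care is the final negative assertion: that neither the resonance nor the Killing interpretation produces a nonconcentric branch. I would argue that this is simply the absence of an implication, not a nonexistence claim. Resonance only says $\dim\mathcal K=2$, and by the remark after Corollary~\ref{cor:implicit-branch-derivative} no branch follows without finite-dimensional transversality. The Killing fields $Y_\alpha$ have nonzero normal trace $(1+K)\rp(\alpha e^{-i\theta})$ on $\partial\mathbb D$, as shown in Proposition~\ref{prop:constant-curvature-resonances}. Their flows therefore do not map the fixed-boundary problem to itself, so they cannot transport the concentric solution into a family of solutions. I expect this interpretive step to be the main obstacle. It requires phrasing the conclusion precisely so that it claims no more than these observations support.
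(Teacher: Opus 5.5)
Your proposal is correct and takes essentially the same route as the paper. It obtains $d=2$ from Proposition~\ref{prop:constant-curvature-resonances}, treats the reduction and the necessary conditions as the degenerate alternative of Corollary~\ref{cor:radial-continuation} (that is, Theorems~\ref{thm:physical-lyapunov-schmidt} and~\ref{thm:reduced-obstructions}), and reads the statement about nonconcentric branches as a non-implication that rests on the nonzero outer-boundary normal trace $(1+K)\rp(\alpha e^{-i\theta})$ of the Killing fields. Your additional check that the constant-curvature path is $C^3$ into $C^\infty(\overline{\mathbb D})$ is harmless and is left implicit in the paper.
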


\begin{proof}
Proposition~\ref{prop:constant-curvature-resonances} gives $d=2$ throughout
the admissible parameter range and excludes all other resonances.  The
reduction and the necessary conditions are the degenerate alternative in
Corollary~\ref{cor:radial-continuation}.  The qualification concerning nonconcentric branches follows from the nonzero outer-boundary normal trace computed in the proof of
Proposition~\ref{prop:constant-curvature-resonances}.
\end{proof}

\section{Conclusion}
\label{sec:conclusion}

We have formulated continuation of a steady vortex sheet under a prescribed
conformal deformation of the metric on the disk as a nonlinear equation on a
fixed normal-graph chart.  Its streamline and Bernoulli components naturally
occupy different Sobolev orders.  When the two one-sided tangential
velocities do not vanish simultaneously, the resulting mixed-order
linearization is Fredholm of index two.

Appending the mean normal displacement and total circulation gives a
Fredholm derivative of index zero.  If their differential is onto
$\re^2$ on the unnormalized kernel, the normalized kernel and cokernel
are canonically reduced to equal dimension $d$.  For $d=0$, the Banach
implicit function theorem yields a unique local normalized continuation.  For
$d>0$, Lyapunov--Schmidt reduction gives a $C^2$ equation
$B\colon\re\times\mathcal K\to\mathcal C$ and explicit first- and
second-order necessary conditions.  Those conditions do not, without
additional finite-dimensional information, imply a branch count, a scale, or
the existence of a further branch.

For a radial reference metric and a concentric circular sheet, Fourier
decomposition reduces the normalized derivative to explicit $2\times2$
blocks.  The determinant estimate leaves only finitely many possible
resonant modes.  In the constant-curvature family the determinant vanishes
exactly at $k=1$, is strictly negative for every $k\geq2$, and hence gives
$d=2$.  The two resonant profiles are normal traces of ambient Killing
fields, but those fields do not preserve the fixed outer boundary.  Thus the
remaining local question is sharply bounded: one must
analyze the two-dimensional reduced equation to decide whether nonconcentric
solutions exist.

\section*{Declarations}
\paragraph{Funding.}
This work was supported by JSPS KAKENHI Grant Number JP25K17294.

\paragraph{Data availability.}
Data sharing is not applicable to this article as no datasets were generated or analyzed.

\paragraph{Consent to participate.}
Not applicable.

\paragraph{Consent to publish.}
Not applicable.

\paragraph{Competing interests.}
The author declares no competing interests.

\bibliographystyle{amsplain}

\end{document}